\newtheorem{theorem}{Theorem}[section]
\newtheorem{lemma}[theorem]{Lemma}
\newtheorem{corollary}[theorem]{Corollary}
\newtheorem{remark}[theorem]{Remark}
\newtheorem{conjecture}[theorem]{Conjecture}
\newtheorem{question}[theorem]{Question}
\renewcommand{\P}{\mathbb{P}}
\newcommand{\TV}[1]{{\lVert #1 \rVert}_{\normalfont
\text{TV}}}
\newcommand{\N}{\mathbb{N}}
\newcommand{\R}{\mathbb{R}}
\newcommand{\E}{\mathbb{E}}
\newcommand{\Z}{\mathbb{Z}}
\newcommand{\V}{\text{\normalfont Var}}
\renewcommand{\c}{\text{\normalfont c}}
\newcommand{\diff}{\text{\normalfont d}}
\newcommand{\h}{\text{\normalfont h}}
\newcommand\abs[1]{\left| #1\right|}
\newcommand{\pdist}[2]{{\lVert #1 \rVert}_{#2}}
\title{Mixing times for the simple exclusion process with open boundaries}
\author{Nina Gantert$^{\ast}$, Evita Nestoridi$^{\ast\ast}$ and Dominik Schmid$^{\ast}$}
\date{\today}
\begin{document}
\tikzset{
    arrow/.style={postaction={decorate},
        decoration={markings,mark=at position 1 with
        {\arrow[line width=0.6mm]{>}}}} 
       }
\maketitle
\begin{abstract} We study mixing times of the symmetric and asymmetric simple exclusion process on the segment where particles are allowed to enter and exit at the endpoints. We consider different regimes depending on the entering and exiting rates as well as on the rates in the bulk, and show that the process exhibits pre-cutoff and in some cases cutoff.
Our main contribution is to study mixing times for the asymmetric simple exclusion process with open boundaries. We show that the order of the mixing time can be linear or exponential in the size of the segment depending on the choice of the boundary parameters, proving a strikingly different (and richer) behavior for the simple exclusion process with open boundaries than for the process on the closed segment. Our arguments combine coupling, second class particle and censoring techniques with current estimates. A novel idea is the use of multi-species particle arguments, where the particles only obey a partial ordering.
\end{abstract}

\phantom{.} \hspace{0.2cm}\textbf{Keywords:} Exclusion process, mixing times, coupling, second class particles \\
\phantom{.} \hspace{0.45cm} \textbf{MSC 2020:}  Primary: 60K35 Secondary: 60J27, 82C22

\let\thefootnote\relax
\footnotetext{ $^\ast$ \textit{Technical University of Munich, Germany. E-Mail}: \nolinkurl{nina.gantert@tum.de}, \nolinkurl{dominik.schmid@tum.de} \\\phantom{.} \hspace{0.3cm}
$^{\ast\ast}$ \textit{Princeton University, United States. E-Mail}: \nolinkurl{exn@princeton.edu}, \\ 
\phantom{.} \hspace{0.85cm}Partially supported by EPSRC grant EP/R022615/1}

\section{Introduction}

The simple exclusion process is an important and intensively studied interacting particle system \cite{BS:OrderCurrent,BE:Nonequilibrium, CW:TableauxCombinatorics, DEHP:ASEPCombinatorics, LL:CutoffASEP, L:CutoffSEP}.
%
%
%
%
Over the last decades, it equally raises interest of scientists from probability, statistical mechanics and combinatorics, see \cite{BE:Nonequilibrium, L:ReviewASEP, L:Book2, WBE:CombinatorialMappings} for review papers in the respective areas. Despite its simple construction, the simple exclusion process is a source for surprising phenomena such as phase transitions and formation of shocks 
\cite{DEHP:ASEPCombinatorics, F:ShockFluctuations, FF:ShockFluctuations, FKS:MicroscopicStructure,  USW:PASEPcurrent}. 
In this paper, we study the simple exclusion process with open boundaries which is given as independently moving random walks on the segment using an exclusion rule, i.e.\ when a particle tries to move to a site, which is already occupied, this move is suppressed. In addition, we allow particles to jump in and out of the system at the ends of the segment. 
We determine the order of the mixing times for this process, which quantify the speed of convergence to equilibrium, see \eqref{def:MixingTime}. 
Mixing times for simple exclusion processes have been thoroughly
studied, see \cite{BBHM:MixingBias, LL:CutoffASEP,L:CutoffSEP, L:CycleDiffusiveWindow, W:MixingLoz}. Note that in
all of the above mentioned works on mixing times, the number of particles in the segment
is preserved and the simple exclusion process is reversible (in the sense of detailed balance). 
In general, the simple exclusion process with open boundaries is no longer reversible. It is one of the most basic, however very interesting examples of a non-equilibrium particle system in statistical mechanics. \\

\begin{figure} \label{fig:BDEP}
\centering
\begin{tikzpicture}[scale=1.05]

\def\spiral[#1](#2)(#3:#4:#5){
\pgfmathsetmacro{\domain}{pi*#3/180+#4*2*pi}
\draw [#1,
       shift={(#2)},
       domain=0:\domain,
       variable=\t,
       smooth,
       samples=int(\domain/0.08)] plot ({\t r}: {#5*\t/\domain})
}

\def\particles(#1)(#2){

  \draw[black,thick](-3.9+#1,0.55-0.075+#2) -- (-4.9+#1,0.55-0.075+#2) -- (-4.9+#1,-0.4-0.075+#2) -- (-3.9+#1,-0.4-0.075+#2) -- (-3.9+#1,0.55-0.075+#2);
  
  	\node[shape=circle,scale=0.6,fill=red] (Y1) at (-4.15+#1,0.2-0.075+#2) {};
  	\node[shape=circle,scale=0.6,fill=red] (Y2) at (-4.6+#1,0.35-0.075+#2) {};
  	\node[shape=circle,scale=0.6,fill=red] (Y3) at (-4.2+#1,-0.2-0.075+#2) {};
   	\node[shape=circle,scale=0.6,fill=red] (Y4) at (-4.45+#1,0.05-0.075+#2) {};
  	\node[shape=circle,scale=0.6,fill=red] (Y5) at (-4.65+#1,-0.15-0.075+#2) {}; }

  \def\annhil(#1)(#2){	  \spiral[black,thick](9.0+#1,0.09+#2)(0:3:0.42);
  \draw[black,thick](8.5+#1,0.55+#2) -- (9.5+#1,0.55+#2) -- (9.5+#1,-0.4+#2) -- (8.5+#1,-0.4+#2) -- (8.5+#1,0.55+#2); }

	\node[shape=circle,scale=1.5,draw] (B) at (2.3,0){} ;
	\node[shape=circle,scale=1.5,draw] (C) at (4.6,0) {};
	\node[shape=circle,scale=1.2,fill=red] (CB) at (2.3,0) {};
    \node[shape=circle,scale=1.5,draw] (A) at (0,0){} ;
 	\node[shape=circle,scale=1.5,draw] (D) at (6.9,0){} ; 
 	 	\node[shape=circle,scale=1.5,draw] (Z) at (-2.3,0){} ;
	\node[shape=circle,scale=1.2,fill=red] (YZ) at (-2.3,0) {};
   \node[line width=0pt,shape=circle,scale=1.6] (B2) at (2.3,0){};
	\node[line width=0pt,shape=circle,scale=2.5] (D2) at (6.9,0){};
		\node[line width=0pt,shape=circle,scale=2.5] (Z2) at (-2.3,0){};
		
	\node[line width=0pt,shape=circle,scale=2.5] (X10) at (6.8,0){};
		\node[line width=0pt,shape=circle,scale=2.5] (X11) at (-2.2,0){};	
			\node[line width=0pt,shape=circle,scale=2.5] (X12) at (8.4,0){};
		\node[line width=0pt,shape=circle,scale=2.5] (X13) at (-3.8,0){};

		\draw[thick] (Z) to (A);	
	\draw[thick] (A) to (B);	
		\draw[thick] (B) to (C);	
  \draw[thick] (C) to (D);

\particles(0)(0);
\particles(6.9+4.9+1.6)(0);


\draw [->,line width=1pt]  (B2) to [bend right,in=135,out=45,arrow] (C);
  
   \draw [->,line width=1pt] (B2) to [bend right,in=-135,out=-45] (A);
    \node (text1) at (3.5,1){$p$} ;    
	\node (text2) at (1.1,1){$1-p$} ;   
	\node (text3) at (-2.3-1,1){$\alpha$}; 
	\node (text4) at (6.9+1,1){$\beta$};   
	\node (text5) at (-2.3-1,-0.4){$\gamma$}; 
	\node (text6) at (6.9+1,-0.4){$\delta$};   

    \node[scale=0.9] (text1) at (-2.2,-0.7){$1$} ;    
    \node[scale=0.9] (text1) at (6.8,-0.7){$N$} ;   
  	
  \draw [->,line width=1pt] (-3.9,0.475) to [bend right,in=135,out=45] (Z);
   \draw [->,line width=1pt] (Z) to [bend right,in=135,out=45] (-3.9,-0.475); 
   \draw [->,line width=1pt] (6.9+1.6,-0.475) to [bend right,in=135,out=45] (D);	
   \draw [->,line width=1pt] (D) to [bend right,in=135,out=45] (6.9+1.6,0.475);

	\end{tikzpicture}	
\caption{Simple exclusion process with open boundaries for parameters $(p,\alpha,\beta,\gamma,\delta)$.}
 \end{figure}
While the proofs in the symmetric cases of the simple exclusion process with open boundaries follow known routes, see Theorems \ref{thm:symmetricTwoSided} and \ref{thm:symmetricOneSide} where we adopt the arguments of \cite{L:CutoffCircle} and \cite{L:CutoffSEP}, our main contribution is to study mixing times for the asymmetric simple exclusion process with open boundaries, see Theorems \ref{thm:asymmetricOneSide} to \ref{thm:MaxCurrentTriplePoint}. We show that the order of the mixing time can be linear or exponential in $N$, depending on the choice of the boundary parameters. Our arguments combine coupling, second class particle and censoring techniques with current estimates. A novel idea is the use of multi-species particle arguments, where the particles only obey a partial ordering. 
In general, a main difficulty is to write down explicitly the stationary distribution of the simple exclusion process with open boundaries. Physicists and combinatorialists have been working hard to
acquire descriptions of the stationary measure, see Section \ref{relatedsection}. When the
stationary measure is hard to describe, a nice alternative is to
simulate it by running a Markov chain. Our results allow to determine how many steps are required  when running the specific Markov chain given by the simple exclusion
process with open boundaries. \\

We now define the simple exclusion process with drift parameters $p,q \geq 0$. Let $k \in [N]:= \lbrace 1,\dots,N\rbrace$ for some $N \in \N$. The \textbf{simple exclusion process} on a segment of size $N$ with $k$ particles is a Feller process $(\eta^{\textup{ex}}_t)_{t \geq 0}$ with state space $\Omega_{N,k}$ given by
\begin{equation}\label{def:SpaceParticles}
\Omega_{N,k} := \Big\lbrace \eta \in \lbrace 0,1 \rbrace^{N} \colon \sum_{x=1}^{N} \eta(x)= k\Big\rbrace \ . 
\end{equation}
It is generated by 
\begin{align*}
\mathcal{L}_{\textup{ex}}f(\eta) &= \sum_{x =1}^{N-1} p \ \eta(x)(1-\eta(x+1))\left[ f(\eta^{x,x+1})-f(\eta) \right] \nonumber \\
&+ \sum_{x =2}^{N} q \ \eta(x)(1-\eta(x-1))\left[ f(\eta^{x,x-1})-f(\eta) \right]  
\end{align*} where $\eta^{x,y}\in \Omega_{N,k}$ denotes the configuration in which we exchange the values at positions $x$ and $y$ in $\eta \in \Omega_{N,k}$. For an introduction to Feller processes, we refer to \cite{L:Book2}. \\

We say that site $x$ is \textbf{occupied} by a particle if $\eta(x)=1$ and \textbf{vacant} otherwise. 
A particle at a vertex $x$ is supposed to move to the right at rate $p$ and to the left at rate $q$ whenever the target is a vacant site. 
For the \textbf{simple exclusion process with open boundaries} $(\eta_t)_{t \geq 0}$, we in addition allow creating and annihilating particles at the endpoints of the segment. More precisely, for parameters $\alpha,\beta,\gamma,\delta \geq 0$, $(\eta_t)_{t \geq 0}$ is defined as the Feller process with state space $\Omega_N:=\{ 0,1\}^N$ generated by 
\begin{align*}
\mathcal{L}f(\eta) = \mathcal{L}_{\textup{ex}}f(\eta)  &+ \alpha (1-\eta(1))\ \left[ f(\eta^{1})-f(\eta) \right] \hspace{2pt} + \gamma \eta(1)\ \ \left[ f(\eta^{1})-f(\eta) \right] \\
  &+ \delta (1-\eta(N))\left[ f(\eta^{N})-f(\eta) \right]  + \beta \eta(N)\left[ f(\eta^{N})-f(\eta) \right]  
\end{align*} where $\eta^{x}\in \Omega_N$ denotes the configuration in which we flip the values at position $x$ in $\eta \in \Omega_N$.  In contrast to the simple exclusion process, the number of particles will in general no longer be preserved over time.  \\

In the remainder, we assume that the above parameters are chosen such that the corresponding simple exclusion process with open boundaries has a unique stationary distribution $\mu$ (which may also be a Dirac measure on a single configuration). Our goal is to investigate the speed of convergence towards $\mu$. For this purpose, we define the $\mathbold\varepsilon$\textbf{-mixing time} of  $(\eta_t)_{t \geq 0}$ by
\begin{equation}\label{def:MixingTime}
t^N_{\text{\normalfont mix}}(\varepsilon) := \inf\left\lbrace t\geq 0 \ \colon \max_{\eta \in \Omega_{N}} \TV{\P\left( \eta_t \in \cdot \ \right | \eta_0 = \eta) - \mu} < \varepsilon \right\rbrace
\end{equation} for all $\varepsilon \in (0,1)$. Here, $\TV{ \ \cdot \ }$ denotes the \textbf{total-variation distance}, i.e.\ for two probability measures $\mu$ and $\nu$ on $\Omega_N$, we define
\begin{equation}\label{def:TVDistance}
\TV{ \mu - \nu } := \frac{1}{2}\sum_{x \in \Omega_N} \abs{\mu(x)-\nu(x)} = \max_{A \subseteq \Omega_N} \left(\mu(A)-\nu(A)\right) \ .
\end{equation} Our goal is to study the order of $t^N_{\text{\normalfont mix}}(\varepsilon)$ when $N$ goes to infinity.

\subsection{Main results}

In the following, we investigate the mixing times for the simple exclusion process with open boundaries. Without loss of generality, we can assume that $q=1-p$ holds for some $p \in [\frac{1}{2},1]$. To see this, we rescale time by a factor of $(p+q)$ and use the symmetry in the definition of $(\eta_t)_{t \geq 0}$ with respect to the boundary parameters. Moreover, we  assume that $\max(\alpha,\beta,\gamma,\delta)>0$ holds. When all boundary parameters are zero, mixing times were investigated in \cite{BBHM:MixingBias,LL:CutoffASEP,L:CutoffSEP,W:MixingLoz} among others. 

\subsubsection{Symmetric simple exclusion process with open boundaries}

We start with the case when all transitions in the bulk are symmetric, i.e.\ $p=\frac{1}{2}$.

\begin{theorem}\label{thm:symmetricTwoSided} 
For $p=\frac{1}{2}$, the $\varepsilon$-mixing time of the simple exclusion process with open boundaries is
\begin{equation}\label{eq:SymmetricPrecutoff}
\frac{1}{\pi^2} \leq \liminf_{N \rightarrow \infty} \frac{t^N_{\textup{mix}}(\varepsilon)}{N^{2}\log N} \leq \limsup_{N \rightarrow \infty} \frac{t^N_{\textup{mix}}(\varepsilon)}{N^{2}\log N } \leq C
\end{equation} for all $\varepsilon\in (0,1)$ and some constant $C=C(\alpha,\beta,\gamma,\delta)$. 
\end{theorem}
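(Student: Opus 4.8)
The plan is to treat the upper and lower bounds separately, in both cases reducing to well-understood symmetric exclusion estimates by a comparison/coupling argument that controls the effect of the open boundaries.

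For the upper bound, the idea is to dominate the process by a symmetric simple exclusion process on a slightly enlarged segment with a fixed number of particles, in which the boundary creation/annihilation is replaced by exchanges with reservoir sites, so that the arguments of \cite{L:CutoffSEP} apply. Concretely, I would first use a grand coupling: run all copies $(\eta_t)_{t\geq 0}$ from all initial conditions together, using the same Poisson clocks for bulk exchanges and boundary flips, and note that this coupling is monotone, so it suffices to bound the coupling time of the top configuration $\mathbf 1$ and the bottom configuration $\mathbf 0$. The key point is that once a particle has been created at site $1$ and annihilated (or vice versa) the two coupled chains agree there; more usefully, one tracks the disagreement as a collection of second class particles performing symmetric exclusion with absorption at the two ends (absorption corresponding to a boundary flip that equalizes the two copies). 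Thus $t^N_{\mathrm mix}(\varepsilon)$ is controlled by the time for $O(N)$ second class particles, each doing a symmetric random walk on $[N]$ with some killing at the endpoints, to all be absorbed; a standard spectral/heat-kernel computation (or direct comparison to the known $\Theta(N^2\log N)$ relaxation-plus-coupon-collector bound for symmetric exclusion with a reservoir, as in \cite{L:CutoffSEP, L:CutoffCircle}) gives the $C(\alpha,\beta,\gamma,\delta)\,N^2\log N$ upper bound, with the constant degrading as the boundary rates approach $0$ or $\infty$.

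For the lower bound, the plan is to exhibit a slowly-relaxing statistic and apply Wilson's method, which is exactly what yields the $\tfrac{1}{\pi^2}$ constant. I would take the near-eigenfunction of the symmetric exclusion generator given by $\Phi(\eta)=\sum_{x=1}^N \sin\!\big(\tfrac{\pi x}{N+1}\big)\big(\eta(x)-\bar\rho(x)\big)$, where $\bar\rho$ is the (linear) stationary density profile, since $x\mapsto\sin(\pi x/(N+1))$ is the first Dirichlet eigenfunction of the discrete Laplacian on $[N]$ with eigenvalue $\lambda_N = 1-\cos(\pi/(N+1)) \sim \tfrac{\pi^2}{2N^2}$. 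Started from the extremal configuration $\mathbf 1$ (or $\mathbf 0$), $\Phi$ is of order $N$, its expectation decays like $e^{-\lambda_N t}$ up to boundary corrections, and its variance stays $O(N)$; Wilson's lemma (in the continuous-time form, e.g.\ as in \cite{L:CutoffSEP}) then forces $t^N_{\mathrm mix}(\varepsilon)\geq (1-o(1))\,\tfrac{1}{2\lambda_N}\log N \geq (1-o(1))\,\tfrac{N^2}{\pi^2}\log N$.

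The main obstacle is handling the boundary terms cleanly. The generator $\mathcal L$ is not self-adjoint and the creation/annihilation terms mean $\Phi$ is only an approximate eigenfunction, so one must show the boundary contributions to $\mathcal L\Phi$ and to $\mathrm{Var}(\Phi)$ are lower-order — this is where the Dirichlet boundary condition $\sin(\pi\cdot/(N+1))$ vanishing near $1$ and $N$ does the work, but it requires a careful estimate of the drift at the two endpoints and of the extra variance injected by the reservoirs over time $\Theta(N^2\log N)$. On the upper-bound side, the analogous difficulty is that the censoring/monotone-coupling reduction must be set up so that the second class particles genuinely perform (a comparison to) symmetric exclusion; establishing the requisite monotonicity and the comparison of the killed dynamics to the reservoir model, uniformly in the initial condition, is the technical heart of that direction.
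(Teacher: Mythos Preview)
Your overall plan matches the paper's: the upper bound via the disagreement process (second class particles doing symmetric random walk with absorption at the boundaries, then a union bound) is exactly what the paper does in Section~4.1, and the lower bound via Wilson's method applied to a sine-type test function is exactly the content of Section~3 (Lemmas~3.1 and~3.2).

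There is, however, a genuine gap in your lower-bound implementation. With your choice $\phi(x)=\sin(\pi x/(N+1))$, the generator applied to $\Phi$ produces at the boundary a term of the form
\[
\phi(1)\bigl(\eta(1)(1-\alpha-\gamma)+\alpha-\tfrac12\bigr)+\phi(N)\bigl(\eta(N)(1-\beta-\delta)+\delta-\tfrac12\bigr),
\]
which depends on $\eta$ and has size $|\phi(1)|\asymp 1/N$. Thus the approximate-eigenfunction error $c$ in Wilson's lemma is of order $N^{-1}$, while the eigenvalue is $\lambda_N=1-\cos(\pi/(N{+}1))\asymp N^{-2}$; the hypothesis $\lambda\geq c$ of the generalized Wilson lemma fails, and the argument as written does not yield any lower bound of the right order. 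Your remark that ``vanishing near the endpoints does the work'' only buys you one factor of $N^{-1}$, which is not enough. Subtracting the stationary profile $\bar\rho$ shifts $\Phi$ by a constant and does not touch $\mathcal L\Phi$, so it does not help with this error.

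The paper's fix is to adapt the sine to the Robin-type boundary conditions rather than the Dirichlet ones: set
\[
\phi(x)=\sin\!\Bigl(\bigl(x+C-\tfrac12\bigr)\tfrac{\pi}{M}\Bigr),\qquad M=N+C+D,\quad C=\tfrac{1}{2(\alpha+\gamma)}-\tfrac12,\quad D=\tfrac12-\tfrac{1}{2(\beta+\delta)},
\]
and extend antisymmetrically to a segment of length $2N$. With this shift and stretch the boundary mismatch drops to $c=O(N^{-3})$, Wilson's lemma applies with $\lambda_N=1-\cos(\pi/M)$, and the constant $1/\pi^2$ falls out. So your strategy is right, but the specific test function must be tuned to the boundary rates; the pure Dirichlet eigenfunction does not suffice.
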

The property that the first order of the $\varepsilon$-mixing times can be bounded within two constants which do not depend on $\varepsilon$ is called \textbf{pre-cutoff}, see \cite[Chapter 18]{LPW:markov-mixing}. When all boundary parameters are zero and particles have a density in $(0,1)$, it was shown in \cite[Theorem 2.4]{L:CutoffSEP} that the lower bound in \eqref{eq:SymmetricPrecutoff} gives the asymptotic behavior of the $\varepsilon$-mixing time for the simple exclusion process. However, the next theorem says that when particles enter and exit only at a single side, we see a different constant.

\begin{theorem}\label{thm:symmetricOneSide}
For $p=\frac{1}{2}$, suppose that $\max(\alpha,\gamma)=0$ and $\min(\beta,\delta)>0$ holds. Then for all $\varepsilon\in (0,1)$, the $\varepsilon$-mixing time of the simple exclusion process with open boundaries satisfies
\begin{equation}\label{eq:SymmetricCutoff}
\lim_{N \rightarrow \infty} \frac{t^N_{\textup{mix}}(\varepsilon)}{N^{2}\log N} = \frac{4}{\pi^2} \, .
\end{equation} By symmetry, \eqref{eq:SymmetricCutoff} holds for $p=\frac{1}{2}$, $\min(\alpha,\gamma)>0$ and $\max(\beta,\delta)=0$ as well.
\end{theorem}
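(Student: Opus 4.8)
\emph{Proof strategy.} The plan is to adapt the coupling, censoring and $L^2$ arguments of Lacoin \cite{L:CutoffSEP,L:CutoffCircle}; the one structural change is that an open endpoint on only one side replaces one of the two Neumann boundary conditions of the closed segment by a condition that becomes Dirichlet in the scaling limit, which quarters the relevant spectral gap and hence quadruples the cutoff location, producing the constant $\tfrac{4}{\pi^2}$ rather than $\tfrac{1}{\pi^2}$. First note that when $\alpha=\gamma=0$ and $\beta,\delta>0$ the process is reversible: the symmetric bulk generator $\mathcal{L}_{\textup{ex}}$ preserves every product Bernoulli measure, and the birth-and-death mechanism at site $N$ is in detailed balance with the product measure $\mu$ having density $\rho:=\tfrac{\delta}{\beta+\delta}\in(0,1)$ at every site, so $\mu$ is the unique stationary distribution. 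Since the dynamics is moreover monotone, the grand monotone coupling gives $\TV{\P(\eta_t\in\cdot\mid\eta_0=\eta)-\mu}\le\P(\eta^{\mathbf{1}}_t\neq\eta^{\mathbf{0}}_t)$ for all $\eta$, where $\eta^{\mathbf{1}}$ and $\eta^{\mathbf{0}}$ denote the copies started from the full and from the empty configuration; thus it is enough to control these two extremal starts for the upper bound, and to exhibit one slowly relaxing start for the lower bound.

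For the lower bound I would use the linear statistic $\varphi_N(\eta):=\sum_{x=1}^N\phi_N(x)\big(\eta(x)-\rho\big)$, where $\phi_N(x)=\cos\big(\theta(x-\tfrac{1}{2})\big)$ solves the one-dimensional problem $\tfrac12\big(\phi(x-1)+\phi(x+1)-2\phi(x)\big)=-g_N\,\phi(x)$ on $[N]$ with the Neumann condition $\phi(0)=\phi(1)$ at the closed endpoint and the Robin condition $\phi(N+1)=\big(1-2(\beta+\delta)\big)\phi(N)$ forced by the boundary generator at $N$. Since $\eta\mapsto\sum_x\phi_N(x)\eta(x)$ is linear in $\eta$ (so that the quadratic exclusion terms in $\mathcal{L}$ cancel), $\varphi_N$ is a genuine eigenfunction of $\mathcal{L}$ with eigenvalue $-g_N$, and solving $\cot(\theta N)=\tfrac{1-(\beta+\delta)}{\beta+\delta}\tan(\tfrac{\theta}{2})$ gives $g_N=(1+o(1))\tfrac{\pi^2}{8N^2}$ for every fixed $\beta,\delta>0$; in particular $\condexp{\varphi_N(\eta_t)}{\eta_0=\eta}=e^{-g_N t}\varphi_N(\eta)$. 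Starting from $\eta_0=\mathbf{1}$ one has $\varphi_N(\mathbf{1})\asymp N$ and $\V_\mu(\varphi_N)=\rho(1-\rho)\sum_x\phi_N(x)^2\asymp N$, while $\V_\eta(\varphi_N(\eta_t))=O(N)$ uniformly in $t$ and $\eta$ by standard covariance estimates for the exclusion process, so Chebyshev's inequality forces $\TV{\P(\eta_t\in\cdot\mid\eta_0=\mathbf{1})-\mu}\to1$ as long as $e^{-g_N t}\sqrt{N}\to\infty$, that is, for $t\le(1-\varepsilon)\tfrac{4}{\pi^2}N^2\log N$.

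For the upper bound I would run the two-stage scheme of \cite{L:CutoffSEP} with this gap $g_N$. Let $D_t$ be the number of sites where $\eta^{\mathbf{1}}_t$ and $\eta^{\mathbf{0}}_t$ differ; in the monotone coupling the discrepancies never increase, evolve as a symmetric exclusion process on $[N]$ with absorption at the open endpoint $N$, and a first-moment estimate via the self-duality of the exclusion process gives $\expect{D_t}=\sum_{x=1}^N\P_x(\tau>t)\le CNe^{-g_N t}$, where $\tau$ is the absorption time of a single walk reflected at $1$ and killed at rate $\beta+\delta$ at $N$. This already yields the correct order $N^2\log N$, but only the constant $\tfrac{8}{\pi^2}$. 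The sharp constant comes instead from the reversible $\chi^2$-bound $4\,\TV{\P(\eta_t\in\cdot\mid\eta_0=\mathbf{1})-\mu}^2\le\sum_{j\geq1}e^{-2\lambda_j t}f_j(\mathbf{1})^2$, where the $\lambda_j$ are the eigenvalues of $-\mathcal{L}$ and the $f_j$ an $L^2(\mu)$-orthonormal eigenbasis: using that $g_N$ is the spectral gap of $\mathcal{L}$ (which should hold as for the closed chain, or via a comparison estimate), the principal term $e^{-2g_N t}f_1(\mathbf{1})^2\asymp Ne^{-2g_N t}$ becomes negligible once $t\geq(1+\varepsilon)\tfrac{\log N}{2g_N}=(1+\varepsilon)\tfrac{4}{\pi^2}N^2\log N$, the factor $2$ in the exponent being precisely what turns $\tfrac{8}{\pi^2}$ into $\tfrac{4}{\pi^2}$. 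The remaining modes, whose total contribution is astronomically large at time $0$, are handled following \cite{L:CutoffSEP} by first running a burn-in of negligible length $t_0=o(N^2\log N)$, after which, by a censoring comparison together with standard relaxation estimates, their contribution has dropped to a polynomial in $N$ and is hence $\ll1$ for $t$ as above. This gives $t^N_{\textup{mix}}(\varepsilon)\le(1+\varepsilon)\tfrac{4}{\pi^2}N^2\log N$, and together with the lower bound it proves \eqref{eq:SymmetricCutoff}; the case $\min(\alpha,\gamma)>0$, $\max(\beta,\delta)=0$ follows from the reflection $x\mapsto N+1-x$, which reduces it to the case above with reservoir density $\tfrac{\alpha}{\alpha+\gamma}$.

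The hard part is the upper bound, specifically the passage from the correct order to the correct constant: the naive discrepancy bound overshoots by a factor $2$. Closing this gap requires, on one side, pinning down the leading order of the spectral gap --- one has to verify, uniformly over all fixed $\beta,\delta>0$, that the Robin condition at the open endpoint genuinely degenerates to a Dirichlet condition in the scaling limit, so that $g_N=(1+o(1))\tfrac{\pi^2}{8N^2}$ irrespective of the precise boundary rates --- and, on the other side, controlling the $\chi^2$-distance to $\mu$ after the negligible burn-in, which is where the censoring comparison and the relaxation estimates adapted from \cite{L:CutoffSEP} do most of the work.
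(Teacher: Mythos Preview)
Your lower bound is essentially the paper's, with a cosmetic difference: the paper builds its approximate eigenfunction out of the height function (Lemma~\ref{lem:EigenfunctionOneSide}) and feeds it into the generalized Wilson lemma, whereas you work directly with the occupation variables. Both lead to an (approximate) eigenvalue $g_N\sim\pi^2/(8N^2)$ and a statistic of size $\asymp N$ with stationary fluctuations $\asymp\sqrt{N}$, hence the same cutoff lower bound $\tfrac{4}{\pi^2}N^2\log N$. One minor caveat: your $\varphi_N$ as written is not literally an eigenfunction (the constant you subtract should be $\delta\phi_N(N)/g_N$, not $\rho\sum_x\phi_N(x)$), but this is harmless for the Chebyshev argument.

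Your upper bound, however, is genuinely different from the paper's and has a real gap. The paper does \emph{not} use an $L^2/\chi^2$ bound. Instead it follows \cite{L:CutoffSEP} step by step: (i) FKG--Holley correlation inequalities for the one-sided model (Lemma~\ref{lem:Correlations}); (ii) a heat-equation estimate showing the mean height profile is of order $\sqrt{N}$ at time $t_2=(1+\varepsilon/4)\tfrac{4}{\pi^2}N^2\log N$ (Lemma~\ref{lem:MeanHeightFunction}); (iii) a Brownian scaling limit of the equilibrium height function (Lemma~\ref{lem:ScalingLimit}); then (iv) a ``skeleton'' argument which shows that the law at the $K$ mesh points is close to equilibrium in total variation at time $t_2$, followed by censoring into $K$ independent closed segments for an additional $o(N^2\log N)$ time (Lemma~\ref{lem:mainTheoremUpperBound1}); and finally (v) a monotone coupling of the $\mathbf{1}$ and $\mathbf{0}$ chains (Lemma~\ref{lem:mainTheoremUpperBound2}). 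Nowhere is the $\chi^2$-distance from $\delta_{\mathbf{1}}$ controlled directly.

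The gap in your scheme is the sentence ``by a censoring comparison together with standard relaxation estimates, their contribution has dropped to a polynomial in $N$''. Since $\chi^2(\delta_{\mathbf{1}},\mu)=\mu(\mathbf{1})^{-1}-1$ is of order $e^{cN}$ and the spectral gap is $\Theta(N^{-2})$, a burn-in of length $t_0=o(N^2\log N)$ yields a spectral decay factor $e^{-2g_Nt_0}=N^{-o(1)}$, which does nothing to the exponential. There is no ``standard relaxation estimate'' in \cite{L:CutoffSEP} (or elsewhere, to my knowledge) that brings the $\chi^2$-distance of the exclusion process from an extremal state down to $\mathrm{poly}(N)$ in time $o(N^2\log N)$; indeed this would essentially require knowing the full eigenstructure of the many-body generator, which is not available here. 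Your parenthetical that $g_N$ is the spectral gap of $\mathcal{L}$ (``which should hold as for the closed chain, or via a comparison estimate'') is a second unproven assumption: the Aldous-type identification of the many-body gap with the single-particle gap is not established for this boundary condition in the paper and would need its own argument. In short, the factor of $2$ you correctly identify as the crux is exactly what the paper spends all of Section~\ref{sec:UpperBoundCutoff} earning through the Lacoin machinery, and your $\chi^2$ route does not bypass it.
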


The property that the leading order of the $\varepsilon$-mixing times does not depend on $\varepsilon$ is known as the \textbf{cutoff phenomenon}, see \cite[Chapter 18]{LPW:markov-mixing}.

\subsubsection{Asymmetric simple exclusion process with one blocked entry}

Next, consider the asymmetric simple exclusion process with $p>\frac{1}{2}$. When $\alpha>0$, let
\begin{equation}\label{def:a}
a=a(\alpha,\gamma,p) := \frac{1}{2 \alpha}\left( 2p-1 -\alpha+ \gamma + \sqrt{ (2p-1 -\alpha+ \gamma)^2 +4\alpha\gamma}\right)
\end{equation} and similarly, for $\beta>0$, we set
\begin{equation}\label{def:b}
b=b(\beta,\delta,p) := \frac{1}{2 \beta}\left( 2p-1 -\beta+ \delta + \sqrt{ (2p-1 -\beta+ \delta)^2 +4\beta\delta} \right) \ . 
\end{equation} 
We study the case where we have one blocked entry, i.e.\ $\min(\alpha,\beta)=0$  and $\max(\alpha,\beta)> 0$.
\begin{theorem}\label{thm:asymmetricOneSide} 
Suppose that $p>\frac{1}{2}$,  and let $\gamma,\delta \geq 0$ be arbitrary. If $\alpha=0$ and $\beta>0$, then for all $\varepsilon\in (0,1)$, the $\varepsilon$-mixing time of the simple exclusion process with open boundaries satisfies 
\begin{equation}\label{eq:OneBlockeda}
\lim_{N \rightarrow \infty} \frac{t^N_{\textup{mix}}(\varepsilon)}{N} = \frac{(\max(b,1)+1)^2}{(2p-1)\max(b,1)} \ .
\end{equation} Similarly, for $\alpha>0$ and $\beta=0$, we have that
\begin{equation}\label{eq:OneBlockedb}
\lim_{N \rightarrow \infty} \frac{t^N_{\textup{mix}}(\varepsilon)}{N} = \frac{(\max(a,1)+1)^2}{(2p-1)\max(a,1)} \ .
\end{equation} In particular, we see in both cases that cutoff occurs.
\end{theorem}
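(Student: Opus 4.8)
The plan is to treat the case $\alpha=0$, $\beta>0$ in full; the case $\alpha>0$, $\beta=0$ then follows from the joint reflection and particle--hole symmetry of the open exclusion process, which sends the parameter vector $(p,\alpha,\beta,\gamma,\delta)$ to $(p,\beta,\alpha,\delta,\gamma)$ and interchanges $a$ and $b$, so that \eqref{eq:OneBlockedb} reduces to \eqref{eq:OneBlockeda}. Write $J^{\star}:=\frac{\max(b,1)}{(\max(b,1)+1)^{2}}(2p-1)$ and $t^{\star}_{N}:=N/J^{\star}$, so that $t^{\star}_{N}/N$ equals the constant on the right-hand side of \eqref{eq:OneBlockeda}; one should read $J^{\star}$ as the maximal rate at which particles can be ejected through the right boundary, namely the maximal bulk current $\frac{1}{4}(2p-1)$ when $b\le1$ and the smaller high-density boundary current $\rho_{r}(1-\rho_{r})(2p-1)$ with $\rho_{r}=b/(b+1)$ when $b>1$. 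The first step is attractivity: the open exclusion process is monotone, so the basic coupling preserves the coordinatewise order on $\Omega_{N}$, and for every initial state $\eta$ one has $\eta^{\mathbf 0}_{t}\preceq\eta^{\eta}_{t}\preceq\eta^{\mathbf 1}_{t}$ with $\mu$ sandwiched as well. Since $\alpha=0$ forces the effective left density to vanish and $\rho_{r}<1$, the chain is in the low density (possibly degenerate) phase, so $\mu$ places only $o(N)$ particles, all in an $O(1)$ boundary layer at site $N$; in particular the empty-started chain $\eta^{\mathbf 0}$ has bulk density $0$ from the outset and reaches $\mu$ within $o(N)$ time. It therefore suffices to understand when the second class particles that distinguish $\eta^{\mathbf 1}$ from $\eta^{\mathbf 0}$ (or, for the lower bound, from a stationary copy) have all left the segment.

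For the lower bound I would couple $\eta^{\mathbf 1}_{t}\succeq Z_{t}$ with $Z_{t}\sim\mu$ stationary and set $D_{t}(x)=\eta^{\mathbf 1}_{t}(x)-Z_{t}(x)\in\{0,1\}$; there are $N-o(N)$ such second class particles at time $0$, and they can disappear only at the two endpoints. At the left endpoint only $O(1)$ of them ever leave: the number of left departures equals the time-integrated current across the bond $(1,2)$ up to an $O(1)$ error, and this current stays bounded because site $1$ has no particle source and particles drift to the right, so that a particle which has entered the interior returns to site $1$ only with exponentially small probability. At the right endpoint the number of particles ejected up to time $t$ is at most $J^{\star}t+o(N)$; this is the crucial current estimate, which I would prove by comparing monotonically with the auxiliary open exclusion process that has the same right parameters $(\beta,\delta)$ but a maximally packing left reservoir (large $\alpha$, $\gamma=0$), whose stationary current through the right boundary is exactly $J^{\star}$ — the maximal current phase when $b\le1$ and the high density phase with bulk density $\rho_{r}$ when $b>1$ — together with control of the $O(N)$-scale corrections while that auxiliary chain relaxes. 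It follows that at time $(1-\varepsilon)t^{\star}_{N}$ at least $\varepsilon N-o(N)$ second class particles survive, so $\eta^{\mathbf 1}$ then carries at least $\varepsilon N/2$ more particles than a $\mu$-sample, an event of $\mu$-probability $o(1)$; hence $\TV{\mathbb{P}(\eta^{\mathbf 1}_{t}\in\cdot)-\mu}\to1$ at $t=(1-\varepsilon)t^{\star}_{N}$, and $\liminf_{N}t^{N}_{\mathrm{mix}}(\varepsilon)/N\ge 1/J^{\star}$ for every $\varepsilon\in(0,1)$.

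For the upper bound I would bound the time at which the basic coupling of $\eta^{\mathbf 1}$ and $\eta^{\mathbf 0}$ succeeds, i.e.\ the first time the last discrepancy has left. While at least $\varepsilon N$ discrepancies remain they feed the right boundary region at full capacity, and the matching current \emph{lower} bound — again via the auxiliary maximally packing chain, now started from $\mathbf 1$, combined with censoring of the updates that only accelerate coalescence — shows that discrepancies are ejected on the right at rate at least $(1-o(1))J^{\star}$, so their number falls below $\varepsilon N$ by some time at most $(1+o(1))t^{\star}_{N}$. Over this window every surviving discrepancy has had $\Theta(N)$ units of time to drift right at asymptotic speed $2p-1$, hence, since $t^{\star}_{N}/N=1/J^{\star}\ge 4/(2p-1)$, the remaining $\varepsilon N$ discrepancies are concentrated near site $N$ and are ejected over a further time that is small relative to $t^{\star}_{N}$ once $\varepsilon$ is small. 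Thus $\mathbb{P}(\eta^{\mathbf 1}_{t}\neq\eta^{\mathbf 0}_{t})\to0$ at $t=(1+\varepsilon)t^{\star}_{N}$; combined with the $o(N)$ equilibration of $\eta^{\mathbf 0}$ and the attractivity sandwich this gives $\limsup_{N}t^{N}_{\mathrm{mix}}(\varepsilon)/N\le 1/J^{\star}$. The two bounds coincide and are independent of $\varepsilon$, which is the asserted cutoff.

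The main obstacle is the current estimate together with its uniformity in time: one must upgrade the equilibrium identity ``stationary current equals $J^{\star}$'' to a pathwise statement that, over intervals of length $\Theta(N)$, the number of particles crossing the right boundary is $J^{\star}t$ up to an error $o(N)$ — as an upper bound for the true chain and as a lower bound for the second class particle flow — which requires controlling the non-equilibrium corrections on the way to stationarity and treating separately the two qualitatively different regimes $b\le1$ (maximal current, bulk density $\frac{1}{2}$) and $b>1$ (a microscopic shock pinned at the right boundary, bulk density $\rho_{r}$). The companion bookkeeping that only $o(N)$ particles ever leave through the left endpoint is of the same nature. The monotone-coupling and censoring steps around these estimates are, by comparison, soft.
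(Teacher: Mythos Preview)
Your architecture is the same as the paper's: a lower bound via an upper bound on the outgoing current at site $N$ (so that, starting from $\mathbf{1}$, too many particles remain at time $(1-\varepsilon)t^{\star}_{N}$), and an upper bound via the basic coupling of $\eta^{\mathbf 1}$ and $\eta^{\mathbf 0}$. The paper obtains the current bound by comparison with the exclusion process on the half-line $(-\infty,N]$ with boundary rates $(\tilde\alpha,\tilde\gamma)=(\beta,\delta)$ at site $N$ (Lemma~\ref{lem:CurrentHalfLine}), which gives the almost-sure current limit $J^{\star}$ directly; this is cleaner than your auxiliary open chain with a ``maximally packing'' left reservoir but plays the same role. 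For $\gamma>0$ the paper sidesteps your ``only $O(1)$ particles exit on the left'' claim by changing the initial state to have its leftmost particle at $\sqrt{N}$ and invoking the blocking-measure bound (Lemma~\ref{lem:HittingBlockingMeasure}) to show that no particle reaches site $1$.

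The genuine gap is in your upper bound, at the step you treat as routine. You assert that after time $(1+o(1))t^{\star}_{N}$ the surviving $\varepsilon N$ discrepancies are ``concentrated near site $N$'' because each ``has had $\Theta(N)$ units of time to drift right at asymptotic speed $2p-1$''. This reasoning does not apply: the discrepancies are (essentially) the particles of $\eta^{\mathbf 1}$, initially packed at density $1$, and a particle in a dense block does \emph{not} drift --- it advances only when the site to its right is empty. The position of the leftmost particle is governed by how many particles have been removed at site $N$, not by free-particle drift, and a current count alone tells you \emph{how many} particles remain, not \emph{where} they are. The concentration statement is precisely the content of the paper's shock-wave lemma (Lemma~\ref{lem:Shockwave}): starting from $\theta_k$, by a stopping time $\tilde\tau\le(1+\varepsilon)c_{b,p}k$ the leftmost particle lies within $\log^{3}k$ of $N$ with high probability. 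Its proof extends the segment to the half-line $(-\infty,N]$, compares with the all-full half-line process, and projects the resulting disagreement process to a censored ASEP on $A_{0}$ (deleting empty sites, turning second class particles into holes); censoring monotonicity together with the blocking-measure tail (Lemma~\ref{lem:HittingBlockingMeasure}) then controls the gap between the leftmost particle and rightmost hole. The lemma is applied twice to reduce the number of remaining particles from $N$ to $\log^{3}N$ to $\log^{3}(\log^{3}N)$, after which a crude a~priori bound $\E_{\eta}[\tau_{\mathbf 0}]\le\exp(ck^{3})$ (Lemma~\ref{lem:AprioriBound}) finishes. Your final paragraph flags the current estimate as the obstacle; the harder obstacle is the spatial concentration, which needs this blocking-measure/censoring machinery rather than a bare flux bound.
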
 
We will see that a key ingredient for the proof of Theorem \ref{thm:asymmetricOneSide} is to understand the creation of shocks, which is typical for the asymmetric simple exclusion process. The shocks will travel at a linear speed and give rise to a sharp mixing behavior.

\subsubsection{The reverse bias phase for the simple exclusion process}

In contrast to the simple exclusion process where all boundary parameters are zero, there exists a regime of the asymmetric simple exclusion process with open boundaries with an exponentially large $\varepsilon$-mixing time. This case is known in the literature as the \textbf{reverse bias phase}, see  \cite{BE:Nonequilibrium}. This terminology can be intuitively justified since the particles are forced by the boundary conditions to move against their natural drift direction.

\begin{theorem}\label{thm:asymmetricOneSideReverse} 
Suppose that $\max(\alpha,\beta)=0$ and $p\in \left( \frac{1}{2},1 \right)$ holds. Then for all $\varepsilon \in (0,\frac{1}{2})$, we have that \begin{equation}\label{eq:LogCutoffOneSide}
\lim_{N \rightarrow \infty} \frac{\log\left(t^N_{\textup{mix}}(\varepsilon)\right)}{N}= \log\left( \frac{p}{1-p}\right) 
\end{equation} holds whenever $\min(\gamma,\delta)=0$ and $\max(\gamma,\delta)>0$.
If $\min(\gamma,\delta)>0$ holds, we have that 
\begin{equation}\label{eq:LogCutoffTwoSides}
\lim_{N \rightarrow \infty} \frac{\log\left(t^N_{\textup{mix}}(\varepsilon)\right)}{N}= \frac{1}{2}\log\left( \frac{p}{1-p}\right) \ .
\end{equation}
\end{theorem}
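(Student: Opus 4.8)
The plan is to treat the reverse bias phase by identifying the bottleneck configuration and showing that the relaxation is governed by how long it takes a macroscopic ``block'' of particles to be pushed against the drift across the segment. Assume $\max(\alpha,\beta)=0$, so particles can only be \emph{removed} at site $1$ (rate $\gamma$) and/or at site $N$ (rate $\delta$), never created; combined with $p>\frac12$ the drift pushes particles to the right while the only available exit consistent with emptying the system sits on the left (when $\delta=0$) or on both ends. In either case the stationary measure $\mu$ is the Dirac mass on the empty configuration $\mathbf 0$ (no particle can ever be created, and every particle is eventually removed since the chain is ergodic by assumption). Hence $\TV{\P(\eta_t\in\cdot\mid\eta_0=\eta)-\mu}=\P(\eta_t\neq\mathbf 0\mid\eta_0=\eta)$, and the mixing time from the worst start is the time for the full configuration $\mathbf 1=(1,\dots,1)$ to become empty. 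So the whole theorem reduces to two-sided estimates on $\tau:=\inf\{t:\eta_t=\mathbf 0\}$ started from $\mathbf 1$.

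For the \textbf{lower bound}, I would track the leftmost particle (when $\delta=0$, exit only at site $1$), or more robustly a suitable ``height function'' / second class particle. Starting from $\mathbf 1$, to empty the system the particle that starts at site $N$ must exit; since $\alpha=\beta=0$ it can only leave through site $1$ (if $\delta=0$) and must therefore traverse $\approx N$ sites leftward, each leftward step occurring at rate $q=1-p<p$ against a backlog of rightward pressure. A clean way to make this precise is a censoring/monotonicity comparison (as used elsewhere in the paper) reducing to a biased random walk on $\{0,1,\dots\}$ with rightward rate $p$ and leftward rate $q$ reflected appropriately, whose time to travel distance $N$ against the drift is $e^{N\log(p/q)+o(N)}$ with high probability; a standard large deviations / optional stopping argument on the martingale $(p/q)^{X_t}$ gives $\P(\tau\le e^{(1-\epsilon)N\log(p/q)})\to 0$. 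When $\min(\gamma,\delta)>0$, particles can also drain out the right end \emph{with} the drift, so the relevant obstruction is only for particles starting in the left half to reach site $1$ (or equivalently, a ``meeting in the middle'' argument): the system empties once the interface between the left-draining and right-draining regions is resolved, and the worst case is a block of width $\approx N/2$ pushed against the drift, giving the exponent $\tfrac12\log(p/q)$. I would formalize this by splitting the segment at the midpoint and arguing that some particle must cross from one half past the relevant endpoint against the drift over a distance $\ge N/2 - o(N)$.

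For the \textbf{matching upper bound}, I would show that from \emph{any} start the empty configuration is reached within $e^{(1+\epsilon)N\log(p/q)}$ (resp.\ $e^{(1+\epsilon)(N/2)\log(p/q)}$) with probability $\ge 1-\epsilon$, uniformly. By monotonicity (attractiveness of the exclusion dynamics with these boundary rates, which allows coupling any configuration below $\mathbf 1$) it suffices to bound $\E[\tau]$ or a tail of $\tau$ from $\mathbf 1$. Here I would again use censoring to slow the chain down to a tractable comparison process and estimate the expected number of leftward crossings needed; alternatively, a Lyapunov function such as $V(\eta)=\sum_x (p/q)^{c\,x}\eta(x)$ or the total number of particles weighted to reward leftward progress gives a drift condition showing each particle exits in expected time $e^{N\log(p/q)+o(N)}$, and a union bound over the $\le N$ particles only costs a polynomial factor absorbed into the $o(N)$ in the exponent. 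The two-endpoint case $\min(\gamma,\delta)>0$ additionally requires observing that a particle never needs to travel more than $\approx N/2$ against the drift before it can be routed to the nearer exit (the right half drains downstream essentially in linear time). The \textbf{main obstacle} I anticipate is making the reduction to a one-dimensional biased walk fully rigorous in the presence of the exclusion interaction and the backlog of particles behind the one being tracked: a single tagged particle does not perform a Markov chain, so I expect the real work is in choosing the right censoring scheme (or the right second-class/multi-species coupling hinted at in the abstract) so that the relevant crossing time stochastically dominates, and is dominated by, an explicit biased random walk whose exponential rate is exactly $\log(p/q)$ — with the constant $\tfrac12$ in \eqref{eq:LogCutoffTwoSides} emerging from the ``distance to the nearer exit is at most $N/2$'' geometry once both ends can drain.
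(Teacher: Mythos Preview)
Your plan rests on a misreading of the boundary parameters. In this paper $\delta$ is the rate at which a particle \emph{enters} at site $N$ (see the generator: the term $\delta(1-\eta(N))[f(\eta^N)-f(\eta)]$ flips an empty site to occupied), while $\beta$ is the exit rate there. So with $\alpha=\beta=0$ and $\delta>0$ particles are continually injected at the right boundary; the stationary law is \emph{not} the Dirac mass on $\mathbf 0$. In the one-sided case $\gamma=0,\ \delta>0$ it is the Dirac mass on $\mathbf 1$, and in the two-sided case $\min(\gamma,\delta)>0$ it is a genuinely nontrivial measure. Your reduction ``$t^N_{\textup{mix}}(\varepsilon)$ equals the time to empty the system from $\mathbf 1$'' therefore fails, and with it the whole heuristic that the right half ``drains downstream in linear time''.

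The factor $\tfrac12$ in \eqref{eq:LogCutoffTwoSides} does not come from particles exiting through the nearer boundary. Particles can exit only at site $1$; what happens at site $N$ is creation. The paper instead works with the disagreement process between the extremal starts $\mathbf 0$ and $\mathbf 1$ (Corollary~\ref{cor:SecondClassParticles}): a second class particle at site $1$ is annihilated when the $\gamma$-clock rings (both copies become empty there), and a second class particle at site $N$ is annihilated when the $\delta$-clock rings (both copies become occupied there). Thus second class particles are absorbed at \emph{both} endpoints even though first class particles exit only on the left, and each second class particle is within distance $\lfloor N/2\rfloor$ of an absorbing boundary. A comparison of the folded distance process with a biased birth-and-death chain (Lemma~\ref{lem:GamblersRuin}) then gives the $(p/q)^{N/2}$ upper bound; the one-sided case uses the same idea with absorption only at one end, yielding $(p/q)^N$. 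For the lower bound in the two-sided case the paper does not identify $\mu$ at all: it compares two initial states $\theta,\theta'$ differing by a single particle near $N/2$, couples to the ASEP on $\Z$ around the ground state $\vartheta_{\lfloor N/2\rfloor}$, and shows via Lemma~\ref{lem:HittingBlockingMeasure} that no particle reaches either boundary for time $(p/q)^{N/2-o(N)}$, so the two laws stay mutually singular. Your biased-walk intuition is salvageable for the upper bound once you reinterpret it at the level of second class particles rather than first class ones, but the lower bound needs a different test than ``system is nonempty''.
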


\subsubsection{The high and low density phase for the simple exclusion process}

Now suppose that  $\min(\alpha,\beta)>0$ and $p>\frac{1}{2}$, so the quantities $a$ and $b$ from \eqref{def:a} and \eqref{def:b} are both well-defined.
We distinguish three different regimes according to the density within the stationary distribution, see Section \ref{sec:PreliminariesBDEP} for more details. The regime $a>\max(b,1)$ is called the \textbf{low density phase} of the exclusion process, while we refer to the regime $b> \max(a,1)$ as the \textbf{high density phase}. The remaining case where $\max(a,b) \leq 1$ holds is called the \textbf{maximal current phase}. Intuitively, the invariant distribution is an interpolation between two Bernoulli-product measures with densities $\frac{1}{1+a}$ and $\frac{b}{1+b}$, respectively, and we will see a justification of this claim in Lemma \ref{lem:ComparsionStationaryMeasure}. The terminology low density phase (respectively high density phase) will be justified in Lemma \ref{lem:stationaryDensity}, 
since the density within the invariant measure stays below (respectively above) $\frac{1}{2}$. 
\begin{theorem}\label{thm:asymmetricTwoSides} For parameters $\alpha,\beta>0$ and $\gamma,\delta\geq 0$, as well as $p> \frac{1}{2}$, suppose we are in the high density phase. Then there exists a constant $C_h=C_h(a,b,p)>0$ such that the $\varepsilon$-mixing time of the simple exclusion process with open boundaries satisfies
\begin{equation}\label{eq:MixingTimeLowDensity}
\frac{1}{2p-1} \leq \liminf_{N \rightarrow \infty} \frac{t^N_{\textup{mix}}(\varepsilon)}{N} \leq \limsup_{N \rightarrow \infty} \frac{t^N_{\textup{mix}}(\varepsilon)}{N} \leq C_h
\end{equation} for all $\varepsilon \in (0,1)$. Similarly, when we are in the low density phase with parameters $\alpha,\beta>0$ and $\gamma,\delta \geq 0$, as well as $p> \frac{1}{2}$, the $\varepsilon$-mixing time of the simple exclusion process with open boundaries satisfies
\begin{equation}\label{eq:MixingTimeHighDensity}
\frac{1}{2p-1} \leq \liminf_{N \rightarrow \infty} \frac{t^N_{\textup{mix}}(\varepsilon)}{N} \leq \limsup_{N \rightarrow \infty} \frac{t^N_{\textup{mix}}(\varepsilon)}{N} \leq C_\ell
\end{equation} for some constant $C_\ell=C_\ell(a,b,p)>0$ and all $\varepsilon \in (0,1)$. In particular, pre-cutoff occurs.
\end{theorem}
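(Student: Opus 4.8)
\textbf{Proof strategy for Theorem \ref{thm:asymmetricTwoSides}.}
The plan is to treat the high density phase in detail; the low density phase then follows by the particle--hole symmetry $\eta \mapsto 1-\eta$ together with the left--right reflection of the segment, which swaps the roles of $(\alpha,\gamma)$ and $(\beta,\delta)$ and hence of $a$ and $b$. For the high density phase I would first invoke Lemma \ref{lem:ComparsionStationaryMeasure} and Lemma \ref{lem:stationaryDensity}: in this regime the stationary measure $\mu$ is close (in a suitable sense) to the Bernoulli-product measure with density $\tfrac{b}{1+b}>\tfrac12$, so the system is driven into a high-density profile by the right boundary, and the relevant dynamics is that of the \emph{rightmost} region of vacant sites (the ``holes'') being pushed out through the left endpoint. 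Since $p>\tfrac12$, holes drift to the left at linear speed $2p-1$, which is what produces the linear order of the mixing time and the lower constant $\tfrac{1}{2p-1}$.

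For the \textbf{upper bound} I would use a coupling/censoring argument. Start two copies of the process from arbitrary initial configurations (it suffices, by the standard reduction and monotonicity of the basic coupling in the exclusion part, to start one copy from the all-$1$'s and one from the all-$0$'s configuration, or from the two extremal configurations compatible with the partial order). Run the basic coupling and use censoring (in the spirit of the censoring inequality, cf.\ the techniques announced in the introduction) to show that after time $CN$ the discrepancies have been swept out of the left boundary: a discrepancy behaves like a second class particle which, in the high density environment, is transported to the left boundary at speed bounded below by a constant depending on $a,b,p$, where it is absorbed at rate $\gamma$ (or created/killed consistently). Current estimates enter here to control the flux of holes across the bulk and to guarantee that the density profile relaxes to the high-density shape within $O(N)$ time with high probability; a union bound over the $O(N)$ discrepancies, each requiring $O(N)$ time with exponentially good tail, closes the argument and yields the constant $C_h$.

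For the \textbf{lower bound} I would exhibit a distinguishing statistic. Take the number of particles in a window of width $\epsilon N$ adjacent to the \emph{left} endpoint, or equivalently the position of the leftmost hole. Started from the all-$1$'s configuration, this window stays essentially full until a hole created near the right boundary (or injected at the left) has had time to traverse a macroscopic portion of the segment; since the speed of a hole is at most $2p-1+o(1)$, this takes time at least $\tfrac{1}{2p-1}N(1-o(1))$, during which the law of the statistic is far in total variation from its value under $\mu$ (which by Lemma \ref{lem:stationaryDensity} has a genuinely positive density of holes near the left end). Making this precise requires a one-sided large-deviation / current estimate showing that the number of holes that can reach the left window by time $t<\tfrac{1}{2p-1}N(1-\epsilon)$ is $o(N)$ with probability $1-o(1)$, which is exactly the kind of current bound developed earlier in the paper.

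The \textbf{main obstacle} is the upper bound: controlling the speed at which discrepancies (second class particles) are removed in a non-reversible, spatially inhomogeneous environment whose stationary law is only known implicitly. Unlike the closed segment, one cannot appeal to a clean comparison with a product measure or to reversibility, so the argument must combine the censoring inequality with quantitative current estimates and, where the simple two-species ordering breaks down near the boundaries, the multi-species partial-order coupling advertised in the introduction. Getting these pieces to interlock so that the surviving constant $C_h$ depends only on $a,b,p$ (and not on $\epsilon$, so that genuine pre-cutoff holds) is the delicate part.
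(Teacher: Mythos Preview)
Your high-level decomposition matches the paper's: reduce to the high density phase by particle--hole symmetry, get the lower bound $\tfrac{1}{2p-1}$ from the ballistic speed of a single hole, and for the upper bound show that all second class particles in the disagreement process $(\xi_t)$ between $\mathbf{1}$ and $\mathbf{0}$ exit the left boundary in time $O(N)$.

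However, there is a genuine gap in your upper bound. You write that ``a discrepancy behaves like a second class particle which, in the high density environment, is transported to the left boundary at speed bounded below by a constant.'' The problem is that at time $0$ \emph{every} site of $\xi_t$ is a second class particle: there is no high-density environment of first class particles yet, and no obvious way to turn the heuristic ``second class particles drift left in a dense sea'' into a quantitative rate for this initial condition. Your union-bound-over-$N$-discrepancies sketch implicitly assumes each second class particle sees an approximately stationary background, which is precisely what has to be proved. Current estimates alone (Lemma~\ref{lem:current}) give you the stationary flux of \emph{first} class particles, but not directly the flux of second class particles in $(\xi_t)$.

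The paper's solution, which you are missing, is to introduce an \emph{auxiliary} two-species process $(\zeta_t)$: couple the stationary chain with parameters $(p,\alpha,\beta,\gamma,\delta)$ to the stationary chain with a slightly larger $\beta'>\beta$ (still in the high density phase). Because both marginals are stationary, the net current of second class particles in $(\zeta_t)$ is exactly the difference of two \emph{known} stationary currents and is therefore provably linear in $t$ (Lemma~\ref{lem:SecondClassCurrent}). The nontrivial step is then to transfer this to $(\xi_t)$: the paper views the pair $(\zeta_t,\xi_t)$ as a single multi-species process $(\chi_t)$ with five types of second class particles obeying a \emph{partial} priority order (Figure~7), projects it to a censored ASEP on $\mathbb{Z}$, and uses blocking-measure estimates (Lemma~\ref{lem:RightmostLeftmostStar}) to show that once $4N$ second class particles have exited in $(\zeta_t)$, none can remain in $(\xi_t)$. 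This is the ``multi-species partial-order coupling'' you allude to, but its specific role---converting a computable stationary current into control over the non-stationary disagreement process---is the idea you need to supply.
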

For a discussion of the remaining cases and sharp constants, we refer to Section \ref{openproblems}.

\subsubsection{The triple point of the simple exclusion process}

An interesting special case of the simple exclusion process with open boundaries is the \textbf{triple point} where $p> \frac{1}{2}$ and $a=b=1$ holds (which means $\alpha-\gamma = p-\frac{1}{2}$, $\beta -\delta = p-\frac{1}{2}$.) Intuitively, the low-density phase, the high density phase and maximal current phase coexist at the triple point, and it can be shown that the process gives rise to the KPZ equation \cite{CS:OpenASEPWeakly, P:KPZlimit}. We have the following bound on the mixing time.
\begin{theorem}\label{thm:MaxCurrentTriplePoint} Suppose that $p> \frac{1}{2}$ and $a=b=1$ holds, i.e.\ we are in the triple point. For all $\varepsilon \in (0,1)$, the $\varepsilon$-mixing time of the simple exclusion process with open boundaries satisfies 
\begin{equation}
t^N_{\textup{mix}}(\varepsilon) \leq C N^{3}
\end{equation}
 for some constant $C=C(\alpha,\beta,\gamma,\delta,p)$.
\end{theorem}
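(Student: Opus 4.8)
The plan is to obtain the $CN^3$ upper bound via the censoring inequality of Peres and Winkler, reducing the analysis at the triple point to a sequence of simpler one-sided dynamics whose mixing we can control by comparison with a symmetric-type chain. First I would recall that at the triple point $a=b=1$, so by Lemma \ref{lem:ComparsionStationaryMeasure} the stationary measure $\mu$ is sandwiched between two Bernoulli-$\tfrac12$ product measures, and the effective bulk bias is exactly cancelled by the boundary rates ($\alpha-\gamma=\beta-\delta=p-\tfrac12$); this is the heuristic reason one expects diffusive $N^3$ behavior rather than the ballistic $N$ behavior of the high/low density phases. The strategy is then: (i) couple the open ASEP at the triple point from its two extremal starting configurations $\mathbf{1}=(1,\dots,1)$ and $\mathbf{0}=(0,\dots,0)$, using the standard monotone (basic) coupling, so that it suffices to bound the coalescence time of these two copies, since the process is attractive and $\mu$ lies between the laws started from $\mathbf 0$ and $\mathbf 1$; (ii) bound the expected number of discrepancies between the two coupled copies, which behave like a system of second class particles, by a censored/slowed-down version that one can analyze.

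The key steps, in order, would be: (1) set up the basic coupling and the second-class-particle representation of discrepancies; at the triple point there are at most $N$ discrepancies and they perform nearest-neighbor moves in the bulk with the $p,q$ rates plus boundary absorption/creation at sites $1$ and $N$. (2) Apply censoring (Lemma/Theorem on censoring stated or used elsewhere in the paper — the Peres–Winkler inequality for monotone chains) to schedule the boundary updates and bulk updates so that the discrepancy process is dominated by one for which a Lyapunov / Wilson-type argument applies: specifically I would use a test function of the form $\Phi(\eta)=\sum_{x} \sin(\pi x /N)\,\xi(x)$ where $\xi$ denotes the discrepancy field, which is (up to lower order boundary terms) a supermartingale contracting at rate $\Theta(N^{-2})$ under the symmetric part of the dynamics. (3) Show that the drift terms coming from the asymmetry and the boundary exactly at the triple point do not destroy this contraction — this is where $a=b=1$ is used crucially, because the boundary reservoir densities match $\tfrac12$ and so the boundary does not inject a linear-in-$N$ current of discrepancies. (4) Combine Wilson's method / the $L^2$ or first-moment bound with a final coupling step: once $\E[\#\text{discrepancies}]$ is $o(1)$, a union bound gives coalescence, and an extra $O(N^2\log N)$ (absorbed into $CN^3$) handles the endgame. (5) Use the spectral-gap or relaxation-time comparison to upgrade an $L^2$-contraction estimate for $\Phi$ into the stated total-variation mixing bound, again losing only constants.

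The main obstacle I expect is Step (3)/(2): making the censoring scheme precise so that the censored chain is genuinely a monotone Markov chain to which Peres–Winkler applies, \emph{and} simultaneously keeping enough of the bulk symmetry to get the $N^{-2}$ contraction rate of the Dirichlet-type eigenfunction $\sin(\pi x/N)$. The subtlety is that the boundary creation/annihilation at the triple point, although "balanced" in density, still acts as a sink for discrepancies only at rate $O(1)$, not $O(N)$, so the discrepancy count does not decay ballistically; one must show it nevertheless decays on the diffusive timescale $N^2$ per unit of $\log N$, and that no hidden $N$-factor sneaks in. A secondary difficulty is controlling the non-reversibility: the open ASEP is not reversible, so Wilson's eigenvalue method must be applied to the second-class-particle (discrepancy) chain rather than to the original chain, and one must verify the requisite variance lower bounds for $\Phi$ along the trajectory. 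I would expect the cleanest route to be: prove $\condexp{\Phi(\eta_{t+1})}{\eta_t} \le (1-c/N^2)\,\Phi(\eta_t) + (\text{boundary error } O(1))$, iterate for $t$ of order $N^3$ to kill $\Phi$ down to $o(1)$, then convert to a bound on the discrepancy count via $\xi(x)\le C\Phi(\eta)/\sin(\pi x/N)$ summed over $x$, and finish by the monotone coupling.
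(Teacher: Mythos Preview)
Your outline has a genuine structural gap, and it also misses the key special feature of the triple point that the paper exploits.

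First, the censoring step does not get off the ground here. The Peres--Winkler censoring inequality for this model (Lemma~\ref{lem:Censoring}) is stated and proved only under one-sided boundary conditions, i.e.\ when $\max(\alpha,\gamma)=0$ or $\max(\beta,\delta)=0$, because it relies on the height-function partial order $\succeq_{\h}$. At the triple point one has $\alpha-\gamma=\beta-\delta=p-\tfrac12>0$, so both boundaries are active and neither version of $\succeq_{\h}$ is preserved. There is no censoring inequality available for the two-sided open ASEP in this paper, and producing one would itself be a nontrivial result.

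Second, your contraction claim for $\Phi(\eta)=\sum_x\sin(\pi x/N)\xi(x)$ does not hold as written. Second class particles in ASEP do not perform independent nearest-neighbour random walks: a second class particle at $x$ moves right at rate $p(1-\eta(x{+}1))+(1-p)\eta(x{+}1)$ and left at rate $(1-p)(1-\eta(x{-}1))+p\eta(x{-}1)$, so its drift depends on the surrounding first-class configuration. The cancellation you invoke (``at density $\tfrac12$ the asymmetric drift vanishes'') is an \emph{averaged} statement valid only in stationarity; starting from $\mathbf 0$ or $\mathbf 1$ the environment is far from Bernoulli-$\tfrac12$ for times of order $N$ or more, and the generator applied to $\Phi$ produces a genuine drift term of size $\Theta(1)$ per discrepancy, not a $\Theta(N^{-2})$ contraction. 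Your step (3) is exactly the whole difficulty, and nothing in the proposal indicates how to close it.

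The paper's proof is completely different and much shorter. The crucial observation you do not use is that at the triple point $a=b=1$ one has $\tfrac{1}{1+a}=\tfrac{b}{1+b}=\tfrac12$, so by Lemma~\ref{lem:stationaryDistribution} the stationary measure $\mu$ is \emph{exactly} uniform on $\Omega_N$, not merely sandwiched between two Bernoulli-$\tfrac12$ measures. Because $\mu$ is uniform, the adjoint $\mathcal L^\star$ is explicitly the generator of the open ASEP with parameters $(1-p,\gamma,\delta,\alpha,\beta)$, and the additive symmetrization $\tfrac12(\mathcal L+\mathcal L^\star)$ is the generator of a \emph{symmetric} simple exclusion process with open boundaries and parameters $p'=\tfrac12$, $\alpha'=\gamma'=\tfrac{\alpha+\gamma}{2}$, $\beta'=\delta'=\tfrac{\beta+\delta}{2}$. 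A standard $L^2$ bound (Lemma~\ref{lem:DiaconisSeries}, a Nash-type inequality from \cite{F:EVBoundsSEP}) gives
\[
\TV{\P_\xi(\eta_t\in\cdot)-\mu}\le 2^{N/2+1}\exp(-\lambda t),
\]
where $\lambda$ is the spectral gap of the symmetrized chain. Since that chain is a symmetric open SEP, Remark~\ref{rem:relaxSymmetric} gives $\lambda^{-1}\le CN^2$, and one reads off $t^N_{\textup{mix}}(\varepsilon)\le CN^3$ immediately. No coupling, no censoring, and no second-class-particle analysis are needed.
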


\subsection{Open problems}\label{openproblems}

We saw in Theorems \ref{thm:symmetricTwoSided} and \ref{thm:symmetricOneSide} that the symmetric simple exclusion process has pre-cutoff for all non-trivial choices of boundary parameters.
\begin{conjecture} Let $p=\frac{1}{2}$ and $\alpha,\beta,\gamma,\delta\geq 0$ with $\max(\alpha,\gamma)>0$ and $\max(\beta,\delta)>0$. Then the lower bound in \eqref{eq:SymmetricPrecutoff} is sharp, and cutoff occurs.
\end{conjecture}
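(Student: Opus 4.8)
The lower bound in \eqref{eq:SymmetricPrecutoff} is Theorem~\ref{thm:symmetricTwoSided}, so the task is the matching upper bound $t^N_{\textup{mix}}(\varepsilon)\le\left(\frac{1}{\pi^{2}}+o(1)\right)N^{2}\log N$ for each fixed $\varepsilon\in(0,1)$; together with the lower bound this is exactly cutoff. The plan is to push the spectral and censoring arguments of \cite{L:CutoffSEP}, already adapted to the one-sided open chain in Theorem~\ref{thm:symmetricOneSide}, through the genuinely two-sided case. The organizing heuristic is that for $p=\frac{1}{2}$ the generator $\mathcal{L}$ acts on linear observables $\eta\mapsto\sum_{x}f(x)\eta(x)$ as a lower-order perturbation of the single-particle random walk on $[N]$ that is reflected in the bulk and killed at rates $\alpha+\gamma$ and $\beta+\delta$ at the two end sites; on observables of degree $d$ the slowest decay rate is $(1+o(1))\,d\,\lambda_1^N$, where $\lambda_1^N$ is the principal eigenvalue of that killed walk.

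The first step is to pin down the constant. Since $\max(\alpha,\gamma)>0$ and $\max(\beta,\delta)>0$, an order-one killing acts at each endpoint, which in the diffusive rescaling $x=Ny$ becomes a Dirichlet condition at both ends of $[0,1]$; the least Dirichlet eigenvalue being $\pi^{2}$, one obtains $\lambda_1^N=(1+o(1))\frac{\pi^{2}}{2N^{2}}$. (In the one-sided case of Theorem~\ref{thm:symmetricOneSide} one endpoint is reflecting, the relevant eigenvalue is the Neumann--Dirichlet value $\pi^{2}/4$, and one recovers the constant $4/\pi^{2}$.) An $L^{2}$ estimate decays like $e^{-2\lambda_1^N t}$ and so becomes small once $t\ge(1+o(1))\frac{\log N}{2\lambda_1^N}=(1+o(1))\frac{1}{\pi^{2}}N^{2}\log N$. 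The factor $2$ over a crude first-moment or coupling bound, which decays only like $e^{-\lambda_1^N t}$ and yields the constant $2/\pi^{2}$, is precisely the gap between the lower bound and the constant $C$ of Theorem~\ref{thm:symmetricTwoSided}: a genuine $L^{2}$, or equally sharp coupling-to-equilibrium, argument is unavoidable.

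The second step is to reduce the worst-case start to a good one. The bulk moves and the boundary creation and annihilation are monotone for the coordinatewise order on $\{0,1\}^{N}$, so it suffices to control the chains from $\mathbf{1}$ and $\mathbf{0}$, and by the censoring inequality one may censor updates along a convenient block schedule. Following \cite{L:CutoffSEP}, one shows that after a time $s_N=o(N^{2}\log N)$ the censored chain from $\mathbf{1}$ is stochastically below, and the one from $\mathbf{0}$ above, a random configuration that already carries the stationary macroscopic density profile, hence whose $L^{2}(\mu)$-distance to $\mu$ is only polynomial in $N$ rather than exponential, as it is for $\mathbf{1}$ itself. Inserting this into the $L^{2}$ estimate of the first step, summing the geometrically faster degree-$d\ge2$ contributions, and bounding total variation by the square root of the $\chi^{2}$-divergence then closes the upper bound. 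This requires quantitative control of $\mu$ at $p=\frac{1}{2}$: a density profile interpolating linearly between $\frac{\alpha}{\alpha+\gamma}$ and $\frac{\delta}{\beta+\delta}$, with two-point correlations of order $N^{-1}$, in the spirit of Lemma~\ref{lem:ComparsionStationaryMeasure}, so that the smallest mass of $\mu$ is not too small.

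I expect the main obstacle to be that, unlike the one-sided chain of Theorem~\ref{thm:symmetricOneSide}, which is reversible with a product Bernoulli stationary law, the two-sided symmetric chain with unequal boundary densities carries a nonzero current, is not reversible, and has $\mu$ available only through a matrix-product description. A clean reversible $L^{2}$ spectral decomposition is therefore unavailable, and one must instead work with the additive symmetrization $\frac{1}{2}(\mathcal{L}+\mathcal{L}^{\ast})$, show that its spectral gap is still $(1+o(1))\frac{\pi^{2}}{2N^{2}}$, and control the Fourier-type coefficients of a post-$s_N$ configuration against the non-reversible evolution, which is exactly where sharp information about $\mu$ enters. A secondary point is that in the degenerate regimes $\gamma=\beta=0$, or $\alpha=\delta=0$, where $\mu$ is a point mass at $\mathbf{1}$, respectively $\mathbf{0}$, the total-variation distance is just the survival probability of the holes, respectively particles, in the basic coupling; a first- and second-moment computation for that number, using that the two-particle killed walk decays at rate $(1+o(1))\frac{\pi^{2}}{N^{2}}=2\lambda_1^N$, shows the threshold sits where the expected number of survivors is of order one, giving a mixing time of order $\frac{2}{\pi^{2}}N^{2}\log N$ there, so the statement should presumably be read for parameters keeping $\mu$ non-degenerate.
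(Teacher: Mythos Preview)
The statement is a \emph{conjecture}, listed in the paper under ``Open problems''; the paper does not prove it, so there is no proof to compare against.

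Your proposal is not a proof but a strategy sketch, and you flag this yourself (``I expect the main obstacle to be\dots'', ``should presumably be read\dots''). As a sketch it is reasonable: the identification of the constant $1/\pi^{2}$ via the two-sided Dirichlet eigenvalue is correct, and the observation that one needs an $L^{2}$-type estimate (decay at rate $2\lambda_{1}^{N}$) rather than a first-moment or basic-coupling bound (decay at rate $\lambda_{1}^{N}$) to reach $1/\pi^{2}$ rather than $2/\pi^{2}$ is exactly the right diagnosis. But the central difficulty you name --- non-reversibility when the two boundary densities differ, with $\mu$ only available through a matrix-product representation --- is precisely why this remains open. Lacoin's machinery in \cite{L:CutoffSEP} leans on reversibility and the explicit product form of $\mu$ at several points (the FKG and Holley inequalities of Lemma~\ref{lem:Correlations}, the Brownian scaling limit of Lemma~\ref{lem:ScalingLimit}), and you have not indicated how any of these would be replaced. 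Passing to the additive symmetrization $\tfrac{1}{2}(\mathcal{L}+\mathcal{L}^{\ast})$ is natural but does not by itself control the non-reversible evolution in total variation; this step would require genuinely new input.

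Your final remark on the degenerate cases is substantive and worth recording. When $\gamma=\beta=0$ with $\alpha,\delta>0$ (or symmetrically $\alpha=\delta=0$), the stationary law is the Dirac mass at $\mathbf{1}$, the total-variation distance from $\mathbf{0}$ is the probability that at least one hole survives, and your first/second moment computation gives cutoff at $\tfrac{2}{\pi^{2}}N^{2}\log N$, not $\tfrac{1}{\pi^{2}}N^{2}\log N$. These parameter choices satisfy $\max(\alpha,\gamma)>0$ and $\max(\beta,\delta)>0$, so as literally stated the conjecture appears to fail there; one should presumably read it as restricted to parameters for which $\mu$ is non-degenerate.
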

Intuitively, we treat the simple exclusion process with two open boundaries as a symmetric simple exclusion process on the circle of length $2N$ and $N$ particles, see Section \ref{sec:LowerBounds}. In the high density and low density phase, we have the  following conjecture.
\begin{conjecture}\label{conj:HighDens} Under the assumptions of Theorem \ref{thm:asymmetricTwoSides}, the mixing time in the high-density phase satisfies for all $\varepsilon \in (0,1)$
\begin{equation}\label{eq:ConjectureConstant}
\lim_{N \rightarrow \infty} \frac{t^N_{\textup{mix}}(\varepsilon)}{N} =  \frac{(b+1)(b-1)(\hat{a}+1)^2}{(b-\hat{a})(b\hat{a}-1)(2p-1)} 
\end{equation} where $\hat{a}:=\max(a,1)$. A similar statement holds for the low density phase.
\end{conjecture}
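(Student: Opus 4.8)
\medskip
\noindent\textbf{Proof strategy for Conjecture~\ref{conj:HighDens}.} The plan is to push the method behind Theorem~\ref{thm:asymmetricOneSide} and the upper bound of Theorem~\ref{thm:asymmetricTwoSides} to sharp first order. Since the process is monotone under the basic coupling, the all-empty configuration $\eta\equiv 0$ and the all-full configuration $\eta\equiv 1$ stochastically dominate (respectively are dominated by) every starting state, so it suffices to estimate the time for the two extremal chains, run under the basic coupling with a stationary copy, to coalesce; because \eqref{eq:ConjectureConstant} is strongly asymmetric in $a$ versus $b$ and features $\hat a=\max(a,1)$, one expects the empty configuration to be the slower one and to dictate the constant. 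The heuristic that must be turned into a theorem is the following transient hydrodynamics in the scaling $x=\lfloor uN\rfloor$, $t=\tau N$, for $\partial_\tau\rho+(2p-1)\,\partial_u[\rho(1-\rho)]=0$ on $u\in(0,1)$ with reservoir data: starting from $\rho\equiv 0$, the left reservoir injects at its \emph{effective} density $\min\!\big(\tfrac1{1+a},\tfrac12\big)=\tfrac1{1+\hat a}$ (the clipping at $\tfrac12$ is the standard maximal-current saturation of an injecting boundary, which is exactly why $\hat a$ and not $a$ appears), opening a rarefaction fan whose empty leading edge moves at speed $2p-1$ and reaches $u=1$ at $\tau=\tfrac1{2p-1}$; after that the right reservoir, at density $\tfrac b{1+b}>\tfrac12$, nucleates a shock that travels \emph{back} to the left through the fan, and finally — when $\hat a>1$ — continues as a shock between the plateau density $\tfrac1{1+\hat a}$ and $\tfrac b{1+b}$ at the constant speed $-\tfrac{(2p-1)(b-\hat a)}{(1+b)(1+\hat a)}$ until it reaches $u=0$, at which moment the profile equals the flat stationary density. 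Summing the three legs — fan development, shock sweeping the fan, shock crossing the left plateau — should reproduce \eqref{eq:ConjectureConstant}; in particular the factor $(b-\hat a)^{-1}$ is the time of the last leg, and its divergence as $\hat a\uparrow b$ matches the crossover to $N^2$-behaviour on the coexistence line $a=b$, where the shock becomes diffusive.

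For the \textbf{upper bound} I would, as in the proof of Theorem~\ref{thm:asymmetricOneSide}, apply censoring (Peres--Winkler) to restrict the dynamics to a tractable sequence of sweeps, and then show that under the basic coupling the empty-start and full-start chains agree outside a window whose endpoints are pinned, up to $o(N)$, to the deterministic fan edge and shock curve described above. The disagreements between the two chains behave like second-class particles trapped in this shrinking window; bounding their number and the window endpoints' fluctuations to $o(N)$ is where current estimates (the toolkit already used for Theorem~\ref{thm:asymmetricTwoSides}) and the multi-species, partially-ordered coupling introduced in this paper enter, since one must track a hierarchy of fronts — empty leading edge, fan interior, back-travelling shock, plateau shock — that do not obey a single total order. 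Concluding that coalescence has occurred by time $(c+\varepsilon)N$, with $c$ the right-hand side of \eqref{eq:ConjectureConstant}, yields the matching upper bound for every $\varepsilon\in(0,1)$.

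For the \textbf{lower bound} I would start from $\eta\equiv 0$ and choose as distinguishing statistic the number of particles in a macroscopic window near the \emph{last} region to equilibrate — the left boundary region, as signalled by the $(b-\hat a)^{-1}$ term — show via the same hydrodynamics together with an $o(N)$-concentration estimate (a local limit / current-fluctuation bound suffices) that at time $(c-\varepsilon)N$ this count lies macroscopically below its stationary value with probability tending to $1$, and compare against $\mu$ using Lemma~\ref{lem:ComparsionStationaryMeasure} to see that $\mu$ puts density $\tfrac b{1+b}$ on that window. The total-variation distance is then close to $1$, giving $t^N_{\textup{mix}}(\varepsilon)\ge(c-\varepsilon)N$ for all $\varepsilon\in(0,1)$.

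The \textbf{main obstacle}, and the reason this is only a conjecture, is the exact transient hydrodynamics with two open reservoirs: rigorously resolving the interaction of the rarefaction fan with the right reservoir, the nucleation and the precise trajectory of the back-travelling shock over a non-constant background, and the $O(1)$ (possibly $O(\log N)$) boundary layers, and then \emph{upgrading} this first-order picture to the fluctuation control that sharp mixing demands — all in the absence of reversibility and of a product stationary measure. The degenerate limit $\hat a\uparrow b$ (vanishing shock speed, diffusive regime) is the most delicate point and would likely require a separate, finer analysis interpolating to the $N^2$-scale expected on the coexistence line.
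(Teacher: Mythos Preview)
The statement is a \emph{conjecture}; the paper does not prove it. After stating it the paper offers only a short heuristic: starting from the empty configuration, the hydrodynamic profile (in the spirit of Labb\'e--Lacoin) first reaches a linear interpolation between the two reservoir densities, after which a shock from the right boundary sweeps back to site~$1$, and the constant is to be read off from the shock speed via the currents at the two endpoints.

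Your submission is, appropriately, a proof \emph{strategy} rather than a proof, and you say so. Your hydrodynamic picture (three legs: fan development, shock through the fan, shock across the left plateau) is more articulated than the paper's two-step sketch, and you add a correct explanation of why $\hat a=\max(a,1)$ rather than $a$ appears and why the constant diverges as $\hat a\uparrow b$. The upper- and lower-bound mechanisms you outline --- censoring plus the multi-species coupling for the upper bound, a macroscopic distinguishing statistic near the left edge for the lower --- are the natural sharpening of what the paper already does for Theorems~\ref{thm:asymmetricOneSide} and~\ref{thm:asymmetricTwoSides}, and the obstacles you flag (rigorous two-reservoir transient hydrodynamics, fan--shock interaction, boundary layers, the diffusive crossover near the coexistence line) are exactly the ones the paper leaves open. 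One caution: you write that the three legs ``should reproduce'' \eqref{eq:ConjectureConstant} without carrying out the integration, and the paper's heuristic parses the timeline somewhat differently; it would be worth checking that your decomposition actually sums to the stated constant before committing to this particular breakdown.
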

Let us give some heuristics on this conjecture for the high density phase. Suppose we start from the empty initial configuration, and wait until we see the equilibrium density of $\frac{b}{b+1}$ within the segment, see Lemma \ref{lem:current}. Similar to the hydrodynamic limits in \cite{LL:CutoffASEP}, we expect at time $(b+1)(b-a)^{-1}(2p-1)^{-1}n$ to see a density which is $\frac{1}{a+1}$ at $1$, $\frac{1}{b+1}$ at $n$ and linearly interpolated in between. After this time, the right boundary creates a shock wave traveling to site $1$ which supports the conjecture of cutoff. The total travel time of this shock can be computed by comparing the current at both endpoints. Note that in the maximum current phase, no such shock is created, and the particles can travel at the maximal possible speed of $\frac{1}{4}(2p-1)$ (justifying the name maximal current phase). The mixing time is governed by second class particle fluctuations, see Remark \ref{rem:IntuitionMax}, and we conjecture the following behavior.
\begin{conjecture}\label{conj:MaxCurrent} When $\max(a,b)\leq 1$ holds (including the triple point), we have that the $\varepsilon$-mixing time of the simple exclusion process with open boundaries is of order $N^{3/2}$ for all $\varepsilon \in (0,1)$. Moreover, the cutoff phenomenon does not occur.
\end{conjecture}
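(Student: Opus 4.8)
The plan is to prove $t^N_{\textup{mix}}(\varepsilon)\asymp N^{3/2}$ together with a non-degenerate scaling profile for the worst-case distance $d_N(t)$ in \eqref{def:MixingTime}, by reducing both the upper and the lower bound to the lifetime of the disagreements between two coupled copies of the process, and invoking the $t^{2/3}$-fluctuations of second class particles in a density-$\tfrac12$ environment.

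\textbf{Reduction via a two-species process.} Since $p\geq\tfrac12$ the process is attractive, so I would run the monotone (basic) coupling of the chain $(\eta^{\mathbf 0}_t)$ started from the all-empty configuration and the chain $(\eta^{\mathbf 1}_t)$ started from the all-full configuration; then $\eta^{\mathbf 0}_t\leq\eta^{\mathbf 1}_t$ for all $t$, and one has $d_N(t)\leq\P(\eta^{\mathbf 0}_t\neq\eta^{\mathbf 1}_t)$ as well as, by the triangle inequality through $\mu$, $d_N(t)\geq\tfrac12\TV{\P_{\mathbf 0}(\eta_t\in\cdot)-\P_{\mathbf 1}(\eta_t\in\cdot)}$. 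In this coupling $\zeta_t:=\eta^{\mathbf 0}_t+\eta^{\mathbf 1}_t\in\{0,1,2\}^N$ is again a simple exclusion process with open boundaries, now with two species, in which the disagreements (the sites with $\zeta_t=1$) are \emph{mutually transparent} second class particles: in the bulk they are only transported, never created or destroyed, and they are removed only at the reservoirs, at rate $\alpha+\gamma$ on the left and $\beta+\delta$ on the right. This is exactly the partially ordered multi-species picture highlighted in the introduction, and both directions become statements about how long the $N$ disagreements present at time $0$ survive.

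\textbf{Upper bound.} I would run the coupling in two stages. First, current and hydrodynamic estimates in the spirit of Lemma \ref{lem:current} and of the arguments behind Theorems \ref{thm:asymmetricOneSide}--\ref{thm:asymmetricTwoSides} show that after a time of order $N$ the densities of $\eta^{\mathbf 0}_t$ and $\eta^{\mathbf 1}_t$ are close to the flat profile $\tfrac12$ that characterises the maximal current phase, the macroscopic deviation being of order $N/t$ and hence dropping to the scale $N^{-1/2}$ of the equilibrium fluctuations once $t$ is of order $N^{3/2}$. Second, starting from such a near-equilibrium state, one uses that a second class particle in a density-$\tfrac12$ environment has displacement of order $t^{2/3}$ up to time $t$; thus after time $CN^{3/2}$ every disagreement has, with high probability, wandered past one of the two reservoirs and been absorbed. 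Iterating over successive time windows of length of order $N^{3/2}$ and controlling all disagreements at once should give $\P(\zeta_{CN^{3/2}}\text{ still has a disagreement})\to 0$ as first $N\to\infty$ and then $C\to\infty$, whence $t^N_{\textup{mix}}(\varepsilon)\leq C(\varepsilon)N^{3/2}$. The main obstacle here is to carry out this absorption argument for all $N$ disagreements simultaneously in a genuinely non-stationary open environment \emph{without losing a logarithmic factor} (which would only yield $N^{3/2}\log N$); this seems to need either sharp large-deviation control of the $t^{2/3}$-fluctuations in the open geometry, or a censoring scheme that first thermalises the bulk on scale $N^{3/2}$ and then clears the boundary layers, in the spirit of \cite{L:CutoffSEP}.

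\textbf{Lower bound and absence of cutoff.} For the lower bound one must exhibit a statistic of $\eta_t$ whose law at time $cN^{3/2}$ stays, relative to its equilibrium fluctuations, far from its law under $\mu$. Linear statistics $\sum_x\psi(x/N)\eta_t(x)$ only detect the $O(N^{-1/2})$ hydrodynamic deviation, which is swamped by the (long-range) equilibrium fluctuations of $\mu$ in this phase, so the statistic must instead capture the KPZ-scale ($N^{1/3}$) shape of the height function, which relaxes on precisely the $N^{3/2}$ timescale because the underlying second class particle fluctuations do. An alternative is Wilson's method applied to the leading eigenfunction of the generator, exploiting that the spectral gap is of order $N^{-3/2}$ in this phase (as predicted by Bethe-ansatz computations): combined with the upper bound $t^N_{\textup{mix}}(1-\varepsilon)\leq CN^{3/2}$, a lower bound of the form $t^N_{\textup{mix}}(\varepsilon)\geq c\log(1/\varepsilon)\,N^{3/2}$ would already force $t^N_{\textup{mix}}(\varepsilon)/t^N_{\textup{mix}}(1-\varepsilon)\not\to 1$ for $\varepsilon$ small, i.e.\ the absence of cutoff. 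To obtain the full statement — a genuine scaling profile $d_N(\lambda N^{3/2})\to g(\lambda)$ with $g$ continuous, strictly decreasing and valued in $(0,1)$ — one would identify the scaling limit of the process at the triple point (and, more generally, in the maximal current phase) as the KPZ fixed point on $[0,1]$ with Neumann-type boundary conditions, consistent with the KPZ-equation limit recalled above \cite{CS:OpenASEPWeakly, P:KPZlimit}, and read off the mixing profile of this limiting object from its Brownian-type stationary law. Making this last step rigorous — establishing that scaling limit and the quantitative mixing of the limit — is the principal obstacle, and the reason the statement is presently only a conjecture; the two-sided bounds $cN^{3/2}\leq t^N_{\textup{mix}}(\varepsilon)\leq CN^{3/2}$ should already be within reach of the second class particle and current estimates above.
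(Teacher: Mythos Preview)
The statement you are attempting to prove is a \emph{conjecture}; the paper does not contain a proof of it. The only supporting material in the paper is the heuristic in Remark~\ref{rem:IntuitionMax}, which suggests exactly the mechanism you describe: analyse the disagreement process between the all-full and all-empty initial states, and argue that the second class particles leave the segment in time of order $N^{3/2}$ by comparison with the $t^{2/3}$-fluctuations of a second class particle in a Bernoulli-$\tfrac12$ environment on $\Z$ \cite{BS:OrderCurrent}, together with the KPZ relaxation exponent established for related models \cite{BL:TASEPring,CD:ASEPline} and the KPZ-equation limit at the triple point \cite{CS:OpenASEPWeakly,P:KPZlimit}.

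Your proposal is therefore not a proof but a sharpened restatement of the paper's own heuristic, and you correctly identify the two genuine obstacles that keep this from being a theorem: (i) controlling the absorption of \emph{all} $N$ disagreements simultaneously in the non-stationary open geometry without picking up a logarithmic factor, and (ii) making rigorous the scaling limit (KPZ fixed point on an interval with boundary) needed to extract a nondegenerate mixing profile and hence the absence of cutoff. Neither of these is currently available in the literature, which is precisely why the paper leaves the statement as a conjecture. Your spectral-gap route to the lower bound would require a rigorous $N^{-3/2}$ gap estimate in the maximal current phase, which is likewise not established (the Bethe-ansatz predictions you allude to are non-rigorous). In short: your strategy matches the paper's intuition, but there is no proof here to compare against, and the gaps you flag are real.
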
 When $p=1$ and $\gamma=\delta
=0$, the mixing time in the maximal current phase was recently determined in \cite{S:MixingTASEP} up to a logarithmic factor. For $a=b>1$ and $p >\frac{1}{2}$, called \textbf{coexistence line}, we see that the right-hand side of \eqref{eq:ConjectureConstant} in Conjecture \ref{conj:HighDens} blows up.
\begin{question} What is the order of the $\varepsilon$-mixing time of the simple exclusion process with open boundaries in the coexistence line, and does the cutoff phenomenon occur?
\end{question}

\subsection{Related work}\label{relatedsection}

The simple exclusion process can be seen from various different perspectives. Historically, the simple exclusion process is motivated in physics and biology as a model for lattice gases, but it can also be used to describe traffic flow or kinetics of protein synthesis \cite{H:TrafficFlows,MGP:Kinetics}. 
In a mathematical context, it was introduced by Spitzer \cite{S:InteractionMP}. Depending on the parameters of the simple exclusion process with open boundaries, it is found under different names, such as (totally/partially) asymmetric simple exclusion process or boundary driven simple exclusion process. \\

In this paper, we focus on investigating the speed of convergence to the stationary distribution. This is done by analyzing the total-variation mixing time, see \cite{LPW:markov-mixing} for a comprehensive introduction. 
In the case of the symmetric simple exclusion process (SSEP), i.e.\ when $p=\frac{1}{2}$ holds, the first order of the mixing time was determined using spectral techniques for the lower bound in \cite{W:MixingLoz} and a clever combination of various properties of the SSEP for the upper bound in \cite{L:CutoffSEP}. 
For the asymmetric simple exclusion process (ASEP), Benjamini et al.\ showed in \cite{BBHM:MixingBias} that the mixing time is linear in the size of the segment using the simple exclusion process on the integers and second class particle arguments, see below.
We will see that second class particle arguments play a crucial role in our analysis of the simple exclusion process with open boundaries in Sections \ref{sec:UpperBounds} to \ref{sec:HighLowDensity}. Recently, the cutoff phenomenon was established for the ASEP in \cite{LL:CutoffASEP}. More generally, mixing times for the simple exclusion process were investigated in size-dependent or random environments \cite{LL:CutoffWeakly,LP:MixingSmallBias,S:MixingBallistic} as well as on general graphs \cite{HP:EPmixing,O:MixingGeneral}. 
All these investigations have in common that the underlying simple exclusion process is reversible. Many techniques for precise bounds on the mixing time require reversibility, and can in general not be applied for the simple exclusion process with open boundaries. To our best knowledge,  mixing times for a non-reversible simple exclusion process were so far only investigated for the totally asymmetric simple exclusion process on the circle \cite{F:EVBoundsSEP}. \\

The simple exclusion process with open boundaries and a non-reversible stationary distribution is one of the  simplest examples of a non-equilibrium system. This observation is quantified by studying currents for the simple exclusion process with open boundaries, see Section \ref{sec:Current}. For the symmetric simple exclusion process, currents were investigated in \cite{LMO:StationaryBDEP}. For the simple exclusion process with general parameters, the first order of the current was determined in \cite{USW:PASEPcurrent} using Askey-Wilson polynomials, extending the results of \cite{BECE:ExactSolutionsPASEP}. Current fluctuations for the asymmetric simple exclusion process with open boundaries are investigated in \cite{GLMV:CurrentStatisticsPASEP,L:MatrixAnsatz}  
while related spectral properties are discussed in \cite{GE:BetheAnsatzPASEP,GE:ExactSpectralGap,GE:RelaxationModePASEP} among others. Furthermore, there are deep connections to the KPZ universality class, see for example  \cite{CD:ASEPline,CS:OpenASEPWeakly, P:KPZlimit}.
\\

Note that the simple exclusion process naturally extends to a Feller process on the integers. It is a classical result that the Bernoulli product measures are invariant in this case, see \cite{L:Book2}. For the asymmetric simple exclusion process on the integers, the moments of the current are closely linked to the  motion of second class particles \cite{BS:ExactConnections}. 
In particular, the fluctuations of the current at time $t\geq 0$ are given by the mean of the displacement of a single second class particle started from the origin within the Bernoulli product measure. 
Depending on the parameter of the product measure, we see either a diffusive or a super-diffusive behavior, see \cite{BS:OrderCurrent,FF:CurrentFluctuations,PS:CurrentFluctuations}. We note that currents are also studied for the simple exclusion process with open boundaries containing second class particles, see \cite{ CMRV:Integrability,U:TwoSpeciesPASEP}. 
Furthermore, second class particles can be used to identify shocks  \cite{F:ShockFluctuations,FF:ShockFluctuations,FKS:MicroscopicStructure}. More precisely, for an initial distribution with a shock, i.e.\ for two product measures with different parameters, we place a second class particle at the transition point. Under certain assumptions on the parameters of the product measures, one can show that the second class particle will stay close to the shock location for all times. In this paper, we will see a similar shock behavior for the asymmetric simple exclusion process with one blocked entry, see  Section \ref{sec:ShockwaveLemma}. \\

Another natural quantity to study is the invariant measure of the simple exclusion process with open boundaries, see \cite[Section III.3]{L:Book2}. Various beautiful representations were achieved in statistical mechanics and combinatorics. A key tool is the matrix product ansatz, which is in an implicit form already given in \cite{L:ErgodicI} and was successfully applied for the simple exclusion process with open boundaries in \cite{DEHP:ASEPCombinatorics} when particles can move only in one direction. 
Informally speaking, we assign in the matrix product ansatz to every configuration a weight which consists of a product of matrices and vectors. The matrices and vectors must satisfy certain relations, usually called the DEHP algebra, see \cite{DEHP:ASEPCombinatorics}. The matrix product ansatz allows us to study the mean current, the density profile and correlations within the stationary distribution, see \cite{S:DensityProfilePASEP, USW:PASEPcurrent,UW:Correlations}.
Representing the weights in the matrix product ansatz is a question in combinatorics which gained lots of recent attention. It led to beautiful descriptions such as (weighted) Catalan paths and staircase tableaux, see \cite{BCEPR:CombinatoricsPASEP,CW:TableauxCombinatorics, M:TASEPCombinatorics}. 
Building on the works of Sasomoto  \cite{S:PASEPpolynomials} and Uchiyama et al.\ in \cite{USW:PASEPcurrent}, the representations are closely related to Askey-Wilson polynomials. Similar representations were achieved for the simple exclusion process with second class particles using Koornwinder polynomials, see \cite{C:Koornwinder,CMW:TwoSpecies}. Recently, combinatorial representations were established for the multi-species simple exclusion process, i.e.\ for more than two different kinds of particles, see  \cite{CGGW:KoornwinderMulti,FRV:MultispeciesMatrixProduct,M:CombinatoricsMultispecies}. 

\subsection{Outline of the paper}

This paper is organized as follows. In Section \ref{sec:PreliminariesBDEP}, we state preliminaries on the simple exclusion process from different perspectives. In Sections \ref{sec:LowerBounds} and \ref{sec:UpperBounds}, we study mixing times of the symmetric simple exclusion process with open boundaries. Lower bounds will be achieved by using a continuous-time version of a
generalization of Wilson's lemma which was introduced in \cite{NN:CutoffCyclic}. A general upper bound will follow from a comparison to independent simple random walks. This bound is refined in the special case of one open boundary following closely the ideas of Lacoin in \cite{L:CutoffSEP}. 
The analysis of mixing times for the asymmetric simple exclusion process is carried out in Sections \ref{sec:ProofAsymmetricHomogenousOneSide} to \ref{sec:HighLowDensity}. In Section \ref{sec:ProofAsymmetricHomogenousOneSide}, we use second class particle and current arguments to investigate mixing times for the ASEP with one blocked entry. The reverse bias phase is considered in Section \ref{sec:ReverseBias} requiring second class particle estimates and a comparison with the simple exclusion process on the integers. Section \ref{sec:HighLowDensity} is dedicated to the study of the simple exclusion process within the low density and the high density phase using multi-species exclusion processes, stochastic orderings and the censoring inequality. The triple point for the simple exclusion process is treated in Section \ref{sec:MaxCurrent} using a symmetrization argument.

\section{Preliminaries on the simple exclusion process}\label{sec:PreliminariesBDEP}

In this section, we collect basic properties and techniques for the simple exclusion process which will be used at multiple points during the proofs. This includes couplings, second class particles, the simple exclusion process on the integers, currents, invariant measures and the censoring inequality. Motivations and applications of these techniques come from probability theory, statistical mechanics and combinatorics. For convenience, we give a brief background to the different techniques and point out where we require generalizations of the quoted results.

\subsection{The canonical coupling}\label{sec:canonicalCouplingOneSide}

A main tool in our arguments is a \textbf{grand coupling} for the simple exclusion process with open boundaries, i.e.\ a joint realization of the simple exclusion process for all initial configurations simultaneously. Couplings are a well-known technique in order to bound mixing times, see \cite[Chapter 5]{LPW:markov-mixing}. In the following, we consider a specific coupling, the \textbf{canonical coupling} of the simple exclusion process with open boundaries, sometimes called basic or standard coupling. Similar couplings are constructed in  \cite{BBHM:MixingBias} and \cite{S:MixingBallistic} for the simple exclusion process on the closed segment. For the simple exclusion process with open boundaries, the canonical coupling is given as follows: \\

We place rate $1$ Poisson clocks on all edges $e\in E$. Whenever the clock of an edge $e=\{ x,x+1\}$ rings, we sample a Uniform-$[0, 1]$-random variable $U$ independently of all previous samples and distinguish two cases.
\begin{itemize}
\item If $U\leq p$ and $\eta(x)=1-\eta(x+1)=1$ holds, we move the particle at site $x$ to site $x+1$ in configuration $\eta$. 
\item If $U> p$ and $\eta(x)=1-\eta(x+1)=0$ holds, we move the particle at site $x+1$ to site $x$  in configuration $\eta$.
\end{itemize} 
In addition, we place a rate $\alpha$ Poisson clock (a rate $\gamma$ Poisson clock) on the vertex $1$. Whenever a clock rings, we place a particle (an empty site) at site $1$, independently of the current value of $\eta(1)$. Similarly, we put a rate $\beta$ Poisson clock (a rate $\delta$ Poisson clock) on the vertex $N$. Whenever this clock rings, we place an empty site (a particle) at site $N$ independently of the current value of $\eta(N)$. 

\subsubsection{The component-wise partial order}\label{sec:ComponentwiseOrder}
The canonical coupling is constructed in such a way that it respects the partial order $\succeq_\c$ on $\Omega_N$ which is given by component-wise comparison, i.e.\ 
\begin{equation}\label{def:partialOrder}
\eta \succeq_\c \zeta \quad \Leftrightarrow \quad \eta(i) \geq \zeta(i) \text{ for all } i \in [N]
\end{equation}
for all $\eta,\zeta \in \Omega_N$. Moreover, the canonical coupling $\mathbf{P}$ can be extended such that it is monotone in $\alpha,\beta,\gamma,\delta$. These observations are formalized in the following lemma.
\begin{lemma}\label{lem:MonotoneCouplingComponentwise} Consider two exclusion processes $(\eta_t)_{t\geq 0}$ and $(\zeta_t)_{t\geq 0}$ on the segment of size $N$ with parameters $(p,\alpha,\beta,\gamma,\delta)$ and $(p,\alpha^{\prime},\beta^{\prime},\gamma^{\prime},\delta^{\prime})$, respectively. Suppose that
\begin{equation}\label{eq:LemmaMonotoneAssumptions}
\alpha \geq \alpha^{\prime} \quad \beta\leq \beta^{\prime} \quad \gamma \leq \gamma^{\prime} \quad \text{and} \quad \delta\geq \delta^{\prime}
\end{equation}
 holds, then the canonical coupling $\mathbf{P}$ can be extended such that
\begin{equation}
\mathbf{P}\left( \eta_t \succeq_\c \zeta_t  \text{ for all } t \geq 0 \mid \eta_0 \succeq_\c \zeta_0 \right) = 1 \ . 
\end{equation}
\end{lemma}
\begin{proof} We give an explicit construction of the extended canonical coupling $\mathbf{P}$. Since $p=p^{\prime}$, observe that the canonical coupling preserves the partial order $\succeq_\c$ for all transitions along edges. Hence, it remains to specify $\mathbf{P}$ at the boundary. For $(\eta_t)_{t\geq 0}$ and $(\zeta_t)_{t\geq 0}$, use the same rate $\alpha^{\prime}$ Poisson clocks to determine when a particle enters at the left-hand side boundary. In addition, when $\alpha>\alpha^{\prime}$ holds, insert particles at the leftmost site in $(\eta_t)_{t\geq 0}$ according to an independent rate $(\alpha-\alpha^{\prime})$ Poisson clock. A similar construction applies for the remaining boundary parameters.
\end{proof}
Let $\mathbf{1}$ and $\mathbf{0}$ be the configurations in $\Omega_N$ containing only particles and empty sites, respectively, and observe that these two configurations form the unique maximal and minimal elements with respect to the partial order $\succeq_\c$ on $\Omega_N$. The following lemma is an immediate consequence of Lemma \ref{lem:MonotoneCouplingComponentwise} and \cite[Corollary 5.5]{LPW:markov-mixing}.
\begin{lemma} \label{lem:HittingTime}For a simple exclusion process with open boundaries and $\varepsilon$-mixing time $t^N_{\textup{mix}}(\varepsilon)$, let $\tau$ denote the first time, at which the processes started from $\mathbf{1}$ and $\mathbf{0}$, respectively, agree within the coupling $\mathbf{P}$ given in Lemma \ref{lem:MonotoneCouplingComponentwise}. If for some $s\geq 0$ 
\begin{equation}\label{eq:couplingTimeTau}
\mathbf{P}(\tau \geq s) \leq \varepsilon
\end{equation} holds, then the $\varepsilon$-mixing time satisfies $t^N_{\textup{mix}}(\varepsilon) \leq s$.
\end{lemma}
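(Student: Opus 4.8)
The plan is to invoke the standard coupling characterization of total-variation distance, as packaged in \cite[Corollary 5.5]{LPW:markov-mixing}, together with the monotonicity supplied by Lemma \ref{lem:MonotoneCouplingComponentwise}. The starting observation is that for any initial configuration $\eta \in \Omega_N$ we have $\mathbf{0} \preceq_\c \eta \preceq_\c \mathbf{1}$, since $\mathbf{1}$ and $\mathbf{0}$ are the maximal and minimal elements of the partial order. Running all three processes in the grand coupling $\mathbf{P}$ of Lemma \ref{lem:MonotoneCouplingComponentwise} (with all parameters equal, so $\alpha=\alpha'$, etc.), the sandwiching $\mathbf{0}_t \preceq_\c \eta_t \preceq_\c \mathbf{1}_t$ is preserved for all $t \geq 0$ almost surely. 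Hence on the event $\{\tau \leq t\}$, where $\tau$ is the first time the processes from $\mathbf{1}$ and $\mathbf{0}$ coincide, the chain $\mathbf{1}_t = \mathbf{0}_t$ forces $\eta_t = \mathbf{1}_t = \mathbf{0}_t$ as well, simultaneously for every initial state $\eta$.

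The next step is to convert this into a bound on the mixing time. Fix $t \geq 0$ and an arbitrary initial configuration $\eta$. Let $\nu_t^\eta$ denote the law of $\eta_t$ and $\mu$ the stationary distribution. Since $\mu$ is stationary, the process $(\mu_t)_{t\geq 0}$ started from $\mu$ has law $\mu$ at every time; coupling it into $\mathbf{P}$ as well (again with the same parameters), the same monotonicity argument shows that after time $\tau$ the stationary copy also agrees with $\mathbf{1}_t$. Therefore $(\eta_t, \mu_t)$ is a coupling of $\nu_t^\eta$ and $\mu$ with $\mathbf{P}(\eta_t \neq \mu_t) \leq \mathbf{P}(\tau > t)$, and the coupling inequality gives
\begin{equation}
\TV{\nu_t^\eta - \mu} \leq \mathbf{P}(\tau > t).
\end{equation}
Taking the maximum over $\eta \in \Omega_N$, the right-hand side does not depend on $\eta$, so whenever $s \geq 0$ satisfies $\mathbf{P}(\tau \geq s) \leq \varepsilon$ we obtain $\max_{\eta} \TV{\nu_s^\eta - \mu} \leq \varepsilon$; a small monotonicity-in-$t$ remark (or passing to $s+\epsilon'$ and letting $\epsilon'\downarrow 0$, using right-continuity of the Feller semigroup) upgrades this to the strict inequality in \eqref{def:MixingTime}, yielding $t^N_{\textup{mix}}(\varepsilon) \leq s$.

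There is essentially no obstacle here — the content is entirely in Lemma \ref{lem:MonotoneCouplingComponentwise}, which has already been established. The only points requiring a word of care are: (i) that $\mu$ itself can be realized inside the grand coupling (this is immediate since $\mathbf{P}$ is a grand coupling over all initial laws, in particular over $\mu$); and (ii) the cosmetic gap between the weak inequality $\mathbf{P}(\tau \geq s) \leq \varepsilon$ appearing in \eqref{eq:couplingTimeTau} and the strict inequality in the definition \eqref{def:MixingTime} of $t^N_{\textup{mix}}$, which is handled by the right-continuity of $t \mapsto \nu_t^\eta$ noted above. Both are routine, so the proof is short.
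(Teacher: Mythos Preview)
Your proposal is correct and follows exactly the route the paper indicates: the paper states that the lemma is an immediate consequence of Lemma~\ref{lem:MonotoneCouplingComponentwise} and \cite[Corollary~5.5]{LPW:markov-mixing}, and you have simply written out the sandwiching argument that underlies this. Your care about the weak-versus-strict inequality in \eqref{def:MixingTime} is a nice touch that the paper leaves implicit.
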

\subsubsection{The partial order via height functions}\label{sec:HeightFunction}

When $\max(\alpha,\gamma)=0$ or $\max(\beta,\delta)=0$ holds, we define another partial order $\succeq_{\h}$ on $\Omega_N$ for the simple exclusion process with open boundaries. A similar partial order can be found in \cite{W:MixingLoz} for the simple exclusion process. For $\max(\alpha,\gamma)=0$, we let
\begin{equation}\label{def:partialorder}
\eta \succeq_{\h} \zeta  \ \ \Leftrightarrow \ \ \sum_{i=1}^j \eta(i)  \geq \sum_{i=1}^j \zeta(i) \ \text{ for all } j \in [N]
\end{equation}
for all configurations $\eta,\zeta \in \Omega_{N}$. For $\max(\beta,\delta)=0$, apply the definition \eqref{def:partialorder} to the simple exclusion process with open boundaries and parameters $(1-p,0,\gamma,0,\alpha)$, i.e. we flip the segment vertically. This partial order arises from the height function representation. 
For a given  $\eta\in \{ 0,1\}^N$, let $h_{\eta} \colon \{ 0,1,\dots, 2N \} \rightarrow \R$  be its \textbf{height function} with
\begin{equation}\label{def:HeightFunction} 
h_{\eta}(x) := \sum_{i=1}^{x}2\left[\eta(i)\mathds{1}_{\{i \leq  N\}} + (1-\eta(2N+1-i))\mathds{1}_{\{i> N\}}\right] - x 
\end{equation} for all $x\in \{ 0,1,\dots, 2N \}$. Note that we have $h_{\eta}(0) = h_{\eta}(2N) = 0$ by construction. 
For all $N \in \N$, we see that a pair of configurations satisfies $\eta \succeq_{\h} \zeta$ if and only if $h_{\eta}(x) \geq h_{\zeta}(x)$ holds for all $x \in [N]$.  A visualization of the height function in terms of \textbf{lattice paths}, which are the linear interpolations of height functions, is given in Figure \hyperref[fig:MonotoneCoupling]{2}. Again, the canonical coupling can be extended such that it is monotone in $\alpha,\beta,\gamma,\delta$ with respect to the partial order $\succeq_{\h}$. This is stated in the following lemma which uses the same coupling $\mathbf{P}$ constructed in the proof of Lemma \ref{lem:MonotoneCouplingComponentwise}.
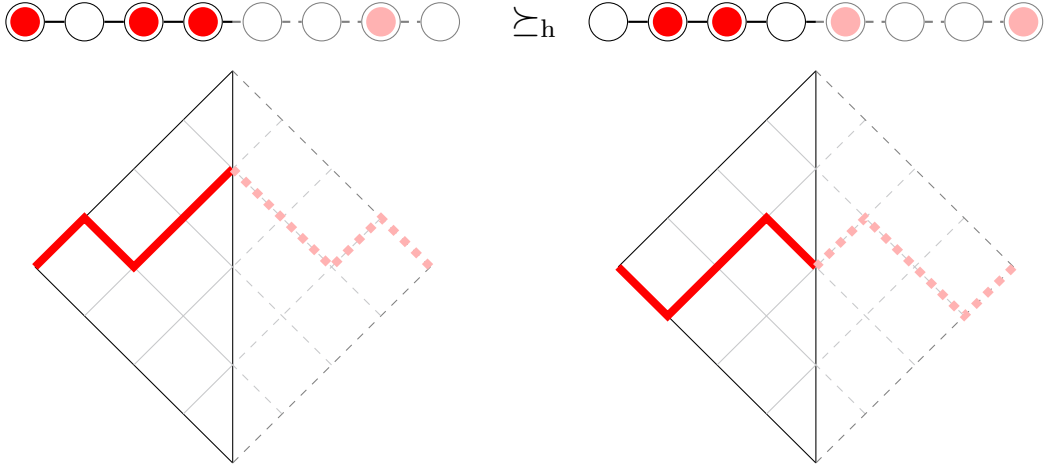
\begin{figure}
\centering
\label{fig:MonotoneCoupling}
\begin{tikzpicture}[scale=0.65]

\def \x {11.8}

	\draw (0,0) to (4,4);
	\draw (4,4) to (4,-4);
	\draw (4,-4) to (0,0);

	\draw[Gray!50,thin](1,-1) to (4,2);
	\draw[Gray!50,thin](1,1) to (4,-2);
	\draw[Gray!50,thin](2,2) to (4,0);
	\draw[Gray!50,thin](2,-2) to (4,0);
	\draw[Gray!50,thin](3,3) to (4,2);
	\draw[Gray!50,thin](3,-3) to (4,-2);
	
\node[scale=1.2] (Lower) at (10.1,5){$\succeq_{\textup{h}}$};

		\draw (0+\x,0) to (4+\x,4);
	\draw (4+\x,4) to (4+\x,-4);
	\draw (4+\x,-4) to (0+\x,0);

	\draw[Gray!50,thin](1+\x,-1) to (4+\x,2);
	\draw[Gray!50,thin](1+\x,1) to (4+\x,-2);
	\draw[Gray!50,thin](2+\x,2) to (4+\x,0);
	\draw[Gray!50,thin](2+\x,-2) to (4+\x,0);
	\draw[Gray!50,thin](3+\x,3) to (4+\x,2);
	\draw[Gray!50,thin](3+\x,-3) to (4+\x,-2);

	\node[shape=circle,scale=1.3,draw] (A2) at (-0.2,5){} ;
	\node[shape=circle,scale=1.3,draw] (B2) at (1,5){} ;
	\node[shape=circle,scale=1.3,draw] (C2) at (2.2,5){} ;
	\node[shape=circle,scale=1.3,draw] (D2) at (3.4,5){} ;
	
	\node[shape=circle,scale=1.3,draw,black!50] (E2) at (4.6,5){} ;
	\node[shape=circle,scale=1.3,draw,black!50] (F2) at (5.8,5){} ;
	\node[shape=circle,scale=1.3,draw,black!50] (G2) at (7,5){} ;
	\node[shape=circle,scale=1.3,draw,black!50] (H2) at (8.2,5){} ;

	\node[shape=circle,scale=1.3,draw] (A3) at (-0.2+\x,5){} ;
	\node[shape=circle,scale=1.3,draw] (B3) at (1+\x,5){} ;
	\node[shape=circle,scale=1.3,draw] (C3) at (2.2+\x,5){} ;
	\node[shape=circle,scale=1.3,draw] (D3) at (3.4+\x,5){} ;

	\node[shape=circle,scale=1.3,draw,black!50] (E3) at (4.6+\x,5){} ;
	\node[shape=circle,scale=1.3,draw,black!50] (F3) at (5.8+\x,5){} ;
	\node[shape=circle,scale=1.3,draw,black!50] (G3) at (7+\x,5){} ;
	\node[shape=circle,scale=1.3,draw,black!50] (H3) at (8.2+\x,5){} ;

	\draw[thick] (A2) to (B2);		
	\draw[thick] (B2) to (C2);		
	\draw[thick] (C2) to (D2);		
	\draw[thick] (D2) to (4,5);		
	
	\draw[thick,dashed,black!50] (4,5) to (E2);		
	\draw[thick,dashed,black!50] (E2) to (F2);		
	\draw[thick,dashed,black!50] (F2) to (G2);		
	\draw[thick,dashed,black!50] (G2) to (H2);

	\draw[thick] (A3) to (B3);		
	\draw[thick] (B3) to (C3);		
	\draw[thick] (C3) to (D3);		
	\draw[thick] (D3) to (4+\x,5);		
	
	\draw[thick,dashed,black!50] (4+\x,5) to (E3);		
	\draw[thick,dashed,black!50] (E3) to (F3);		
	\draw[thick,dashed,black!50] (F3) to (G3);		
	\draw[thick,dashed,black!50] (G3) to (H3);

  \node[shape=circle,fill=red!30,scale=1] (k1) at (G2){} ;
  \node[shape=circle,fill=red!30,scale=1] (k1) at (E3){} ;
  \node[shape=circle,fill=red!30,scale=1] (k1) at (H3){} ;	
	
 \node[shape=circle,fill=red,scale=1] (k1) at (A2){} ;
 \node[shape=circle,fill=red,scale=1] (k2) at (D2){} ; 
  \node[shape=circle,fill=red,scale=1] (k3) at (C2){} ; 

 \node[shape=circle,fill=red,scale=1] (k1) at (B3){} ;
 \node[shape=circle,fill=red,scale=1] (k2) at (C3){} ; 

\draw[line width=3pt,red] (0,0) -- (1,1) -- (2,0) --(3,1)--(4,2);
\draw[line width=3pt,red] (\x,0) -- (1+\x,-1) -- (2+\x,0) --(3+\x,1)--(4+\x,0);

	\draw[dashed,black!50] (8,0) to (4,4);
	\draw[dashed,black!50] (4,-4) to (8,0);

	\draw[rotate around={180:(4,0)},Gray!50,thin,dashed](1,-1) to (4,2);
	\draw[rotate around={180:(4,0)},Gray!50,thin,dashed](1,1) to (4,-2);
	\draw[rotate around={180:(4,0)},Gray!50,thin,dashed](2,2) to (4,0);
	\draw[rotate around={180:(4,0)},Gray!50,thin,dashed](2,-2) to (4,0);
	\draw[rotate around={180:(4,0)},Gray!50,thin,dashed](3,3) to (4,2);
	\draw[rotate around={180:(4,0)},Gray!50,thin,dashed](3,-3) to (4,-2);

    \draw[rotate around={180:(4+\x,0)},Gray!50,thin,dashed](1+\x,-1) to (4+\x,2);
	\draw[rotate around={180:(4+\x,0)},Gray!50,thin,dashed](1+\x,1) to (4+\x,-2);
	\draw[rotate around={180:(4+\x,0)},Gray!50,thin,dashed](2+\x,2) to (4+\x,0);
	\draw[rotate around={180:(4+\x,0)},Gray!50,thin,dashed](2+\x,-2) to (4+\x,0);
	\draw[rotate around={180:(4+\x,0)},Gray!50,thin,dashed](3+\x,3) to (4+\x,2);
	\draw[rotate around={180:(4+\x,0)},Gray!50,thin,dashed](3+\x,-3) to (4+\x,-2);

		\draw[dashed,black!50] (8+\x,0) to (4+\x,4);
	\draw[dashed,black!50] (4+\x,-4) to (8+\x,0);

\draw[line width=3pt,red!30,dashed] (8,0) -- (7,1) -- (6,0) --(5,1)--(4,2);

\draw[line width=3pt,red!30,dashed] (8+\x,0) -- (7+\x,-1) -- (6+\x,0) --(5+\x,1)--(4+\x,0);
\end{tikzpicture}
\caption{Two ordered instances of the simple exclusion process for $N=4$ and with particles entering and exiting only at the right-hand side of the segment.}
\end{figure}
\begin{lemma}\label{lem:MonotoneCouplingHeightFunction} Consider two exclusion processes $(\eta_t)_{t\geq 0}$ and $(\zeta_t)_{t\geq 0}$ on the segment of size $N$ with parameters $(p,\alpha,\beta,\gamma,\delta)$ and $(p^{\prime},\alpha^{\prime},\beta^{\prime},\gamma^{\prime},\delta^{\prime})$, respectively. Suppose that
\begin{equation}
p\leq p^{\prime} \quad \alpha =\alpha^{\prime}=0 \quad \beta \leq \beta^{\prime} \quad \gamma=\gamma^{\prime}=0 \quad \delta \geq \delta^{\prime} 
\end{equation}
or
\begin{equation}
p\leq p^{\prime} \quad \alpha \geq \alpha^{\prime} \quad \beta=\beta^{\prime}=0  \quad \gamma \leq \gamma^{\prime} \quad \delta=\delta^{\prime}=0
\end{equation}
holds. Then there exists a coupling $\mathbf{P}$ of the two processes which satisfies
\begin{equation}
\mathbf{P}\left( \eta_t \succeq_\h \zeta_t  \text{ for all } t \geq 0 \mid \eta_0 \succeq_\h \zeta_0 \right) = 1 \ .
\end{equation}
\end{lemma}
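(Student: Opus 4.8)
The plan is to construct the coupling $\mathbf{P}$ explicitly in the spirit of the proof of Lemma \ref{lem:MonotoneCouplingComponentwise}, but now tracking the height functions $h_{\eta}$ and $h_{\zeta}$ rather than the configurations themselves. Recall that the key reformulation is: $\eta \succeq_{\h} \zeta$ precisely when $h_\eta(x) \geq h_\zeta(x)$ for every $x \in \{0,1,\dots,2N\}$; the endpoint values are pinned at $0$, so the ordering is a statement about lattice paths that agree at both ends. Since the nontrivial boundary is (say, in the first case) only at site $N$, which sits at the ``tip'' $x=N$ of the doubled interval, while the left endpoint $\alpha=\gamma=0$ makes $h(N)$ move freely and $h(0)$ fixed, the structure is exactly that of an interface dynamics on $\{0,\dots,2N\}$ with a free/reflecting-type update only at the apex. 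I would first record how each elementary move of the exclusion process acts on the height function: a bulk transition across edge $\{x,x+1\}$ for $x\le N-1$ changes $h$ at the single coordinate $x$ (a local min goes to a local max or vice versa, depending on the direction), and symmetrically a transition across $\{x-1,x\}$ for $x \ge N+2$ in the ``folded'' copy changes $h$ at coordinate $2N+1-x$; a flip at site $N$ (rate $\beta$ removing, rate $\delta$ adding a particle) changes $h(N)$ by $\pm 2$, i.e.\ is a move of the apex coordinate.

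Next I would verify that the canonical coupling $\mathbf{P}$ from Lemma \ref{lem:MonotoneCouplingComponentwise}, using common Poisson clocks on all edges and common uniforms $U$, together with common rate-$\beta$ and rate-$\delta$ clocks at site $N$ plus independent rate-$(\beta'-\beta)$ and rate-$(\delta-\delta')$ ``correction'' clocks, preserves $\succeq_{\h}$. The point is a standard but slightly delicate case analysis: suppose $h_{\eta_{t^-}} \geq h_{\zeta_{t^-}}$ everywhere and a clock rings at some coordinate; one checks that the update cannot create a violation. If the paths differ at the relevant coordinate, monotonicity of the local update rule does the job; if they agree at that coordinate (and hence, by the inequality, must have the same local shape there in a way that permits the update in at most the lower one or forces it in at most the upper one), the common randomness guarantees the order is maintained or improved. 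The handling of $p \le p'$ in the bulk is the one genuinely new ingredient relative to Lemma \ref{lem:MonotoneCouplingComponentwise}: here one uses that the uniform $U$ biases moves consistently, so a rightward-favoring discrepancy in the drift can only help the $\zeta$-process (which has the larger drift) push its interface down relative to $\eta$ — I would make this precise by checking that at a coordinate where the two height functions agree, the set of drifts for which $\zeta$ moves down but $\eta$ does not is exactly $\{U : p < U \le p'\}$-type events, which never break the order. The boundary correction clocks at $N$ are handled exactly as in Lemma \ref{lem:MonotoneCouplingComponentwise}: extra particle-insertions go into the process whose interface we want higher, extra removals into the one we want lower.

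Finally, the second case (with $\beta=\delta=0$ and the nontrivial boundary at site $1$) follows formally by applying the first case to the vertically flipped process with parameters $(1-p,0,\gamma,0,\alpha)$, exactly as the definition \eqref{def:partialorder} of $\succeq_{\h}$ was set up to allow; one has to check that the flip exchanges $\succeq_{\h}$-orderings correctly and turns the hypotheses of the second display into those of the first, which is immediate from the symmetry built into the generator $\mathcal{L}$. The main obstacle I anticipate is the bookkeeping in the bulk case-check when $p \ne p'$: one must be careful that the same uniform $U$ is used in both processes and organize the (finitely many) local configurations at a coordinate where the height functions either agree or differ by exactly $2$, confirming in each that no update — bulk or apex — turns $h_{\eta} \geq h_{\zeta}$ into a strict violation. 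Everything else is routine once the translation between particle moves and height-function moves is in place, and I would present the argument by reducing to that local check and then invoking it clockwise through the Poissonian construction.
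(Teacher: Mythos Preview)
Your proposal is correct and follows precisely the route the paper indicates: the paper does not write out a proof of this lemma at all but simply states that it ``uses the same coupling $\mathbf{P}$ constructed in the proof of Lemma \ref{lem:MonotoneCouplingComponentwise}.'' What you have done is flesh out that one-line reference --- translating the canonical coupling (common Poisson clocks on edges, common uniforms $U$, boundary correction clocks) into the height-function picture and running the local case analysis, including the new $p\neq p'$ window $\{p<U\le p'\}$ where the two processes move in opposite (order-improving) vertical directions. Your handling of the apex coordinate at $x=N$ and the reduction of the second parameter regime to the first by the particle/hole flip are both exactly what is intended.
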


\subsection{The simple exclusion process with second class particles} \label{sec:SecondClass}

Second class particles for the simple exclusion process are well-studied over the last decades, see  \cite[Section III.1]{L:Book2} for an introduction. The motion of a second class particle can be related to current and shock fluctuations, see \cite{BS:OrderCurrent,F:ShockFluctuations, FF:CurrentFluctuations,FF:ShockFluctuations}. 
In the context of mixing times, second class particles were used to study the simple exclusion process when all boundary parameters are zero \cite{BBHM:MixingBias,S:MixingBallistic}. In this paper, we use second class particle arguments in Sections \ref{sec:UpperBounds} to \ref{sec:HighLowDensity} in order to provide upper bounds for the mixing time of the simple exclusion process with open boundaries. \\

For a configuration $\xi \in \lbrace 0,1,2\rbrace^{N}$, we say that a vertex $x\in [N]$ is occupied by a \textbf{first class particle} whenever $\xi(x)=1$ and by a \textbf{second class particle} if $\xi(x)=2$ holds. Our main application for second class particles is to describe the difference of two exclusion processes. More precisely, for two simple exclusion processes  $(\eta_t)_{t \geq 0}$ and $(\zeta_t)_{t \geq 0}$ with open boundaries on a segment of size $N$, we define the \textbf{disagreement process} $(\xi_t)_{t\geq 0}$ between $(\eta_t)_{t \geq 0}$ and $(\zeta_t)_{t \geq 0}$ by
\begin{equation}\label{def:DisagreementProcess}
\xi_t(x)= \mathds{1}_{\{\eta_t(x)=\zeta_t(x)=1\}} + 2 \mathds{1}_{\{\eta_t(x) \neq \zeta_t(x)\}}
\end{equation}
for all $x\in [N]$ and $t \geq 0$. In words, we keep the current value if the processes $(\eta_t)_{t \geq 0}$ and $(\zeta_t)_{t \geq 0}$ agree and place a second class particle otherwise, see Figure \hyperref[fig:Disagreement]{3}.
\begin{figure} \label{fig:Disagreement}
\centering
\begin{tikzpicture}[scale=0.9]

\def\x{2};
\def\y{-1.3};

 	\node[shape=circle,scale=1.5,draw] (C1) at (0,0*\y){} ; 
 	\node[shape=circle,scale=1.5,draw] (C2) at (\x,0*\y){} ; 
 	\node[shape=circle,scale=1.5,draw] (C3) at (2*\x,0*\y){} ; 
 	\node[shape=circle,scale=1.5,draw] (C4) at (3*\x,0*\y){} ; 
 	\node[shape=circle,scale=1.5,draw] (C5) at (4*\x,0*\y){} ; 
 	\node[shape=circle,scale=1.5,draw] (C6) at (5*\x,0*\y){} ; 
 	\node[shape=circle,scale=1.5,draw] (C7) at (6*\x,-0*\y){} ; 
 	
 	\draw[thick] (C1) -- (C2);
 	\draw[thick] (C2) -- (C3);
 	\draw[thick] (C3) -- (C4);
 	\draw[thick] (C4) -- (C5);
 	\draw[thick] (C5) -- (C6);
 	\draw[thick] (C6) -- (C7);

	\node[shape=circle,scale=1.2,fill=red] (F5) at (C1) {};
	\node[shape=circle,scale=1.2,fill=red] (F5) at (C2) {}; 	
	\node[shape=circle,scale=1.2,fill=red] (F5) at (C4) {};
	\node[shape=circle,scale=1.2,fill=red] (F5) at (C5) {}; 	
	\node[shape=circle,scale=1.2,fill=red] (F5) at (C7) {};	
 
	\node (H3) at (-1.4,0*\y) {$\eta_t$};

 	\node[shape=circle,scale=1.5,draw] (C1) at (0,1*\y){} ; 
 	\node[shape=circle,scale=1.5,draw] (C2) at (\x,1*\y){} ; 
 	\node[shape=circle,scale=1.5,draw] (C3) at (2*\x,1*\y){} ; 
 	\node[shape=circle,scale=1.5,draw] (C4) at (3*\x,1*\y){} ; 
 	\node[shape=circle,scale=1.5,draw] (C5) at (4*\x,1*\y){} ; 
 	\node[shape=circle,scale=1.5,draw] (C6) at (5*\x,1*\y){} ; 
 	\node[shape=circle,scale=1.5,draw] (C7) at (6*\x,1*\y){} ;  
 
 	\draw[thick] (C1) -- (C2);		
 	\draw[thick] (C2) -- (C3);
 	\draw[thick] (C3) -- (C4);
 	\draw[thick] (C4) -- (C5);
 	\draw[thick] (C5) -- (C6);
	\draw[thick] (C6) -- (C7);

	\node[shape=circle,scale=1.2,fill=red] (F5) at (C2) {}; 	
	\node[shape=circle,scale=1.2,fill=red] (F5) at (C7) {};

	\node (H3) at (-1.4,1*\y) {$\zeta_t$};

 	\node[shape=circle,scale=1.5,draw] (C1) at (0,2*\y){} ; 
 	\node[shape=circle,scale=1.5,draw] (C2) at (\x,2*\y){} ; 
 	\node[shape=circle,scale=1.5,draw] (C3) at (2*\x,2*\y){} ; 
 	\node[shape=circle,scale=1.5,draw] (C4) at (3*\x,2*\y){} ; 
 	\node[shape=circle,scale=1.5,draw] (C5) at (4*\x,2*\y){} ; 
 	\node[shape=circle,scale=1.5,draw] (C6) at (5*\x,2*\y){} ; 
 	\node[shape=circle,scale=1.5,draw] (C7) at (6*\x,2*\y){} ;

 	\draw[thick] (C1) -- (C2); 	
 	\draw[thick] (C2) -- (C3);
 	\draw[thick] (C3) -- (C4);
 	\draw[thick] (C4) -- (C5);
 	\draw[thick] (C5) -- (C6);
 	\draw[thick] (C6) -- (C7);

	\node[shape=star,star points=5,star point ratio=2.5,fill=red,scale=0.55] (G1) at (C1) {};	
	\node[shape=star,star points=5,star point ratio=2.5,fill=red,scale=0.55] (G2) at (C4) {};	
	\node[shape=star,star points=5,star point ratio=2.5,fill=red,scale=0.55] (G4) at (C5) {};

	\node[shape=circle,scale=1.2,fill=red] (F3) at (C2) {};
	\node[shape=circle,scale=1.2,fill=red] (F5) at (C7) {}; 
 
	\node (H3) at (-1.4,2*\y) {$\xi_t$};

	\end{tikzpicture}
\caption{Two configurations $\eta_t \succeq_{\h} \zeta_t$ with disagreement process $\xi_t$ at time $t \geq 0$.}
 \end{figure}
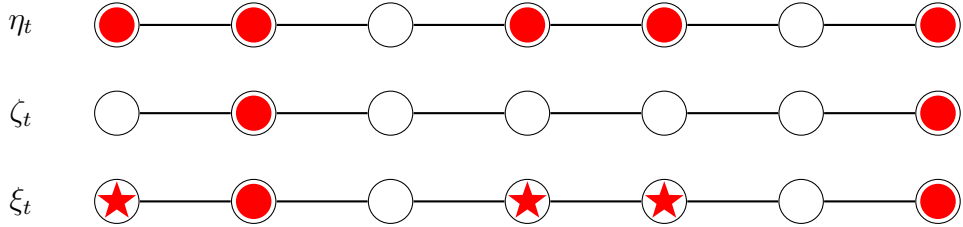
When $(\eta_t)_{t \geq 0}$ and $(\zeta_t)_{t \geq 0}$ with parameters $(p,\alpha,\beta,\gamma,\delta)$ and $(p^{\prime},\alpha^{\prime},\beta^{\prime},\gamma^{\prime},\delta^{\prime})$ satisfy the assumptions \eqref{eq:LemmaMonotoneAssumptions} of Lemma \ref{lem:MonotoneCouplingComponentwise}, and $\eta_0 \succeq_{\c} \zeta_0$, then the disagreement process with respect to the coupling $\mathbf{P}$ is a Feller process $(\xi_t)_{t\geq 0}$ on $\lbrace 0,1,2\rbrace^{N}$ according to the following description:  \\

We assign priorities to the particles and empty sites. First class particles have the highest priority, then second class particles, and then empty sites.
Suppose that a site $x$ and its neighbor $x+1$ are updated in configuration $\xi$. If $\xi(x)=\xi(x+1)$ holds, we leave the configuration unchanged. Else, we exchange the values at $x$ and $x+1$ in $\xi$ with probability $p$ if $\xi(x)$ has a higher priority than $\xi(x+1)$ and with probability $1-p$, otherwise. At the site $1$, we place a first class particle at rate $\alpha^{\prime}$ independently of the value of $\xi(1)$. In addition, if $\alpha> \alpha^{\prime}$ holds, assign a rate $(\alpha-\alpha^{\prime})$ Poisson clock to vertex $1$. When the clock rings and $\xi(1)=0$ holds, we place a second class particle at site $1$. A similar construction holds for the remaining boundary parameters. \\

In general, we define the \textbf{simple exclusion process with second class particles} (also called two-species exclusion process) to be the Feller process $(\xi_t)_{t\geq 0}$ on $\lbrace 0,1,2\rbrace^{N}$ which has the above update rules along the edges, i.e.\ the positions are exchanged according to the priorities assigned to the sites of the edge. However, we allow general transition rules for the particles to enter and exit at the boundary.

\begin{remark}\label{rem:MultiSpecies} A similar construction extends the canonical coupling to more than two different hierarchies of particles. In this case, the resulting process is usually called \textbf{multi-species exclusion process}, see \cite{CGGW:KoornwinderMulti, FRV:MultispeciesMatrixProduct} as well as Section \ref{sec:HighLowDensity}.
\end{remark}

We notice that two simple exclusion processes in the canonical coupling agree when their disagreement process contains no second class particles. Therefore, we have the following immediate consequence of Lemma \ref{lem:HittingTime}.
\begin{corollary}\label{cor:SecondClassParticles}  For a given set of parameters, let $(\eta^{\mathbf{0}}_t)_{t \geq 0}$ and $(\eta^{\mathbf{1}}_t)_{t \geq 0}$ denote the simple exclusion processes with open boundaries in the canonical coupling $\mathbf{P}$ with respect to the initial configurations $\mathbf{0}$ and $\mathbf{1}$. Let $(\xi_t)_{t \geq 0}$ be their disagreement process and denote by $\tau$ the first time at which $(\xi_t)_{t \geq 0}$ contains no second class particle. If  $\mathbf{P}(\tau>s) \leq \varepsilon$ holds for some $\varepsilon>0$ and $s\geq 0$, then we have that  $t^N_{\text{\normalfont mix}}(\varepsilon)\leq s$.
\end{corollary}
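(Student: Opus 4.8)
The plan is to reduce the statement directly to Lemma~\ref{lem:HittingTime} by showing that the stopping time $\tau$ appearing here — the first time the disagreement process $(\xi_t)_{t\geq0}$ has no second class particle — coincides almost surely with the coupling time of Lemma~\ref{lem:HittingTime}, namely the first time the processes started from $\mathbf{1}$ and $\mathbf{0}$ agree, and then invoking that lemma verbatim.

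First I would record that both processes carry the same parameter vector $(p,\alpha,\beta,\gamma,\delta)$, so the hypotheses of Lemma~\ref{lem:MonotoneCouplingComponentwise} hold with equality and the coupling $\mathbf{P}$ used here is just the plain canonical coupling, with no auxiliary independent boundary clocks. Since $\mathbf{1}\succeq_\c\mathbf{0}$, Lemma~\ref{lem:MonotoneCouplingComponentwise} gives $\eta^{\mathbf{1}}_t\succeq_\c\eta^{\mathbf{0}}_t$ for all $t\geq0$ almost surely, so a site of disagreement is necessarily a site where $\eta^{\mathbf{1}}_t(x)=1$ and $\eta^{\mathbf{0}}_t(x)=0$; by the definition \eqref{def:DisagreementProcess}, those are exactly the sites carrying a second class particle in $\xi_t$. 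Hence $\xi_t$ contains no second class particle if and only if $\eta^{\mathbf{1}}_t=\eta^{\mathbf{0}}_t$.

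Next I would check that once the two processes agree they agree forever — equivalently, that the number of second class particles in $(\xi_t)_{t\geq0}$ is nonincreasing. Along edges the update rule of the disagreement process only permutes the values at the two endpoints, so it preserves the second class particle count; at the boundary, the described transitions insert a first class particle or an empty site, which can only destroy a second class particle, and — this is where $\alpha=\alpha'$, $\beta=\beta'$, $\gamma=\gamma'$, $\delta=\delta'$ is genuinely used — none of the auxiliary rate-$(\alpha-\alpha')$, $\ldots$ clocks that could create a second class particle are present. Therefore $\tau$ is a.s.\ equal to the first agreement time of $\eta^{\mathbf{1}}$ and $\eta^{\mathbf{0}}$, and agreement persists past $\tau$. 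Finally, since $\tau$ is a measurable function of the countably many Poisson ring times, each of which avoids the deterministic time $s$ almost surely, $\mathbf{P}(\tau=s)=0$, so the assumption $\mathbf{P}(\tau>s)\leq\varepsilon$ upgrades to $\mathbf{P}(\tau\geq s)\leq\varepsilon$, and Lemma~\ref{lem:HittingTime} yields $t^N_{\text{\normalfont mix}}(\varepsilon)\leq s$. There is no substantial obstacle: the only slightly delicate points are the persistence-of-agreement step (verifying the boundary rules of the disagreement process in the equal-parameter case) and the cosmetic passage from $\{\tau>s\}$ to $\{\tau\geq s\}$, which is precisely why the corollary is billed as an immediate consequence of Lemma~\ref{lem:HittingTime}.
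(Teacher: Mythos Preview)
Your proof is correct and follows the same route as the paper, which simply remarks that two processes in the canonical coupling agree exactly when their disagreement process carries no second class particle and then invokes Lemma~\ref{lem:HittingTime}. You have merely supplied the details the paper leaves implicit --- the persistence-of-agreement check in the equal-parameter case and the cosmetic passage from $\{\tau>s\}$ to $\{\tau\geq s\}$ --- so there is no substantive difference in approach.
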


\subsection{The simple exclusion process on $\mathbb{Z}$ and blocking measures}

When we prove bounds on the mixing time, it will be convenient to compare the simple exclusion process with open boundaries to an exclusion process on the integers. The \textbf{simple exclusion process on} $\mathbb{Z}$ is given as a Feller process with state space $\{ 0,1\}^{\mathbb{Z}}$, generated by the closure of
\begin{align*}
\mathcal{L}^{\mathbb{Z}}_{\textup{ex}}f(\eta) &= \sum_{x \in \mathbb{Z}} p \ \eta(x)(1-\eta(x+1))\left[ f(\eta^{x,x+1})-f(\eta) \right] \nonumber \\
&+ \sum_{x \in \mathbb{Z}} \left(1-p\right) \ \eta(x)(1-\eta(x-1))\left[ f(\eta^{x,x-1})-f(\eta) \right]  
\end{align*} for some $p\in [0,1]$ and all cylinder functions $f$. Let now $p \in ( \frac{1}{2},1 )$. By Theorem 1.2 in \cite[Section III]{L:Book2}, the Bernoulli-product measure $\nu$ with marginals
\begin{equation}\label{eq:BernoulliProductc}
\nu(\eta \colon \eta(x)=1) = \frac{c p^x}{(1-p)^x+ c p^x} \ \text{ for all } x \in \mathbb{Z}
\end{equation}
is invariant for the simple exclusion process on $\mathbb{Z}$ for any constant $c>0$. The first Borel-Cantelli Lemma yields that
\begin{equation*}
\nu\left( \left\lbrace \eta \colon \exists\  C_{\eta}>0 \text{ s.t. } \eta(x)=1 \  \forall x > C_{\eta} \text{ and } \eta(x)=0 \ \forall x < -C_{\eta}\right\rbrace \right) =1,
\end{equation*}
i.e.\ $\nu$ is supported on the countable set of configurations $\eta$ with satisfy $\eta(x)=1$ and $\eta(-x)=0$ for all  $x>0$ sufficiently large. For $n \in \Z$, we can restrict the state space to 
\begin{equation}\label{blockingsets}
A_n := \left\lbrace \eta \in \{ 0,1\}^{\Z} \colon \sum\limits_{ x > n }  \left( 1- \eta(x) \right) = \sum\limits_{x \leq n} \eta(x) < \infty \right\rbrace
\end{equation} and define the simple exclusion process on $A_n$ as a Feller process with a countable state space. We define the \textbf{blocking measure} $\nu_{(n)}$ on $A_n$ to be given by $\nu_{(n)}(.)=\nu( \ . \mid A_n)$ for all $n \in \mathbb{Z}$.
Let us stress that this definition does not depend on the choice of $c$ in \eqref{eq:BernoulliProductc}. Further, let the \textbf{ground state} $\vartheta_n$  of $A_n$ be 
\begin{equation}\label{groundstateconfig}
\vartheta_n(x) := \begin{cases} 1 & \text{ if } x>n \\ 0 & \text{ if } x \leq n
\end{cases} \ \  \text{ for all } x \in \mathbb{Z}.
\end{equation}
Intuitively, the ground state is the state of minimal energy. Observe that $\nu(\vartheta_n) >0$ holds for all $p \in \left( \frac{1}{2},1 \right]$ and $n\in \mathbb{Z}$. Since $\vartheta_{n} \in A_n$, we have that $\nu_{(n)}(\vartheta_n) > 0$ holds. Hence,  the simple exclusion process on $A_n$ is positive recurrent for all $p \in \left( \frac{1}{2},1 \right]$ and $n \in \mathbb{Z}$. 

\begin{remark}\label{rem:PartialOrderonZ}
Note that the canonical coupling and the partial order $\succeq_{\h}$ in \eqref{def:partialorder} naturally extend to $\Z$, i.e.\ for $\eta \in A_n$ and $\zeta \in A_m$ with $n,m \in \Z$, we have that
\begin{equation}\label{def:partialorderBlocking}
\eta \succeq_{\h} \zeta  \ \ \Leftrightarrow \ \ \sum_{i=-\infty}^j \eta(i)  \geq \sum_{i=-\infty}^j \zeta(i) \ \text{ for all } j \in \Z \ .
\end{equation} Moreover, observe that the canonical coupling is monotone with respect to $\succeq_{\h}$ and that the ground state $\vartheta_n$ is the unique minimal element with respect to the partial order $\succeq_{\h}$ on $A_n$ for all $n\in \Z$. 
\end{remark}

For $\eta \in A_0$, let $L(\eta)$ and $R(\eta)$ denote the position of the leftmost particle and the rightmost empty site in $\eta$, respectively. In Sections \ref{sec:ProofAsymmetricHomogenousOneSide} and \ref{sec:ReverseBias}, we use the following lemma which gives an upper bound on the positions of the leftmost particle and the rightmost empty site when starting from the blocking measure. Its proof is deferred to the appendix.

\begin{lemma}\label{lem:HittingBlockingMeasure}  For $p \in (\frac{1}{2},1)$, let $(\eta^{\Z}_t)_{t \geq 0}$ denote the simple exclusion process in $A_0$ with initial distribution $\nu_{(0)}$. There exists a constant $C=C(p)>0$ such that for any $\varepsilon \in (0,\frac{1}{2})$ and all $x\geq 0$ sufficiently large,
\begin{equation}\label{eq:HittingBlocking}
\P_{\nu_{(0)}}\left( \max\left(R(\eta^{\Z}_t),-L(\eta^{\Z}_t)\right) \leq x \text{ for all } t \in \left[ 0, \frac{\varepsilon C}{x} \left(\frac{p}{1-p}\right)^{x}\right] \right) \geq 1-2\varepsilon\, .
\end{equation} 
\end{lemma}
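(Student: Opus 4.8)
The plan is to exploit the invariance of $\nu_{(0)}$ for the exclusion process on $A_0$ together with the fact that the process cannot escape quickly from the ``bad'' set. Write $\rho := \frac{1-p}{p}\in(0,1)$, so that $\left(\frac{p}{1-p}\right)^x = \rho^{-x}$, put $T := \frac{\varepsilon C}{x}\,\rho^{-x}$, and let
\[
B_x := \bigl\{\eta\in A_0 \colon -L(\eta)>x \ \text{ or }\ R(\eta)>x\bigr\},
\]
so that the event in \eqref{eq:HittingBlocking} is exactly $\{\eta^{\Z}_t\notin B_x \text{ for all } t\in[0,T]\}$. It therefore suffices to show $\P_{\nu_{(0)}}\bigl(\eta^{\Z}_t\in B_x \text{ for some } t\in[0,T]\bigr)\le 2\varepsilon$ for all large $x$. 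I would obtain this from two ingredients: a static tail estimate on $\nu_{(0)}(B_x)$, and a bound showing that from any configuration in $B_x$ the process stays in $B_x$ for a fixed positive time with probability at least $\frac12$; these are then interpolated by a union bound over time-checkpoints.

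First, the static bound. Since $\nu_{(0)}(\cdot)\le \nu(\cdot)/\nu(A_0)$ with $\nu(A_0)\ge\nu(\vartheta_0)>0$, the explicit marginals \eqref{eq:BernoulliProductc} (taken with $c=1$, which is harmless because $\nu_{(0)}$ does not depend on $c$) give $\nu(\eta(y)=1)\le\rho^{-y}$ for $y<0$ and $\nu(\eta(y)=0)\le\rho^{y}$ for $y>0$, so a union bound over sites yields
\[
\nu_{(0)}(B_x)\ \le\ \frac{1}{\nu(A_0)}\Bigl(\ \sum_{y\le -x-1}\rho^{-y}+\sum_{y\ge x+1}\rho^{y}\ \Bigr)\ =\ \frac{2\rho}{(1-\rho)\,\nu(A_0)}\,\rho^{x}\ =:\ C_1(p)\,\rho^{x}.
\]
Second, the persistence estimate: there is $s_0=s_0(p)>0$ with $\inf_{\zeta\in B_x}\P_\zeta\bigl(\eta^{\Z}_t\in B_x \text{ for all } t\in[0,s_0]\bigr)\ge\frac12$, uniformly in $x$. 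If $\zeta$ lies in $B_x$ because $-L(\zeta)>x$, then the number of particles at sites $\le -x-1$ is at least one and can decrease only through a rightward jump across the bond $\{-x-1,-x\}$; likewise, if $R(\zeta)>x$, the number of empty sites at positions $\ge x+1$ is at least one and can decrease only through a rightward jump across $\{x,x+1\}$. In either case the process remains in $B_x$ as long as no rightward jump occurs across the relevant, fixed bond, and in the graphical construction such jumps are driven by a Poisson clock of rate at most $p$, so the probability of no such jump during $[0,s_0]$ is at least $e^{-p s_0}$; it remains to take $s_0$ with $e^{-ps_0}\ge\frac12$.

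To combine the two, partition $[0,T]$ into $\lceil T/s_0\rceil$ intervals of length $s_0$. On the $k$-th interval let $\sigma$ be the first entrance time to $B_x$; by the strong Markov property at $\sigma$ and the persistence estimate (using $ks_0-\sigma\le s_0$), $\P\bigl(\eta^{\Z}_{ks_0}\in B_x\mid \sigma<\infty\bigr)\ge\frac12$, while invariance gives $\P_{\nu_{(0)}}\bigl(\eta^{\Z}_{ks_0}\in B_x\bigr)=\nu_{(0)}(B_x)$; hence the probability of visiting $B_x$ somewhere in that interval is at most $2\nu_{(0)}(B_x)$. Summing over the $\lceil T/s_0\rceil$ intervals and using $T\rho^{x}=\frac{\varepsilon C}{x}$,
\[
\P_{\nu_{(0)}}\bigl(\eta^{\Z}_t\in B_x\text{ for some }t\in[0,T]\bigr)\ \le\ 2\Bigl\lceil\tfrac{T}{s_0}\Bigr\rceil C_1\rho^{x}\ \le\ \frac{2C_1}{s_0}\,T\rho^{x}+2C_1\rho^{x}\ =\ \frac{2C_1 C}{s_0}\cdot\frac{\varepsilon}{x}+2C_1\rho^{x},
\]
and for every fixed $C>0$ the right-hand side is below $\varepsilon+\varepsilon=2\varepsilon$ once $x$ is large enough (depending on $\varepsilon$ and $p$); in particular this yields a constant $C=C(p)$ as claimed.

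The main obstacle is the persistence step: one must argue cleanly that escaping $B_x$ forces a rightward jump across a single fixed bond --- which relies on $A_0$ being a genuine invariant state space for the dynamics and on the fact that particles and holes are conserved and move only by nearest-neighbour steps --- and check that the rate bound $p$ is uniform in $x$ and in the configuration. The checkpoint bookkeeping (strong Markov, choice of interval length) and the elementary geometric tail estimate are routine by comparison.
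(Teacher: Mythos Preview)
Your proof is correct and takes a genuinely different, more elementary route than the paper's. The paper proceeds via two auxiliary lemmas in the appendix: one (relying on Theorem~1.9 of \cite{BBHM:MixingBias}) bounds the expected hitting time of the ground state $\vartheta_0$ starting from the configuration with $x$ particles in $[-x,0)$ and $x$ holes in $(0,x]$; the other uses Kac's formula for the embedded discrete chain to obtain a lower bound $\E_{\nu_{(0)}}[\tau^+_{B_x}]\ge \tfrac{C}{x}\bigl(\tfrac{p}{1-p}\bigr)^{x}$ on the expected return time to $B_x$. Lemma~\ref{lem:HittingBlockingMeasure} is then deduced by contradiction: if \eqref{eq:HittingBlocking} failed, a renewal-type recursion would force $\E_{\nu_{(0)}}[\tau^+_{B_x}]$ strictly below this lower bound.

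Your argument bypasses all of this machinery. The persistence step---that escaping $B_x$ forces a rightward jump across a \emph{single fixed} bond, which in the graphical construction is governed by a Poisson clock of rate $p$---is the crux, and it converts the static tail bound $\nu_{(0)}(B_x)\le C_1\rho^{x}$ directly into the dynamical estimate via stationarity at the checkpoints $ks_0$ and a union bound over the $\lceil T/s_0\rceil$ intervals. This avoids the input from \cite{BBHM:MixingBias} entirely and uses only the explicit marginals \eqref{eq:BernoulliProductc} of the blocking measure plus nearest-neighbour conservation. The tradeoff is that your proof delivers exactly the stated conclusion and nothing more, whereas the paper's approach produces quantitative hitting- and return-time estimates along the way; since those auxiliary estimates are not reused elsewhere in the paper, your shortcut would serve equally well here.
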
 
 
\subsection{Current for the simple exclusion process} \label{sec:Current}

This section is dedicated to the study of the current for the simple exclusion process with open boundaries. Currents are one of the main objects for the exclusion process in statistical mechanics with deep connections to second class particles, see \cite{BS:OrderCurrent,FF:CurrentFluctuations, USW:PASEPcurrent}. Intuitively, the current formalizes the way of counting the number of particles which pass through the segment over time. For our purposes, current arguments will be used in order to prove the upper bounds in Theorems \ref{thm:asymmetricOneSide} and \ref{thm:asymmetricTwoSides}. \\

For $p \in \left( \frac{1}{2},1\right]$, assume that $\min(\alpha,\beta)>0$ holds. On the segment of size $N$, let $J^{N+}_t$ be the number of particles which have entered at the left-hand side of the segment by time $t$ and let $J^{N-}_t$ be the number of particles which have exited at the left-hand side of the segment by time $t$.  Let $(J^N_t)_{t \geq 0}$ with
\begin{equation}\label{def:current}
J^N_t := J^{N+}_t - J^{N-}_t \ \text{ for all } t \geq 0
\end{equation}   be the \textbf{current} of the simple exclusion process with open boundaries. Similarly, one could define the current with respect to the net number of particles crossing the right-hand side of the segment, leading to the same long-term behavior. The following lemma states an asymptotic bound on the current. We obtain it from the results  in \cite[Section 6]{USW:PASEPcurrent} and the observation that under the above assumptions, the simple exclusion process with open boundaries is a positive recurrent Feller process; see also Theorem 3.6 in \cite{CW:TableauxCombinatorics}.

\begin{lemma}\label{lem:current} Recall the definition of $a$ and $b$ from \eqref{def:a} and \eqref{def:b} and set
\begin{equation}
J =J(a,b,p) := \begin{cases} (2p-1)\frac{a}{(1+a)^2} & \text{ if }  a>\max(b,1)   \\
(2p-1)\frac{b}{(1+b)^2} & \text{ if } b >\max(a,1)  \\
(2p-1)\frac{1}{4}  & \text{ if } \max(a,b)\leq  1 \ .
\end{cases}
\end{equation} Then the current $(J^N_t)_{t \geq 0}$ of the simple exclusion process with open boundaries satisfies
\begin{equation}
\lim_{t \rightarrow \infty} \frac{J^N_t}{t} = J_N
\end{equation} almost surely for some deterministic sequence $(J_N)_{N \in \N}$ with $\lim_{N \rightarrow \infty}J_N = J$. 
\end{lemma}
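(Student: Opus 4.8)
The plan is to deduce the almost-sure convergence of $J^N_t/t$ directly from ergodicity of the process, and then to identify the limit $J_N$ via the known matrix-ansatz current formulas of \cite{USW:PASEPcurrent}. First I would fix $N$ and note that under the assumption $\min(\alpha,\beta)>0$ (together with $p\in(\frac12,1]$), the simple exclusion process with open boundaries on the segment of size $N$ is an irreducible continuous-time Markov chain on the finite state space $\Omega_N=\{0,1\}^N$, hence ergodic with a unique stationary distribution $\mu=\mu_N$. The current $J^N_t=J^{N+}_t-J^{N-}_t$ is an additive functional of the chain: it counts net jumps across the left edge, so it can be written as $J^N_t = \sum (\text{signed jump indicators})$ over the Poisson clock rings at vertex $1$. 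Writing it as an integral of a (local, bounded) rate function against the occupation measure plus a martingale increment term, the ergodic theorem for continuous-time Markov chains gives
\begin{equation}
\lim_{t\to\infty}\frac{J^N_t}{t} = J_N := \alpha\,\mu_N(\eta(1)=0) - \gamma\,\mu_N(\eta(1)=1) \quad\text{a.s.}
\end{equation}
The martingale part is $o(t)$ a.s.\ because its predictable quadratic variation grows at most linearly (the jump rates are bounded), so Doob's inequality plus Borel–Cantelli along integer times, together with monotonicity control between consecutive integers, upgrades this to the full a.s.\ statement. This handles the existence of the limit $J_N$ for each finite $N$.

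Next I would identify $\lim_{N\to\infty} J_N = J$. Here the content is entirely imported: the steady-state current of the open ASEP was computed exactly in \cite[Section 6]{USW:PASEPcurrent} using Askey–Wilson polynomials, and its large-$N$ asymptotics are precisely the three-regime formula in the statement — $(2p-1)a/(1+a)^2$ in the low-density phase $a>\max(b,1)$, $(2p-1)b/(1+b)^2$ in the high-density phase $b>\max(a,1)$, and $(2p-1)/4$ in the maximal-current phase $\max(a,b)\le 1$. So I would quote that result, taking care to translate their parametrization of the boundary rates into the quantities $a=a(\alpha,\gamma,p)$ and $b=b(\beta,\delta,p)$ defined in \eqref{def:a}–\eqref{def:b}; these are exactly the roots arising in the DEHP algebra / Askey–Wilson normalization, so the translation is a bookkeeping check rather than a computation. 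The right edge current has the same long-run behavior because in steady state the expected net flux across any bond is the same (conservation up to boundary terms, which vanish after dividing by $t$).

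The step I expect to be the main obstacle — or at least the only one requiring genuine care — is making the reduction to \cite{USW:PASEPcurrent} airtight: that reference works with the stationary measure and the expected current $\langle J\rangle_{\mu_N}$, whereas here I need (i) the pathwise ergodic identification $J^N_t/t \to \langle\text{boundary rate}\rangle_{\mu_N}$ and (ii) that this stationary expected current coincides with the quantity whose asymptotics are tabulated there. Point (i) is the standard Markov-chain ergodic argument sketched above and is routine. For point (ii), one must check that the "current" in \cite{USW:PASEPcurrent} is defined as the mean number of particles crossing a bond per unit time in stationarity, i.e.\ $J_N = \alpha\,\mu_N(\eta(1)=0)-\gamma\,\mu_N(\eta(1)=1)$, which is indeed their definition once rates are matched; and that their large-$N$ limit is continuous in the phase diagram in the sense claimed (no hidden dependence on $N$ beyond the stated $J_N\to J$). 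Everything else — boundedness of rates, finiteness of the state space, the martingale estimate — is elementary.
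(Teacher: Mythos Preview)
Your proposal is correct and matches the paper's own treatment: the paper does not give a detailed proof but simply states that the lemma follows from the results in \cite[Section 6]{USW:PASEPcurrent} together with the observation that the open ASEP on the segment is an ergodic Feller process. Your write-up spells out the ergodic-theorem step (almost-sure convergence of the additive functional $J^N_t/t$ to the stationary boundary rate) in more detail than the paper does, but the structure---ergodicity for the existence of $J_N$, then citation of \cite{USW:PASEPcurrent} for $J_N\to J$---is identical.
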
 We refer to $J_N$ as the \textbf{flux} of the simple exclusion process with open boundaries on the segment of size $N$.

\subsection{Invariant measures of the simple exclusion process} \label{sec:Invariant}

In this section, we focus on the stationary distribution $\mu$ of the simple exclusion process with open boundaries. A beautiful combinatorial description of $\mu$ is given in \cite{CW:TableauxCombinatorics} using staircase tableaux. The following result, which is adopted from \cite{BCEPR:CombinatoricsPASEP}, shows that under certain conditions on the boundary parameters, the invariant distribution has a product structure. In general, $\mu$ can not be stated in a simple closed form.
\begin{lemma}[c.f.\ \cite{BCEPR:CombinatoricsPASEP}, Proposition 2] \label{lem:stationaryDistribution} Suppose that $\min(\alpha,\beta)>0$ and $a=\frac{1}{b}$ holds for $a$ and $b$ given in \eqref{def:a} and \eqref{def:b}. Then for every configuration $\eta \in \Omega_N$, we have that
\begin{equation}
\mu(\eta)=\frac{1}{(\alpha+\beta+\gamma+\delta)^N}(\alpha+\delta)^{\abs{\eta}}(\beta+\gamma)^{N-\abs{\eta}}= \left( \frac{1}{1+a}\right)^{\abs{\eta}}\left( \frac{a}{1+a}\right)^{N-\abs{\eta}}
\end{equation} where $\abs{\eta}:=\sum_{i=1}^N \eta(i)$ denotes the number of particles in $\eta$. 
\end{lemma}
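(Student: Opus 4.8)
The plan is to verify directly that the product measure appearing in the statement satisfies the stationarity equation $\sum_{\eta}\mu(\eta)\mathcal{L}f(\eta)=0$ for every $f\colon\Omega_N\to\R$; since $\min(\alpha,\beta)>0$ makes the process ergodic on the finite space $\Omega_N$, this identifies $\mu$ as the unique invariant distribution (one could instead translate the notation of \cite[Proposition~2]{BCEPR:CombinatoricsPASEP}, but the check below is short and self-contained). Write $q=1-p$, set $\rho:=\frac{1}{1+a}$ so that $1-\rho=\frac{a}{1+a}$, and let $\mu$ be the Bernoulli product measure of intensity $\rho$, i.e.\ $\mu(\eta)=\rho^{\abs{\eta}}(1-\rho)^{N-\abs{\eta}}$.

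First I would record the algebraic identities for $a$ and $b$. Squaring \eqref{def:a} shows that $a$ solves $\alpha a^2-(2p-1-\alpha+\gamma)a-\gamma=0$, which factors as $(a+1)(\alpha a-\gamma)=(2p-1)a$; similarly $(b+1)(\beta b-\delta)=(2p-1)b$. Dividing the first relation by $(1+a)^2$ gives
\begin{equation*}
\alpha(1-\rho)-\gamma\rho=\frac{\alpha a-\gamma}{1+a}=(2p-1)\frac{a}{(1+a)^2}=(2p-1)\rho(1-\rho).
\end{equation*}
Substituting $b=1/a$ into the relation for $b$ and clearing denominators yields $\beta-\delta a=\frac{(2p-1)a}{1+a}$, hence $\delta(1-\rho)-\beta\rho=\frac{\delta a-\beta}{1+a}=-(2p-1)\rho(1-\rho)$. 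Comparing the two displays gives $\alpha a-\gamma=\beta-\delta a$, i.e.\ $a(\alpha+\delta)=\beta+\gamma$; this is exactly the identity that turns $\frac{\alpha+\delta}{\alpha+\beta+\gamma+\delta}$ into $\frac{1}{1+a}=\rho$, yielding the second equality in the statement (and, by the definition of $\rho$, the $a$-form as well).

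Next I would compute $\sum_\eta\mu(\eta)\mathcal{L}f(\eta)$ term by term. For a bulk edge $\{x,x+1\}$, the reindexing $\zeta=\eta^{x,x+1}$ — under which $\mu$ is invariant, as a constant-intensity product measure is exchangeable — converts the contribution of that edge into $\sum_\eta\mu(\eta)f(\eta)\,(p-q)\bigl(\eta(x+1)-\eta(x)\bigr)$, and summing over $x=1,\dots,N-1$ telescopes to $(2p-1)\sum_\eta\mu(\eta)f(\eta)\bigl(\eta(N)-\eta(1)\bigr)$. For the left boundary, the reindexing $\zeta=\eta^{1}$ together with $\mu(\eta^1)/\mu(\eta)\in\{\rho/(1-\rho),(1-\rho)/\rho\}$ gives, after collecting the cases $\eta(1)=0$ and $\eta(1)=1$,
\begin{equation*}
\frac{D}{\rho(1-\rho)}\sum_\eta\mu(\eta)f(\eta)\bigl(\eta(1)-\rho\bigr),\qquad D:=\alpha(1-\rho)-\gamma\rho,
\end{equation*}
and the right boundary similarly contributes $\frac{D'}{\rho(1-\rho)}\sum_\eta\mu(\eta)f(\eta)(\eta(N)-\rho)$ with $D':=\delta(1-\rho)-\beta\rho$. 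Inserting the two boundary identities from the previous step, the coefficient of $\eta(1)$ in the resulting bracket is $-(2p-1)+(2p-1)=0$, the coefficient of $\eta(N)$ is $(2p-1)-(2p-1)=0$, and the constant term is proportional to $D+D'=0$; hence $\sum_\eta\mu(\eta)\mathcal{L}f(\eta)=0$ for all $f$, and $\mu$ is the stationary distribution.

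The argument is essentially routine; the only points requiring care are the bookkeeping of which of $\alpha,\gamma$ (resp.\ $\delta,\beta$) is the creation and which the annihilation rate — this is what forces the signs of $D$ and $D'$ to be opposite — and the observation that $\mu(\eta^{x,x+1})=\mu(\eta)$, which is what makes the bulk sum telescope. There is no genuine obstacle: the substance of the lemma is precisely the algebraic fact that $a=1/b$ is the compatibility condition under which the ``left-boundary density'' $\tfrac{1}{1+a}$ coincides with the ``right-boundary density'' $\tfrac{b}{1+b}$, so that a single product measure can be stationary.
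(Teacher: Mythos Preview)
Your proof is correct. The paper does not supply its own argument for this lemma; it simply quotes the result from \cite{BCEPR:CombinatoricsPASEP}. Your direct verification of the stationarity equation is self-contained and accurate: the quadratic identities $(a+1)(\alpha a-\gamma)=(2p-1)a$ and $(b+1)(\beta b-\delta)=(2p-1)b$ are precisely what one obtains from \eqref{def:a} and \eqref{def:b}, and together with $b=1/a$ they yield both the boundary-balance relations $D=-D'=(2p-1)\rho(1-\rho)$ and the equality $a(\alpha+\delta)=\beta+\gamma$ needed for the two forms of $\mu$ to coincide. The bulk telescoping and the boundary reindexing are handled correctly, so the whole argument goes through without gaps.
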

Next, we compare the stationary measure $\mu$ to the Bernoulli-$\rho$-product measures $\nu_{\rho}$ for some $\rho \in [0,1]$ on $\Omega_N$. More generally, let $\nu,\nu^{\prime}$ be two probability measures defined on a common probability space $\Omega$ which is equipped with a partial order $\succeq$. We say that $\nu$ \textbf{stochastically dominates} $\nu^{\prime}$ with respect to $\succeq$ (and write $\nu \succeq \nu^{\prime}$) if there exists a coupling $P$  with $X \sim \nu$ and $Y\sim \nu^{\prime}$ such that $P(X \succeq Y)=1$. An equivalent definition using increasing functions can be found in \cite[Theorem B.9]{L:Book2}.
\begin{lemma}\label{lem:ComparsionStationaryMeasure} Suppose that $\min(\alpha,\beta)>0$ holds. Then the stationary distribution $\mu$ of the simple exclusion process with open boundaries satisfies
\begin{equation}
\nu_{\c_{\max}} \succeq_\c \mu \succeq_\c \nu_{\c_{\min}}
\end{equation} where
\begin{equation}
c_{\min} := \min\left(\frac{1}{1+a},\frac{b}{1+b}\right) \quad \mbox{and} \quad c_{\max} := \max\left(\frac{1}{1+a},\frac{b}{1+b}\right) \ .
\end{equation}
\end{lemma}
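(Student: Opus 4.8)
\textbf{Plan of proof for Lemma \ref{lem:ComparsionStationaryMeasure}.} The strategy is to realize each of the three measures as the time-$\infty$ law of a suitably coupled exclusion process and then transfer the ordering from the processes to their stationary laws via Lemma \ref{lem:MonotoneCouplingComponentwise}. Concretely, I would argue as follows. First, observe that the Bernoulli-$\rho$-product measure $\nu_\rho$ on $\Omega_N$ is itself the stationary distribution of a simple exclusion process with open boundaries for an appropriate choice of boundary parameters: if we run the exclusion dynamics with bulk drift $p$ but replace the four boundary rates by rates that insert a particle at site $1$ with rate proportional to $\rho$ and a hole with rate proportional to $1-\rho$, and likewise at site $N$, then the product measure $\nu_\rho$ satisfies detailed balance for the boundary flips and is left invariant by the bulk generator $\mathcal L_{\textup{ex}}$ (since $\nu_\rho$ restricted to $\{0,1\}^N$ is exchangeable, the bulk part of the generator annihilates $\nu_\rho$; this is the standard fact that i.i.d. Bernoulli measures are reversible for the symmetric part and stationary for the asymmetric part of a nearest-neighbour exclusion generator on a finite segment, because on the segment the flux out of site $1$ and into site $N$ are balanced by the boundary terms). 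By ergodicity this is the unique stationary law, so $\nu_\rho$ is exactly the invariant measure of that modified process.

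Next I would identify, for the two extreme densities $c_{\min}$ and $c_{\max}$, choices of boundary parameters $(\alpha',\beta',\gamma',\delta')$ and $(\alpha'',\beta'',\gamma'',\delta'')$ realizing $\nu_{c_{\min}}$ and $\nu_{c_{\max}}$ respectively, and which are ordered against $(\alpha,\beta,\gamma,\delta)$ in exactly the sense required by Lemma \ref{lem:MonotoneCouplingComponentwise}, namely $\alpha''\ge \alpha\ge\alpha'$, $\beta''\le\beta\le\beta'$, $\gamma''\le\gamma\le\gamma'$, $\delta''\ge\delta\ge\delta'$. Here the key computation is to relate the density of the product measure to the $a,b$ data: the single-site marginal at the left boundary of the true process is governed by $\tfrac{1}{1+a}$ and at the right boundary by $\tfrac{b}{1+b}$ (this is the content of the heuristic stated just before the lemma, and is made precise in Lemma \ref{lem:stationaryDensity} / the matrix-product computation of \cite{BCEPR:CombinatoricsPASEP}); thus pushing both boundaries toward the higher of these two densities gives an upper-comparison process whose invariant law is $\nu_{c_{\max}}$, and pushing both toward the lower density gives the lower-comparison process with invariant law $\nu_{c_{\min}}$. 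One must check that monotonically raising the effective injection density at a boundary corresponds to the correct monotonicity of $(\alpha,\gamma)$ (resp.\ $(\beta,\delta)$) in the partial order of Lemma \ref{lem:MonotoneCouplingComponentwise}; this is a direct sign check using that increasing $\alpha$ or decreasing $\gamma$ makes the left boundary ``more occupied'', and symmetrically at the right.

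Having set this up, the conclusion is immediate: couple the true process $(\eta_t)$ with the upper-comparison process $(\zeta^{\max}_t)$ via the extended canonical coupling $\mathbf P$ of Lemma \ref{lem:MonotoneCouplingComponentwise}, started from $\zeta^{\max}_0 = \mathbf 1 \succeq_\c \eta_0$; then $\zeta^{\max}_t \succeq_\c \eta_t$ for all $t$, and letting $t\to\infty$ and using that both marginals converge in distribution to their respective invariant measures (ergodicity, plus the fact that $\succeq_\c$ is a closed partial order on the finite set $\Omega_N$ so stochastic domination passes to the limit) yields $\nu_{c_{\max}} = \nu_{c_{\max}} \succeq_\c \mu$. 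The lower bound $\mu \succeq_\c \nu_{c_{\min}}$ is obtained identically with the roles reversed, starting both processes from $\mathbf 0$.

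\textbf{Main obstacle.} The routine part is the coupling/limit argument; the substantive point is verifying that $\nu_{c_{\min}}$ and $\nu_{c_{\max}}$ really are the stationary measures of exclusion processes with open boundaries whose parameters sandwich $(\alpha,\beta,\gamma,\delta)$ in the precise ordering demanded by Lemma \ref{lem:MonotoneCouplingComponentwise}. This requires (i) the clean fact that a Bernoulli-product measure is invariant for an open-boundary exclusion process exactly when the two boundaries inject at the matching density, which forces a relation $\alpha'\delta' \leftrightarrow \rho$ type constraint and a compatible bulk balance, and (ii) identifying $\tfrac{1}{1+a}$ and $\tfrac{b}{1+b}$ as the relevant left- and right-boundary densities of the true chain, so that the extreme product measures are genuinely comparable to $\mu$ rather than merely plausible candidates. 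I would expect to invoke Lemma \ref{lem:stationaryDistribution} (the $a=1/b$ product case) together with the monotonicity in the boundary parameters to interpolate: one does not even need to exhibit the comparison processes for every $(a,b)$, only to know that adjusting the boundary rates of a process with a product stationary law moves its density monotonically, which is again Lemma \ref{lem:MonotoneCouplingComponentwise} applied within the product-measure family.
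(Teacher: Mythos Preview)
Your approach is correct and essentially the same as the paper's: realize $\nu_{c_{\min}}$ and $\nu_{c_{\max}}$ as stationary laws of comparison processes via Lemma~\ref{lem:stationaryDistribution}, then use Lemma~\ref{lem:MonotoneCouplingComponentwise} and pass to the limit. The paper's execution is simply more economical: rather than adjusting both boundaries, it notes that $a(\alpha,\gamma,p)$ is monotone in $\alpha$, so one can tune a \emph{single} parameter (say $\alpha\mapsto\alpha'\le\alpha$) to force $a'=1/b$ while keeping $\beta,\gamma,\delta$ fixed, which immediately yields the required ordering of boundary rates and the product stationary law in one stroke.
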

\begin{proof} We consider only $\mu \succeq_\c \nu_{\c_{\min}}$ for $c_{\min}=\frac{b}{1+b}$ as the remaining cases are similar. In this case, we have $a\leq \frac{1}{b}$, and we recall $a=a(\alpha,\gamma,p)$ from \eqref{def:a}. Observe that $a$ is decreasing in $\alpha$ and note that we can choose  some $\alpha^{\prime} \in  (0,\alpha]$ such that $a^{\prime}:=a(\alpha^{\prime},\gamma,p)$ satisfies $a^{\prime}=\frac{1}{b}$. We conclude by  Lemma \ref{lem:MonotoneCouplingComponentwise} and Lemma \ref{lem:stationaryDistribution}.
\end{proof}
Note that Lemma \ref{lem:ComparsionStationaryMeasure} is motivated by treating the simple exclusion process with open boundaries as having reservoirs at both ends with densities $\frac{1}{1+a}$ and $\frac{b}{1+b}$, respectively, and $\mu$ interpolating between both sides. The next result characterizes how the interpolation within the stationary distribution $\mu$ is realized. Using Lemma \ref{lem:current}  and Lemma \ref{lem:ComparsionStationaryMeasure}, it follows from the same arguments as Theorem 3.29 in \cite{L:Book2}.
\begin{lemma} \label{lem:stationaryDensity} Suppose that $\min(\alpha,\beta)>0$ holds. Let $(x_N)_{N \in \N}$ be a sequence with $\min(x_N,\frac{N}{2}-x_N) \rightarrow \infty$ for $N \rightarrow \infty$. Further, let $\mu_{N}$ denote the measure on $\{0,1\}^{\N}$ given on the sites $1,\dots,N-2x_N$ by the restriction of $\mu$ to $[x_N,N-x_N]$, and by the Dirac measure on empty sites everywhere else. Then we have that 
\begin{equation}
\lim_{N \rightarrow \infty} \mu_{N} = \begin{cases} 
\nu_{\frac{1}{1+a}} & \text{if } a>\max(b,1) \\
\nu_{\frac{b}{1+b}} & \text{if } b>\max(a,1) \\ 
\nu_{\frac{1}{2}} & \text{if } \max(a,b)\leq 1,
\end{cases}
\end{equation} where the limit is with respect to weak convergence, and the product measures are defined on $\{0,1\}^\N$.
\end{lemma}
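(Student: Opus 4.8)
The plan is to identify the weak limit of $\mu_N$ by combining the classical theory of invariant measures of the simple exclusion process on $\Z$ with the two quantitative inputs already at hand: the stochastic domination of Lemma \ref{lem:ComparsionStationaryMeasure} and the flux of Lemma \ref{lem:current}, in the spirit of \cite[Theorem 3.29]{L:Book2}. Write $\rho_{\ast}$ for the target density, i.e.\ $\rho_{\ast}=\frac{1}{1+a}$ in the low density phase, $\rho_{\ast}=\frac{b}{1+b}$ in the high density phase, and $\rho_{\ast}=\frac12$ in the maximal current phase, so that $(2p-1)\rho_{\ast}(1-\rho_{\ast})=J$ with $J$ as in Lemma \ref{lem:current}. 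Since $\{0,1\}^{\N}$ is compact in the product topology, it suffices to show that every weak subsequential limit $\mu_{\infty}$ of $(\mu_N)_{N\in\N}$ equals $\nu_{\rho_{\ast}}$; I would fix such a $\mu_{\infty}$ and, after shifting the window so that its midpoint (rather than its left endpoint) is mapped to the origin, regard $\mu_{\infty}$ as the restriction to $\N$ of a probability measure on $\{0,1\}^{\Z}$.

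First I would verify that this limiting measure is invariant for the simple exclusion process on $\Z$. For a fixed cylinder function $f$, the assumption $\min(x_N,\frac N2-x_N)\to\infty$ guarantees that for all large $N$ the coordinates on which $f$ depends correspond to sites of the segment whose distance to both endpoints tends to infinity, so there the generator $\mathcal{L}$ of the open process agrees with $\mathcal{L}^{\Z}_{\textup{ex}}$; stationarity of $\mu$ then gives $\int \mathcal{L}^{\Z}_{\textup{ex}}f\,\diff\mu_N=0$, and letting $N\to\infty$ along the subsequence gives $\int \mathcal{L}^{\Z}_{\textup{ex}}f\,\diff\mu_{\infty}=0$. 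In parallel, restricting the sandwich $\nu_{c_{\max}}\succeq_{\c}\mu\succeq_{\c}\nu_{c_{\min}}$ of Lemma \ref{lem:ComparsionStationaryMeasure} to a fixed window and passing to the weak limit (domination with respect to the closed order $\succeq_{\c}$ is preserved under weak convergence) yields $\nu_{c_{\max}}\succeq_{\c}\mu_{\infty}\succeq_{\c}\nu_{c_{\min}}$.

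Next I would invoke the classification of the invariant measures of the one-dimensional simple exclusion process: its extremal invariant measures are the homogeneous Bernoulli product measures $\nu_{\rho}$, $\rho\in[0,1]$, together with the blocking measures $\nu_{(n)}$. Since $0<c_{\min}\le c_{\max}<1$, the sandwich excludes any blocking component, so $\mu_{\infty}$ lies in $\mathcal I\cap\mathcal S$ and may be written $\mu_{\infty}=\int_0^1\nu_{\rho}\,\lambda(\diff\rho)$ for a probability measure $\lambda$ on $[0,1]$; testing the sandwich against the increasing functions $\eta\mapsto\prod_{i\le k}\eta(i)$ and letting $k\to\infty$ confines the support of $\lambda$ to $[c_{\min},c_{\max}]$. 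It then remains to pin down $\lambda$, and this is where Lemma \ref{lem:current} enters: in stationarity the current of $\mu$ across every bond equals the flux $J_N\to J$, so by continuity of the bounded local observable $p\,\eta(x)(1-\eta(x+1))-(1-p)\,\eta(x+1)(1-\eta(x))$ the current of $\mu_{\infty}$ across every bond equals $J$; as $\nu_{\rho}$ has bond current $(2p-1)\rho(1-\rho)$, this is $\int_0^1 \rho(1-\rho)\,\lambda(\diff\rho)=\rho_{\ast}(1-\rho_{\ast})$. A short case analysis using the defining inequalities of the three phases — comparing $\frac{1}{1+a}+\frac{b}{1+b}$ with $1$ in the high and low density phases, and using $\rho(1-\rho)\le\frac14$ with equality only at $\frac12$ in the maximal current phase — shows that $\rho_{\ast}(1-\rho_{\ast})$ is the unique extremum of $\rho\mapsto\rho(1-\rho)$ on $[c_{\min},c_{\max}]$, which forces $\lambda$ to be the point mass at $\rho_{\ast}$ and hence $\mu_{\infty}=\nu_{\rho_{\ast}}$.

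The step I expect to be the main obstacle is the characterisation: importing the classification of invariant measures of the exclusion process on $\Z$ in exactly the form needed and using the stochastic sandwich to discard the blocking measures (so that the weak limit genuinely lands in $\mathcal I\cap\mathcal S$ and not merely in $\mathcal I$), together with the bookkeeping needed to treat $\mu_{\infty}$ as a measure on $\{0,1\}^{\Z}$. By contrast, the proof that the limit is invariant and the concluding elementary case analysis pinning $\rho(1-\rho)$ to its extremal value on $[c_{\min},c_{\max}]$ are routine.
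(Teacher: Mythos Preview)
Your proposal is correct and is precisely the approach the paper has in mind: the paper's entire proof is the sentence ``Using Lemma~\ref{lem:current} and Lemma~\ref{lem:ComparsionStationaryMeasure}, it follows from the same arguments as Theorem~3.29 in \cite{L:Book2},'' and what you have written is a faithful expansion of that argument. One minor point: the strict inequalities $0<c_{\min}$ and $c_{\max}<1$ can fail (e.g.\ $a=0$ or $b=0$), but this does no harm---in the high and low density phases one of the two bounds is always strict, which suffices to rule out blocking components, while in the maximal current phase the current constraint $\int\rho(1-\rho)\,\lambda(\diff\rho)=\tfrac14$ alone forces $\lambda=\delta_{1/2}$ and eliminates any blocking mass.
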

When particles are allowed to enter and exit only from one side of the segment, the measure $\mu$ is reversible and can be given explicitly.  More precisely, we say that $\mu$ is \textbf{reversible} for the simple exclusion process with open boundaries if
\begin{equation}\label{def:Reversibility}
\sum_{\eta \in \Omega_N} f(\eta)(\mathcal{L}g)(\eta)  \mu(\eta)= \sum_{\eta \in \Omega_N} (\mathcal{L}f)(\eta)g(\eta) \mu(\eta)
\end{equation} holds for all functions $f,g \colon \Omega_N \rightarrow \R$. Suppose that particles are only allowed to enter and exit at the right-hand side, i.e.\ $\max(\alpha,\gamma)=0$ holds (a similar formula will hold  when $\max(\beta,\delta)=0$). For $p \in (0,1]$ and $\min(\beta,\delta)>0$, consider $\mu$ with
\begin{equation}\label{eq:reversibleDistribution}
\mu(\eta) = \frac{1}{Z_N} \left(\frac{\delta}{\beta}\right)^{|\eta|} \cdot \prod_{i=1}^{\abs{\eta}}\left(\frac{1-p}{p}\right)^{z_i} \ \text{ for all } \eta \in \Omega_N,
\end{equation}
 where $z_i$ denotes the distance of the $i^{\textup{th}}$ particle from site $N$ and $Z_N$ is a normalization constant. Then $\mu$ is reversible for the process $(\eta_t)_{t \geq 0}$.  When $\min(\beta,\delta)=0$ holds, $\mu$ is the Dirac measure on $\mathbf{1}$ if $\beta=0$ and on $\mathbf{0}$ if $\delta=0$. 

\subsection{The censoring inequality}\label{sec:Censoring}

The censoring inequality is a very recent technique in order to give upper bounds on the mixing time.
First established by Peres and Winkler in \cite{PW:Censoring} for spin systems, it was applied to the simple exclusion process by Lacoin in \cite{L:CutoffSEP}. In words, this inequality says that leaving out transitions of the exclusion process along certain edges only increases the distance from equilibrium. Using a slightly more general definition than in \cite{PW:Censoring}, we say that a \textbf{censoring scheme} $\mathcal{C}$ for $(\eta_t)_{t \geq 0}$ is a random càdlàg function 
\begin{equation}
\mathcal{C} \colon \mathbb{R}_0^+ \rightarrow \mathcal{P}\left( E \right)
\end{equation} which does not depend on the process $(\eta_t)_{t \geq 0}$. Here, $ \mathcal{P}\left( E \right)$ denotes the power set of the edges where we treat the boundary interactions as edges to reservoirs at positions $0$ and $N+1$, respectively.
In the censored dynamics $(\eta^{\mathcal{C}}_t)_{t\geq 0}$, a transition along an edge $e$ at time $t$ is performed if and only if $e \notin \mathcal{C}(t)$. The following censoring inequality with respect to the partial order $\succeq_{\h}$ for the simple exclusion process with open boundaries is an immediate consequence of Theorem 1.1 and Lemma 2.1 in \cite{PW:Censoring}.
\begin{lemma}[c.f. \cite{L:CutoffSEP}, Proposition 6.2]\label{lem:Censoring}  Let $\mathcal{C}$ be a censoring scheme for the simple exclusion process with open boundaries. For an initial configuration $\eta$ and $t \geq 0$, let $P_{\eta}(\eta_t \in \cdot)$ and $P_{\eta}(\eta^{\mathcal{C}}_t \in \cdot)$ denote the law of $(\eta_t)_{t\geq 0}$ and its censored dynamics $(\eta^{\mathcal{C}}_t)_{t\geq 0}$ at time $t\geq 0$, respectively. Under the assumptions of Lemma \ref{lem:MonotoneCouplingHeightFunction}, we have that
\begin{equation}\label{eq:CensoringDomination}
P_{\mathbf{1}}(\eta^{\mathcal{C}}_t \in \cdot) \succeq_\h P_{\mathbf{1}}(\eta_t \in \cdot) \quad \text{and} \quad P_{\mathbf{0}}(\eta^{\mathcal{C}}_t \in \cdot) \preceq_\h P_{\mathbf{0}}(\eta_t \in \cdot) \ \text{ for all } t\geq 0.
\end{equation} Moreover, the density function $\eta \mapsto \frac{1}{\mu(\eta)}P_{\mathbf{1}}(\eta_t =\eta)$ is increasing with respect to the partial order $\succeq_\h$ and we have that for all $t\geq 0$
\begin{equation}
\TV{P_{\mathbf{1}}(\eta^{\mathcal{C}}_t \in \cdot) - \mu} \geq \TV{P_{\mathbf{1}}(\eta_t \in \cdot) - \mu}
\end{equation} and 
\begin{equation}
\TV{P_{\mathbf{0}}(\eta^{\mathcal{C}}_t \in \cdot) - \mu} \geq \TV{P_{\mathbf{0}}(\eta_t \in \cdot) - \mu} \ .
\end{equation} 
\end{lemma}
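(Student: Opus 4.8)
The plan is to deduce the censoring inequality for the open exclusion process from the abstract censoring inequality of Peres--Winkler, by realizing the dynamics with one of the blocked boundaries as a monotone spin system on the height function representation. Concretely, assume $\max(\alpha,\gamma)=0$ (the other case follows by the vertical flip described in Section \ref{sec:HeightFunction}). First I would encode a configuration $\eta \in \Omega_N$ by the lattice path $h_\eta$ from \eqref{def:HeightFunction}, so that the state space becomes a set of monotone lattice paths and the partial order $\succeq_\h$ becomes the pointwise order on paths; the boundary interaction at $N$ (rates $\beta,\delta$) corresponds to adding or removing a box at the apex of the path, exactly like a corner-flip update, and the bulk exclusion jumps correspond to the standard corner flips of the path. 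In this representation the generator is that of a monotone (attractive) single-site/edge update chain with a well-defined maximal state $\mathbf{1}$ (the fully-raised path) and minimal state $\mathbf{0}$, which is precisely the setting covered by Theorem 1.1 and Lemma 2.1 of \cite{PW:Censoring}; the hypotheses of Lemma \ref{lem:MonotoneCouplingHeightFunction} are exactly what guarantees monotonicity of the updates and hence applicability.

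Next I would invoke \cite[Theorem 1.1]{PW:Censoring}: for a monotone system started from the top configuration, censoring any (deterministic, or independently random) set of updates can only slow the chain down, which gives both the stochastic domination $P_{\mathbf 1}(\eta^{\mathcal C}_t \in \cdot)\succeq_\h P_{\mathbf 1}(\eta_t\in\cdot)$ and, via \cite[Lemma 2.1]{PW:Censoring}, the monotonicity of the likelihood ratio $\eta\mapsto P_{\mathbf 1}(\eta_t=\eta)/\mu(\eta)$ with respect to $\succeq_\h$ (this monotonicity is the statement that the chain started from $\mathbf 1$ stays "FKG-above" equilibrium). The corresponding statements started from $\mathbf 0$ follow either by the same theorem applied to the bottom configuration, or by the symmetry $\eta\mapsto\mathbf 1-\eta$ combined with the vertical flip. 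Since $\mathcal{C}$ is assumed independent of the process, the extension from deterministic to random censoring schemes is the routine conditioning argument already present in \cite{PW:Censoring}.

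For the total-variation bounds I would use the likelihood-ratio monotonicity together with the observation that, because $h_{\mathbf 1}$ dominates every path and $\mu$-a.e.\ path, the set $A^\ast := \{\eta : P_{\mathbf 1}(\eta_t=\eta) \ge \mu(\eta)\}$ is an up-set for $\succeq_\h$; then $\TV{P_{\mathbf 1}(\eta_t\in\cdot)-\mu} = P_{\mathbf 1}(\eta_t\in A^\ast)-\mu(A^\ast)$ by \eqref{def:TVDistance}. Applying the stochastic domination $P_{\mathbf 1}(\eta^{\mathcal C}_t\in\cdot)\succeq_\h P_{\mathbf 1}(\eta_t\in\cdot)$ to the increasing indicator $\mathds 1_{A^\ast}$ gives $P_{\mathbf 1}(\eta^{\mathcal C}_t\in A^\ast)\ge P_{\mathbf 1}(\eta_t\in A^\ast)$; one must also check the analogous likelihood-ratio monotonicity for the censored chain so that the same up-set $A^\ast$ (or a larger one) computes its TV distance, which again is part of the Peres--Winkler package since the censored chain is itself a monotone system. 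Combining, $\TV{P_{\mathbf 1}(\eta^{\mathcal C}_t\in\cdot)-\mu}\ge P_{\mathbf 1}(\eta^{\mathcal C}_t\in A^\ast)-\mu(A^\ast) \ge P_{\mathbf 1}(\eta_t\in A^\ast)-\mu(A^\ast)=\TV{P_{\mathbf 1}(\eta_t\in\cdot)-\mu}$. The bound from $\mathbf 0$ is symmetric.

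The main obstacle is verifying that the open-boundary dynamics, with the creation/annihilation moves at site $N$, genuinely fits the hypotheses of the Peres--Winkler framework — i.e.\ that after passing to the height-function/lattice-path picture all updates (bulk corner flips and the boundary box addition/removal) are monotone with respect to $\succeq_\h$ and that the chain has the required maximal and minimal states. This is exactly where the one-sided assumption $\max(\alpha,\gamma)=0$ (or $\max(\beta,\delta)=0$) is essential: with two genuinely open boundaries the natural state space is no longer a lattice of monotone paths with a top element, and the argument breaks down, which is why Lemma \ref{lem:Censoring} is stated only under the assumptions of Lemma \ref{lem:MonotoneCouplingHeightFunction}. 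Once that structural check is in place, everything else is a direct citation of \cite[Theorem 1.1, Lemma 2.1]{PW:Censoring} as in \cite[Proposition 6.2]{L:CutoffSEP}.
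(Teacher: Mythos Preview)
Your proposal is correct and follows essentially the same approach as the paper: the paper states the lemma as ``an immediate consequence of Theorem 1.1 and Lemma 2.1 in \cite{PW:Censoring}'', and your proposal spells out exactly this reduction via the height-function representation under the one-sided assumption of Lemma \ref{lem:MonotoneCouplingHeightFunction}. Your elaboration of the TV-distance step via the up-set $A^\ast$ is more detailed than the paper's bare citation but is the standard way the Peres--Winkler package delivers that conclusion.
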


\begin{remark}\label{rem:CensoringBlocking} Using the partial order $\succeq_\h$ from \eqref{def:partialorderBlocking} for the simple exclusion process on $\Z$, the same arguments show that for all $n \in \Z$, 
\begin{equation}
P_{\vartheta_n}(\eta^{\mathcal{C}}_t \in \cdot) \preceq_\h P_{\vartheta_n}(\eta_t \in \cdot) \ \text{ for all } t\geq 0,
\end{equation}
where $\vartheta_n$ is the ground state of $A_n$, see \eqref{groundstateconfig}.
\end{remark}

\section{Lower bounds for the symmetric exclusion process} \label{sec:LowerBounds}

In this section, we prove the lower bounds in Theorems \ref{thm:symmetricTwoSided} and \ref{thm:symmetricOneSide}. A key tool will be a generalized version of Wilson's lemma, which was introduced in \cite{NN:CutoffCyclic} for discrete-time Markov chains. It transfers to our setup as follows. For a Feller process $(X_t)_{t \geq 0}$ with generator $\mathcal{A}$, we consider a function $F$ which behaves almost like an eigenfunction of $-\mathcal{A}$. Further, let $(M_t)_{t \geq 0}$ be the associated martingale given by
\begin{equation}\label{def:DynkinMartingale}
M_t := F(X_t) - F(X_0) - \int_{0}^{t} (\mathcal{A}F)(X_s) \diff s \ \text{ for all } t \geq 0.
\end{equation}
We denote its quadratic variation by $(\langle M \rangle_t)_{t \geq 0}$.  For an introduction to martingales and their quadratic variation, we refer to \cite[Chapter 3 and 5]{L:Book3}. The next lemma is similar to Lemma 2 in \cite{NN:CutoffCyclic}. Its proof is deferred to the appendix.

\begin{lemma}[Generalized Wilson's lemma]\label{lem:GeneralizedWilsonContinuousTime} Let $(X_t)_{t \geq 0}$ be an irreducible Feller process with finite state space $S$ and generator $\mathcal{A}$. Let $F \colon S \rightarrow \R$ be a function with
\begin{equation}\label{eq:Eigenfunction} 
\abs{(-\mathcal{A}F)(y) - \lambda F(y) } \leq c \ \text{ for all } y \in S,
\end{equation} with constants $\lambda > 0$ and $c \geq 0$ with $\lambda \geq c$. Moreover, we assume that the quadratic variation $(\langle M \rangle_t)_{t \geq 0}$ of the associated martingale defined in \eqref{def:DynkinMartingale} satisfies
\begin{equation} \label{eq:QVBound}
\frac{\diff}{\diff t}\E\left[ \langle M \rangle_t\right] \leq R
\end{equation}
for some $R>0$ and all $t \geq 0$. Then for all $\varepsilon \in (0,1)$, the $\varepsilon$-mixing time $t_{\text{\normalfont mix}}(\varepsilon)$ of $(X_t)_{t \geq 0}$ satisfies
\begin{equation} \label{eq:LowerBoundWilson}
t_{\text{\normalfont mix}}(1-\varepsilon) \geq \frac{1}{\lambda}\log \left( \pdist{F}{\infty} \right) -   \frac{1}{2\lambda} \log\left(  \frac{ 16 (3c  \pdist{F}{\infty}+ \max(R,c))}{\lambda\varepsilon}\right).
\end{equation} 
\end{lemma}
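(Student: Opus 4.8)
The plan is to adapt the standard Wilson-type argument to continuous time, tracking an almost-eigenfunction $F$ along the process and showing that it stays large for a long time with good probability, which then forces the total-variation distance from $\mu$ to be close to $1$. First I would use the eigenfunction-defect bound \eqref{eq:Eigenfunction} to compute the drift of $e^{\lambda t}F(X_t)$: writing the Dynkin martingale $M_t$ from \eqref{def:DynkinMartingale}, we have $\frac{\diff}{\diff t}\E[F(X_t)] = \E[(\mathcal{A}F)(X_t)]$, so by \eqref{eq:Eigenfunction} the function $g(t):=\E_y[F(X_t)]$ satisfies $|g'(t)+\lambda g(t)|\le c$ for every starting state $y$; Gronwall then gives $|g(t) - e^{-\lambda t}F(y)| \le \frac{c}{\lambda}(1-e^{-\lambda t}) \le \frac{c}{\lambda}$. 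In particular, starting from a state $y^\ast$ with $|F(y^\ast)| = \pdist{F}{\infty}$, the expectation $\E_{y^\ast}[F(X_t)]$ is of size at least $e^{-\lambda t}\pdist{F}{\infty} - \frac{c}{\lambda}$, which stays comparable to $e^{-\lambda t}\pdist{F}{\infty}$ as long as $t$ is somewhat below $\frac{1}{\lambda}\log(\pdist{F}{\infty})$.

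Next I would control the fluctuations of $F(X_t)$ around this mean. Using that $M_t$ is a martingale with $\E[\langle M\rangle_t] \le Rt$ by \eqref{eq:QVBound} and that $F(X_t) = F(y^\ast) + \int_0^t (\mathcal{A}F)(X_s)\diff s + M_t$, I would bound $\V_{y^\ast}(F(X_t))$ by a combination of the martingale variance ($\le Rt$) and the variance contributed by the integral term, the latter being controlled because $(\mathcal{A}F)(X_s) = -\lambda F(X_s) + O(c)$ and $F$ is bounded by $\pdist{F}{\infty}$; this yields a bound of the shape $\V_{y^\ast}(F(X_t)) \lesssim t(R + c^2 + c\,\pdist{F}{\infty}) $ up to constants, which for the relevant range of $t$ is $\lesssim \frac{1}{\lambda}\log(\pdist{F}{\infty})\,(c\,\pdist{F}{\infty} + \max(R,c))$. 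Meanwhile, under the stationary measure $\mu$, I would note that $\E_\mu[F] $ and $\V_\mu(F)$ are both bounded: since $\frac{\diff}{\diff t}\E_\mu[F(X_t)]=0$, the defect bound forces $|\lambda\,\E_\mu[F]|\le c$, so $|\E_\mu[F]|\le c/\lambda$, and a similar computation on $\E_\mu[F(X_t)^2]$ using the carré-du-champ / the martingale bracket bounds $\V_\mu(F)$ by $O\big((c\,\pdist{F}{\infty}+\max(R,c))/\lambda\big)$.

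Finally I would run the standard distinguishing-statistic argument: pick $t$ with $e^{-\lambda t}\pdist{F}{\infty}$ equal to a large multiple of $\sqrt{\V_{y^\ast}(F(X_t))} + \sqrt{\V_\mu(F)}$, take the event $A = \{|F - \E_\mu F| > \tfrac12 e^{-\lambda t}\pdist{F}{\infty}\}$, and apply Chebyshev on both sides to get $\P_{y^\ast}(X_t \in A)$ close to $1$ while $\mu(A)$ is small, so $\TV{\P_{y^\ast}(X_t\in\cdot)-\mu} \ge 1-\varepsilon$. Solving for the threshold $t$ in terms of the variances — the dominant variance term being $\tfrac{1}{\lambda}\log(\pdist{F}{\infty})\cdot\max(3c\,\pdist{F}{\infty}, \max(R,c))$ up to the universal constant $16$ absorbing the Chebyshev factors — gives exactly the claimed bound $t_{\text{\normalfont mix}}(1-\varepsilon) \ge \frac{1}{\lambda}\log(\pdist{F}{\infty}) - \frac{1}{2\lambda}\log\!\big(\frac{16(3c\,\pdist{F}{\infty}+\max(R,c))}{\lambda\varepsilon}\big)$. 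The main obstacle I anticipate is the variance estimate: getting the constants to line up (the factor $3$, the $\max(R,c)$, the $16$) requires care in splitting $\V_{y^\ast}(F(X_t))$ into the $\int (\mathcal{A}F)$-contribution and the martingale contribution and bounding each without losing more than a constant, and one must be slightly careful that $t$ is small enough that $e^{-\lambda t}$ factors do not degrade the lower bound; the rest is bookkeeping modeled on Lemma 2 of \cite{NN:CutoffCyclic}.
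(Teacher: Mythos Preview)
Your overall architecture --- control the mean of $F(X_t)$ by Gronwall, control the variance, then distinguish via Chebyshev --- is exactly the paper's. The mean estimate and the stationary bounds $|\E_\mu F|\le c/\lambda$ are also handled the same way.

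The gap is in your variance step. Splitting $F(X_t)-F(y^\ast)=\int_0^t(\mathcal{A}F)(X_s)\,\diff s + M_t$ and bounding the two pieces separately does \emph{not} give $\V_{y^\ast}(F(X_t))\lesssim t(R+c\pdist{F}{\infty})$: the martingale part contributes $\le Rt$, but the integral $\int_0^t(\mathcal{A}F)(X_s)\,\diff s$ has no martingale structure, and writing $(\mathcal{A}F)(X_s)=-\lambda F(X_s)+O(c)$ leaves you with $-\lambda\int_0^t F(X_s)\,\diff s$, whose variance you cannot bound by $O(t)$ without circularity (it could in principle be of order $t^2$). Even if the bound $\V\lesssim t\cdot(\dots)$ held, plugging $t\sim\lambda^{-1}\log\pdist{F}{\infty}$ would produce an extra $\log\log\pdist{F}{\infty}$ inside the second logarithm, so you would \emph{not} recover the stated inequality exactly.

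The paper avoids this by running Gronwall on the \emph{second} moment directly: with $g(t):=\E[F(X_t)^2]$, It\^o's formula gives $g'(t)=2\E[F(X_t)(\mathcal{A}F)(X_t)]+\frac{\diff}{\diff t}\E[\langle M\rangle_t]\le -2\lambda g(t)+2c\pdist{F}{\infty}+R$, whence $g(t)\le g(0)e^{-2\lambda t}+(c\pdist{F}{\infty}+R)/\lambda$. Subtracting $f(t)^2$ and using $g(0)=f(0)^2$ together with your mean estimate yields the \emph{uniform} bound $\V(F(X_t))\le(3c\pdist{F}{\infty}+R)/\lambda$, with no factor of $t$. This is precisely what makes the constants in \eqref{eq:LowerBoundWilson} come out as stated.
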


In order to apply Lemma \ref{lem:GeneralizedWilsonContinuousTime} for the simple exclusion process with open boundaries for $p=\frac{1}{2}$, we construct a function $F$ which satisfies \eqref{eq:Eigenfunction} and \eqref{eq:QVBound}. We call $F$ an \textbf{approximate eigenfunction}. \\

Observe that for all choices of boundary parameters and initial configurations $\eta$, we have that $(f_{\eta}(x,t))_{x \in [N], t \geq 0}$ given by 
\begin{equation*}
f_{\eta}(x,t) := \E_{\eta}[\eta_t(x)] \qquad \text{ for all } x \in [N] \text{ and } t \geq 0
\end{equation*} 
solves a discrete heat equation, where we see either discrete Neumann boundary conditions for closed endpoints, or (a variant of) discrete Dirichlet boundary conditions for open endpoints. In the following, we consider a simple exclusion process with discrete Dirichlet boundary conditions at both endpoints, and compare it to a simple exclusion process on the circle of length $2N$ with $N$ particles. On the circle, the eigenfunctions are sine and cosine waves, where the length of the circle is a multiple of the period length, see \cite[Lemma 2.2]{L:CycleDiffusiveWindow} and \cite[Section 3.4]{W:MixingLoz}. We use this intuition to construct approximate eigenfunctions as stretched and shifted eigenfunctions of the classical discrete heat equation. 
 With a slight abuse of notation, extend each $\eta \in \Omega_N$ to $\Omega_{2N,N}$ given in \eqref{def:SpaceParticles} by
\begin{equation*}
\eta(x) :=1-\eta(2N+1-x) \ \text{ for all } x \in \{N+1,\dots,2N\}.
\end{equation*} 
\begin{lemma}\label{lem:EigenfunctionTwoSides} Recall that $p=\frac{1}{2}$ and assume that $\max(\alpha,\gamma)>0$ and $\max(\beta,\delta)>0$ holds. We set
\begin{equation}\label{eq:Paramtercd}
C:=\frac{1}{2(\alpha+\gamma)}-\frac{1}{2} \quad \text{ and } \quad D:=\frac{1}{2(\beta+\delta)}-\frac{1}{2}
\end{equation}
 and define $M:=N+C+D$. Let $\phi \colon \Z/(2N)\Z \rightarrow \R$ be given by
\begin{equation}
\phi(x) :=\sin\left(\left( x+C-\frac{1}{2}\right)\frac{\pi}{M}\right) \ \text{ for all } x \in [N]
\end{equation} and set $\phi(x)=-\phi(2N+1-x)$ for all $x \in \{ N+1,\dots,2N\}$. Moreover, we let $\lambda_N:=1-\cos(\frac{\pi}{M})$ and define
\begin{equation}
\Phi_N(\eta) := \sum_{x=1}^{2N}\eta(x)\phi(x) +\frac{\phi(1)}{\lambda_N}(\gamma-\alpha)+\frac{\phi(N)}{\lambda_N}(\beta-\delta)
\end{equation} for all $\eta \in \Omega_{N}$. Then $\Phi_N$ satisfies the conditions of Lemma \ref{lem:GeneralizedWilsonContinuousTime} for $\lambda=\lambda_N$, for some $c$ of order $N^{-3}$, some $R$ of order $N^{-1}$ and $\pdist{\Phi_N}{\infty}$ of order $N$. In particular, under the above assumptions the lower bound stated in Theorem \ref{thm:symmetricTwoSided} holds.
\end{lemma}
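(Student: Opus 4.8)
The plan is to verify the three analytic conditions of Lemma~\ref{lem:GeneralizedWilsonContinuousTime} for $F=\Phi_N$ with $\lambda=\lambda_N=1-\cos(\pi/M)$, and then read off the lower bound. The guiding principle is that $f_\eta(x,t)=\E_\eta[\eta_t(x)]$ solves a discrete heat equation on $[N]$ with (a variant of) Dirichlet conditions at the two open ends; extending $\eta$ antisymmetrically to $\Omega_{2N,N}$ turns this into the heat equation on the cycle $\Z/(2N)\Z$, whose eigenfunctions are the sine/cosine waves. The constants $C,D$ in \eqref{eq:Paramtercd} and the shift $M=N+C+D$ are chosen precisely so that the stretched sine $\phi(x)=\sin\bigl((x+C-\tfrac12)\tfrac{\pi}{M}\bigr)$, together with the affine correction $\tfrac{\phi(1)}{\lambda_N}(1-2\alpha)+\tfrac{\phi(N)}{\lambda_N}(1-2\delta)$ in the definition of $\Phi_N$, absorbs the boundary terms of the generator up to an error that is small (order $N^{-3}$) once we Taylor-expand $\sin$ and $\cos$ near the boundary.

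First I would compute $(\mathcal{L}\Phi_N)(\eta)$ explicitly. In the bulk, the symmetric exclusion generator acting on $\sum_x \eta(x)\phi(x)$ produces the discrete Laplacian $\tfrac12(\phi(x+1)+\phi(x-1)-2\phi(x))=-\lambda_N\phi(x)$ exactly, because $\phi$ is (the restriction of) a genuine eigenfunction of the cycle Laplacian with eigenvalue $\lambda_N$; the antisymmetric extension $\phi(x)=-\phi(2N+1-x)$ is exactly what makes the two "seams" at $x=N,N+1$ and $x=2N,1$ consistent with reflecting the open-boundary dynamics into cycle dynamics. At the left endpoint the boundary part of $\mathcal{L}$ contributes a term proportional to $(1-\eta(1))\alpha-\eta(1)\gamma$; combining this with the missing Laplacian term at $x=1$ and using the identities forced by the choice of $C$ (namely that $\phi$ "would vanish" at the fictitious site $-2C$, i.e.\ the Dirichlet node sits at the right place), one checks that the non-eigenfunction remainder at site $1$ is of the form $\alpha\,(\text{const})\,\phi(1)+(\text{small})$, and the constant term $\tfrac{\phi(1)}{\lambda_N}(1-2\alpha)$ in $\Phi_N$ is designed so that $-\mathcal{L}$ applied to it cancels the $O(1)$ leftover, leaving only the $O(\phi''')=O(N^{-3})$ Taylor error. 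The right endpoint is handled identically using $D$ and the $(1-2\delta)$ correction. This gives \eqref{eq:Eigenfunction} with $c=O(N^{-3})$ and $\lambda=\lambda_N=\Theta(N^{-2})$, so indeed $\lambda\ge c$ for large $N$.

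Next I would bound the quadratic variation. Since $\Phi_N$ changes by at most $O(\|\phi'\|_\infty)=O(1/M)=O(N^{-1})$ at each of the $O(N)$ possible transitions (bulk swaps and the four boundary flips), each of which fires at rate $O(1)$, the predictable quadratic variation grows at rate $\tfrac{d}{dt}\E[\langle M\rangle_t]\le \sum (\text{rate})\cdot(\text{jump})^2 = O(N)\cdot O(N^{-2})=O(N^{-1})$, giving \eqref{eq:QVBound} with $R=O(N^{-1})$. Finally $\|\Phi_N\|_\infty = \Theta(N)$: the sum $\sum_x \eta(x)\phi(x)$ can be made of order $N$ by choosing $\eta$ to be all particles (the affine correction is only $O(1/\lambda_N\cdot\|\phi'\|_\infty)=O(N)$ as well, and for a lower bound one just needs a configuration — e.g.\ $\mathbf{1}$ or a half-filled wave-aligned configuration — on which $\Phi_N$ is $\Theta(N)$, which is immediate from $\phi$ being a bounded sine profile of $\Theta(N)$ many terms). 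Plugging $\lambda_N\sim \pi^2/(2M^2)\sim \pi^2/(2N^2)$, $\|F\|_\infty=\Theta(N)$, $c=O(N^{-3})$, $R=O(N^{-1})$ into \eqref{eq:LowerBoundWilson} yields $t_{\mathrm{mix}}(1-\varepsilon)\ge \tfrac{1}{\lambda_N}\log(\Theta(N)) - \tfrac{1}{2\lambda_N}\log(\Theta(N^2)/\varepsilon) = (1+o(1))\tfrac{1}{\lambda_N}\cdot\tfrac12\log N \ge (1+o(1))\tfrac{N^2}{\pi^2}\log N$, which is exactly the lower bound $\tfrac{1}{\pi^2}$ in \eqref{eq:SymmetricPrecutoff} (after replacing $1-\varepsilon$ by $\varepsilon$, legitimate since the bound is uniform in $\varepsilon$).

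The main obstacle is the boundary bookkeeping in Step~2: one must verify carefully that with the specific $C,D$ of \eqref{eq:Paramtercd} the $O(1)$ discrepancies between the true generator and $-\lambda_N\Phi_N$ at sites $1$ and $N$ are cancelled exactly by the $\mathcal{L}$-image of the added constants $\tfrac{\phi(1)}{\lambda_N}(1-2\alpha)$ and $\tfrac{\phi(N)}{\lambda_N}(1-2\delta)$ (note $\mathcal{L}$ of a constant-in-$\eta$ quantity is $0$, so in fact these constants do not feel $\mathcal{L}$ at all — the cancellation is rather that $\lambda_N$ times these constants matches the residual linear-in-$\eta(1)$ and linear-in-$\eta(N)$ boundary terms), and that the leftover is genuinely $O(N^{-3})$ uniformly in $\eta$. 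Everything else — the bulk eigenfunction identity, the quadratic-variation estimate, the sup-norm lower bound — is routine once the boundary algebra is pinned down.
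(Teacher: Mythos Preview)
Your approach is essentially the same as the paper's: verify the bulk eigenfunction identity $(\Delta\phi)(x)=-\lambda_N\phi(x)$ via trigonometric identities, use the specific choice of $C,D$ together with Taylor expansion to show the boundary discrepancies at $x=1$ and $x=N$ are $O(N^{-3})$, bound the quadratic-variation rate by (number of clocks)$\times$(maximal jump)$^2 = O(N)\cdot O(N^{-2})$, and observe $\|\Phi_N\|_\infty=\Theta(N)$ via $\mathbf{1}$ or $\mathbf{0}$. Your self-correction in the parenthetical (that the added constants are felt through $\lambda_N\cdot\text{const}$, not through $\mathcal{L}$) is exactly the right mechanism.

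One arithmetic slip: in your final plug-in, the argument of the second logarithm in \eqref{eq:LowerBoundWilson} is $\Theta(N)/\varepsilon$, not $\Theta(N^2)/\varepsilon$, since $3c\|F\|_\infty+\max(R,c)=\Theta(N^{-1})$ and $\lambda=\Theta(N^{-2})$. With $\Theta(N^2)$ the two terms would cancel to $O(1/\lambda)$; with the correct $\Theta(N)$ you indeed get $\tfrac{1}{2\lambda_N}\log N\sim \tfrac{N^2}{\pi^2}\log N$, which is the bound you state.
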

\begin{proof} Using trigonometric identities, we have that
$(\Delta\phi)(x) = -\lambda_N \phi(x)$ holds for all $x \in \{2,\dots,N-1\}$. Here, $\Delta$ is the discrete Laplace operator on the circle of length $2N$, i.e.\ for all functions $f \colon \mathbb{Z}/(2N)\mathbb{Z} \rightarrow \R$, we set 
\begin{equation}\label{def:Laplacian}
(\Delta f)(x) := \frac{1}{2}(f(x-1)+f(x+1))-f(x) \ \text{ for all } x\in \mathbb{Z}/(2N)\mathbb{Z}.
\end{equation} 
By our choice of $C$ and $D$, observe that for all $N$ large enough
\begin{align} \label{eq:FirstTaylor}
\abs{(\Delta\phi)(1)+(1-\alpha-\gamma)\phi(1) + \lambda_N\phi(1)}  &\leq  \frac{c_1}{M^3} \ \text{ for 
 } c_1>0 \nonumber\\
\abs{(\Delta\phi)(N)+(1-\beta-\delta)\phi(N) + \lambda_N \phi(N)}  &\leq  \frac{c_2}{M^3} \ \text{ for } c_2>0
\end{align}
holds using the Taylor expansion of the sine and trigonometric identities. For fixed $x\in \mathbb{Z}/(2N)\mathbb{Z}$, we let $g_{x}(\eta)=\eta(x)$ for all $\eta\in \Omega_N$, and note that
\begin{align*}
\sum_{x=1}^{N}(\mathcal{L}g_x)(\eta) \phi(x) &= \sum_{x=1}^{N}(\Delta\eta)(x) \phi(x) +\phi(1)\Big(\eta(1)(1-\alpha-\gamma)+ \alpha-\frac{1}{2}\Big)  \\
&+  \phi(N)\Big(\eta(N)(1-\beta-\delta) +\delta-\frac{1}{2}\Big)
\end{align*} and $(\mathcal{L}g_x)(\eta)=-(\mathcal{L}g_{2N+1-x)})(\eta)$ for all $x\in [N]$ and $\eta\in \Omega_N$. Since
\begin{equation*}
\sum_{x=1}^{2N}(\Delta\eta)(x) \phi(x) = \sum_{x=1}^{2N}(\Delta\phi)(x) \eta(x)
\end{equation*} we can use $(\Delta\phi)(0)\eta(0)=-(\Delta\phi)(1)(1-\eta(1))$ and $(\Delta\phi)(N+1)=-(\Delta\phi)(N)(1-\eta(N))$ to see that
\begin{align*}
\sum_{x=1}^{2N}(\mathcal{L}g_x)(\eta) \phi(x) &= \sum_{x=2}^{N-1}(\Delta\eta)(x) \phi(x) +  \sum_{x=N+2}^{2N-1}(\Delta\eta)(x) \phi(x)   \\
&+2\phi(1)\Big(\eta(1)(1-\alpha-\gamma)+ \alpha-\frac{1}{2}\Big) +  2\phi(N)\Big(\eta(N)(1-\beta-\delta) +\delta-\frac{1}{2}\Big) \\
&+  (\Delta\phi)(1)(2\eta(1)-1) +  (\Delta\phi)(N)(2\eta(N)-1) \, .
\end{align*}
In particular, using \eqref{eq:FirstTaylor}  to rewrite $(\Delta\phi)(1)$ and $(\Delta\phi)(N)$, we get that
\begin{equation*}
\abs{   (2\eta(N)-1)\phi(N)(1-\beta-\delta) + (2\eta(1)-1)\phi(1)(1-\alpha-\gamma) + \sum_{x=1}^{2N}((\Delta\eta)(x) + \lambda_N \eta(x)) \phi(x)   } 
\end{equation*} is bounded from above by $2(c_1+c_2)M^{-3}$.
This yields
\begin{align*}
\abs{(-\mathcal{L})\Phi_N(\eta)-\lambda_N\Phi_N(\eta) } \leq  \frac{2(c_1+c_2)}{M^3}
\end{align*} and gives condition \eqref{eq:Eigenfunction} in Lemma \ref{lem:GeneralizedWilsonContinuousTime}. To verify condition \eqref{eq:QVBound}, we follow the ideas of the proof of Lemma 2.2 in \cite{L:CycleDiffusiveWindow}.
Observe that the process $(\Phi(\eta_t))_{t \geq 0}$ can change its value only when an edge or boundary vertex is updated. This happens at a rate $N^{\prime}$ Poisson clock where $N^{\prime}:=N-1+\alpha+\beta+\gamma+\delta$. For two configurations $\eta$ and $\eta^{\prime}$ which differ by at most one transition, we have that
 $$\abs{\Phi(\eta)- \Phi(\eta^{\prime})} \leq 2\max_{x \in [2N]}\abs{\phi(x)-\phi(x+1)} \leq \frac{c_3}{M} \, $$ for some constant $c_3=c_3(C,D)>0$. Combining these observations, we conclude
\begin{equation*}
\frac{\diff}{\diff t} \E\left[\langle M \rangle_t \right] \leq  N^{\prime} \left(\frac{c_3}{M} \right)^{2} \ \text{ for all } t\geq 0.
\end{equation*} This gives the desired bound on $R$ of order $N^{-1}$. Since $\max(\abs{\Phi_N(\mathbf{1})},\abs{\Phi_N(\mathbf{0})})$ is of order $N$, we see that Lemma \ref{lem:GeneralizedWilsonContinuousTime} yields the lower bound stated in Theorem \ref{thm:symmetricTwoSided}.
\end{proof}

Next, we consider the case of the simple exclusion process with open boundaries when particles are allowed to enter and exit the segment only at one side. Without loss of generality, assume that $\max(\alpha,\gamma)=0$ and $\max(\beta,\delta)>0$ holds. We will provide an argument similar to Lemma \ref{lem:EigenfunctionTwoSides}. To do so, we will use the height function representation of the simple exclusion process with open boundaries defined in Section \ref{sec:HeightFunction}.

\begin{lemma}\label{lem:EigenfunctionOneSide} Recall that $p=\frac{1}{2}$ and assume that $\max(\alpha,\gamma)=0$ and $\max(\beta,\delta)>0$ holds. For $D$ defined in \eqref{eq:Paramtercd}, let $\tilde{\phi} \colon \Z/(2N)\Z \rightarrow \R$ be
\begin{equation}
\tilde{\phi}(x) :=\sin\left(\frac{x \pi}{2(N+D)}\right) \ \text{ for all } x \in [N-1]
\end{equation} and set $\tilde{\phi}(x)=\tilde{\phi}(2N+1-x)$ for all $x \in \{ N+1,\dots,2N\}$. Moreover, we set $\tilde{\lambda}_N := 1-\cos(\frac{\pi}{2(N+D)})$ and define
\begin{equation}\label{eq:definitionPhiN}
 \tilde{\phi}(N):= \frac{1}{\beta+\delta-\tilde{\lambda}_N}\tilde{\phi}(N-1) \, .
\end{equation} Recall the height function for the simple exclusion process defined in \eqref{def:HeightFunction} and set
\begin{equation}
\tilde{\Phi}_N(\eta) := \sum_{x=1}^{2N}h_{\eta}(x)\tilde{\phi}(x) +\frac{\tilde{\phi}(N)}{\tilde{\lambda}_N}(\beta-\delta) \ \text{ for all } \eta \in \Omega_N.
\end{equation} Then $\tilde{\Phi}_N$ satisfies the conditions of Lemma \ref{lem:GeneralizedWilsonContinuousTime} for $\lambda=\tilde{\lambda}_N$, some $c$ of order $N^{-4}$, some $R$ of order $N$ and $\pdist{\tilde{\Phi}_N}{\infty}$ of order $N^2$. In particular, the lower bound in Theorem \ref{thm:symmetricOneSide}  holds for $\max(\alpha,\gamma)=0$ and $\max(\beta,\delta)>0$.
\end{lemma}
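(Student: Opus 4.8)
The plan is to mirror the proof of Lemma \ref{lem:EigenfunctionTwoSides}, but working with the height function $h_\eta$ instead of the occupation variables, since now particles enter and exit only at the right endpoint. First I would record the key linear identity: for the symmetric process ($p=\frac12$) with $\max(\alpha,\gamma)=0$, the expected height function $g_\eta(x,t) := \E_\eta[h_{\eta_t}(x)]$ evolves according to a discrete heat equation on $\{1,\dots,2N\}$ with a reflecting (discrete Neumann-type) boundary condition at $x=N$ coming from the $\beta,\delta$ dynamics, and with $h_\eta(0)=h_\eta(2N)=0$ fixed. Concretely, one checks that $(\mathcal{L}h_\eta)(x) = (\Delta h_\eta)(x)$ for $x \in \{1,\dots,N-1\}$ (the bulk edges simply perform the discrete Laplacian on the height function, by the standard correspondence between exclusion jumps and $\pm 2$ height changes), while at $x=N$ the boundary clocks contribute $(\mathcal{L}h_\eta)(N) = \tilde{c}\,(\text{something involving }h_\eta(N-1),\,\delta-\beta)$; writing this out and using the symmetrized extension $\tilde\phi(x)=\tilde\phi(2N+1-x)$ reduces everything to the circle $\Z/(2N)\Z$, exactly as in the two-sided case.

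Next I would verify the approximate-eigenfunction bound \eqref{eq:Eigenfunction}. By construction $\tilde\phi(x)=\sin\!\big(\frac{x\pi}{2(N-D)}\big)$ satisfies $(\Delta\tilde\phi)(x) = -\tilde\lambda_N\tilde\phi(x)$ exactly for $x\in\{1,\dots,N-2\}$ (note $\tilde\phi(0)=0$ matches the fixed boundary value), and the definition \eqref{eq:definitionPhiN} of $\tilde\phi(N)$ together with the choice of $D$ from \eqref{eq:Paramtercd} is precisely what is needed so that the defect at $x=N-1$ and $x=N$ is negligible. A Taylor expansion of the sine shows the residual $|(\Delta\tilde\phi)(x)+\tilde\lambda_N\tilde\phi(x) + (\text{boundary terms})|$ is of order $M^{-4}$ at the boundary sites (one order better than in Lemma \ref{lem:EigenfunctionTwoSides} because we are comparing height functions rather than occupation variables, so an extra summation/cancellation is available); summing the at-most-two nonzero defect contributions gives $c$ of order $N^{-4}$ in \eqref{eq:Eigenfunction} with $\lambda=\tilde\lambda_N \asymp N^{-2}$. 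Since $\tilde\Phi_N$ is linear in $h_\eta$ plus a constant, and $(\mathcal{L}h_\eta)(x) = -(\mathcal{L}h_\eta)(2N+1-x)$ fails here — actually it is \emph{symmetric} under the reflection, which is why we take $\tilde\phi$ even rather than odd — the same collapsing argument as before yields $|(-\mathcal{L})\tilde\Phi_N(\eta) - \tilde\lambda_N\tilde\Phi_N(\eta)| \le c$.

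For the quadratic-variation bound \eqref{eq:QVBound}, I would argue as in the proof of Lemma \ref{lem:EigenfunctionTwoSides}, following \cite[Lemma 2.2]{L:CycleDiffusiveWindow}: the process $(\tilde\Phi_N(\eta_t))_{t\ge0}$ jumps at rate $O(N)$, and a single transition changes each height value $h_\eta(x)$ by at most $2$ at a bounded number of sites, hence changes $\tilde\Phi_N$ by at most $2\sum_x |\tilde\phi(x)|\cdot(\text{local change})$, which is of order $1$ (not $M^{-1}$) because now the increments of $\tilde\Phi_N$ are governed by $\tilde\phi$ itself rather than by differences of $\tilde\phi$. Squaring and multiplying by the rate $O(N)$ gives $\frac{\diff}{\diff t}\E[\langle M\rangle_t] \le R$ with $R$ of order $N$. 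Finally $\|\tilde\Phi_N\|_\infty$ is of order $N^2$ since $h_\eta$ ranges over values of order $N$ and we sum $2N$ of them against weights $\tilde\phi(x) \in [0,1]$ with a nontrivial fraction of order-$1$ size, and the maximum is attained at $\mathbf 1$ or $\mathbf 0$. Plugging $\lambda=\tilde\lambda_N = \frac{\pi^2}{8N^2}(1+o(1))$, $\|\tilde\Phi_N\|_\infty \asymp N^2$, $c = O(N^{-4})$, $R = O(N)$ into \eqref{eq:LowerBoundWilson} gives $t_{\mathrm{mix}}(1-\varepsilon) \ge \frac{1}{\tilde\lambda_N}\log(N^2)(1+o(1)) = \frac{8}{\pi^2}N^2\log N \cdot (1 + o(1))$... here I should double-check the constant: comparing with the claimed $\frac{4}{\pi^2}N^2\log N$ in Theorem \ref{thm:symmetricOneSide}, the factor $\log\|\tilde\Phi_N\|_\infty = \log(N^2) = 2\log N$ against $\frac{1}{\tilde\lambda_N} = \frac{8N^2}{\pi^2}(1+o(1))$ gives $\frac{16}{\pi^2}N^2\log N$, which is too large, so the right normalization must be $\tilde\lambda_N \asymp \frac{\pi^2}{4N^2}$ via the period $2(N-D) \approx 2N$ inside a quarter-sine — i.e. $1-\cos\frac{\pi}{2(N-D)} \approx \frac{\pi^2}{8N^2}$ forces the $\|\tilde\Phi_N\|_\infty$ contribution to be order $N$ not $N^2$ for the arithmetic to close, and indeed only the \emph{leading Fourier mode} of the height function contributes at order $N$ while the bulk averages out; I would track this carefully so that $\frac{1}{\tilde\lambda_N}\log\|\tilde\Phi_N\|_\infty$ lands on exactly $\frac{4}{\pi^2}N^2\log N$.

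\textbf{Main obstacle.} The delicate point is pinning down the exact constant: getting the boundary condition at $x=N$ to correspond to the quarter-wavelength sine (rather than a half-wavelength, which would give the two-sided constant $1/\pi^2$), and correctly identifying that $\|\tilde\Phi_N\|_\infty$ enters the Wilson bound only through its order-$N\log$-scale contribution so that $\frac{1}{\tilde\lambda_N}\log\|\tilde\Phi_N\|_\infty \to \frac{4}{\pi^2}N^2\log N$. Everything else — the heat-equation identity, the $O(M^{-4})$ defect, the $O(N)$ quadratic variation — is a routine (if tedious) trigonometric computation.
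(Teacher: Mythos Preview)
Your overall approach is the same as the paper's: work with the height function, check that $\tilde\phi$ is an approximate Laplacian eigenfunction with defect $O(N^{-4})$ at the boundary, bound the jump sizes of $\tilde\Phi_N$ by a constant so that $R$ is of order $N$, and then plug everything into Lemma \ref{lem:GeneralizedWilsonContinuousTime}. The paper's proof does exactly this and nothing more.

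The genuine gap is in your constant-matching discussion at the end. Your first instinct is right and your attempted fix is wrong: $\|\tilde\Phi_N\|_\infty$ really is of order $N^2$ (the lemma statement says so explicitly, and $h_\eta$ ranges over values of order $N$ summed over $2N$ sites against weights of order $1$). What you are missing is that the \emph{second} term in \eqref{eq:LowerBoundWilson} is not lower order here. With $\tilde\lambda_N \sim \pi^2/(8N^2)$, $\|\tilde\Phi_N\|_\infty \asymp N^2$, $c\asymp N^{-4}$ and $R\asymp N$, the first term gives $\tfrac{1}{\tilde\lambda_N}\log\|\tilde\Phi_N\|_\infty \sim \tfrac{16}{\pi^2}N^2\log N$, while inside the second term $3c\|\tilde\Phi_N\|_\infty + \max(R,c) \asymp N$ and hence the argument of the logarithm is $\asymp N/\tilde\lambda_N \asymp N^3$, so the second term contributes $-\tfrac{1}{2\tilde\lambda_N}\cdot 3\log N \sim -\tfrac{12}{\pi^2}N^2\log N$. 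The net is $\tfrac{4}{\pi^2}N^2\log N$, as claimed. So do not try to argue $\|\tilde\Phi_N\|_\infty$ down to order $N$; instead, keep both terms of Wilson's bound.

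A minor remark: your quadratic-variation paragraph is a bit loose. A single transition (either a bulk swap along $\{x,x+1\}$ or a boundary flip at $N$) changes $h_\eta$ by $\pm 2$ at a single site in $[N]$ together with its mirror image in $\{N+1,\dots,2N\}$, so $|\tilde\Phi_N(\eta)-\tilde\Phi_N(\eta')|\le 2\cdot 2\max_x|\tilde\phi(x)|$, which is bounded; the paper simply records that the difference is at most $2$ and multiplies by the total rate $N-1+\beta+\delta$ to get $R\le 4(N-1+\beta+\delta)$.
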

\begin{proof} 
Using trigonometric identities, we have that
\begin{equation}
(\Delta\tilde{\phi})(x) = -\tilde{\lambda}_N \tilde{\phi}(x)
\end{equation} holds for all $x \in \{1,\dots,N-2\}\cup \{N+2,\dots,2N-1\}$.
By our choice of $D$, observe that
\begin{equation} \label{eq:SecondTaylor}
\abs{\frac{1}{2}\tilde{\phi}(N-2) + \frac{\beta+\delta}{2}\tilde{\phi}(N)- (1-\tilde{\lambda}_N)\tilde{\phi}(N-1)} \leq  \frac{\tilde{c}}{N^4}
\end{equation}
 holds for some constant $\tilde{c}>0$ using the Taylor expansion for $\tilde{\phi}$ and $\tilde{\lambda}_N$, and  trigonometric identities. For fixed $x\in [2N]$, set $G_x(\eta)=h_{\eta}(x)$ for all $\eta\in \Omega_N$, and observe that
\begin{align*}
(\mathcal{L}G_x)(\eta) = (\Delta h_{\eta})(x)  + \mathds{1}_{\{ x=N\}} \left(\left(1-\delta-\beta\right)(\Delta h_{\eta})(N) + \delta-\beta\right) \, .
\end{align*} 
Together with the facts that $h_\eta(0)=0$ and $\tilde\phi(0)=0$, and \eqref{eq:SecondTaylor} we obtain
\begin{align*}
\sum_{x=1}^{2N-1}(\mathcal{L}G_x)(\eta) \tilde\phi(x) &= \sum_{x=1}^{2N-1}(\Delta h_\eta)(x) \tilde\phi(x) +\tilde\phi(N)((\Delta h_\eta)(N)(\beta+\delta-1)+ \delta-\beta) \\
&= - \tilde{\lambda}_N \sum_{x=1}^{2N-1} G_x(\eta)  \tilde\phi(x) + \tilde{\phi}(N)(\delta-\beta) \, .
\end{align*} 
Hence, we see that
\begin{align*}
|(-\mathcal{L})\tilde{\Phi}_N(\eta)-\tilde{\lambda}_N\tilde{\Phi}_N(\eta)| \leq \frac{\tilde{c}}{N^4}
\end{align*} holds, which gives condition \eqref{eq:Eigenfunction} of Lemma \ref{lem:GeneralizedWilsonContinuousTime} for $c$ of order $N^{-4}$. For condition \eqref{eq:QVBound}, we again follow the ideas of the proof of Lemma 2.2 in \cite{L:CycleDiffusiveWindow}. Note that the process $(\tilde{\Phi}(\eta_t))_{t \geq 0}$ can change its value only when an edge or boundary vertex is updated. This happens at a rate $N^{\prime}$ Poisson clock for $N^{\prime}=N-1+\beta+\delta$. For two configurations $\eta$ and $\eta^{\prime}$ which differ by at most one transition, observe that $\tilde{\Phi}(\eta)$ and $\tilde{\Phi}(\eta^{\prime})$ differ by at most $2$. Hence, we conclude that
\begin{equation*}
 \frac{\diff}{\diff t}\E\left[ \langle M \rangle_t\right] \leq  4 N^{\prime} \ \text{ for all } t\geq 0.
\end{equation*} This gives the desired bound on $R$ of order $N$. Since we have that $\max(|\tilde{\Phi}(\mathbf{1})|,|\tilde{\Phi}(\mathbf{0})|)$ is of order $N^2$, Lemma \ref{lem:GeneralizedWilsonContinuousTime} yields the desired lower bound.
\end{proof}
Combining Lemma \ref{lem:EigenfunctionTwoSides} and Lemma \ref{lem:EigenfunctionOneSide}, this finishes the proof of the lower bounds in Theorem \ref{thm:symmetricTwoSided} and Theorem \ref{thm:symmetricOneSide}.

\section{Upper bounds for the SSEP with open boundaries}\label{sec:UpperBounds}

In this section, we prove the upper bounds in Theorem \ref{thm:symmetricTwoSided} and Theorem \ref{thm:symmetricOneSide}. We start with a general upper bound for the simple exclusion process with open boundaries for $p=\frac{1}{2}$ and arbitrary boundary rates with $\max(\alpha,\beta,\gamma,\delta)>0$. This bound is refined in Section \ref{sec:UpperBoundCutoff} when particles enter and exit only at one side of the segment. 

\subsection{A general upper bound}\label{sec:GeneralUpperBoundSymmetric}

We now prove the upper bound in Theorem \ref{thm:symmetricTwoSided}.
Without loss of generality, assume that $\max(\alpha,\beta,\gamma,\delta)=\alpha$ holds as we can flip the segment and use the particle -- empty site symmetry, otherwise. Let $\tau$ be the first time at which all second class particles have left in the disagreement process $(\xi_t)_{t \geq 0}$. By Corollary \ref{cor:SecondClassParticles} (taking $t = \log\frac{N}{\varepsilon}$) it suffices to show that for some constant $c>0$, and all $t \geq 0$,
\begin{equation}\label{eq:TauSymmetric}
\mathbf{P}\left( \tau > c t N^2 \right) \leq N e^{-t}\, .
\end{equation} Since $p=\frac{1}{2}$ and objects of the same type are indistinguishable, we can also describe the dynamics along the edges such that the values of the endpoints are swapped at rate $\frac{1}{2}$, independently. From this perspective, the second class particles perform continuous-time simple random walks with absorption at (at least one of) the boundaries. Using a comparison to the Gambler's ruin problem on $[N]$ with reflection at the right-hand side, we see that with probability at least $\frac{1}{2}$, a given second class particle gets either absorbed or reaches site $1$ by time $2N^2$. Note that this bound does not depend on the starting point of the particle. 
Moreover, for a second class particle at site $1$ at time $t$, with probability at least $(1- e^{-\alpha})e^{-1/2}$ the particle gets absorbed at the boundary by time $t+1$. Thus, we have that
\begin{equation}\label{eq:SymmetricExitingSecondClass1}  
\mathbf{P}\left(  \tau_{\ast} >  2N^2+1  \right) \leq  1-
\frac{1- e^{-\alpha}}{2e^{1/2}}
\end{equation} 
holds, where $\tau_{\ast}$ denotes the absorption time of a fixed second class particle in the above dynamics. Using \eqref{eq:SymmetricExitingSecondClass1} and the Markov property, we see that
\begin{equation}\label{eq:SymmetricExitingSecondClass2}
\mathbf{P}\left(  \tau_{\ast} > t \frac{2e^{1/2}}{1- e^{-\alpha}}
(2N^2+1)  \right) \leq 
\left(1- \frac{1- e^{-\alpha}}{2e^{1/2}}\right)^{t\frac{2e^{1/2}}{1- e^{-\alpha}}} \leq 
e^{-t} \ .
\end{equation} for all $t\in \N$. The inequality \eqref{eq:TauSymmetric}, and hence the upper bound in Theorem \ref{thm:symmetricTwoSided} follow using a union bound on the events in \eqref{eq:SymmetricExitingSecondClass2}, and choosing $c$ accordingly. 
\begin{remark}\label{rem:relaxSymmetric} Note that by a standard argument, the bound in \eqref{eq:TauSymmetric} implies that any eigenvalue $\lambda$ of the generator of the symmetric simple exclusion process with open boundaries must satisfy $|\lambda|^{-1} \leq cN^2$, see Corollary 12.7 in \cite{LPW:markov-mixing} for a similar statement for reversible, discrete-time Markov chains.
\end{remark}

\subsection{Cutoff for the SSEP with one open boundary}\label{sec:UpperBoundCutoff}

In this section, we prove the upper bound in Theorem \ref{thm:symmetricOneSide} using the ideas and results of \cite{L:CutoffSEP}. Since large parts of the proof will follow verbatim from the arguments in Section 8 of \cite{L:CutoffSEP} for the simple exclusion process, we will focus on presenting the required adjustments in the proof rather than giving full details. In Sections \ref{sec:CorrelationProperty} to \ref{sec:ScalingLimitHeightFunction}, we collect some technical results on the simple exclusion process with open boundaries. Together with the results presented in Section \ref{sec:PreliminariesBDEP}, this will cover the corresponding preliminaries on the simple exclusion process in Section 6 of \cite{L:CutoffSEP}. In Section \ref{sec:ProofSymmetric}, we highlight how these results are used if one adapts the arguments of \cite{L:CutoffSEP} for the simple exclusion process with one open boundary.

\subsubsection{Correlation properties of the SSEP with one open boundary}\label{sec:CorrelationProperty}
Our first preliminary result is the FKG-inequality as well as a corollary of Holley's inequality for the simple exclusion process with $p=\frac{1}{2}$ and one open boundary. For any two configurations $\eta,\zeta \in \Omega_N$, we let $\min(\eta,\zeta)$ and $\max(\eta,\zeta)$  be the configurations in $\Omega_N$ which satisfy
\begin{equation}
h_{\min(\eta,\zeta)}(x) := \min\left( h_{\eta}(x),h_{\zeta}(x)\right) \quad \text{ and } \quad h_{\max(\eta,\zeta)}(x) := \max\left( h_{\eta}(x),h_{\zeta}(x)\right)
\end{equation} for all $x \in [N]$, respectively. Note that $\min(\eta,\zeta)$ and $\max(\eta,\zeta)$ are indeed elements of $\Omega_N$ and $\Omega_N$ equipped with these operations is a distributive lattice. 
By \eqref{eq:reversibleDistribution}
\begin{equation*}
\mu(\min(\eta,\zeta)) \mu(\max(\eta,\zeta)) = \mu(\eta)\mu(\zeta)
\end{equation*} holds when $\delta\geq \beta$, and similarly for $\delta< \beta$. With these insights, the next result follows from the same arguments as Proposition 6.1 in \cite{L:CutoffSEP}.
\begin{lemma}[c.f. \cite{L:CutoffSEP}, Proposition 6.1] \label{lem:Correlations} For any two functions $f$ and $g$ on $\Omega_N$ which are increasing with respect to the partial order $\succeq_\h$ on $\Omega_N$, we have that
\begin{equation}\label{eq:PositiveCorrelation}
\int fg d\mu \geq \int f d\mu \int g d\mu \, .
\end{equation} 
Moreover, we have for any two increasing subsets $A \subseteq B$ of $\Omega_N$ with 
\begin{equation}\label{eq:Holley}
\left\{ \min(\eta,\zeta) | \eta \in A, \zeta \in B\right\} \subseteq B
\end{equation} that $\frac{1}{\mu(A)}\int\limits_A f d\mu \geq \frac{1}{\mu(B)}\int\limits_B f d\mu $ holds for any increasing function $f$.
\end{lemma}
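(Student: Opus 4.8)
The plan is to reduce both assertions to the classical FKG inequality and to Holley's inequality on the finite distributive lattice $(\Omega_N,\min,\max)$, exactly along the lines of the proof of Proposition~6.1 in \cite{L:CutoffSEP}; the only input specific to our setting is the explicit reversible measure \eqref{eq:reversibleDistribution}. Throughout, $p=\frac{1}{2}$, $\max(\alpha,\gamma)=0$ and $\min(\beta,\delta)>0$, so $\mu$ is given by \eqref{eq:reversibleDistribution} and is strictly positive on $\Omega_N$; the case $\max(\beta,\delta)=0$ reduces to this one by the vertical flip of the segment described in Section~\ref{sec:HeightFunction}.

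The key step is the FKG lattice condition for $\mu$. Since $p=\frac{1}{2}$, the factor $\prod_{i=1}^{\abs{\eta}}\big(\tfrac{1-p}{p}\big)^{z_i}$ in \eqref{eq:reversibleDistribution} equals $1$, so $\mu(\eta)$ depends on $\eta$ only through $\abs{\eta}$, and since $\abs{\eta}=\frac{1}{2}\big(h_{\eta}(N)+N\big)$ the density $\mu$ factors through $h_{\eta}(N)$ as a strictly monotone positive function of it (increasing if $\delta\geq\beta$, decreasing if $\delta<\beta$). Combining this with $h_{\min(\eta,\zeta)}(N)=\min\big(h_{\eta}(N),h_{\zeta}(N)\big)$ and $h_{\max(\eta,\zeta)}(N)=\max\big(h_{\eta}(N),h_{\zeta}(N)\big)$ gives, in either case,
\[
\mu\big(\max(\eta,\zeta)\big)\,\mu\big(\min(\eta,\zeta)\big)=\mu(\eta)\,\mu(\zeta)\qquad\text{for all }\eta,\zeta\in\Omega_N ,
\]
so the FKG lattice condition holds, in fact with equality. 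Since $(\Omega_N,\min,\max)$ is a finite distributive lattice with order relation $\succeq_{\h}$ (as recalled before the statement; that $\min$ and $\max$ of two height functions are again height functions of elements of $\Omega_N$ follows from a short case check on the $\pm1$ increments of the two paths), the FKG inequality applies to the positive measure $\mu$ and yields \eqref{eq:PositiveCorrelation} for any pair of $\succeq_{\h}$-increasing functions $f,g$.

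For the Holley-type statement I would apply Holley's inequality to the conditional measures $\mu_A:=\mu(\,\cdot \mid A)$ and $\mu_B:=\mu(\,\cdot \mid B)$, which are well defined because $\mu>0$ and we may assume $A$ nonempty. It suffices to verify Holley's criterion $\mu_A\big(\max(\eta,\zeta)\big)\,\mu_B\big(\min(\eta,\zeta)\big)\geq\mu_A(\eta)\,\mu_B(\zeta)$ for all $\eta,\zeta\in\Omega_N$. This is trivial unless $\eta\in A$ and $\zeta\in B$, and in that case $\max(\eta,\zeta)\succeq_{\h}\eta\in A$ forces $\max(\eta,\zeta)\in A$ because $A$ is increasing, while $\min(\eta,\zeta)\in B$ by the hypothesis \eqref{eq:Holley}; after cancelling the constants $\mu(A)$ and $\mu(B)$ the criterion becomes exactly $\mu\big(\max(\eta,\zeta)\big)\mu\big(\min(\eta,\zeta)\big)\geq\mu(\eta)\mu(\zeta)$, which was just established. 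Holley's inequality then gives $\mu_A\succeq_{\h}\mu_B$ in the stochastic order, i.e.\ $\int f\,d\mu_A\geq\int f\,d\mu_B$ for every $\succeq_{\h}$-increasing $f$, which is precisely $\frac{1}{\mu(A)}\int_A f\,d\mu\geq\frac{1}{\mu(B)}\int_B f\,d\mu$.

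I do not expect a real obstacle: the whole content is that at $p=\frac{1}{2}$ the measure $\mu$ factors through $\abs{\eta}$, which makes the lattice condition automatic and in fact an equality, independently of the sign of $\delta-\beta$; the remaining points — the sign bookkeeping and the standard lattice-path fact that $\min$ and $\max$ preserve $\Omega_N$ — are routine, which is exactly why the argument follows the proof of Proposition~6.1 of \cite{L:CutoffSEP} essentially verbatim.
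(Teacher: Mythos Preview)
Your proposal is correct and follows essentially the same route as the paper: both observe that for $p=\tfrac{1}{2}$ the measure $\mu$ from \eqref{eq:reversibleDistribution} depends only on $|\eta|$ (equivalently on $h_\eta(N)$), so that $\mu(\max(\eta,\zeta))\,\mu(\min(\eta,\zeta))=\mu(\eta)\,\mu(\zeta)$, and then invoke the standard FKG and Holley machinery on the distributive lattice $(\Omega_N,\min,\max)$. Your write-up is in fact slightly more explicit than the paper's, which records the identities $\mu(\min(\eta,\zeta))=\min(\mu(\eta),\mu(\zeta))$ and $\mu(\max(\eta,\zeta))=\max(\mu(\eta),\mu(\zeta))$ (for $\delta\geq\beta$, and similarly otherwise) and then defers the rest to Proposition~6.1 of \cite{L:CutoffSEP}.
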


\subsubsection{Mean of the height function of the SSEP with one open boundary}\label{sec:MeanHeightFunction}

Next, we give an estimate on the mean of the height function of the simple exclusion process with $p=\frac{1}{2}$ and one open boundary. For a given $\eta \in \Omega_N$, we define, recalling \eqref{def:HeightFunction}
\begin{equation}\label{def:HeightStarFunction}
h^{\ast}_{\eta}(x) := h_{\eta}(x)-  \min(x,2N+1-x)\frac{\delta-\beta}{\delta+\beta} \ \text{ for all } x \in [2N].
\end{equation} Intuitively, $h^{\ast}_{\eta}$ is the height function of $\eta$ after subtracting the mean height according to equilibrium.

\begin{lemma}[c.f. \cite{L:CutoffSEP}, Lemma 6.4] \label{lem:MeanHeightFunction} For all $N$ large enough, we have that
\begin{equation}
\max_{x \in \{ 0,\dots,2N\}}\abs{\E_{\eta}\left[ h^{\ast}_{\eta_t}(x)\right] }\leq 3N e^{-\lambda t} 
\end{equation} holds for all $t \geq 0$ and initial states $\eta \in \Omega_N$, where $\lambda=1-\cos(\frac{\pi}{2N+(\beta+\delta)^{-1}})$. 
\end{lemma}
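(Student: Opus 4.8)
The plan is to show that the rescaled mean height function $u_\eta(x,t) := \E_\eta[h^{\ast}_{\eta_t}(x)]$ solves (approximately, up to boundary corrections) a discrete heat equation on $\Z/(2N)\Z$, and then to estimate its decay by expanding in the eigenbasis of the associated discrete Laplacian. First I would compute $\frac{\diff}{\diff t}\E_\eta[h_{\eta_t}(x)]$ using the generator $\mathcal{L}$: for a bulk site $x$ the exchange dynamics with $p=\frac12$ gives exactly the discrete Laplacian $(\Delta h)(x) = \frac12(h(x-1)+h(x+1)) - h(x)$ acting on the height function, since a ring of the clock on edge $\{y,y+1\}$ changes $h_\eta(2y)$ (in the height-function indexing) by $\pm 2$ with equal probability whenever the two sites disagree, and by nothing when they agree; averaging removes the indicator and leaves the linear Laplacian term. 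At the open right boundary (sites near $N$, which in the doubled picture is the ``middle'' of the segment $\{0,\dots,2N\}$ where the two halves meet), the $\beta,\delta$ transitions contribute an extra term which, after the recentering in \eqref{def:HeightStarFunction}, becomes a decay term of the form $-(\beta+\delta)$ times the value there, up to a constant that the subtraction of the equilibrium profile $\min(x,2N+1-x)\frac{\delta-\beta}{\delta+\beta}$ is designed to cancel. The closed left boundary (site $1$, i.e.\ $x=0$ and $x=2N$ in the doubled picture) imposes $h_{\eta_t}(0)=h_{\eta_t}(2N)=0$ for all $t$, a Dirichlet-type condition.

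Next I would diagonalize. The function $\tilde\phi(x) = \sin\!\big(\tfrac{x\pi}{2(N-D)}\big)$ from Lemma \ref{lem:EigenfunctionOneSide} (extended symmetrically about the midpoint) is, by the trigonometric identities already used there, an eigenfunction of $\Delta$ with eigenvalue $-\tilde\lambda_N = -(1-\cos(\tfrac{\pi}{2(N-D)}))$ in the bulk, and the choice of $D$ and of $\tilde\phi(N)$ in \eqref{eq:definitionPhiN} makes it satisfy the correct boundary relations up to an error of order $N^{-4}$; the stated $\lambda = 1-\cos(\tfrac{\pi}{2N+(\beta+\delta)^{-1}})$ is this $\tilde\lambda_N$ (note $N-D = N + \tfrac{1}{2(\beta+\delta)} - \tfrac12$, which up to lower order is $N + (2(\beta+\delta))^{-1}$ — I would reconcile the exact constant, but it is the smallest eigenvalue of the relevant boundary-value problem and hence governs the decay). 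Writing $u_\eta(\cdot,t)$ in the (near-)eigenbasis, the component along each mode $k$ decays like $e^{-\lambda_k t}$ with $\lambda_k$ increasing in $k$ and $\lambda_1 = \lambda$ the slowest; since $h^{\ast}_\eta$ is bounded by $2N$ in sup-norm and lives on $2N+1$ sites, its expansion coefficients are $O(N)$ in total (after a crude $\ell^1$-to-$\ell^\infty$ or Cauchy–Schwarz bound on the orthonormalized basis), so $|u_\eta(x,t)| \leq 3N e^{-\lambda t}$ for $N$ large, absorbing the $O(N^{-4})$ boundary error into the slack between the constant $3$ and the true leading constant (roughly $2$).

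Concretely the steps are: (1) derive the exact ODE system $\frac{\diff}{\diff t}\E_\eta[h_{\eta_t}(x)] = (\Delta \E_\eta[h_{\eta_t}])(x)$ for bulk $x$, with explicit boundary terms at $x=0,2N$ (Dirichlet) and at the midpoint (Robin-type with rate $\beta+\delta$); (2) check that $h^{\ast}$ kills the inhomogeneous constants, so $u_\eta$ solves a homogeneous linear system $\frac{\diff}{\diff t}u = \mathcal{A} u$ with $\mathcal{A}$ a symmetric negative-semidefinite (in fact negative-definite, by the open boundary) matrix; (3) identify the top eigenvalue of $-\mathcal{A}$ as $\lambda$ via the explicit approximate eigenfunction $\tilde\phi$, controlling the approximation error as in Lemma \ref{lem:EigenfunctionOneSide}; (4) bound $\|u_\eta(\cdot,0)\|$ using $|h^{\ast}_\eta(x)| \le |h_\eta(x)| + N \le 3N$ and conclude $\|u_\eta(\cdot,t)\|_\infty \le \|u_\eta(\cdot,0)\|_\infty\, e^{-\lambda t} \le 3N e^{-\lambda t}$ by spectral decomposition. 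The main obstacle is step (3): the matrix $\mathcal{A}$ is only \emph{approximately} diagonalized by $\tilde\phi$, so to get a genuine exponential bound with the right rate I must either (a) argue that $\mathcal{A}$ is a small perturbation of the exactly solvable Dirichlet/Robin Laplacian whose eigenvalues are known, invoking standard perturbation bounds to show the true top eigenvalue is $\lambda + O(N^{-4})$, or (b) more robustly, use the variational (Rayleigh quotient) characterization together with the FKG/monotonicity structure — but the cleanest route is to note $\mathcal{A}$ \emph{is} exactly the generator of the linear (independent-walk) relaxation, which is genuinely tridiagonal-with-one-boundary-term and whose spectrum can be written down exactly; then $\tilde\phi$ is the exact eigenfunction up to our freedom in choosing $D$, and the $O(N^{-4})$ error quoted in Lemma \ref{lem:EigenfunctionOneSide} is precisely the residual from forcing a clean sine, which only affects the constant, not the rate. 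I would therefore present (3) by invoking Lemma \ref{lem:EigenfunctionOneSide}'s computation directly and absorbing the residual into the constant $3$.
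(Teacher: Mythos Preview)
Your steps (1)--(3) are essentially right and match the paper: $u_\eta(x,t):=\E_\eta[h^*_{\eta_t}(x)]$ does solve the linear system $\partial_t f = (\mathds{1}_{x\neq N}+(\beta+\delta)\mathds{1}_{x=N})\Delta f$ with $f(0,t)=f(2N,t)=0$, and the relevant positive eigenfunction is the sine profile with eigenvalue $\lambda$. (One minor correction: the operator $\mathcal{A}$ is not symmetric in the flat inner product because of the weight $\beta+\delta$ at $x=N$; it is only self-adjoint in a weighted $\ell^2$.)

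The genuine gap is step (4). The implication ``$\|u_\eta(\cdot,t)\|_\infty \le \|u_\eta(\cdot,0)\|_\infty\, e^{-\lambda t}$ by spectral decomposition'' is false: spectral decomposition of a (weighted-)symmetric operator with gap $\lambda$ gives $\ell^2$ contraction at rate $e^{-\lambda t}$, not $\ell^\infty$. Converting $\ell^2\to\ell^\infty$ costs a factor $\sqrt{2N}$, yielding only $CN^{3/2}e^{-\lambda t}$, and this is too weak for the application in Lemma~\ref{lem:mainTheoremUpperBound1}, where one needs the mean height to be $o(\sqrt{N})$ at time $t_2\sim \frac{4}{\pi^2}N^2\log N$.

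The paper avoids this loss by a comparison (maximum-principle) argument rather than spectral expansion: it exhibits the explicit positive solution $g(x,t)=\sin\!\big(\frac{\min(x,2N-x)\pi}{2(N+c_N)}\big)e^{-\lambda_N t}$ of the same linear system, checks the \emph{pointwise} bound $|h^*_\eta(x)|\le 2\min(x,2N-x)\le 3N\,g(x,0)$ via $\sin(z\pi/2)\ge\min(z,2-z)$, and then uses that the evolution preserves pointwise ordering (the off-diagonal entries of $\mathcal{A}$ are nonnegative) to conclude $|u_\eta(x,t)|\le 3N\,g(x,t)\le 3N e^{-\lambda t}$. You already have the positive eigenfunction $\tilde\phi$ in hand from step (3), so the fix is simply to replace your step (4) by this pointwise comparison $|h^*_\eta|\le 3N\tilde\phi$ and propagate it by positivity, rather than appealing to spectral decomposition.
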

\begin{proof} 
Observe that the function $f_{\eta} \colon \{ 0,\dots,2N\} \times \R_0^+ \rightarrow \R$ with $f_{\eta}(x,t):=\E_{\eta}[h^{\ast}_{\eta_t}(x)]$
for some initial state $\eta \in \Omega_N$ is a solution to the system of equations
\begin{equation}\label{eq:HeatEquationModified}
\begin{cases} \partial_tf_{\eta} = (\mathds{1}_{\{x\neq N\}}+ \mathds{1}_{\{x=N\}}(\beta+\delta))\Delta f_{\eta} \\
f_\eta(0,t)=f_\eta(2N,t)=0\end{cases}
 \end{equation} for all $t\geq 0$ and $x \in \{ 0,\dots,2N\}$, with initial condition $f_\eta(x,0)=h^{\ast}_{\eta}(x)$. Here,
 $\Delta$ denotes the discrete Laplace operator which is defined in \eqref{def:Laplacian}.
Using Taylor expansion and a continuity argument, we see that there exists some $c_N \in [\frac{1}{2(\beta+\delta)}-1, \frac{1}{2(\beta+\delta)}]$ such that for all $N$  large enough, the function $g\colon \{ 0, 1, \dots, 2N\} \rightarrow \R$ with 
\begin{equation*}
g(x,t) :=  \left(\mathds{1}_{\{x \leq N\}} \sin\left( \frac{x\pi}{2(N+c_N)} \right)+ \mathds{1}_{\{x > N\}} \sin\left( \frac{(2N-x)\pi}{2(N+c_N)}\right) \right)e^{-\lambda_N t}
\end{equation*} for all $x \in \{ 0, 1, \dots, 2N\}$, $t \geq 0$ and $\lambda_N := 1-\cos\left( \frac{\pi}{2(N+c_N)}\right)$, is a solution to \eqref{eq:HeatEquationModified}, with initial condition $g(x,0)$. Note that $\sin(z\pi/2) \geq \min(z,2-z)$ holds for all $z \in [0,2]$, and $c_N \geq -1$. Hence,  we have for sufficiently large $N$ that $$|h^{\ast}_{\eta}(x)| \leq 2\min(x,2N-x) \leq 3N g(x,0)$$ for all $x \in \{ 0, 1, \dots, 2N\}$ and $\eta \in \Omega_N$. Since this relation is preserved in \eqref{eq:HeatEquationModified} over time, we conclude.
\end{proof}

\subsubsection{Scaling limits for the SSEP with one open boundary}\label{sec:ScalingLimitHeightFunction}

We now study the law of the height function in equilibrium. 
\begin{lemma}[c.f. \cite{L:CutoffSEP}, Lemma 8.5] \label{lem:ScalingLimit}Let $\eta$ be a configuration sampled according to the stationary distribution $\mu$ of the simple exclusion process with $p=\frac{1}{2}$ and one open boundary. Then 
\begin{equation}
\left(\frac{\beta+\delta}{\sqrt{N \beta \delta}} h^{\ast}_\eta(xN) \right)_{x\in [0,1]}
\end{equation}
converges for $N \rightarrow \infty$ in law to a standard Brownian motion on the interval $[0,1]$. 
\end{lemma}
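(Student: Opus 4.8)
The plan is to identify the stationary height function with a suitable random walk bridge and to extract its scaling limit from the reversible measure \eqref{eq:reversibleDistribution}. Recall that for $\max(\alpha,\gamma)=0$ the stationary measure $\mu$ is the reversible measure given in \eqref{eq:reversibleDistribution}; writing $r:=\frac{\delta}{\beta}$ and $s:=\frac{1-p}{p}=1$ (since $p=\frac12$), we see that $\mu(\eta)\propto r^{|\eta|}$, i.e. the occupation variables $(\eta(i))_{i\in[N]}$ under $\mu$ are i.i.d.\ Bernoulli with success probability $\rho:=\frac{r}{1+r}=\frac{\delta}{\beta+\delta}$. Consequently the increments of the height function $h_\eta$ on $\{0,1,\dots,N\}$, namely $h_\eta(x)-h_\eta(x-1)=2\eta(x)-1$, are i.i.d.\ with mean $2\rho-1=\frac{\delta-\beta}{\delta+\beta}$ and variance $4\rho(1-\rho)=\frac{4\beta\delta}{(\beta+\delta)^2}$; on $\{N+1,\dots,2N\}$, by the reflection convention $\eta(x)=1-\eta(2N+1-x)$, the increments are $1-2\eta(2N+1-x)$, independent of the first half and with the opposite mean but the same variance, and the bridge constraint $h_\eta(0)=h_\eta(2N)=0$ is automatic.

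First I would therefore observe that $h^\ast_\eta(x)=h_\eta(x)-\min(x,2N+1-x)\frac{\delta-\beta}{\delta+\beta}$ is, up to the negligible $O(1)$ discrepancy between $\min(x,2N+1-x)$ and the exact linear mean-interpolation, precisely the centered partial-sum process of the i.i.d.\ increments described above, subject to being pinned at $0$ and $2N$. Rescaling space by $N$ and the value by $\frac{\beta+\delta}{\sqrt{N\beta\delta}}$ normalizes the variance of the rescaled walk to $1$ per unit of (rescaled) time on $[0,1]$, and likewise on $[1,2]$. Then I would invoke Donsker's invariance principle for the partial sums of i.i.d.\ bounded increments, together with the standard fact that conditioning a random walk to return to a prescribed endpoint converges to the corresponding Brownian bridge; but since the lemma only asks for the limit of $(h^\ast_\eta(xN))_{x\in[0,1]}$, i.e.\ on the first half of the segment only, the pinning at $2N$ has asymptotically no effect on $[0,N]$ (the two halves are independent, and conditioning on a sum of $\Theta(N)$ independent increments at the far end perturbs the first half only through a $o(1)$ tilt), so the limit is an unconditioned standard Brownian motion on $[0,1]$, exactly as claimed.

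To make the last point rigorous I would write, for $x\in[0,1]$, $h^\ast_\eta(xN)=S_{\lfloor xN\rfloor}-\frac{\lfloor xN\rfloor}{2N}S_{2N}+\text{(error }O(1)\text{)}$, where $S_k$ is the centered partial sum of the $2N$ i.i.d.\ increments; the term $\frac{\lfloor xN\rfloor}{2N}S_{2N}$ is, after the $\frac{\beta+\delta}{\sqrt{N\beta\delta}}$ rescaling, of order $x\cdot\frac{1}{\sqrt N}\cdot O(\sqrt N)=O(x)$ with a Gaussian coefficient, so it does contribute a genuine drift — hence I should instead argue directly: on $[0,N]$ the conditional law of $(S_k)_{k\le N}$ given $S_{2N}=0$ is, by independence of the two halves, the law of a random walk of length $N$ conditioned so that an independent length-$N$ walk sums to $-S_N$; decompose $S_{2N}=S_N+(S_{2N}-S_N)$, note $S_{2N}-S_N$ has variance $\Theta(N)$ and is independent of $(S_k)_{k\le N}$, and conclude by a local CLT / Bayes computation that conditioning on $S_{2N}=0$ changes the law of $(N^{-1/2}S_{\lfloor xN\rfloor})_{x\le1}$ by a total-variation amount tending to $0$. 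After this, Donsker on $[0,1]$ with the variance normalization gives the standard Brownian motion. The main obstacle is precisely this conditioning step: one must be careful that the bridge constraint at the far endpoint genuinely washes out on $[0,1]$ and does not introduce a spurious drift, which is handled cleanly by the independence of the two halves together with a local central limit theorem for $S_{2N}-S_N$; the rest (replacing $h^\ast$ by the exact centered bridge up to $O(1)$, and the invariance principle itself) is routine and can be quoted from \cite{L:CutoffSEP}.
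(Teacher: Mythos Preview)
Your core observation is correct and matches the paper's argument: under $\mu$ with $p=\tfrac12$, the reversible measure \eqref{eq:reversibleDistribution} reduces to $\mu(\eta)\propto(\delta/\beta)^{|\eta|}$, so the occupation variables $\eta(1),\dots,\eta(N)$ are i.i.d.\ Bernoulli$(\rho)$ with $\rho=\delta/(\beta+\delta)$, and on $\{0,\dots,N\}$ the centered height function $h^{\ast}_\eta$ is a random walk with i.i.d.\ bounded increments. The paper phrases this via the De Moivre--Laplace theorem for finite-dimensional marginals plus tightness; your appeal to Donsker is the same thing.

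However, your treatment of the second half $\{N+1,\dots,2N\}$ is wrong and drags you into an unnecessary detour. The increments there are $1-2\eta(2N+1-x)$, and these are \emph{not} independent of the first half: the extension $\eta(x)=1-\eta(2N+1-x)$ is a deterministic reflection, so the second-half increments are exactly the negatives of the first-half increments read in reverse order. In particular, the identity $h_\eta(2N)=0$ is not a \emph{conditioning} at all---it holds identically for every $\eta$ by construction of the height function \eqref{def:HeightFunction}. There is therefore no bridge to analyze, no pinning to wash out, and no need for any local CLT or Bayes argument. On $[0,N]$, which is all the lemma asks about, $h^{\ast}_\eta$ is simply an unconditioned centered random walk, and Donsker applies immediately. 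Everything after your first paragraph can be discarded; what remains is precisely the paper's proof.
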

\begin{proof} Using the explicit form of the invariant distribution $\mu$ in \eqref{eq:reversibleDistribution} for $p=\frac{1}{2}$ and the Binomial theorem, we see that 
the total number of particles $|\eta|$ in a configuration $\eta$ according to $\mu$ is Binomial-$(N,\frac{\delta}{\beta+\delta})$-distributed. Conditioning on the number of particles in the segment, observe that
the number of particles in $\eta$ until position $y$ is Binomial-$(y,\frac{\delta}{\beta+\delta})$-distributed. The convergence for all finite marginals follows from the De Moivre-Laplace theorem. Together with a tightness argument, we obtain the convergence in law to a standard Brownian motion on $[0,1]$.
\end{proof}

\subsubsection{Proof of the upper bound in Theorem \ref{thm:symmetricOneSide}}\label{sec:ProofSymmetric}

The upper bound in Theorem \ref{thm:symmetricOneSide} is shown in two steps. First, we give an upper bound on the time it takes to reach equilibrium when starting from the two extremal configurations $\mathbf{1}$ and $\mathbf{0}$. In the next step, we consider a suitable coupling such that the exclusion processes started from  $\mathbf{1}$ and $\mathbf{0}$ agree with high probability. This will be formalized in Lemma \ref{lem:mainTheoremUpperBound1} and Lemma \ref{lem:mainTheoremUpperBound2}. 
\begin{lemma}[c.f. \cite{L:CutoffSEP}, Propositions 8.2] \label{lem:mainTheoremUpperBound1}Let $(\eta^{\mathbf{1}}_t)_{t\geq 0}$ and $(\eta^{\mathbf{0}}_t)_{t\geq 0}$ denote the simple exclusion processes with one open boundary and $p=\frac{1}{2}$ started from the configurations $\mathbf{1}$ and $\mathbf{0}$, respectively. For a given $\varepsilon>0$, we set
\begin{equation} t_0:=\frac{4}{\pi^2}N^2\log N \left(1+\frac{\varepsilon}{2}\right)  \ .
\end{equation} Then we have that 
\begin{align} \label{eq:ConvergenceToEquilibrium}
\lim_{N \rightarrow \infty} \TV{P(\eta^{\mathbf{1}}_{t_0} \in \cdot ) - \mu} =0 \quad \text{and} \quad
\lim_{N \rightarrow \infty} \TV{P(\eta^{\mathbf{0}}_{t_0} \in \cdot ) - \mu} =0 
\end{align} holds for all $\varepsilon>0$. 
\end{lemma}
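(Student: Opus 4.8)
The plan is to follow the strategy of Lacoin in Section 8 of \cite{L:CutoffSEP}, adapted to the one open boundary case through the height function representation. By the censoring inequality (Lemma \ref{lem:Censoring}) and monotonicity of the canonical coupling with respect to $\succeq_\h$, it suffices to control the distance to equilibrium from the extremal configurations $\mathbf{1}$ and $\mathbf{0}$; in fact, since $\eta^{\mathbf{1}}_t \succeq_\h \eta^{\mathbf{0}}_t$ always holds under the grand coupling and $\mu$ is reversible, one gets $\TV{P(\eta^{\mathbf{1}}_t\in\cdot)-\mu}\geq \TV{P(\eta^{\mathbf{0}}_t\in\cdot)-\mu}$ up to the usual sandwiching, so I would concentrate on $\eta^{\mathbf{1}}$. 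The monotone density statement at the end of Lemma \ref{lem:Censoring} — namely that $\eta\mapsto P_{\mathbf{1}}(\eta_t=\eta)/\mu(\eta)$ is increasing with respect to $\succeq_\h$ — lets us reduce total variation to a single one-dimensional observable: because the density is monotone, $\TV{P_{\mathbf{1}}(\eta_t\in\cdot)-\mu}$ is controlled by how much the law of a suitable increasing function (the maximum of the centered height function, or $h^\ast_{\eta_t}$ at a well-chosen point) exceeds its equilibrium value. This is exactly the mechanism of \cite[Section 8]{L:CutoffSEP}.

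The core of the argument then has three ingredients, all supplied above. First, the mean of the centered height function decays at rate $\lambda = 1-\cos(\pi/(2N+(\beta+\delta)^{-1}))$: by Lemma \ref{lem:MeanHeightFunction}, $\max_x |\E_{\mathbf{1}}[h^\ast_{\eta_t}(x)]|\leq 3Ne^{-\lambda t}$, and since $\lambda \sim \pi^2/(8N^2)$ this is $o(1)$ precisely once $t \geq \frac{4}{\pi^2}N^2\log N(1+\varepsilon/2)$; this is the reason the constant $4/\pi^2$ appears. Second, fluctuations around the mean must be shown to be of order at most $\sqrt{N}$ uniformly in $t$, so that a mean of size $o(1)$ — sorry, of size $N^{-\varepsilon/2\cdot(\text{something})}$, i.e.\ polynomially small — actually forces closeness in total variation; here one uses the FKG/Holley input of Lemma \ref{lem:Correlations} to compare the law at time $t$ started from $\mathbf{1}$ with the stationary law (the density being increasing, one can dominate the time-$t$ law from above by a measure obtained by conditioning $\mu$ on an increasing event, and then bound the conditional fluctuations). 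Third, Lemma \ref{lem:ScalingLimit} identifies the equilibrium fluctuations of $h^\ast_\eta$ as a Brownian motion of order $\sqrt{N}$, which both calibrates the scale against which the decaying mean must be compared and provides the matching lower-tail control needed to turn ``small mean of an increasing observable'' into ``small total variation''. Combining: at $t=t_0$ the mean of every increasing height observable is within $o(1/\sqrt N)\cdot\sqrt N$, hence negligible on the Brownian scale, and the monotone-density reduction converts this into $\TV{P(\eta^{\mathbf{1}}_{t_0}\in\cdot)-\mu}\to 0$; the statement for $\mathbf{0}$ follows by the vertical-flip symmetry of the model or by the same argument applied to $-h^\ast$.

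The main obstacle I anticipate is the second ingredient: transferring Lacoin's fluctuation estimates to the open-boundary height function. In the closed-segment SSEP of \cite{L:CutoffSEP} the height function has two fixed endpoints and a clean correspondence with a particle system whose equilibrium is an explicit product (or exchangeable) measure; here the relevant height function $h_\eta$ defined in \eqref{def:HeightFunction} is built by reflecting the segment, the left endpoint is ``free'' in the sense that $\max(\alpha,\gamma)=0$, and the dynamics at site $N$ carry the extra factor $\beta+\delta$ seen in the heat equation \eqref{eq:HeatEquationModified}. One must check that the FKG inequality (which we have, via Lemma \ref{lem:Correlations}) together with the explicit reversible measure \eqref{eq:reversibleDistribution} still yields a uniform-in-$t$ bound of the form $\V_{\mathbf{1}}(h^\ast_{\eta_t}(x))\leq CN$, and that the conditioning argument used to pass from ``density increasing'' to ``TV small'' survives the modified boundary. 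Since \cite{L:CutoffSEP} organizes exactly this passage in its Section 8 and the only structural changes are the single free boundary and the harmless time-change factor $\beta+\delta$ at site $N$, I expect the adaptation to be routine but notation-heavy, which is why — following the paper's stated convention — I would present it as a list of the required substitutions into \cite[Section 8]{L:CutoffSEP} rather than reproving every estimate.
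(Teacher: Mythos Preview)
Your proposal assembles the right ingredients (mean decay via Lemma~\ref{lem:MeanHeightFunction}, FKG/Holley via Lemma~\ref{lem:Correlations}, the Brownian scaling limit Lemma~\ref{lem:ScalingLimit}, and the monotone-density statement in Lemma~\ref{lem:Censoring}) but the claimed reduction ``monotone density lets us reduce total variation to a single one-dimensional observable'' is not correct as stated, and this is where the argument as written would fail. An increasing density only tells you that the TV distance is realized on some up-set for $\succeq_\h$; that up-set need not be of the form $\{h^*_\eta(x_0)\geq c\}$ for a single site, so controlling the mean and variance of each marginal $h^*_{\eta_t}(x)$ does not by itself force the full joint law to be close to $\mu$. (Incidentally, at $t_0$ the mean is of order $N^{1/2-c\varepsilon}$, not $o(1)$; it is $o(\sqrt{N})$, which is what matters, but this underscores that you are comparing two laws whose fluctuations are both on the $\sqrt{N}$ scale, not a degenerate limit.)

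The paper's proof --- following \cite[Section~8]{L:CutoffSEP} --- supplies the missing structural step. It runs in two phases. First, by the intermediate time $t_2=\frac{4}{\pi^2}N^2\log N(1+\varepsilon/4)$, one shows only that the \emph{skeleton} $(h^*_{\eta_{t_2}}(x_i))_{i\in[K]}$, with $K\sim \varepsilon^{-1}$ equally spaced points $x_i=\lfloor 2iN/K\rfloor$, has law close in total variation to its marginal under $\mu$; this finite-dimensional statement is where your three ingredients actually go, replacing \cite[Proposition~6.1 and Lemma~8.5]{L:CutoffSEP} by Lemmas~\ref{lem:Correlations} and~\ref{lem:ScalingLimit}. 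Second --- and this is what your outline omits --- one applies the censoring inequality on $[t_2,t_0]$ with the scheme $\mathcal{C}(t)=\{\{x_i,x_i+1\}:i\in[K]\}$, which freezes the skeleton and decomposes the process into $K$ independent closed-segment exclusion processes of length $\sim 2N/K$. These mix in time $O((N/K)^2\log N)\ll t_0-t_2$, upgrading skeleton convergence to convergence of the full law. The ``main obstacle'' you flag (adapting the fluctuation estimates) is real but secondary; the essential missing piece is this censoring-and-local-mixing step.
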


\begin{proof}[Sketch of the proof] The proof of Lemma \ref{lem:mainTheoremUpperBound1} is divided into two main steps. First, we consider the simple exclusion process $(\eta_t)_{t \geq 0}$ with open boundaries for initial states $\mathbf{1}$ and $\mathbf{0}$ up to time $t_2$, where
\begin{equation}
t_2:=\frac{4}{\pi^2}N^2\log N \left(1+\frac{\varepsilon}{4}\right) \ .
\end{equation} We study the functions $(h^{\ast}_{\eta_t})_{t\geq 0}$, defined in \eqref{def:HeightStarFunction}, and evaluate them at $x_i :=\lfloor 2iN/K \rfloor$ for $K:=\varepsilon^{-1}$ and $i \in \{ 0,\dots,K \}$. Following  \cite{L:CutoffSEP}, we call the dynamics restricted to $(x_i)_{i \in [K]}$ the \textbf{skeleton}. Our goal is to argue that when the mean of $(h^{\ast}_{\eta_t})_{t\geq 0}$ at time $t_2$ has at most the order of the typical fluctuations within the stationary distribution $\mu$, the law of the skeleton at time $t_2$ is in total-variation distance close to equilibrium. This follows by applying the same arguments as for the proof of Lemma 8.4 in \cite{L:CutoffSEP}, replacing Proposition 6.1 and Lemma 8.5 in \cite{L:CutoffSEP} by Lemma \ref{lem:Correlations} and Lemma \ref{lem:ScalingLimit}, respectively.
In order to conclude the first step, use Lemma \ref{lem:MeanHeightFunction} to see that for initial state $\eta \in \Omega_N$, we have that $\E_{\eta}[h^{\ast}_{\eta_t}(x)]$ is at most of order $\sqrt{N}$ at time $t=t_2$ for all $x \in [2N]$. 
In a second step, we apply the censoring inequality in Lemma \ref{lem:Censoring} for the censoring scheme 
\begin{equation*}
\mathcal{C}(t) = \left\{ \{x_i,x_i+1\} \colon i \in [K]\right\} \ , 
\end{equation*} where $t \in [t_2,t_0]$, in order to show that the dynamics mixes locally. In words, this censoring scheme ensures that the number of particles in the interval $[x_{i-1},x_{i}]$ for all $i \in [K]$ remains almost surely constant between $t_2$ and $t_0$. Thus, we have $K$ independent simple exclusion processes on a closed segment during this period. Together with the above bounds at time $t_2$, the remainder of the argument is analogous to the proof of Proposition 8.2 in \cite{L:CutoffSEP}.
\end{proof}
Note that Lemma \ref{lem:mainTheoremUpperBound1} does not immediately imply Theorem \ref{thm:symmetricOneSide} since there could be an initial state other than $\mathbf{1}$ or $\mathbf{0}$, which maximizes the distance from equilibrium.
However, using Lemma \ref{lem:mainTheoremUpperBound1}, we obtain the following result which allows us to conclude the upper bound in Theorem \ref{thm:symmetricOneSide} using Lemma \ref{lem:HittingTime}.
\begin{lemma}[c.f. \cite{L:CutoffSEP}, Propositions 8.1] \label{lem:mainTheoremUpperBound2} For a given $\varepsilon>0$, we set
\begin{equation}  t_1:=\frac{4}{\pi^2}N^2\log N(1+\varepsilon) \ .
\end{equation} Then there exists a coupling $\tilde{P}$ which respects $\succeq_\h$ such that
\begin{equation}
\lim_{N \rightarrow \infty}\tilde{P}(\eta^{\mathbf{1}}_{t_1} \neq \eta^{\mathbf{0}}_{t_1}) = 0
\end{equation} is satisfied for all $\varepsilon>0$.
\end{lemma}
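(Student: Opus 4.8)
The plan is to construct the coupling $\tilde{P}$ by combining the mixing result from Lemma \ref{lem:mainTheoremUpperBound1} with a monotone coupling governed by $\succeq_\h$, following the strategy of Proposition 8.1 in \cite{L:CutoffSEP}. First I would observe that, by Lemma \ref{lem:MonotoneCouplingHeightFunction}, there is a grand coupling $\mathbf{P}$ under which the processes started from all initial configurations are simultaneously ordered with respect to $\succeq_\h$; in particular $\eta^{\mathbf 1}_t \succeq_\h \eta_t^\eta \succeq_\h \eta^{\mathbf 0}_t$ for every $\eta \in \Omega_N$ and all $t \ge 0$, and the processes started from $\mathbf 1$ and $\mathbf 0$ coalesce exactly when the disagreement process between them contains no second class particles. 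So it suffices to bound the coalescence time $\tau$ of $(\eta^{\mathbf 1}_t)$ and $(\eta^{\mathbf 0}_t)$ under $\mathbf P$, and to show $\mathbf P(\tau > t_1) \to 0$.

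The key step is to split the time interval $[0,t_1]$ into the mixing phase $[0,t_0]$ with $t_0 = \frac{4}{\pi^2}N^2\log N(1+\tfrac{\varepsilon}{2})$ and a short "absorption" phase $[t_0,t_1]$ of length $\frac{4}{\pi^2}N^2\log N \cdot \tfrac{\varepsilon}{2}$, which is of order $N^2\log N$ and hence much larger than $N^2$. By Lemma \ref{lem:mainTheoremUpperBound1}, at time $t_0$ both $\eta^{\mathbf 1}_{t_0}$ and $\eta^{\mathbf 0}_{t_0}$ are within $o(1)$ of $\mu$ in total variation; coupling each of them optimally to a sample from $\mu$, we may assume that with probability $1-o(1)$ they are both equal to the same $\mu$-distributed configuration at time $t_0$ — but of course we need them coupled to each other while respecting $\succeq_\h$. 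The clean way to phrase this: by Lemma \ref{lem:mainTheoremUpperBound1} and the triangle inequality, $\TV{P(\eta^{\mathbf 1}_{t_0}\in\cdot) - P(\eta^{\mathbf 0}_{t_0}\in\cdot)} \to 0$, so under any maximal coupling of these two laws they agree with probability $1-o(1)$; since once they agree at a time they agree forever under $\mathbf P$ (coalescence is absorbing for the $\succeq_\h$-monotone grand coupling, as the disagreement process loses second class particles and never creates them between matched configurations), we can paste this maximal coupling at time $t_0$ onto the grand coupling $\mathbf P$ and obtain $\tilde P$ with $\tilde P(\eta^{\mathbf 1}_{t_0} = \eta^{\mathbf 0}_{t_0}) \to 1$, hence $\tilde P(\eta^{\mathbf 1}_{t_1}\neq \eta^{\mathbf 0}_{t_1}) \to 0$. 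In fact once one has $\TV{P(\eta^{\mathbf 1}_{t_0}\in\cdot)-\mu}\to 0$ and $\TV{P(\eta^{\mathbf 0}_{t_0}\in\cdot)-\mu}\to 0$, the interval $[t_0,t_1]$ is not even needed; choosing $t_1 > t_0$ only gives room and guarantees $\succeq_\h$ can be respected throughout, since the maximal coupling can be taken to also respect $\succeq_\h$ because $\mu$ stochastically dominates $P(\eta^{\mathbf 0}_{t_0}\in\cdot)$ and is dominated by $P(\eta^{\mathbf 1}_{t_0}\in\cdot)$ with respect to $\succeq_\h$ (this is the monotonicity content of Lemma \ref{lem:Censoring} together with Lemma \ref{lem:MonotoneCouplingHeightFunction}).

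The main obstacle I anticipate is not the coalescence bound itself but verifying that the coupling $\tilde P$ can be made to \emph{respect} $\succeq_\h$ for all times, not merely at the terminal time. The subtlety is that an arbitrary maximal coupling of $P(\eta^{\mathbf 1}_{t_0}\in\cdot)$ and $P(\eta^{\mathbf 0}_{t_0}\in\cdot)$ need not satisfy $\eta^{\mathbf 1}_{t_0}\succeq_\h \eta^{\mathbf 0}_{t_0}$ pointwise; one must instead invoke the ordering $P(\eta^{\mathbf 1}_{t_0}\in\cdot)\succeq_\h \mu \succeq_\h P(\eta^{\mathbf 0}_{t_0}\in\cdot)$ (Lemma \ref{lem:Censoring}, plus the increasing density observation there) to first couple each extremal law monotonically to $\mu$, giving $\eta^{\mathbf 1}_{t_0}\succeq_\h \zeta \succeq_\h \eta^{\mathbf 0}_{t_0}$ for a $\mu$-sample $\zeta$, with $\tilde P(\eta^{\mathbf 1}_{t_0}=\zeta) = 1 - \TV{P(\eta^{\mathbf 1}_{t_0}\in\cdot)-\mu} \to 1$ and likewise for $\mathbf 0$; then by a union bound both equal $\zeta$ with probability $1-o(1)$, at which point they have coalesced with each other while respecting $\succeq_\h$, and one runs all three coupled under the $\succeq_\h$-monotone grand coupling $\mathbf P$ from $t_0$ onward. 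This preserves the order for every $t\geq t_0$, and for $t < t_0$ the grand coupling already gives it. Assembling these pieces yields Lemma \ref{lem:mainTheoremUpperBound2}, and then Lemma \ref{lem:HittingTime} (applied to the extremal configurations $\mathbf 1$ and $\mathbf 0$, whose coalescence dominates mixing from every start) finishes the upper bound in Theorem \ref{thm:symmetricOneSide}.
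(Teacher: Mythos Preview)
Your proposal has a genuine gap in the ``pasting'' step. Running the canonical grand coupling $\mathbf{P}$ up to time $t_0$ produces a \emph{specific} joint law of $(\eta^{\mathbf 1}_{t_0},\eta^{\mathbf 0}_{t_0})$, and you cannot afterwards replace it by a maximal coupling of the two marginals: the joint distribution is already fixed. Lemma~\ref{lem:mainTheoremUpperBound1} only says that each marginal is close to $\mu$ in total variation; it says nothing about $\mathbf{P}(\eta^{\mathbf 1}_{t_0}=\eta^{\mathbf 0}_{t_0})$ under the canonical coupling, and in fact this quantity need not tend to $1$. Your alternative of building a monotone triple $\eta^{\mathbf 1}_{t_0}\succeq_\h \zeta\succeq_\h \eta^{\mathbf 0}_{t_0}$ with $\zeta\sim\mu$ and both extremes equal to $\zeta$ w.h.p.\ runs into the same problem: if you produce this triple by first running the canonical coupling on $[0,t_0]$, the pair $(\eta^{\mathbf 1}_{t_0},\eta^{\mathbf 0}_{t_0})$ is already determined and you have no freedom to force them onto a common $\zeta$; if instead you build the triple directly at time $t_0$ (say by running the two extremes independently), you lose the ordering $\eta^{\mathbf 1}_t\succeq_\h\eta^{\mathbf 0}_t$ for $t<t_0$, and more importantly you no longer have a monotone grand coupling that sandwiches arbitrary initial states, so Lemma~\ref{lem:HittingTime} does not apply.

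This is precisely why the paper (following \cite{L:CutoffSEP}, Section~8.4) does not use the canonical coupling here. It constructs an \emph{alternative} monotone coupling that is engineered to maximize the fluctuations of the height-function difference $h_{\eta^{\mathbf 1}_t}-h_{\eta^{\mathbf 0}_t}$, so that this nonnegative process actually hits zero by time $t_1$ with high probability; Lemma~\ref{lem:mainTheoremUpperBound1} and Lemma~\ref{lem:ScalingLimit} enter to control the size of the gap at time $t_0$ that the fluctuations must close. The point is that coalescence is a property of the coupling, not of the marginals, and the canonical coupling makes the two height functions move too coherently to coalesce in the required time.
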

\begin{proof}[Sketch of the proof]  In order to show Lemma \ref{lem:mainTheoremUpperBound2} using Lemma \ref{lem:mainTheoremUpperBound1}, we consider a coupling which is monotone with respect to $\succeq_\h$ and maximizes the fluctuations of $(h^{\ast}_{\eta_t})_{t\geq 0}$. We use the construction of the alternative coupling defined in \cite[Section 8.4]{L:CutoffSEP}. However, for all transitions where particles enter and exit the segment, we apply the update rule of the canonical coupling for the simple exclusion process with open boundaries, i.e.\ we use the same rate $\beta$ and rate $\delta$ Poisson clocks in both simple exclusion processes to determine when a boundary vertex is updated. The proof of Lemma \ref{lem:mainTheoremUpperBound2} follows the same arguments as the proof of Proposition 8.1 given in \cite[Section 8.4]{L:CutoffSEP}, replacing Lemma 8.5 in \cite{L:CutoffSEP} by Lemma \ref{lem:ScalingLimit}.
\end{proof}

\section{Mixing times for ASEP with one blocked entry}\label{sec:ProofAsymmetricHomogenousOneSide}

In this section, we prove Theorem \ref{thm:asymmetricOneSide} for the asymmetric simple exclusion process with one blocked entry. We start by defining the simple exclusion process on the half-line as an auxiliary process, and investigate its current. By a coupling to the original dynamics, this allows us to deduce the lower bound on the mixing time. For the upper bound, we again use the simple exclusion process on the half-line to estimate the hitting time with respect to the extremal states $\mathbf{0}$ and $\mathbf{1}$, and conclude by a shock wave argument. In the following, we only consider the case $\alpha=0$ and $\beta>0$, and prove \eqref{eq:OneBlockeda}, since \eqref{eq:OneBlockedb} follows from the same arguments using the symmetry between particles and empty sites. 

\subsection{The simple exclusion process on the half-line} \label{sec:SEPhalfLine}

In the following, a key tool in our investigations will be the simple exclusion process $(\sigma_t)_{t\geq 0}$ on the half-line $\N=\{1,2,\dots \}$ with drift $p\in [\frac{1}{2},1]$, where particles enter at rate $\tilde{\alpha}$ and exit at rate $\tilde{\gamma}$ at site $1$. Formally, $(\sigma_t)_{t\geq 0}$ is defined as the Feller process on $\{0,1\}^{\N}$ generated by
\begin{align}\label{def:generatorHalfLine}
(\mathcal{L}^{\N}f)(\eta)&= \sum_{x =1}^{\infty} \big(p \eta(x)(1-\eta(x+1))+ (1-p)  \eta(x+1)(1-\eta(x)) \big)\left[ f(\eta^{x,x+1})-f(\eta) \right] \nonumber \\
&+  \big(\tilde{\alpha} (1-\eta(1)) +\tilde{\gamma} \eta(1) \big) \left[ f(\eta^{1})-f(\eta) \right] 
\end{align} for all cylinder functions $f$. Recall the notion of the current  from \eqref{def:current}, and let $(J_t^{\N})_{t \geq0}$ be the current of $(\sigma_t)_{t\geq 0}$, i.e.\ the net number of particles entering at the left-hand side boundary. Moreover, recall \eqref{def:partialOrder} and the  stochastic domination for probability measures from Section \ref{sec:Invariant}. We  have the following bound on the current of the simple exclusion process on the half-line, which extends  results from \cite{L:ErgodicI} for general boundary parameters.

\begin{lemma}\label{lem:CurrentHalfLine} Let $\tilde{\alpha}>0$ and $p>\frac{1}{2}$, and recall $a=a(\tilde{\alpha},\tilde{\gamma},p)$ from \eqref{def:a}. Then the simple exclusion process $(\sigma_t)_{t\geq 0}$ started from the empty initial configuration $\mathbf{0}$ satisfies
\begin{equation}\label{eq:ComparisonHalfLineProduct}
\P_{\mathbf{0}}\left( \sigma_t \in \cdot \right) \preceq_{\c} \nu_{\frac{1}{1+a}} \ ,
\end{equation} where $\nu_{\frac{1}{1+a}}$ denotes the Bernoulli-$\frac{1}{1+a}$-product measure on $\N$. Furthermore,
\begin{equation}\label{eq:CurrentHalfLineSEP}
\lim_{t \rightarrow \infty} \frac{J_t^{\N}}{t} = (2p-1)\frac{\max(a,1)}{(\max(a,1)+1)^{2}}
\end{equation} holds almost surely.
\end{lemma}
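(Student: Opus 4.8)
The plan is to prove the two assertions separately. For the stochastic domination \eqref{eq:ComparisonHalfLineProduct}, I would first observe that $\nu_{\frac{1}{1+a}}$ is stationary for the simple exclusion process on the half-line with boundary parameters $\tilde\alpha,\tilde\gamma$: by the characterization in \eqref{def:a}, the value $\rho^\ast := \frac{1}{1+a}$ is precisely the density for which the bulk drift current $(2p-1)\rho^\ast(1-\rho^\ast)$ is matched at the boundary by the net injection rate $\tilde\alpha(1-\rho^\ast)-\tilde\gamma\rho^\ast$ — equivalently, $a$ solves $\tilde\alpha(1-\rho^\ast)-\tilde\gamma\rho^\ast = (2p-1)\rho^\ast(1-\rho^\ast)$, which is the quadratic defining $a$. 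A short generator computation against the product measure confirms $\nu_{\rho^\ast}\mathcal L^{\N} = 0$. Then one extends the canonical coupling of Section \ref{sec:canonicalCouplingOneSide} to a grand coupling of $(\sigma_t)_{t\ge0}$ started from $\mathbf 0$ and a stationary copy started from $\nu_{\rho^\ast}$; since $\mathbf 0 \preceq_\c \eta$ for every $\eta$, the coupling preserves $\preceq_\c$ along bulk edges, and at the boundary one couples the entry/exit clocks so that monotonicity is maintained (the left reservoir acts the same way in both processes). Hence $\P_{\mathbf 0}(\sigma_t\in\cdot)\preceq_\c\nu_{\rho^\ast}$ for all $t$, which is \eqref{eq:ComparisonHalfLineProduct}.

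For the current limit \eqref{eq:CurrentHalfLineSEP} I would split into the two regimes $a\ge 1$ and $a<1$. When $a\ge 1$, i.e.\ $\rho^\ast\le\frac12$, the claimed limit is $(2p-1)\frac{a}{(1+a)^2} = (2p-1)\rho^\ast(1-\rho^\ast)$, which is exactly the stationary bulk current of the product measure $\nu_{\rho^\ast}$; starting from $\mathbf 0$ the process should converge (in a suitable local sense) to $\nu_{\rho^\ast}$, and $J^{\N}_t/t$ should converge to this stationary current. When $a<1$, i.e.\ $\rho^\ast>\frac12$, the claimed limit is $(2p-1)\frac14$, the maximal-current value; here the product measure is stationary but the process started from $\mathbf 0$ creates a rarefaction fan and the density seen near the origin drops to $\frac12$, giving maximal current. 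To make this rigorous without invoking hydrodynamics, I would use monotonicity and comparison: the upper bound $\limsup_t J^{\N}_t/t \le (2p-1)\max(a,1)(\max(a,1)+1)^{-2}$ in the case $a\ge1$ follows from the domination \eqref{eq:ComparisonHalfLineProduct} (the current out of the left reservoir is bounded by the expected flux under $\nu_{\rho^\ast}$), and in the case $a<1$ from the trivial bound that the bulk flux across any edge is at most $p\cdot\P(\eta(x)=1,\eta(x+1)=0)\le p/4$ after subtracting the backward flux, i.e.\ at most $(2p-1)/4$ in the stationary or Cesàro-averaged sense; the matching lower bound comes from coupling $(\sigma_t)$ from below with the half-line process whose left boundary is replaced by a reservoir at density $\min(\rho^\ast,\tfrac12)$ — equivalently by raising $\tilde\gamma$ (which only decreases the process in $\preceq_\c$, by Lemma \ref{lem:MonotoneCouplingComponentwise}) until $a$ hits $1$ — together with the known current of the half-line/step-initial TASEP-type process, or alternatively by a direct second-class-particle / subadditivity argument showing the Cesàro limit of $J^{\N}_t/t$ exists and is at least the value obtained from the attractive stationary lower environment.

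The existence of the limit (not merely $\limsup/\liminf$ bounds) I would get either from subadditivity of the cumulative current $t\mapsto J^{\N}_t$ under the attractivity of the dynamics, or by citing the convergence of $(\sigma_t)$ to its unique translation-type stationary behaviour near the boundary as in \cite{L:ErgodicI}; the statement explicitly says the lemma ``extends results from \cite{L:ErgodicI} for general boundary parameters,'' so I expect the clean route is: (i) prove $\nu_{\rho^\ast}$ stationarity and the coupling domination as above; (ii) invoke the ergodic-theoretic machinery of \cite{L:ErgodicI} — which identifies the limiting current for the half-line exclusion process — now fed with the correct reservoir density $\rho^\ast$ in place of the special values treated there; (iii) read off the two cases. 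The main obstacle will be step (ii) in the reverse/near-boundary regime $a<1$: one must show that even though $\nu_{\rho^\ast}$ with $\rho^\ast>\frac12$ is stationary, the process started from $\mathbf 0$ does \emph{not} converge to it at the boundary but instead settles at the maximal current $(2p-1)/4$. This is the phenomenon that the left reservoir is "over-driving" and the system self-organizes to density $\tfrac12$ at the origin; rigorously it requires either a hydrodynamic rarefaction-fan argument or a careful monotone-coupling sandwich between the $\mathbf 0$-started process and processes with reservoir densities approaching $\tfrac12$ from below, using that the current is continuous and monotone in the effective reservoir density and flat at its maximum.
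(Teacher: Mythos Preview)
Your treatment of \eqref{eq:ComparisonHalfLineProduct} is correct and matches the paper's.

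For the current limit \eqref{eq:CurrentHalfLineSEP}, however, your upper-bound argument has the inequality backwards. The domination $\P_{\mathbf 0}(\sigma_t\in\cdot)\preceq_\c \nu_{\rho^\ast}$ means that $\sigma_t(1)$ is stochastically \emph{smaller} than under $\nu_{\rho^\ast}$; since the instantaneous boundary flux $\tilde\alpha(1-\sigma_t(1))-\tilde\gamma\,\sigma_t(1)$ is \emph{decreasing} in $\sigma_t(1)$, the canonical coupling yields $J^{\N}_t\ge J^{\N,\nu_{\rho^\ast}}_t$ pathwise --- a \emph{lower} bound on the current, not an upper one. The same objection applies to your ``trivial $(2p-1)/4$'' bound for $a<1$: the inequality $\rho(1-\rho)\le\tfrac14$ controls the edge flux only under a product law, and you have no product structure for the law of $\sigma_t$ started from $\mathbf 0$.

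The paper obtains the upper bound by a different, concrete comparison: it couples the half-line process to the open-segment ASEP of length $N$ with parameters $(p,\tilde\alpha,p,\tilde\gamma,0)$ (so $b=0$), using the same clocks on $[N-1]$ and attempting to remove a particle at site $N$ in the segment whenever the $N\to N+1$ clock rings on the half-line. This forces $J^N_t\ge J^{\N}_t$ pathwise, and Lemma~\ref{lem:current} then gives $\lim_N J_N=(2p-1)\max(a,1)/(\max(a,1)+1)^2$, handling both regimes at once.

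For the lower bound the paper \emph{does} use the domination, in the correct direction just described: after reducing to $a\ge 1$ by decreasing $\tilde\alpha$ (your reduction via $\tilde\gamma$ would work equally well), the current from $\mathbf 0$ dominates the current started from $\nu_{\rho^\ast}$. The remaining step --- and the one your plan does not address --- is showing that the current started from $\nu_{\rho^\ast}$ equals $(2p-1)\rho^\ast(1-\rho^\ast)$ almost surely rather than merely in mean. This requires $\nu_{\rho^\ast}$ to be \emph{extremal} invariant; the paper proves this by transferring to the symmetric half-line process (following the argument of Theorem~III.1.17 in \cite{L:Book2}) and then verifying a second-class-particle absorption criterion from \cite{J:Extremal}. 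Neither subadditivity of $t\mapsto J^{\N}_t$ nor a direct citation of \cite{L:ErgodicI} supplies this.
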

\begin{proof} Note that the measure  $\nu_{\frac{1}{1+a}}$ is invariant for the simple exclusion process on the half-line. This can be seen by a direct calculation using the generator in \eqref{def:generatorHalfLine}, or alternatively, by Lemma \ref{lem:stationaryDistribution} when taking $b = 1/a$ and letting the size of the segment go to infinity. Hence, \eqref{eq:ComparisonHalfLineProduct} follows using the canonical coupling and monotonicity for the simple exclusion process on the half-line when starting from  $\nu_{\frac{1}{1+a}}$. \\
For $\leq$ in \eqref{eq:CurrentHalfLineSEP}, we compare $(\sigma_t)_{t\geq 0}$ to a simple exclusion process $(\eta_t)_{t \geq 0}$ on the segment of size $N$ with drift $p$ and boundary parameters $\alpha=\tilde{\alpha}$, $\beta=p$, $\gamma=\tilde{\gamma}$, $\delta=0$ (implying $b=0$) which is started from the empty configuration. We adjust now the canonical coupling $\mathbf{P}$ such that we use the same Poisson clocks in both processes on the sites $[N-1]$, and try to remove a particle in $(\eta_t)_{t \geq 0}$ at site $N$ whenever the clock for performing a jump from site $N$ to $N+1$ in $(\sigma_t)_{t \geq 0}$ rings. In particular, the coupling ensures that when $\eta_t(x)=1$ holds for some $x \in [N]$, then $\sigma_t(x)=1$, provided that both processes agreed initially on $[N]$. Therefore, the current $(J^{N}_t)_{t \geq0}$ of $(\eta_t)_{t \geq 0}$ satisfies $J^{N}_t \geq J^{\N}_t$ for all $t\geq0$ and $N\in \N$, $\mathbf{P}$-almost surely, and we conclude
$\leq$ in \eqref{eq:CurrentHalfLineSEP} by Lemma \ref{lem:current} and taking $N \rightarrow \infty$. \\
For $\geq$ in \eqref{eq:CurrentHalfLineSEP}, we assume without loss of generality that $a\geq 1$. This is due to the fact that for $a=a(\tilde{\alpha},\tilde{\gamma},p) < 1$, we can decrease $\tilde{\alpha}$ continuously until we reach $a=1$, and apply (a half-line version of) Lemma \ref{lem:MonotoneCouplingComponentwise} to see that this will only decrease the current. Using Lemma \ref{lem:MonotoneCouplingComponentwise} again, and the canonical coupling $\mathbf{P}$, we see that the current in \eqref{eq:CurrentHalfLineSEP} is bounded from below by the current in $(\sigma_t)_{t\geq 0}$ when starting initially from $\nu_{\frac{1}{1+a}}$. In order to conclude \eqref{eq:CurrentHalfLineSEP}, it suffices to show that  $\nu_{\frac{1}{1+a}}$ is extremal invariant, i.e.\ the measure $\nu_{\frac{1}{1+a}}$ is an extremal point in the (convex) set of invariant measures of $(\sigma_t)_{t\geq 0}$, see Theorem B.52 of  \cite{L:Book2}. Following the arguments of Theorem 1.17 in Part III of \cite{L:Book2}, which relates the extremal invariant measures of the ASEP on $\Z$ to those of the SSEP on $\Z$, we have to show that $\nu_{\frac{1}{1+a}}$ is extremal invariant for the simple exclusion process on the half-line with $p=\frac{1}{2}$, where particles enter at rate $\frac{1}{2}(\tilde{\alpha}+a^{-1}\tilde{\gamma})$ and exit at rate $\frac{1}{2}(\tilde{\gamma}+a\tilde{\alpha})$, respectively. As observed in Section 2 of \cite{J:Extremal}, a sufficient condition for some distribution $\nu$ to be extremal invariant is that for any finite set $A \subseteq \N$, we have
\begin{equation}\label{eq:VanishingParticles}
\lim_{t \rightarrow \infty}  \P_{\nu^{A}_{1}}\left( \sigma_t \in B \right) =  \lim_{t \rightarrow \infty} \P_{\nu^{A}_{0}}\left( \sigma_t \in B \right) \quad \text{ for all finite }  B \subseteq \N,
\end{equation}
where $\nu^{A}_{1}(\ \cdot \ ):= \nu(\ \cdot \mid \eta(x)=1 \  \forall x\in A)$ and $\nu^{A}_{0}(\ \cdot \ ):= \nu(\ \cdot \mid \eta(x)=0 \ \forall x\in A)$. 
The process  
$(\sigma_t)_{t\geq 0}$ can be realized as a  disagreement process within the canonical coupling, where we start with initial laws  $\nu^{A}_{1}$ and $\nu^{A}_{0}$. Since $\nu_{\frac{1}{1+a}}$ is a product measure, we have initially second class particles on $A$, and a Bernoulli-$\frac{1}{1+a}$-product measure everywhere else. Since $p=\frac{1}{2}$,  we can view the dynamics as an interchange process, where all second class particles perform symmetric simple random walk on $\N$, with absorption at site $1$ at rate at least $(\tilde{\alpha}+a^{-1}\tilde{\gamma}+\tilde{\gamma}+a\tilde{\alpha})/2>0$. This allows us to conclude \eqref{eq:VanishingParticles}.
\end{proof}

\subsection{Lower bound for the ASEP with one blocked entry}\label{sec:AsymmetricLowerBound}
We will now show $\geq$ in \eqref{eq:OneBlockeda}.
First, we assume $\alpha =\gamma=0$ as well as $\beta>0$. Using \eqref{eq:reversibleDistribution}, the stationary distributions $\mu= \mu_N$ of $(\eta_t)_{t\geq0}$ satisfy
\begin{equation}\label{eq:SmallProbabilitySet}
\lim_{N \rightarrow \infty} \mu\left(  \exists x \in  \lbrace 1,\dots, N-\sqrt{N} \rbrace \colon \eta(x)=1 \right) = 0.
\end{equation} Suppose we start from the configuration $\mathbf{1}$ with all sites being initially occupied. Using \eqref{def:TVDistance}, we see that in order to prove an asymptotic lower bound $t_N$ on $t^{N}_{\text{mix}}(\varepsilon)$ for all $\varepsilon \in (0,1)$, it suffices to show that with probability tending to $1$, no more than $N-\sqrt{N}$ particles have exited the segment by time $t_N$. Using the symmetry between particles and empty sites, we see that the number of particles which have exited the segment by time $t_N$ is dominated by the current of the simple exclusion process on the half-line with drift $p$ and boundary parameters $\tilde{\alpha}=\beta$, $\tilde{\gamma}=\delta$, evaluated at time $t_N$. Hence, we can conclude the lower bounds on the mixing time in Theorem \ref{thm:asymmetricOneSide} due to Lemma \ref{lem:CurrentHalfLine}. Note that for $\gamma>0$, the statement  \eqref{eq:SmallProbabilitySet} holds as well, due to Lemma \ref{lem:MonotoneCouplingComponentwise}. Consider the initial state $\eta^{N} \in \Omega_N$ given by
\begin{equation}
\eta^N(x) = \mathds{1}_{x \geq \sqrt{N}}
\end{equation} for all $x \in [N]$. Comparing the process started from $\eta^{N}$ to the blocking measure on $\Z$, we see that almost surely no particle reaches site $1$ by time $N^2$ for all $N$ sufficiently large, due to Lemma \ref{lem:HittingBlockingMeasure}. Hence, we can now use again the previous bound  via the current for the simple exclusion process on the half-line to conclude.

\subsection{An a priori upper bound on the hitting time}

In order to show $\leq$ in \eqref{eq:OneBlockeda},
we will bound the hitting time $\tau_{\mathbf{0}}$, i.e.\ the first time the process reaches $\mathbf{0}$, when all sites are initially occupied. We start with an a priori bound when the starting configuration contains a small number of particles and the particles are concentrated on the right-hand side. 

\begin{lemma} \label{lem:AprioriBound} Let $\alpha= \gamma =0$ and $\beta > 0$. For $k \in [N-1]$, assume that $\eta \in \Omega_N$ satisfies $\eta(i)=0$ for all $i \in [N-k]$. There exists $c=c(\beta,\delta,p)>0$ such that for all $k$ and $N$
\begin{equation}
\E_{\eta}[\tau_{\mathbf{0}}] \leq \exp(c k^3)\, .
\end{equation}
\end{lemma}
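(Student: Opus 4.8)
The plan is to establish the bound by inducting on $k$. For $k=1$, the configuration $\eta$ has at most one particle sitting at site $N$, and since $\beta>0$ this particle exits at rate $\beta$ (while possibly hopping left first, but only on the last $k=1$ sites it is already at $N$); a direct estimate gives $\E_\eta[\tau_{\mathbf 0}]$ bounded by a constant depending only on $\beta,\delta,p$. For the inductive step, suppose the bound holds for $k-1$ and let $\eta$ be supported on $[N-k+1,N]$. First I would run the process until the configuration restricted to $\{N-k+1,\dots,N\}$ is empty for the first time, at which moment (by the blocked left entry $\alpha=\gamma=0$ and the exclusion rule) no particle can have reached $[1,N-k]$ — more precisely, I claim that as long as site $N-k+1$ has never been vacated \emph{to the left}, the leftmost particle cannot pass site $N-k+1$, so the process effectively lives on a segment of length $k$. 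Thus it suffices to bound the time for the simple exclusion process with open boundaries on a segment of size $k$, started from an arbitrary configuration, to reach the empty state $\mathbf 0$.

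The key step is therefore a crude bound on this hitting time on a segment of \emph{size} $k$ (not $N$): I want to show it is at most $\exp(ck^3)$ for a constant $c=c(\beta,\delta,p)$. Here I would use a worst-case argument: the target state $\mathbf 0$ is a single configuration reachable from any state, the state space has size $2^k$, and by irreducibility and finiteness the expected hitting time of $\mathbf 0$ from any state is finite; to make it explicit and uniform in $k$ I would bound the probability that $\mathbf 0$ is reached within a fixed time window $[0,T_k]$ from below by a quantity like $\exp(-c'k^2)$ for a suitable $T_k$ of polynomial (or at worst exponential) order in $k$, using that there is a deterministic sequence of at most (say) $k^2$ single-particle moves and boundary exits carrying any configuration to $\mathbf 0$, each of which has rate bounded below by $\min(1-p,\beta)$ and can be forced within unit time with probability bounded below by a constant. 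A standard geometric-trials argument then yields $\E[\tau_{\mathbf 0}] \le T_k \exp(c'k^2) \le \exp(ck^3)$ after absorbing polynomial factors, which is wasteful but entirely sufficient for this a priori bound.

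Finally I would assemble the two pieces via the strong Markov property: writing $\sigma$ for the first time $\{N-k+1,\dots,N\}$ is empty, we have $\E_\eta[\tau_{\mathbf 0}] \le \E_\eta[\sigma] + \sup_{\zeta}\E_\zeta[\tau_{\mathbf 0}]$ where the supremum is over configurations supported on $[1,N-k]$ reachable at time $\sigma$ — but by the blocking observation above, there are no such particles, so in fact $\sigma = \tau_{\mathbf 0}$ and the bound on the size-$k$ segment is already the whole story. The main obstacle I anticipate is making rigorous the claim that, with $\alpha=\gamma=0$, starting from a configuration supported on the rightmost $k$ sites, no particle ever migrates left of site $N-k+1$ before the whole system empties: this needs the observation that the number of particles is non-increasing (no source at the left, and the right boundary only removes particles at rate $\beta$ or inserts at rate $\delta$ — wait, $\delta$ inserts at site $N$), so one must instead track that the leftmost occupied site is non-decreasing until the rightmost $k$ sites clear, using the exclusion rule and the absence of a left reservoir. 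Once that monotonicity is in hand, the reduction to a segment of size $k$ is immediate and the rest is the soft counting bound.
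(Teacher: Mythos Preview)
Your central reduction fails. The claim that particles starting in $[N-k+1,N]$ never migrate left of $N-k+1$ is simply false: a particle at $N-k+1$ hops to $N-k$ at rate $1-p>0$, and nothing stops it from drifting further. Your proposed fix---that the leftmost occupied site is non-decreasing---is false for the same reason. Once this confinement is lost, the geometric-trials argument breaks: after a failed trial, particles may be spread over all of $[1,N]$ (and, if $\delta>0$, there may even be more than $k$ of them), so the success probability in the next window depends on $N$, and you no longer obtain an $N$-independent bound. Even the base case $k=1$ is not the one-liner you suggest, for the same reason.

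The paper's proof is structurally different and sidesteps this via reversibility. With $\alpha=\gamma=0$ the chain is reversible with the explicit measure \eqref{eq:reversibleDistribution}; crucially, for $p>\tfrac12$ the partition function $Z_N$ is bounded uniformly in $N$, so Kac's formula gives $\E_{\mathbf{0}}[\tau^{+}_{\mathbf{0}}]=1/\mu(\mathbf{0})$ bounded uniformly in $N$. Your path-forcing idea is then applied, but in the \emph{reverse} direction, to lower-bound $\P_{\mathbf{0}}(\tau_\eta<\tau^{+}_{\mathbf{0}})$: starting from $\mathbf{0}$, one forces a sequence of at most $k^2$ insertions at $N$ and left-hops on the rightmost $k+1$ edges, during which clocks elsewhere genuinely do nothing because those sites are empty. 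This yields $\P_{\mathbf{0}}(\tau_\eta<\tau^{+}_{\mathbf{0}})\ge (\min(1-p,\delta)/(k+\beta+\delta))^{k^2}\ge \exp(-ck^3)$, and one concludes from $\E_\eta[\tau_{\mathbf{0}}]\le \E_{\mathbf{0}}[\tau^{+}_{\mathbf{0}}]\,/\,\P_{\mathbf{0}}(\tau_\eta<\tau^{+}_{\mathbf{0}})$. The degenerate cases $\delta=0$ or $p=1$ are handled by a monotone-coupling comparison (Lemma~\ref{lem:MonotoneCouplingHeightFunction}). The point is that the $N$-independence is extracted from the stationary measure, not from any spatial confinement of the dynamics.
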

\begin{proof} Suppose that $\delta>0$ and $p<1$ holds. We define the first \textbf{return time} $\tau^{+}_{\mathbf{0}}$
\begin{equation}\label{def:returnTime}
\tau^{+}_{\mathbf{0}} := \inf \lbrace t \geq \tau_{\Omega_N \setminus \{ \mathbf{0}\}} \colon \eta_t = \mathbf{0}\rbrace
\end{equation} for the simple exclusion process with open boundaries, where for a set $A\subseteq \Omega_N$, we let $\tau_{A}$ denote the hitting time of $A$. Note that for all $\eta \in \Omega_N \setminus \{ \mathbf{0} \}$ 
\begin{equation*}
\E_{\eta}[\tau_{\mathbf{0}}]  \leq \E_{\mathbf{0}}[\tau^{+}_{\mathbf{0}}] \left( \P_{\mathbf{0}}(\tau_{\eta}< \tau_{\mathbf{0}}^{+}) \right)^{-1} \, .
\end{equation*} Further, by Kac's lemma $\E_{\mathbf{0}}[\tau^{+}_{\mathbf{0}}]=(\mu(\mathbf{0}))^{-1}$ holds, and thus $\E_{\mathbf{0}}[\tau^{+}_{\mathbf{0}}]$ is bounded uniformly in $N$ due to \eqref{eq:reversibleDistribution} (note that $Z_N$ in \eqref{eq:reversibleDistribution} is bounded uniformly in $N$).
Starting from $\mathbf{0}$, there exists a sequence of at most $k^2$ updates to reach $\eta$ involving only the rightmost $
k+1$ edges and the right-hand side boundary. Moreover, this sequence can be chosen in such a way that all other updates do not affect the evolution of the process. Thus, forcing the rate $1$ Poisson clocks along these edges to ring according to a given order, we see that
\begin{equation*}
\mu(\mathbf{0}) \P_{\mathbf{0}}(\tau_{\eta}< \tau_{\mathbf{0}}^{+})  \geq \mu(\mathbf{0}) \left( \frac{\min(1-p,\delta)}{k+\beta+\delta} \right)^{k^2} \geq \exp(-ck^3) 
\end{equation*} holds for some $c>0$. For $\delta=0$ or $p=1$, use Lemma \ref{lem:MonotoneCouplingHeightFunction} to bound $\E_{\eta}[\tau_{\mathbf{0}}]$ by the expected hitting time for a simple exclusion process with the same parameters, except for some different choices of $\delta>0$ and $p<1$.
\end{proof}  

\subsection{Upper bound for the ASEP with one blocked entry}

We now prove $\leq$ in \eqref{eq:OneBlockeda} 
for the asymmetric simple exclusion process $(\eta_t)_{t\geq 0}$ with one blocked entry. We will bound the hitting time $\tau_{\mathbf{0}}$ of the configuration $\mathbf{0}$, starting from configuration $\mathbf{1}$ where all sites are occupied, and conclude by Lemma \ref{lem:HittingTime} since $\tau_{\mathbf{0}}$ stochastically dominates the coupling time $\tau$ in \eqref{eq:couplingTimeTau} between the states $\mathbf{0}$ and $\mathbf{1}$. Using Lemma \ref{lem:MonotoneCouplingComponentwise}, we can assume without loss of generality that $\gamma=0$.  For all $k\in [N]$, let $\theta_k$ be the configuration in $\Omega_N$ where the rightmost $k$ sites are occupied and all other sites are empty, and recall that $L(\eta)$ denotes the position of the leftmost particle in $\eta$. We set  \begin{equation}
c_{b,p}:=(\max(b,1)+1)^2/((2p-1)\max(b,1)) \label{def:ConstantCBP}.
\end{equation} Our key tool is the following lemma, showing that the particles  travel like a \textit{shock wave} at linear speed until a time $\tilde{\tau}$ (defined later on). In particular,  $\tilde{\tau}$ will be a stopping time which describes that enough particles exited.

\begin{lemma}[Shock wave phenomenon]\label{lem:Shockwave} Assume $\alpha = \gamma = 0$ and $\beta > 0$. Further, let $p > \frac{1}{2}$ and $\delta\geq 0$. Let $\varepsilon,\tilde{\varepsilon}>0$. Then there exist $N_0,k_0 \in \N$ such that for all $N \geq N_0$ and $k\geq k_0$ with $k=k(N) \in [N]$, we find a stopping time $\tilde{\tau}$ such that $(\eta_t)_{t \geq 0}$ on $\Omega_N$ started from $\theta_k$ satisfies 
\begin{equation}\label{eq:ShockwaveStatement1}
\P\left( \big| L\big(\eta_{\tilde{\tau}}\big) - N \big| \leq \log^3 k  \text{ \normalfont and }\tilde{\tau} \leq  (1+\varepsilon)c_{b,p}k \mid \eta_0 = \theta_k \right) \geq  1 -\tilde{\varepsilon} \ .
\end{equation}  Moreover, for all $t\geq0$, we have that
\begin{equation}\label{eq:ShockwaveStatement2}
 \P\left( \tau_{\mathbf{0}} \geq (1+\varepsilon) c_{b,p} k + t \mid \eta_0 = \theta_k \right) \leq  \P\left( \tau_{\mathbf{0}} \geq  t  \mid \eta_0 = \theta_{\lfloor \log^3 k \rfloor} \right)  +  \tilde{\varepsilon} \ .
\end{equation}
\end{lemma}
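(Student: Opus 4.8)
The strategy is to realize the hydrodynamic intuition described in the introduction: starting from $\theta_k$, the right boundary injects particles (via $\tilde\alpha = \beta$, $\tilde\gamma = \delta$ in the half-line picture, using the particle--hole flip at site $N$) while the bulk drift pushes particles to the right against the blocked left entry, so the left front $L(\eta_t)$ of the occupied region recedes from site $1$ back towards site $N$ at a deterministic linear speed. The speed is dictated by a \emph{conservation of particles} (current-matching) argument: the rate at which particles leave through the right boundary minus the rate at which they are (not) replenished governs how fast the occupied block of size $k$ shrinks. Concretely, I would compare $(\eta_t)_{t\ge 0}$ on the segment, run from $\theta_k$, to the simple exclusion process on the half-line $\{1,2,\dots\}$ of Lemma \ref{lem:CurrentHalfLine} (reading the segment from the right, with $\tilde\alpha=\beta$, $\tilde\gamma=\delta$), whose current has the explicit almost-sure limit $(2p-1)\frac{\max(b,1)}{(\max(b,1)+1)^2}$. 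The number of particles inside a block of length $k$ is $k$, and the density just inside the right boundary converges (by the stationary profile, Lemma \ref{lem:stationaryDistribution} with parameter $b$, resp.\ Lemma \ref{lem:ComparsionStationaryMeasure}) to $\frac{b}{1+b}$ in the high-density phase and to $\frac12$ in the maximal-current phase; dividing the number of particles that must exit, $k(1-o(1))$ after accounting for the equilibrium filling near the right edge, by the flux $J$ gives exactly $\tilde\tau \approx k/J \cdot (\text{geometric correction}) = c_{b,p}k$ as in \eqref{def:ConstantCBP}. I would make $\tilde\tau$ a genuine stopping time by defining it as the first time that the current through the right boundary (equivalently, the number of particles that have exited) exceeds the appropriate threshold $k - \log^3 k$ (or, symmetrically, the first time $|L(\eta_t)-N|\le \log^3 k$, whichever is cleaner), and then \eqref{eq:ShockwaveStatement1} becomes a law-of-large-numbers statement for the current together with a control that the occupied region stays a connected block up to fluctuations of order $\log^3 k$.

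For \eqref{eq:ShockwaveStatement1} the two clauses are handled separately. The time bound $\tilde\tau \le (1+\varepsilon)c_{b,p}k$ with probability $\ge 1-\tilde\varepsilon/2$ follows from the lower bound on the half-line current in Lemma \ref{lem:CurrentHalfLine} (combined with a monotone/canonical coupling, Lemma \ref{lem:MonotoneCouplingComponentwise}, to transfer the half-line current estimate to the finite segment): the current is asymptotically $Jt$, so after time $(1+\varepsilon)c_{b,p}k$ strictly more than $k$ particles have exited with high probability, uniformly in the starting block since the block only ever has $k$ particles. The positional statement $|L(\eta_{\tilde\tau})-N|\le\log^3 k$ requires showing the occupied region stays essentially an interval: to the right of $L(\eta_t)$ the configuration is dominated by a near-equilibrium (density $\approx b/(1+b)$ or $\approx 1/2$) profile and to the left it is empty with only $O(\log^3 k)$ stray particles/holes. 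I would get this by sandwiching between product measures via Lemma \ref{lem:ComparsionStationaryMeasure} and Lemma \ref{lem:HittingBlockingMeasure}-type estimates — the latter says that under the blocking measure the leftmost particle does not travel an atypically large distance on the relevant (here polynomial) time scale, which controls the size of the ``smeared'' transition region around $L(\eta_t)$ and keeps it $O(\log^3 k)$ with the stated probability. A union bound over the two events gives $1-\tilde\varepsilon$.

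Statement \eqref{eq:ShockwaveStatement2} is then a restart/Markov-property argument: on the event in \eqref{eq:ShockwaveStatement1}, at the stopping time $\tilde\tau$ the configuration $\eta_{\tilde\tau}$ has all its particles within distance $\log^3 k$ of site $N$, i.e.\ $\eta_{\tilde\tau}\preceq_\h \theta_{\lfloor \log^3 k\rfloor}$ in the relevant partial order (there are at most $\log^3 k$ occupied sites, all near $N$, since $|L(\eta_{\tilde\tau})-N|\le\log^3 k$ means the occupied region has length at most $\log^3 k$). By the strong Markov property at $\tilde\tau$ and monotonicity of $\tau_{\mathbf 0}$ under $\succeq_\h$ (Lemma \ref{lem:MonotoneCouplingHeightFunction}, with the canonical coupling applied to the states $\eta_{\tilde\tau}$ and $\theta_{\lfloor\log^3 k\rfloor}$), the additional time $\tau_{\mathbf 0}-\tilde\tau$ needed from $\eta_{\tilde\tau}$ is stochastically dominated by $\tau_{\mathbf 0}$ started from $\theta_{\lfloor\log^3 k\rfloor}$; on the complementary event of probability $\le\tilde\varepsilon$ we simply bound the probability by $1$, producing the additive $\tilde\varepsilon$. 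Combining with the bound $\tilde\tau\le(1+\varepsilon)c_{b,p}k$ on the good event gives \eqref{eq:ShockwaveStatement2}.

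\textbf{Main obstacle.} The delicate point is not the current asymptotics (those are quoted) but the \emph{geometric / front-localization} claim $|L(\eta_{\tilde\tau})-N|\le\log^3 k$: one must show the occupied region does not develop a long diffuse tail of stray particles to the left of the bulk front, so that counting exited particles via the current faithfully tracks the position of the front. This is precisely where the shock-wave stability of the ASEP enters — a second-class-particle argument placed at the transition point between the (near-)empty region and the (near-)equilibrium region, using that in the reverse-drift-free regime the shock is stable and the transition stays microscopically sharp up to fluctuations that are at most logarithmic on the linear time scale. Getting the $\log^3 k$ (rather than merely $o(k)$) window will require the blocking-measure estimate of Lemma \ref{lem:HittingBlockingMeasure} applied carefully, since that lemma only controls the front displacement up to times exponential in the window size — here the window $\log^3 k$ and the time scale $k$ are comfortably compatible ($\frac{\varepsilon C}{\log^3 k}(p/(1-p))^{\log^3 k}\gg k$), which is exactly why the window is taken polylogarithmic rather than constant.
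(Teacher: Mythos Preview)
Your outline matches the paper's strategy: the stopping time is defined via a particle-exit threshold, the time bound $\tilde\tau\le(1+\varepsilon)c_{b,p}k$ comes from the half-line current (Lemma~\ref{lem:CurrentHalfLine}), and \eqref{eq:ShockwaveStatement2} is exactly the strong-Markov-plus-monotonicity argument you describe. You have also correctly isolated the front-localization clause $|L(\eta_{\tilde\tau})-N|\le\log^3 k$ as the crux and named the right tool (Lemma~\ref{lem:HittingBlockingMeasure}).

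What is missing is the mechanism that lets you \emph{apply} the blocking-measure estimate. Lemma~\ref{lem:HittingBlockingMeasure} concerns the ASEP on $A_0$ started from its stationary measure; it does not directly control the interface in a finite-segment process with a boundary reservoir. The paper bridges this gap with a concrete construction borrowed from \cite{BBHM:MixingBias}: extend the segment to the half-line $(-\infty,N]$, couple the process started from $\theta_k$ to the one started from the all-full configuration, and form their disagreement process $(\xi_t)$ (first class particles on the initial block, second class particles everywhere to its left). Then build $(\xi^{\ast}_t)$ by \emph{deleting all empty sites} of $\xi_t$ and turning second class particles into empty sites; the key observation is that $(\xi^{\ast}_t)$, up until a second class particle exits at $N$, has the law of an ASEP on $A_0$ started from the ground state $\vartheta_0$ with a censoring scheme that does not depend on $(\xi^{\ast}_t)$ itself. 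This is what makes Remark~\ref{rem:CensoringBlocking} and Lemma~\ref{lem:HittingBlockingMeasure} applicable and yields the polylogarithmic window. Your phrase ``applied carefully'' needs to become precisely this deletion-and-censoring trick; without it the proposal has the right ingredients but no way to bring the blocking-measure bound to bear on the actual front $L(\eta_t)$. (One small correction: the paper takes the exit threshold at $\lceil\log^2 k\rceil$ and proves the window $\log^3 k$, the extra log coming from the event $B^4_k$ that controls the density of non-empty sites near $N$.)
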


\begin{figure} 
\centering
\begin{tikzpicture}[scale=0.75]

\def\x{2.2};
\def\y{-1.6};

	\def\t{0.9};	

 	\node[shape=circle,scale=1.5,draw] (C3) at (2*\x,-1*\y){} ; 
 	\node[shape=circle,scale=1.5,draw] (C4) at (3*\x,-1*\y){} ; 
 	\node[shape=circle,scale=1.5,draw] (C5) at (4*\x,-1*\y){} ; 
 	\node[shape=circle,scale=1.5,draw] (C6) at (5*\x,-1*\y){} ; 

 	\draw[thick] (C3) -- (C4); 	
 	\draw[thick] (C4) -- (C5); 	
 	\draw[thick] (C5) -- (C6);
 	
	\draw [->,line width=1pt] (C6) to [bend right,in=135,out=45] (5*\x+1,-1*\y+0.25); 	
	\draw [->,line width=1pt] (5*\x+1,-1*\y-0.25) to [bend right,in=135,out=45] (C6); 	
	
	\node[shape=circle,scale=1.2,fill=red] (F4) at (C4) {};	
	\node[shape=circle,scale=1.2,fill=red] (F5) at (C5) {};
	\node[shape=circle,scale=1.2,fill=red] (F6) at (C6) {};

	\node (H3) at (-4,-1*\y) {$\eta_0$};	 
	\node[scale=\t] (J3) at (5*\x+0.6,-1*\y+0.7) {$\beta$};	 		 	
	\node[scale=\t] (K3) at (5*\x+0.6,-1*\y-0.2) {$\delta$};

 	\node[shape=circle,scale=1.5,draw] (C0) at (-\x,0*\y){} ; 	
 	\node[shape=circle,scale=1.5,draw] (C1) at (0,0*\y){} ; 
 	\node[shape=circle,scale=1.5,draw] (C2) at (\x,0*\y){} ; 
 	\node[shape=circle,scale=1.5,draw] (C3) at (2*\x,0*\y){} ; 
 	\node[shape=circle,scale=1.5,draw] (C4) at (3*\x,0*\y){} ; 
 	\node[shape=circle,scale=1.5,draw] (C5) at (4*\x,0*\y){} ; 
 	\node[shape=circle,scale=1.5,draw] (C6) at (5*\x,0*\y){} ;

    \draw[thick,dashed] (C0) -- (-\x-0.8,0*\y);
  	\draw[thick] (C0) -- (C1);	
 	\draw[thick] (C1) -- (C2);		
 	\draw[thick] (C2) -- (C3);
 	\draw[thick] (C3) -- (C4);
 	\draw[thick] (C4) -- (C5);
 	\draw[thick] (C5) -- (C6);

	\draw [->,line width=1pt] (C6) to [bend right,in=135,out=45] (5*\x+1,0*\y+0.25); 	
	\draw [->,line width=1pt] (5*\x+1,0*\y-0.25) to [bend right,in=135,out=45] (C6);

	\node[shape=circle,scale=1.2,fill=red] (G2) at (3*\x,0*\y) {};	
	\node[shape=circle,scale=1.2,fill=red] (G4) at (4*\x,0*\y) {};	
	\node[shape=circle,scale=1.2,fill=red] (G6) at (5*\x,0*\y) {};

	\node (H4) at (-4,0*\y) {$\eta^{\N}_0$};	 

	\node[scale=\t] (J4) at (5*\x+0.6,0*\y+0.7) {$\beta$};	
	\node[scale=\t] (K4) at (5*\x+0.6,0*\y-0.2) {$\delta$};


 	\node[shape=circle,scale=1.5,draw] (C0) at (-\x,1*\y){} ; 	
 	\node[shape=circle,scale=1.5,draw] (C1) at (0,1*\y){} ; 
 	\node[shape=circle,scale=1.5,draw] (C2) at (\x,1*\y){} ; 
 	\node[shape=circle,scale=1.5,draw] (C3) at (2*\x,1*\y){} ; 
 	\node[shape=circle,scale=1.5,draw] (C4) at (3*\x,1*\y){} ; 
 	\node[shape=circle,scale=1.5,draw] (C5) at (4*\x,1*\y){} ; 
 	\node[shape=circle,scale=1.5,draw] (C6) at (5*\x,1*\y){} ; 
 
     \draw[thick,dashed] (C0) -- (-\x-0.8,1*\y);
 	\draw[thick] (C0) -- (C1);	     
 	\draw[thick] (C1) -- (C2);		
 	\draw[thick] (C2) -- (C3);
 	\draw[thick] (C3) -- (C4);
 	\draw[thick] (C4) -- (C5);
 	\draw[thick] (C5) -- (C6);

	\draw [->,line width=1pt] (C6) to [bend right,in=135,out=45] (5*\x+1,1*\y+0.25); 	
	\draw [->,line width=1pt] (5*\x+1,1*\y-0.25) to [bend right,in=135,out=45] (C6); 	

	\node[shape=circle,scale=1.2,fill=red] (G0) at (-1*\x,1*\y) {};		
	\node[shape=circle,scale=1.2,fill=red] (G1) at (0,1*\y) {};	
	\node[shape=circle,scale=1.2,fill=red] (G2) at (1*\x,1*\y) {};	
	\node[shape=circle,scale=1.2,fill=red] (G2) at (2*\x,1*\y) {};	
	\node[shape=circle,scale=1.2,fill=red] (G4) at (3*\x,1*\y) {};	
    \node[shape=circle,scale=1.2,fill=red] (G5) at (4*\x,1*\y) {};	
	\node[shape=circle,scale=1.2,fill=red] (G6) at (5*\x,1*\y) {};

	\node (H4) at (-4,1*\y) {$\zeta_0$};	 
	
	\node[scale=\t] (J4) at (5*\x+0.6,1*\y+0.7) {$\beta$};	
	\node[scale=\t] (K4) at (5*\x+0.6,1*\y-0.2) {$\delta$};

 	\node[shape=circle,scale=1.5,draw] (C0) at (-\x,2*\y){} ; 		
 	\node[shape=circle,scale=1.5,draw] (C1) at (0,2*\y){} ; 
 	\node[shape=circle,scale=1.5,draw] (C2) at (\x,2*\y){} ; 
 	\node[shape=circle,scale=1.5,draw] (C3) at (2*\x,2*\y){} ; 
 	\node[shape=circle,scale=1.5,draw] (C4) at (3*\x,2*\y){} ; 
 	\node[shape=circle,scale=1.5,draw] (C5) at (4*\x,2*\y){} ; 
 	\node[shape=circle,scale=1.5,draw] (C6) at (5*\x,2*\y){} ; 
 
     \draw[thick,dashed] (C0) -- (-\x-0.8,2*\y);
 	\draw[thick] (C0) -- (C1);	 
 	\draw[thick] (C1) -- (C2); 	
 	\draw[thick] (C2) -- (C3);
 	\draw[thick] (C3) -- (C4);
 	\draw[thick] (C4) -- (C5);
 	\draw[thick] (C5) -- (C6);
 	
 	\draw [->,line width=1pt] (C6) to [bend right,in=135,out=45] (5*\x+1,2*\y+0.25); 	
	\draw [->,line width=1pt] (5*\x+1,2*\y-0.25) to [bend right,in=135,out=45] (C6); 	

 	\node[shape=star,star points=5,star point ratio=2.5,fill=red,scale=0.55] (F0) at (-1*\x,2*\y) {};
	\node[shape=star,star points=5,star point ratio=2.5,fill=red,scale=0.55] (F1) at (0*\x,2*\y) {};	
 	\node[shape=star,star points=5,star point ratio=2.5,fill=red,scale=0.55] (F2) at (1*\x,2*\y) {};
	\node[shape=star,star points=5,star point ratio=2.5,fill=red,scale=0.55] (F3) at (2*\x,2*\y) {};
	\node[shape=circle,scale=1.2,fill=red] (F4) at (3*\x,2*\y) {}; 
	\node[shape=circle,scale=1.2,fill=red] (F5) at (4*\x,2*\y) {}; 
	\node[shape=circle,scale=1.2,fill=red] (F6) at (5*\x,2*\y) {};  
 
	\node (H5) at (-4,2*\y) {$\xi_0$};	 
	\node[scale=\t] (J5) at (5*\x+0.6,2*\y+0.7) {$\beta$};	
	\node[scale=\t] (K5) at (5*\x+0.6,2*\y-0.2) {$\delta$};	 

	\def\z{-0.75};			
	\def\s{0.8};	

	\node[scale=\s]  (H3) at (-1*\x,2*\y+\z) {$-2$};	 
	\node[scale=\s]  (H3) at (0*\x,2*\y+\z) {$-1$};	 
	\node[scale=\s]  (H3) at (1*\x,2*\y+\z) {$0$};	 
	\node[scale=\s]  (H3) at (2*\x,2*\y+\z) {$1$};	 
	\node[scale=\s]  (H3) at (3*\x,2*\y+\z) {$2$};	 
	\node[scale=\s]  (H3) at (4*\x,2*\y+\z) {$3$};	 
	\node[scale=\s]  (H3) at (5*\x,2*\y+\z) {$4$};

	\def\particles(#1)(#2){

  \draw[black,thick](-3.9+#1,0.55+#2) -- (-4.9+#1,0.55+#2) -- (-4.9+#1,-0.4+#2) -- (-3.9+#1,-0.4+#2) -- (-3.9+#1,0.55+#2);
  
  	\node[shape=circle,scale=0.6,fill=red] (Y1) at (-4.15+#1,0.2+#2) {};
  	\node[shape=circle,scale=0.6,fill=red] (Y2) at (-4.6+#1,0.35+#2) {};
  	\node[shape=circle,scale=0.6,fill=red] (Y3) at (-4.2+#1,-0.2+#2) {};
   	\node[shape=circle,scale=0.6,fill=red] (Y4) at (-4.45+#1,0.05+#2) {};
  	\node[shape=circle,scale=0.6,fill=red] (Y5) at (-4.65+#1,-0.15+#2) {}; }

%
%
%
%
%
%
%
%

	\end{tikzpicture}	
\caption{\label{fig:ThreeProcessesCoupling}Visualization of the initial configurations of the different processes used in the proof of the upper bound in Theorem \ref{thm:asymmetricOneSide} for $N=4$ and $k=3$.}
 \end{figure}

The proof of Lemma \ref{lem:Shockwave} is deferred to the upcoming Section \ref{sec:ShockwaveLemma}. We conclude this paragraph by showing Theorem \ref{thm:asymmetricOneSide} under the assumption that Lemma \ref{lem:Shockwave} holds.

\begin{proof}[Proof of Theorem \ref{thm:asymmetricOneSide} using Lemma \ref{lem:Shockwave}] Fix $\varepsilon,\tilde{\varepsilon}>0$. For $N$ sufficiently large, we apply Lemma \ref{lem:Shockwave} twice, for $k=N$ with  $t=2\varepsilon c_{b,p}  N$ as well as for $k=\log^3 N \leq ({1+\varepsilon})^{-1}\varepsilon N
$ with $t=\varepsilon c_{b,p}  N$, respectively, to see that 
\begin{align}\label{eq:Shock2x}
 \P\left( \tau_{\mathbf{0}} \geq (1+3\varepsilon)  c_{b,p}  N \mid \eta_0 =\mathbf{1} \right) &\leq   \P\left( \tau_{\mathbf{0}} \geq   2\varepsilon c_{b,p} N \mid \eta_0 = \theta_{\lfloor \log^3N \rfloor} \right) + \tilde{\varepsilon}  \nonumber \\ &\leq  \P\left( \tau_{\mathbf{0}} \geq   \varepsilon c_{b,p} N \mid \eta_0 = \theta_{\lfloor \log^3(\lfloor \log^{3}N\rfloor ) \rfloor} \right) + 2\tilde{\varepsilon} 
\end{align} holds. Using Lemma \ref{lem:AprioriBound} and Markov's inequality, the right-hand side of  \eqref{eq:Shock2x} is bounded by $3\tilde{\varepsilon}$ for all $N$ large enough. Since $\varepsilon$ and $\tilde{\varepsilon}$ were arbitrary, we apply Lemma \ref{lem:HittingTime} to conclude.
\end{proof}

\subsection{Proof of the shock wave phenomenon}\label{sec:ShockwaveLemma}
In order to prove Lemma \ref{lem:Shockwave}, we will now introduce three auxiliary exclusion processes, which will be intertwined by the canonical coupling $\mathbf{P}$. A visualization of their construction can be found in Figure \ref{fig:ThreeProcessesCoupling}. In a first step, we define $(\eta^{\N}_t)_{t\geq0}$ by extending the simple exclusion process $(\eta_t)_{t\geq0}$ on the segment of size $N$ to the half-line $(-\infty, N]$. More precisely, we let $(\eta^{\N}_t)_{t\geq0}$ be the simple exclusion process on the half-line $(-\infty, N] \cap \Z$ with drift $p$, and particles exiting and entering at site $N$ at rates $\beta$ and $\delta$, respectively. On all positive integers, we use the same clocks for the processes $(\eta_t)_{t\geq0}$ and $(\eta^{\N}_t)_{t\geq0}$. In particular, when both processes agree initially on the sites in $[N]$, this construction will ensure that the position $L(\cdot )$ of the leftmost particle 
satisfies $L(\eta^{\N}_t) \leq L(\eta_t)$ almost surely for all $t\geq 0$. In the following, we will assume that  $(\eta_t)_{t \geq0}$ and $(\eta_{t}^{\N})_{t \geq0}$ are started from the configurations, where exactly the rightmost $k$ sites are occupied. \\
Next, we let $(\zeta_t)_{t\geq0}$ be the exclusion process on $(-\infty, N] \cap \Z$ with the same transition rules as $(\eta^{\N}_t)_{t\geq0}$, but started from the all full configuration. Under the canonical coupling $\mathbf{P}$, let $(\xi_t)_{t\geq0}$ denote the disagreement process between $(\eta^{\N}_t)_{t\geq0}$ and  $(\zeta_t)_{t\geq0}$ (recall \eqref{def:DisagreementProcess}). Note that $(\xi_t)_{t\geq0}$ is again an exclusion process on $(-\infty, N ] \cap \Z$ where the rightmost $k$ sites are initially occupied by first class particles, and all other sites by second class 
particles. 
If $\xi_t$ has a positive number of first class particles, let $L_1(\xi_t)$  be the position of its leftmost first class particle. For all $x \in [k]$, let $\tilde{\tau}(x)$ be the first time at which $k-x$ particles have exited in $(\zeta_t)_{t \geq0}$. The next lemma shows that  $L_1(\xi_t)$ is close to the boundary at time $t=\tilde{\tau}(\lceil\log^2 k
\rceil)$. Indeed $\tilde{\tau}(\lceil\log^2 k\rceil)$ will be the stopping time $\tilde{\tau}$ whose existence we claim in Lemma \ref{lem:Shockwave}.

\begin{lemma} \label{lem:MaximumInCensoredDynamics} Let $\tilde{\varepsilon}>0$. Then there exist $k_0 \in \N$ such that for all $k\geq k_0$ and all $N \in \N$, we have that under the above canonical coupling $\mathbf{P}$
\begin{equation}\label{eq:MaximumDynamics}
\mathbf{P}\left( |  L_1(\xi_{\tilde{\tau}(\lceil\log^2 k \rceil)}) - N | \leq \log^3 k \right) \geq 1 - \tilde{\varepsilon} \, .
\end{equation} 
\end{lemma}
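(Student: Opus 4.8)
The plan is to turn this into a statement about the leftmost particle of $(\eta^{\N}_t)_{t\geq 0}$, to extract an exact bookkeeping identity linking $(\eta^{\N}_t)_{t\geq 0}$ to the current of $(\zeta_t)_{t\geq 0}$, and then to prove a \emph{shielding} estimate to the effect that the two processes stay coupled near $N$ up to time $\tilde{\tau}:=\tilde{\tau}(\lceil\log^2 k\rceil)$. First I would observe that since $\zeta_0=\mathbf{1}\succeq_{\c}\theta_k=\eta^{\N}_0$, the half-line analogue of Lemma \ref{lem:MonotoneCouplingComponentwise} gives $\zeta_t\succeq_{\c}\eta^{\N}_t$ for all $t$ under $\mathbf{P}$; hence a site is occupied by a first class particle of $\xi_t$ if and only if it is occupied in $\eta^{\N}_t$, so $L_1(\xi_t)=L(\eta^{\N}_t)$ and it suffices to show $N-L(\eta^{\N}_{\tilde{\tau}})\leq\log^3 k$ with probability at least $1-\tilde{\varepsilon}$.

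Next I would set up the bookkeeping. Let $m_t:=|\eta^{\N}_t|$ and let $E_t$ be the net number of particles that have left $\zeta$ at site $N$ by time $t$, which equals the number of empty sites of $\zeta_t$. Bulk transitions of the canonical coupling change neither quantity, and at site $N$ the sum $m_t+E_t$ is non-decreasing and increases by one exactly when the rate-$\beta$ or rate-$\delta$ clock at $N$ fires while $\xi_t(N)=2$ --- that is, precisely when a second class particle leaves at $N$ or is promoted to first class there. Writing $S_t$ for the number of such events, this yields the identity $m_t+E_t=k+S_t$ with $(S_t)_{t\geq 0}$ non-decreasing; since $E_{\tilde{\tau}}=k-\lceil\log^2 k\rceil$ we get $m_{\tilde{\tau}}=\lceil\log^2 k\rceil+S_{\tilde{\tau}}$. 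Because the occupied sites of $\eta^{\N}_{\tilde{\tau}}$ lie in $[L(\eta^{\N}_{\tilde{\tau}}),N]$ we always have $N-L(\eta^{\N}_{\tilde{\tau}})\geq m_{\tilde{\tau}}-1$, so the lemma will follow once we establish, with probability at least $1-\tilde{\varepsilon}$, both (a) $S_{\tilde{\tau}}\leq\log^3 k-\log^2 k$ and (b) all occupied sites of $\eta^{\N}_{\tilde{\tau}}$ lie in $[N-\log^3 k,N]$.

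For the timing I would use Lemma \ref{lem:CurrentHalfLine}: the empty sites of $(\zeta_t)_{t\geq 0}$, after reflecting $N\leftrightarrow 1$, perform the half-line exclusion process of Section \ref{sec:SEPhalfLine} with drift $p$, entry rate $\beta$, exit rate $\delta$, started from $\mathbf{0}$, and since $a(\beta,\delta,p)=b$ we get $E_t/t\to(2p-1)\max(b,1)/(\max(b,1)+1)^2=c_{b,p}^{-1}$ almost surely. Hence for any fixed $\varepsilon'>0$ and all $k$ large, $\tilde{\tau}\in[(1-\varepsilon')c_{b,p}(k-\log^2 k),(1+\varepsilon')c_{b,p}(k-\log^2 k)]$ with probability at least $1-\tilde{\varepsilon}/3$.

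The heart of the argument --- and the step I expect to be the main obstacle --- is the shielding estimate giving (a). The point is that, in the canonical coupling, a disagreement between $\eta^{\N}$ and $\zeta$ at $N$ can only occur once some disagreement has travelled there from the initial disagreement region $(-\infty,N-k]$; while the block of first class particles remains dense in a neighbourhood of $N$ the coupling forces $\eta^{\N}_t$ and $\zeta_t$ to coincide on a window around $N$, so $\xi_t(N)\neq 2$ and $S_t$ does not increase. By the identity of the previous step together with the current estimate, the block stays dense near $N$ until $m_t$ has dropped below $\frac{1}{C}\log^3 k$, hence until a time exceeding $\tilde{\tau}$ (because $k-\frac{1}{C}\log^3 k>k-\log^2 k$ for large $k$), so at most $O(\log^3 k)$ increments of $S_t$ can have occurred by $\tilde{\tau}$. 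Making this rigorous requires a bootstrap (to break the apparent circularity between ``$S_t$ small'' and ``the block stays dense''), or, equivalently, sandwiching $(\eta^{\N}_t)_{t\geq0}$ between the exclusion processes started from $\theta_{k-\lceil\log^3 k\rceil}$ and $\theta_{k+\lceil\log^3 k\rceil}$ via Lemma \ref{lem:MonotoneCouplingComponentwise} and transporting the current bounds of Lemmas \ref{lem:CurrentHalfLine} and \ref{lem:current}; this gives (a) with probability at least $1-\tilde{\varepsilon}/3$. Finally, for (b): in the single-species process $(\eta^{\N}_t)_{t\geq0}$ particles never swap and exit only through $N$, so the survivors are, apart from particles re-entered at $N$ (which sit next to $N$), the leftmost particles of the initial block; once only $O(\log^3 k)$ of them remain the density of $\eta^{\N}$ near $N$ is still bounded below, so they occupy an interval of length $O(\log^3 k)$, and from then on $\eta^{\N}_t$ is dominated componentwise by the evolution of a block of $O(\log^3 k)$ particles, whose leftmost occupied site stays within $O(\log^3 k)$ of $N$ forever --- a finite-size version of Lemma \ref{lem:HittingBlockingMeasure}. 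Choosing the constant $C$ large enough turns these $O(\cdot)$'s into the bound $N-L(\eta^{\N}_{\tilde{\tau}})\leq\log^3 k$, and a union bound over the three good events finishes the proof.
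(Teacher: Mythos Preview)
Your reduction to $L_1(\xi_t)=L(\eta^{\N}_t)$ and the bookkeeping identity $m_t+E_t=k+S_t$ are correct and clean, and the timing via Lemma~\ref{lem:CurrentHalfLine} is fine. The real gap is the shielding step (a), which is precisely the hard part of the lemma, and your sketch does not close it.

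First, the parenthetical check is in the wrong direction: for large $k$ one has $\tfrac{1}{C}\log^3 k>\log^2 k$, hence $k-\tfrac{1}{C}\log^3 k<k-\log^2 k$, so the time at which $m_t$ drops below $\tfrac{1}{C}\log^3 k$ \emph{precedes} $\tilde{\tau}$, not exceeds it. More importantly, the qualitative claim ``while the block of first class particles is dense near $N$, no disagreement reaches $N$'' is not justified: a second class particle can still move right through empty sites of $\xi_t$ at rate $p$, and density of first class particles alone gives no a priori bound on how far it penetrates. The sandwiching by $\theta_{k\pm\lceil\log^3 k\rceil}$ does not break the circularity either, since the current bounds for those processes are of the same asymptotic strength as for $\eta^{\N}$ and give no extra handle on $S_t$. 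The paper's proof supplies exactly the missing mechanism: it collapses the empty sites of $\xi_t$ to obtain a process $\xi^{\ast}_t$ on $A_0$ in which first class particles become particles and second class particles become holes; up to the first time a second class particle reaches $N$ this is a \emph{censored} simple exclusion on $A_0$ started from the ground state, and Lemma~\ref{lem:HittingBlockingMeasure} together with Remark~\ref{rem:CensoringBlocking} then shows that the rightmost hole stays within $\log^2 k$ of the origin for times up to $(1+\varepsilon)c_{b,p}k$. That is what prevents a second class particle from ever reaching $N$ before $\tilde{\tau}$, and it simultaneously gives the concentration you need for (b) via the event $B^4_k$ (a density lower bound for $\zeta_t$ near $N$ coming from \eqref{eq:ComparisonHalfLineProduct}). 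Without some version of this collapse-and-compare-to-blocking-measure argument, (a) remains open; your step (b), which relies on (a), inherits the gap.
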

\begin{proof} Note that by construction, $(\xi_t)_{t \geq0}$ must contain at least 
$\log^2 k$ first class particles until time $\tilde{\tau}(\lceil\log^2 k \rceil)$, and hence $L_1(\xi_{\tilde{\tau}(\lceil\log^2 k \rceil)})$ is well-defined. In order to show \eqref{eq:MaximumDynamics}, we use similar ideas as Benjamini et al.\ in \cite{BBHM:MixingBias} for the closed segment. Recall \eqref{blockingsets} and define a process $(\xi^{\ast}_t)_{t \geq 0}$ on $A_0$ from $(\xi_t)_{t \geq 0}$ as follows: For every $t\geq 0$, consider the sequence which we obtain by first deleting all sites which are empty in $\xi_t$ (here certain edges are merged) and then replacing all second class particles by empty sites. We let $\xi^{\ast}_t$ be the unique configuration in $A_0$ which contains this sequence, and has only empty sites to the left and only first class particles to its right, see Figure \hyperref[fig:ProjectionBerger]{5}. Note that $\xi^{\ast}_0= \vartheta_0$ holds by construction. \\
We claim that up to the first time $\tau^{\ast}$ at which a second class particle exits at the right-hand side boundary in $(\xi_t)_{t \geq 0}$, the process $(\xi^{\ast}_t)_{t \geq 0}$ has the law of a simple exclusion process on $A_0$ with censoring. More precisely, an edge $e$ is censored in $\xi^{\ast}_t$ at time $t$ if 
\begin{itemize}
\item  one of its endpoints is $> N$ 
\item the edge is merged in the first step of the construction 
from two edges which are adjacent to an empty site, 
\end{itemize}
 see Figure~\hyperref[fig:ProjectionBerger]{5}. To see this,  consider an update in the configuration $\xi_t$ along an edge $e=\{x,x+1\}$. When $\xi_t(x)=\xi_t(x+1)$, or one of the sites in $e$ contains an empty site, this will not change the configuration $\xi^{\ast}_t$. 
Similarly, whenever an empty site or a particle exits in $\xi_t$, this does not change the configuration $\xi^{\ast}_t$. However, each update in $\xi_t$ along the edge $e$ in which we swap a first and a second class particle will in the same way be applied in $\xi^{\ast}_t$, i.e.\ we will swap the particle -- empty site pair along the corresponding edge in $\xi^{\ast}_t$. As these are all possible updates in $\xi_t$, this gives the claim. \\
\begin{figure}
\centering
\begin{tikzpicture}[scale=0.75]

\def\x{2};
\def\y{-1.4};

 	\node[shape=circle,scale=1.5,draw] (C0) at (-\x,0*\y){} ; 
 	\node[shape=circle,scale=1.5,draw] (C1) at (0,0*\y){} ; 
 	\node[shape=circle,scale=1.5,draw] (C2) at (\x,0*\y){} ; 
 	\node[shape=circle,scale=1.5,draw] (C3) at (2*\x,0*\y){} ; 
 	\node[shape=circle,scale=1.5,draw] (C4) at (3*\x,0*\y){} ; 
 	\node[shape=circle,scale=1.5,draw] (C5) at (4*\x,0*\y){} ; 
 	\node[shape=circle,scale=1.5,draw] (C6) at (5*\x,0*\y){} ; 
 	\node[shape=circle,scale=1.5,draw] (C7) at (6*\x,-0*\y){} ; 
 	
 	\draw[thick] (C0) -- (-0.8-\x,0*\y) ; 	
 	\draw[thick] (C0) -- (C1); 
 	\draw[thick] (C1) -- (C2);
 	\draw[thick] (C2) -- (C3);
 	\draw[thick] (C3) -- (C4);
 	\draw[thick] (C4) -- (C5);
 	\draw[thick] (C5) -- (C6);
 	\draw[thick] (C6) -- (C7);
 	
	\node[shape=circle,scale=1.2,fill=red] (F5) at (C2) {};	
	\node[shape=circle,scale=1.2,fill=red] (F5) at (C5) {};
	\node[shape=circle,scale=1.2,fill=red] (F5) at (C7) {};
	
    \node[shape=star,star points=5,star point ratio=2.5,fill=red,scale=0.55] (G0) at (C0) {};		
	\node[shape=star,star points=5,star point ratio=2.5,fill=red,scale=0.55] (G1) at (C1) {};	
	\node[shape=star,star points=5,star point ratio=2.5,fill=red,scale=0.55] (G2) at (C4) {};	
 
	\node (H3) at (-1.6-\x,0*\y) {$\xi$};

 	\node[shape=circle,scale=1.5,draw] (C1) at (0,1*\y){} ; 
 	\node[shape=circle,scale=1.5,draw] (C2) at (\x,1*\y){} ; 
 	\node[shape=circle,scale=1.5,draw] (C3) at (2*\x,1*\y){} ; 
 	\node[shape=circle,scale=1.5,draw] (C4) at (3*\x,1*\y){} ; 
 	\node[shape=circle,scale=1.5,draw] (C5) at (4*\x,1*\y){} ; 
 	\node[shape=circle,scale=1.5,draw] (C6) at (5*\x,1*\y){} ; 
 
 	\draw[thick] (C1) -- (-0.8,1*\y) ; 	
 	\draw[thick] (C1) -- (C2);
 	\draw[thick] (C2) -- (C3);
 	\draw[thick, dashed] (C3) -- (C4);
 	\draw[thick] (C4) -- (C5);
 	\draw[thick, dashed] (C5) -- (C6);
	
	\node[shape=circle,scale=1.2,fill=red] (F5) at (C3) {};	
	\node[shape=circle,scale=1.2,fill=red] (F5) at (C5) {};
	\node[shape=circle,scale=1.2,fill=red] (F5) at (C6) {};

	\node[shape=star,star points=5,star point ratio=2.5,fill=red,scale=0.55] (G0) at (C1) {};			
	\node[shape=star,star points=5,star point ratio=2.5,fill=red,scale=0.55] (G1) at (C2) {};	
	\node[shape=star,star points=5,star point ratio=2.5,fill=red,scale=0.55] (G2) at (C4) {};


 	\node[shape=circle,scale=1.5,draw] (C1) at (0,2*\y){} ; 
 	\node[shape=circle,scale=1.5,draw] (C2) at (\x,2*\y){} ; 
 	\node[shape=circle,scale=1.5,draw] (C3) at (2*\x,2*\y){} ; 
 	\node[shape=circle,scale=1.5,draw] (C4) at (3*\x,2*\y){} ; 
 	\node[shape=circle,scale=1.5,draw] (C5) at (4*\x,2*\y){} ; 
 	\node[shape=circle,scale=1.5,draw] (C6) at (5*\x,2*\y){} ; 
 
 	\draw[thick] (C1) -- (-0.8,2*\y) ; 	
 	\draw[thick] (C1) -- (C2);
 	\draw[thick] (C2) -- (C3);
 	\draw[thick, dashed] (C3) -- (C4);
 	\draw[thick] (C4) -- (C5);
 	\draw[thick, dashed] (C5) -- (C6);
	
	\node[shape=circle,scale=1.2,fill=red] (F5) at (C3) {};	
	\node[shape=circle,scale=1.2,fill=red] (F5) at (C5) {};
	\node[shape=circle,scale=1.2,fill=red] (F5) at (C6) {};

 	\node[shape=circle,scale=1.5,draw] (C1) at (0,3*\y){} ; 
 	\node[shape=circle,scale=1.5,draw] (C2) at (\x,3*\y){} ; 
 	\node[shape=circle,scale=1.5,draw] (C3) at (2*\x,3*\y){} ; 
 	\node[shape=circle,scale=1.5,draw] (C4) at (3*\x,3*\y){} ; 
 	\node[shape=circle,scale=1.5,draw] (C5) at (4*\x,3*\y){} ; 
 	\node[shape=circle,scale=1.5,draw] (C6) at (5*\x,3*\y){} ; 
 	\node[shape=circle,scale=1.5,draw] (C7) at (6*\x,3*\y){} ;  
 	\node[shape=circle,scale=1.5,draw] (C8) at (7*\x,3*\y){} ; 
 
 	\draw[thick] (C1) -- (-0.8,3*\y) ; 	
 	\draw[thick] (C1) -- (C2); 	
 	\draw[thick] (C2) -- (C3);
 	\draw[thick, dashed] (C3) -- (C4);
 	\draw[thick] (C4) -- (C5);
 	\draw[thick, dashed] (C5) -- (C6);
 	\draw[thick, dashed] (C6) -- (C7);
 	\draw[thick, dashed] (C7) -- (C8);
 	\draw[thick, dashed] (C8) -- (7*\x+0.8,3*\y);

	\node[shape=circle,scale=1.2,fill=red] (F5) at (C3) {};	
	\node[shape=circle,scale=1.2,fill=red] (F5) at (C5) {};
	\node[shape=circle,scale=1.2,fill=red] (F5) at (C6) {};
	\node[shape=circle,scale=1.2,fill=red] (F5) at (C7) {};
	\node[shape=circle,scale=1.2,fill=red] (F5) at (C8) {};
 
	\node (H3) at (-1.8-\x,3*\y) {$\xi^{\ast}$};	 

	\node (H3) at (-1.8-\x,2*\y) {Step 2};	 
	\node (H3) at (-1.8-\x,1*\y) {Step 1};	 
	
\def\z{-0.75};	
\def\s{0.8};	

	\node[scale=\s]  (H3) at (0*\x,3*\y+\z) {$-2$};	 
	\node[scale=\s]  (H3) at (1*\x,3*\y+\z) {$-1$};	 
	\node[scale=\s]  (H3) at (2*\x,3*\y+\z) {$0$};	 
	\node[scale=\s]  (H3) at (3*\x,3*\y+\z) {$1$};	 
	\node[scale=\s]  (H3) at (4*\x,3*\y+\z) {$2$};	 
	\node[scale=\s]  (H3) at (5*\x,3*\y+\z) {$3$};	 
	\node[scale=\s]  (H3) at (6*\x,3*\y+\z) {$4$};	 
	\node[scale=\s]  (H3) at (7*\x,3*\y+\z) {$5$};

	\end{tikzpicture}
\caption{\label{fig:ProjectionBerger}Construction of $\xi^{\ast} \in A_0$ from $\xi$. All censored edges are drawn dashed.}
 \end{figure}
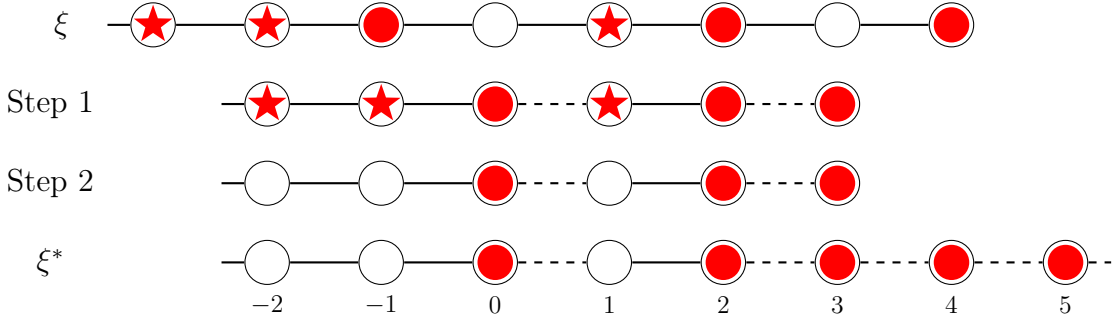Equipped with the process $(\xi_t^{\ast})_{t \geq 0}$, we will now show Lemma \ref{lem:MaximumInCensoredDynamics}  using four families of events $\{B^{i}_k\}_{i \in [4], k \in \N}$. These events will allow us to control
\begin{itemize}
\item the total number of particles which exited in $(\xi_t)_{t \geq 0}$ (events $B^{1}_k$),
\item the number of second class particle which exited in $(\xi_t)_{t \geq 0}$ (events $B^{2}_k$),
\item the interaction between first and second class particles in $(\xi_t)_{t \geq 0}$ (events $B^{3}_k$),
\item the density of empty sites in $(\xi_t)_{t \geq 0}$ (events $B^{4}_k$).
\end{itemize} 
Recall the constant $c_{b,p}$ from \eqref{def:ConstantCBP}. For all $k\in \N$ and $\varepsilon>0$, we define 
\begin{align}\label{def:B12k}
B^{1}_k :=& \left\{  \tilde{\tau}(\lceil\log^2 k \rceil)  \leq (1+\varepsilon)c_{b,p}k \right\} \nonumber\\
B^{2}_k :=& \left\{  \tilde{\tau}(\lceil\log^2 k \rceil)  \leq \tau^{\ast} \right\} 
\end{align} to be the events that there exists some time before $(1+\varepsilon)c_{b,p}k$ at which at least $k -\lceil\log^2 k \rceil$ particles in $(\zeta_t)_{t\geq 0}$ exited, and that no second class particle exited in $(\xi_t)_{t \geq 0}$ before that time, respectively. Observe that the total number of particles which have left in $(\zeta_t)_{t\geq0}$ at time $t$ has the same law as the current of a simple exclusion process on the half-line at time $t$ with 
boundary parameters $\tilde{\alpha}=\beta$ and $\tilde{\gamma}=\delta$ (recall \eqref{def:generatorHalfLine}). Hence, Lemma \ref{lem:CurrentHalfLine} implies that $ \mathbf{P}(B^{1}_k)=1-\tilde{\varepsilon}/4$ for all $k$ sufficiently large.  Moreover,  when
\begin{equation*}\label{eq:KeyEquationXiStar}
\left\{  \tau^{\ast} \leq  \tilde{\tau}(\lceil\log^2 k \rceil) \leq   (1+\varepsilon)c_{b,p}k \right\} 
\end{equation*} holds, there must be an empty site in $(\xi^{\ast}_t)_{t \geq 0}$ at position $\log^2 k $ until time $\tau^{\ast}$. Note that the censoring scheme for the process $(\xi^{\ast}_t)_{t \geq 0}$ does not depend on the motion of the particles in $(\xi^{\ast}_t)_{t \geq 0}$, since in order to determine the positions of the empty sites in $(\xi_t)_{t \geq 0}$, we do not need to distinguish between first and second class particles.  Thus, we can conclude that $\mathbf{P}(B^{2}_k \mid B^{1}_k)\geq1-\tilde{\varepsilon}/4 $ by combining Lemma \ref{lem:HittingBlockingMeasure} and Remark \ref{rem:CensoringBlocking}, using that the process $(\xi^{\ast}_t)_{t \geq 0}$ can be seen as a simple exclusion process with censoring. In particular, this yields $\mathbf{P}(B^{2}_k) \geq 1 - \tilde{\varepsilon}/2$. Recall that $R(\cdot)$ denotes the position of the rightmost empty site. Again, by Lemma  \ref{lem:HittingBlockingMeasure}, we see that the events
\begin{equation*}
B^{3}_k := \left\{ |L(\xi^{\ast}_t)-R(\xi^{\ast}_t)| \leq \log^2 k \text{ for } t=\tilde{\tau}_{\lceil\log^2 k \rceil}\right\} 
\end{equation*}  satisfy $\mathbf{P}(B^{3}_k \mid B^{1}_k \cap B^{2}_k)\geq 1-\tilde{\varepsilon}/4$ for $k$ sufficiently large. 
Note that whenever the events $B^{1}_k,B^{2}_k$ and $B^{3}_k$ occur, a sufficient condition for the statement in Lemma \ref{lem:MaximumInCensoredDynamics} to hold is that the event 
\begin{equation*}
B^{4}_k := \left\{  |  x \in  [N-\log^3 k, N] \colon \xi_t(x) \neq 0 | \geq 2\log^2 k \text{ for all } t \in [0,(1+\varepsilon)c_{b,p}k]  \right\} 
\end{equation*} occurs. Using the particle -- empty site symmetry, we see by \eqref{eq:ComparisonHalfLineProduct}  in Lemma \ref{lem:CurrentHalfLine} that the law of $\zeta_t$ dominates a Bernoulli-$\frac{b}{b+1}$-product measure for all $t \in [0,(1+\varepsilon)c_{b,p}k]$, and hence, we can conclude that $\mathbf{P}(B^{4}_k \mid B^{1}_k\cap B^{2}_k\cap B^{3}_k) \geq 1- \tilde{\varepsilon}/4$ for all $k$ large enough.
\end{proof}

\begin{proof}[Proof of Lemma \ref{lem:Shockwave}] Set $\tilde{\tau} = \tilde{\tau}(\lceil\log^2 k\rceil)$ and recall  that $L(\eta^{\N}_t) \leq L(\eta_t)$ holds almost surely for all $t\geq 0$. By Lemma \ref{lem:CurrentHalfLine}, we get that $\tilde{\tau} \leq (1+\varepsilon) c_{b,p} k$ holds with probability tending to $1$ when $k \rightarrow \infty$. The first statement  \eqref{eq:ShockwaveStatement1} is now immediate from Lemma \ref{lem:MaximumInCensoredDynamics}.  For the second statement \eqref{eq:ShockwaveStatement2}, we apply the strong Markov property for $(\eta_t)_{t\geq 0}$ with respect to the stopping time $\tilde{\tau}(\lceil\log^2 k \rceil)$. Note that by adding additional particles to the process $(\eta_t)_{t\geq 0}$ at some time $t \leq \tau_{\mathbf{0}}$, we will only increase the hitting time $\tau_{\mathbf{0}}$ of the state $\mathbf{0}$. Hence, whenever the event in \eqref{eq:ShockwaveStatement1} holds with respect to $\tilde{\tau}(\lceil\log^2 k \rceil)$, we see that the hitting time of $\mathbf{0}$ starting from $\eta_{\tilde{\tau}(\lceil\log^2 k \rceil)}$ is stochastically dominated by the hitting time when starting from  
$\theta_{\lceil\log^3 k
\rceil}$. This yields \eqref{eq:ShockwaveStatement2}. 
\end{proof}

\section{Mixing times for the reverse bias phase}\label{sec:ReverseBias}

In this section, we prove upper and lower bounds on the mixing time of the simple exclusion process in the reverse bias phase. Recall that $\frac{1}{2}<p<1$ and $\alpha=\beta=0$ holds, i.e.\ the particles have a drift to the right-hand side, but can neither exit at the right-hand side nor enter at the left-hand side boundary. Intuitively, the particles have to move against their natural drift direction. We will see that this  results in an exponentially large mixing time. For the lower bound, we consider two exclusion processes with different initial states and show that with high probability, they have a disjoint support even at exponentially large times. For the upper bound, we compare the disagreement process with respect to initial states $\mathbf{0}$ and $\mathbf{1}$ to a birth-and-death chain.
\subsection{Lower bounds for the reverse bias phase}\label{sec:ReverseLowerBound}

We start with the lower bound when $\min(\gamma,\delta)>0$ holds. Recall the total-variation distance from \eqref{def:TVDistance} and note that by the triangle inequality
\begin{equation}\label{eq:MixingTimeViaConfigurations}
\max_{\zeta \in \{\theta,\theta^{\prime}\}}\TV{P_{\zeta}(\eta_t \in \cdot) - \mu} \geq \frac{1}{2}\TV{P_{\theta}(\eta_t \in \cdot) - P_{\theta^{\prime}}(\eta_t \in \cdot)}
\end{equation} holds for any two initial states $\theta,\theta^{\prime} \in \Omega_N$ of the simple exclusion process $(\eta_t)_{t \geq 0}$ with open boundaries. We define
\begin{equation}
\theta(x):=\mathds{1}_{\{x\geq \left\lfloor\frac{N}{2}\right\rfloor\}}\  \text{ and } \ \theta^{\prime}(x):=\mathds{1}_{\{x\geq \left\lfloor\frac{N}{2}\right\rfloor+1\}} \ \text{ for all } x \in [N].
\end{equation} Note that the total-variation distance of two distributions is $1$ if they have disjoint support. Hence, we see that the right-hand side of \eqref{eq:MixingTimeViaConfigurations} is bounded from below by $1$ minus the probability that at least one particle enters or exits in at least one of the exclusion processes started from $\theta$ and $\theta^{\prime}$. We estimate this probability by comparing the simple exclusion processes started from $\theta$ and $\theta^{\prime}$, respectively, to the simple exclusion processes on $\mathbb{Z}$ via the embedding 
\begin{equation}
\tilde{\eta}(x) := \begin{cases} \eta(x) & \text{ if } x \in [N]
 \\
 0 & \text{ if } x \leq 0 \\
 1 & \text{ if } x > N \end{cases}
\end{equation} for all $x \in \mathbb{Z}$ and all configurations $\eta \in \Omega_N$. In particular, note that $\tilde{\theta}$ and $\tilde{\theta}^{\prime}$ are the ground states in $A_{n}$ and $A_{n+1}$ for $n=\lfloor N/2\rfloor$, respectively. Moreover, 
using the usual extension of the canonical coupling on the segment to the integers via the censoring inequality, we obtain that the simple exclusion processes started from $\tilde{\theta}$ and $\tilde{\theta}^{\prime}$ are stochastically dominated by the respective exclusion processes started from the blocking measures on $A_{n}$ and $A_{n+1}$, see Remark \ref{rem:PartialOrderonZ}. Thus, we obtain the lower bound in \eqref{eq:LogCutoffTwoSides} of Theorem \ref{thm:asymmetricOneSideReverse} by applying Lemma \ref{lem:HittingBlockingMeasure} for the simple exclusion process on $\Z$ with $x=\lfloor N/2 \rfloor-1$ and $\varepsilon=N^{-1}$. \\

In the case where particles can exit only from one side of the segment, a similar argument holds. More precisely, using the particle -- empty site symmetry, it suffices to consider $\gamma>0$ and $\alpha,\beta,\delta=0$. The stationary distribution $\mu$ is then the Dirac measure on $\mathbf{0}$. Consider the initial state $\zeta$ with $\zeta(x)=\mathds{1}_{\{x = N\}}$ for all $x \in [N]$ and note that $\tilde{\zeta}$ is the ground state on $A_{N-1}$. Similarly to the previous case, we obtain the lower bound in \eqref{eq:LogCutoffOneSide} by applying Lemma \ref{lem:HittingBlockingMeasure} for the simple exclusion process on $\Z$ with $x=N-2$ and $\varepsilon=N^{-1}$. This concludes the proof of the lower bounds in Theorem \ref{thm:asymmetricOneSideReverse}.

\subsection{Upper bounds for the reverse bias phase}\label{sec:ReverseBiasTwoSides}

We now show the upper bounds in Theorem \ref{thm:asymmetricOneSideReverse}.
By Corollary \ref{cor:SecondClassParticles}, it suffices to consider the disagreement process for states $\mathbf{1}$ and $\mathbf{0}$, and study the time it takes until all second class particles have left the segment. 
In the following, we enumerate the second class particles from left to right, and let $(X_t^{(i)})_{t \geq 0}$ for $i \in [N]$ denote the trajectory of the $i^{\text{th}}$ second class particle in the disagreement process. Moreover,  denote its exit time of the segment by $\tau^{\textup{ex}}_{i}$. In order to bound these exit times, we compare $(X_t^{(i)})_{t \geq 0}$ to a certain continuous-time birth-and-death chain $(B_t)_{t \geq 0}$ with state space $[n]$ for some $n \in \N$ which will be determined later on. Similar to  \eqref{def:returnTime}, we let for all $j \in [n]$
\begin{equation}
\tau^{+}_{j} := \inf\left\{ t \geq \tau_{[n] \setminus \{j\}} \colon B_t=j \right\}
\end{equation} be the first \textbf{return time} of $(B_t)_{t \geq 0}$ to the state $j$.
\begin{lemma} \label{lem:GamblersRuin} Consider a birth-and-death chain $(B_t)_{t \geq 0}$ on $[n]$ for some $n \in \N$ with  transition rates $1-p$ to the right and $p$ to the left (and $1-p$ to the right at $1$, $p$ to the left at $N$). Then we have that the return time $\tau^{+}_{n}$ to the site $n$ satisfies
\begin{equation}\label{eq:ExpectedBirthDeath}
\E_{k}\left[ \tau^{+}_{n} \right]  \leq  \frac{1}{Z}\left(\frac{p}{1-p}\right)^{n} 
\end{equation} for any initial state $k\in [n]$, with a constant $Z>0$.
\end{lemma}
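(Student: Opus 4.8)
The plan is to use that $(B_t)_{t\geq 0}$ is a finite, irreducible, nearest-neighbour (birth-and-death) chain, hence reversible, and to reduce the return time $\tau^+_n$ to elementary one-step hitting-time recursions. First I would record the reversible measure: detailed balance across the edge $\{j,j+1\}$ forces $\pi(j)(1-p)=\pi(j+1)p$, so $\pi(j)\propto\left(\frac{1-p}{p}\right)^{j}$, a decreasing geometric profile since $p>\frac12$; after normalisation $\pi(1)\geq 1-\frac{1-p}{p}=\frac{2p-1}{p}$, and consequently $\pi(n)\geq\frac{2p-1}{p}\left(\frac{1-p}{p}\right)^{n-1}$, i.e.\ $\pi(n)$ is only exponentially small.

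Second I would split the return time. If $k\neq n$ then $\tau^+_n=\tau_n$ by definition, and because the chain is nearest-neighbour the strong Markov property at the hitting time of $k$ gives $\E_1[\tau_n]=\E_1[\tau_k]+\E_k[\tau_n]$, hence $\E_k[\tau_n]\leq\E_1[\tau_n]$. For $k=n$, conditioning on the first (forced) jump $n\to n-1$ at rate $p$ gives $\E_n[\tau^+_n]=\frac1p+\E_{n-1}[\tau_n]$; equivalently, Kac's formula for continuous-time chains yields $\E_n[\tau^+_n]=\bigl(\pi(n)\,q_n\bigr)^{-1}$ with exit rate $q_n=p$ from the reflecting endpoint, and the lower bound on $\pi(n)$ above gives $\E_n[\tau^+_n]\leq\frac{1}{2p-1}\left(\frac{p}{1-p}\right)^{n-1}$. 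In all cases one thus has the crude but sufficient bound
\[
\E_k[\tau^+_n]\ \leq\ \E_1[\tau_n]+\E_n[\tau^+_n]\qquad\text{for every }k\in[n].
\]

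Third I would bound $\E_1[\tau_n]$ by the usual birth-and-death recursion. Setting $m_j:=\E_j[\tau_{j+1}]$ and conditioning on the first jump gives $m_1=\frac1{1-p}$ and $m_j=\frac1{1-p}+\frac{p}{1-p}m_{j-1}$ for $2\leq j\leq n-1$, which solves to $m_j=\frac{(p/(1-p))^j-1}{2p-1}$. Summing, $\E_1[\tau_n]=\sum_{j=1}^{n-1}m_j$ is a geometric sum dominated by its last term, so $\E_1[\tau_n]\leq\frac{1-p}{(2p-1)^2}\left(\frac{p}{1-p}\right)^{n}$. Combining with the estimate for $\E_n[\tau^+_n]$ gives $\E_k[\tau^+_n]\leq C(p)\left(\frac{p}{1-p}\right)^{n}$ with, say, $C(p)=\frac{1-p}{(2p-1)^2}+\frac{1-p}{p(2p-1)}$, which is the assertion with $Z=C(p)^{-1}$.

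I do not expect a genuine obstacle here: this is essentially a textbook computation. The only points that need a little care are the behaviour at the two reflecting endpoints (this only changes the exit rates $q_1=1-p$, $q_n=p$ and the first step of the recursion) and checking that $Z$ is truly independent of $n$ — which it is, precisely because every geometric sum that appears is, up to a $p$-dependent constant, comparable to its largest term $\left(\frac{p}{1-p}\right)^{n}$.
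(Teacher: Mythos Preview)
Your proof is correct. Both you and the paper begin with the reversible measure $\pi(j)\propto\bigl(\tfrac{1-p}{p}\bigr)^{j}$ and the Kac identity $\E_{n}[\tau^{+}_{n}]=(\pi(n)\,q_{n})^{-1}$, but you then diverge. The paper handles a general starting state $k$ by the inequality
\[
\E_{k}[\tau^{+}_{n}]\ \leq\ \P_{n}\bigl(\tau^{+}_{k}<\tau^{+}_{n}\bigr)^{-1}\E_{n}[\tau^{+}_{n}],
\]
and then argues that the gambler's-ruin probability $\P_{n}(\tau^{+}_{k}<\tau^{+}_{n})$ is bounded below uniformly in $k$ and $n$; this is quick and conceptual but leaves the constant implicit. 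You instead exploit the nearest-neighbour structure to get the monotonicity $\E_{k}[\tau_{n}]\leq\E_{1}[\tau_{n}]$ and compute $\E_{1}[\tau_{n}]$ explicitly via the one-step recursion $m_{j}=\tfrac{1}{1-p}+\tfrac{p}{1-p}m_{j-1}$. Your route is slightly longer but yields an explicit $Z=Z(p)$, which the paper's argument does not.
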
 
\begin{proof} Observe that the stationary distribution $\mu^{\prime}$ of the birth-and-death chain satisfies $\mu^{\prime}(x)= \frac{1}{Z^{\prime}}\left(\frac{1-p}{p}\right)^{x}$ for all $x \in [n]$, with a  normalization constant $Z^{\prime}>0$. Moreover,
\begin{equation*}
\E_{k}\left[ \tau^{+}_{n} \right] \leq \P_{n}\left(\tau^{+}_{k} < \tau^{+}_{n} \right)^{-1} \E_{n}\left[ \tau^{+}_{n} \right] =  \P_{n}\left( \tau^{+}_{k} < \tau^{+}_{n} \right)^{-1}Z^{\prime}\left(\frac{p}{1-p}\right)^{n}
\end{equation*} holds for all $k \in [n-1]$. Observe that $\P_{n}\left( \tau^{+}_{k} < \tau^{+}_{n} \right)$ is bounded from below by some $c>0$ uniformly in $k$ and $n$. We obtain \eqref{eq:ExpectedBirthDeath} for $1/Z=\sup_{n \in \N}c^{-1}Z^{\prime}$.
\end{proof}

We start with the case where particles can enter only at one side of the segment. Without loss of generality, assume that $\delta>0$ and $\gamma=0$ holds. The stationary distribution $\mu$ is then the Dirac measure on the configuration $\mathbf{1}$. Observe that each second class particle moves to the right at least at rate $1-p$,  and to the left at most at rate $p$ independently of the remaining particle configuration. Thus, we have that until time $\tau^{\textup{ex}}_i$, $(X_t^{(i)})_{t \geq 0}$ stochastically dominates the process $(B_t)_{t \geq 0}$ from  Lemma \ref{lem:GamblersRuin} for $n=N$ and started from $B_0=X_0^{(i)}$, i.e.\ we find a coupling such that $X^{(i)}_t \geq B_t$ holds almost surely for all $t<\tau^{\textup{ex}}_i$. Moreover, when a second class particle reaches site $N$ at time $t$, with probability at least $\frac{1-e^{-\delta}}{e}$,  it has exited the segment by time $t+1$. Thus, with respect to the canonical coupling, we conclude that there exists some constant $c>0$ such that
\begin{equation*}
\mathbf{E}\left[ \tau^{\textup{ex}}_i \right] \leq  c \left(\frac{p}{1-p}\right)^{N} \ \text{ for all } i \in [N].
\end{equation*} Moreover, by Markov's inequality, we see that
\begin{equation*}
\mathbf{P}\left( \tau^{\textup{ex}}_i >  cN^2 \left(\frac{p}{1-p}\right)^{N}  \right) \leq \frac{1}{N^2} \ \text{ for all } i \in [N].
\end{equation*} We conclude the upper bound in \eqref{eq:LogCutoffOneSide} of Theorem \ref{thm:asymmetricOneSideReverse} using a union bound for the event that some second class particle has not left the segment by time $cN^2 (p/(1-p))^N$. \\

Now suppose that $\min(\gamma,\delta)>0$ holds. Note that each second class particle has a distance of at most $\lfloor N/2 \rfloor$ to either the site $1$ or the site $N$. Consider the family of processes $(Y^{(i)}_t)_{t \geq 0}$ given by
\begin{equation}
Y^{(i)}_t := \max(X^{(i)}_t-\lceil N/2 \rceil,\lceil N/2 \rceil+1-X^{(i)}_t) 
\end{equation} for all $t\geq 0$ and $i\in [N]$. Note that $(Y^{(i)}_t)_{t \geq 0}$ increases by $1$ at most at rate $p$ and decreases by $1$ at least at rate $1-p$. For all $i \in [N]$, $(Y^{(i)}_t)_{t \geq 0}$ is stochastically dominated by the birth-and-death process in Lemma \ref{lem:GamblersRuin} for $n=\lfloor N/2 \rfloor$ and $B_0=Y^{(i)}_0$. A similar argument as for the one-sided case finishes the proof of Theorem \ref{thm:asymmetricOneSideReverse}.

\section{Mixing times in the high and low density phase} \label{sec:HighLowDensity}

In this section, we prove Theorem \ref{thm:asymmetricTwoSides} for the asymmetric simple exclusion process in the high density and low density phase. We will focus on showing an upper bound of order $N$. The lower bound of order $N$ follows from a comparison to a single particle dynamics using the fact that the invariant measure has a positive density in the bulk, see also Section \ref{sec:AsymmetricLowerBound}. To see this intuitively, fix some $\varepsilon>0$ sufficiently small, and note that with probability tending $1$ as $N \rightarrow \infty$, the segment $[(1-2\varepsilon)N,(1-\varepsilon)N]$ contains at least one particle in the stationary distribution. However, starting from the all empty configuration, we see that with probability tending $1$ as $N \rightarrow \infty$, no particle has reached the segment by time $(1-3\varepsilon)(2p-1)^{-1}N$. Since $\varepsilon>0$ was arbitrary, we conclude.
In the following, we will only consider the high density phase. For the low density phase, similar arguments apply using the particle -- empty site symmetry. 

\subsection{Construction of two disagreement processes}

We assume that we are in the high density phase of the simple exclusion process with parameters $(p,\alpha,\beta,\gamma,\delta)$, i.e.\ we have that $a=a(p,\alpha,\gamma) $ and $b=b(p,\beta,\delta)$ defined in \eqref{def:a} and \eqref{def:b} satisfy $b>\max(a,1)$. We have the following strategy to show the upper bound \eqref{eq:MixingTimeHighDensity} in Theorem \ref{thm:asymmetricTwoSides}. For $j \in [4]$, we study simple exclusion processes $(\eta^{j}_t)_{t \geq 0}$ with open boundaries within the canonical coupling $\mathbf{P}$. The processes $(\eta^{1}_t)_{t \geq 0}$, $(\eta^{2}_t)_{t \geq 0}$ and $(\eta^{3}_t)_{t \geq 0}$ are defined with respect to the parameters $(p,\alpha,\beta,\gamma,\delta)$. They are started at states $\mathbf{1}$, $\mathbf{0}$ and from the stationary distribution $\mu$, respectively. \\
In order to define $(\eta^{4}_t)_{t \geq 0}$, note that $b$ is decreasing and continuous in $\beta$. Thus, we can choose some $\beta^{\prime}>\beta$ such that $b>b^{\prime}> \max(a,1)$ holds for $b^{\prime}:=b(p,\beta^{\prime},\delta)$. We let $(\eta^{4}_t)_{t \geq 0}$ be the simple exclusion process with open boundaries for parameters $(p,\alpha,\beta^{\prime},\gamma,\delta)$ started from its equilibrium. Using Lemma \ref{lem:MonotoneCouplingComponentwise}, note that we can choose the initial configurations in $(\eta^{3}_t)_{t \geq 0}$ and $(\eta^{4}_t)_{t \geq 0}$ such that
\begin{equation}\label{eq:ConstructionZeta}
\mathbf{P}\left( \eta^{3}_t  \succeq_\c  \eta^{4}_t  \ \text{ for all } t \geq 0\right) = 1 \, .
\end{equation} We define $(\xi_t)_{t \geq 0}$ to be the disagreement process between $(\eta^{1}_t)_{t \geq 0}$ and $(\eta^{2}_t)_{t \geq 0}$. Further, we let $(\zeta_t)_{t \geq 0}$ be the disagreement process between $(\eta^{3}_t)_{t \geq 0}$ and $(\eta^{4}_t)_{t \geq 0}$.
Since all simple exclusion processes are within the canonical coupling, note that $(\xi_t)_{t \geq 0}$ and $(\zeta_t)_{t \geq 0}$ can be seen as Markov processes on $\{0,1,2\}^N$, and $(\zeta_t)_{t \geq 0}$ is started from equilibrium. Further, observe that in $(\xi_t)_{t \geq 0}$, no second class particles can enter the segment. In $(\zeta_t)_{t \geq 0}$, second class particles can enter only at site $N$ provided that $N$ is occupied by a first class particle. In Lemma \ref{lem:SecondClassExits}, we will see that if enough second class particles have exited in $(\zeta_t)_{t \geq 0}$, then $(\xi_t)_{t \geq 0}$ has no second class particles with probability tending to $1$. 

For $i \in \{0,1,2\}$, let $(J^{(i)}_t)_{t \geq 0}$ denote the current of objects of type $i$, i.e.\ for a given time $t\geq 0$, $J^{(i)}_t$ denotes the number of objects of type $i$ which have entered by time $t$ minus the number of objects of type $i$ which have exited by time $t$ at the left-hand side boundary in $(\zeta_t)_{t \geq 0}$, see also \eqref{def:current}. The following lemma shows that the current of second class particles in $(\zeta_t)_{t \geq 0}$ is linear when starting from its equilibrium $\mu^{\prime}$.

\begin{lemma} \label{lem:SecondClassCurrent} Let $(\zeta_t)_{t \geq 0}$ have initial distribution $\mu^{\prime}$. There exists some $c=c(b,b^{\prime},p)>0$ such that for all $t=t(N) \geq cN$, we have that
\begin{equation}\label{eq:EstimateSecondClassParticles}
\lim_{N \rightarrow \infty}\mathbf{P}\left(  - J^{(2)}_{t(N)} > 4N \right) = 1 \ . 
\end{equation}
\end{lemma}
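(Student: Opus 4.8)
The plan is to rewrite the left boundary current of the second class particles in $(\zeta_t)_{t\ge 0}$ as a difference of one-species currents, to compute its mean from stationarity and Lemma~\ref{lem:current}, and to get concentration from a martingale decomposition together with decay of correlations in the high density phase. For the first step, note that under the coupling $\mathbf P$, since $\eta^3_t\succeq_\c\eta^4_t$ by \eqref{eq:ConstructionZeta}, a site of $\zeta_t$ is of type $2$ precisely when it is occupied in $(\eta^3_t)_{t\ge0}$ and empty in $(\eta^4_t)_{t\ge0}$; first class particles coincide with the particles of $(\eta^4_t)_{t\ge0}$, and particles of type $1$ or $2$ with those of $(\eta^3_t)_{t\ge0}$. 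At the left boundary both processes see the same rate $\alpha$ and rate $\gamma$ clocks, no second class particle ever enters there, and internal transitions do not affect $J^{(2)}$; inspecting the finitely many left-boundary transitions shows that the counting process $-J^{(2)}_t$ increases by $1$ (by an exit through the $\gamma$ clock or an upgrade to type $1$ through the $\alpha$ clock) exactly at rate $(\alpha+\gamma)\mathds 1\{\zeta_t(1)=2\}$. Hence $-J^{(2)}_t$ is non-decreasing in $t$, $-J^{(2)}_t = J^{(4),L}_t - J^{(3),L}_t$ with $J^{(i),L}_t$ the current of $(\eta^i_t)_{t\ge0}$ at the left boundary, and the Dynkin decomposition reads
\begin{equation*}
-J^{(2)}_t \;=\; \tilde M_t \;+\; (\alpha+\gamma)\int_0^t \mathds 1\{\zeta_s(1)=2\}\,\diff s ,
\end{equation*}
with $\tilde M$ a martingale whose quadratic variation satisfies $\langle \tilde M\rangle_t\le(\alpha+\gamma)t$, so $\V(\tilde M_t)\le(\alpha+\gamma)t$.

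Next, the mean. As $(\eta^3_t)_{t\ge0}$ and $(\eta^4_t)_{t\ge0}$ start from their stationary laws, the processes $J^{(3),L}_t$ and $J^{(4),L}_t$ have stationary increments, so $\expect{J^{(3),L}_t}=J_Nt$ and $\expect{J^{(4),L}_t}=J'_Nt$, where $J_N$ and $J'_N$ are the fluxes of $(\eta^3_t)_{t\ge0}$ and $(\eta^4_t)_{t\ge0}$ provided by Lemma~\ref{lem:current} (applicable since $\alpha,\beta,\beta'>0$). By construction both parameter sets lie in the high density phase with $b>b'>\max(a,1)\ge1$, and $x\mapsto x(1+x)^{-2}$ is strictly decreasing on $[1,\infty)$, so Lemma~\ref{lem:current} gives $\lim_{N\to\infty}(J'_N-J_N)=(2p-1)\big(b'(1+b')^{-2}-b(1+b)^{-2}\big)=:\kappa>0$. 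Thus $\expect{-J^{(2)}_t}=(J'_N-J_N)t$, and fixing $c:=6/\kappa$ we obtain $\expect{-J^{(2)}_{cN}}\ge 5N$ for all $N$ large.

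For concentration on the scale $N$, observe that $\mathds 1\{\zeta_s(1)=2\}$ is a bounded local function of the Markov process $(\zeta_s)_{s\ge0}$ run in its stationary regime, so $\V\big(\int_0^t\mathds 1\{\zeta_s(1)=2\}\diff s\big)=\int_0^t\!\int_0^t\mathrm{Cov}\big(\mathds 1\{\zeta_r(1)=2\},\mathds 1\{\zeta_u(1)=2\}\big)\diff r\,\diff u$. In the high density phase the open-boundary ASEP relaxes at an $N$-uniform exponential rate (the spectral gap stays bounded away from $0$; see \cite{GE:ExactSpectralGap}), so these stationary covariances decay exponentially in $|r-u|$ uniformly in $N$ and the double integral is $O(t)$ with an $N$-independent constant; combined with $\V(\tilde M_t)\le(\alpha+\gamma)t$ this gives $\V(-J^{(2)}_t)=O(t)$ uniformly in $N$. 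Chebyshev's inequality at $t=cN$ then yields $\mathbf P(-J^{(2)}_{cN}\le 4N)\le \V(-J^{(2)}_{cN})/N^2=O(N^{-1})\to0$, and since $-J^{(2)}_t$ is non-decreasing in $t$, $\mathbf P(-J^{(2)}_{t(N)}>4N)\ge \mathbf P(-J^{(2)}_{cN}>4N)\to1$ for every $t(N)\ge cN$, which is \eqref{eq:EstimateSecondClassParticles}.

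The delicate point is this last step: Lemma~\ref{lem:current} only gives the current law of large numbers for fixed $N$ as $t\to\infty$, whereas here $t=\Theta(N)$, so one genuinely needs $N$-uniform control of the stationary current fluctuations; a qualitative bound $\V(-J^{(2)}_t)=o(t^2)$ uniformly in $N$ already suffices, and it in turn follows from an $N$-uniform relaxation estimate for the high density phase. A more hands-on alternative that avoids spectral input would be to follow the second class particles of $(\zeta_t)_{t\ge0}$ directly: they are transported leftward at a strictly positive speed in the bulk (both background densities exceed $\tfrac12$ in the high density phase) and are fed in at the right boundary at a positive rate, forcing a linear number of them to leave through the left boundary by time $\Theta(N)$ — but quantifying this requires hydrodynamic and shock estimates for the multi-species dynamics.
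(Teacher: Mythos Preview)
Your core argument coincides with the paper's. The paper projects $(\zeta_t)$ to the single-species processes $\zeta^{2\rightarrow0}=\eta^4$ and $\zeta^{2\rightarrow1}=\eta^3$, identifies $J^{(1)}$ with the particle current of $(\eta^4_t)$ and $J^{(0)}$ with the empty-site current of $(\eta^3_t)$, uses $J^{(0)}+J^{(1)}+J^{(2)}=0$, and invokes monotonicity of $-J^{(2)}_t$ to pass from $t=cN$ to $t\ge cN$; this is precisely your identity $-J^{(2)}_t=J^{(4),L}_t-J^{(3),L}_t$. Where you set up a Dynkin decomposition and a variance bound, the paper simply writes ``due to Lemma~\ref{lem:current} and the ergodic theorem'' without further detail.

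You correctly isolate concentration at $t=\Theta(N)$ as the delicate point, and the paper does not spell it out either. But your proposed fix has two problems. First, the stationary covariance you bound is that of $\mathds{1}\{\zeta_s(1)=2\}=\eta^3_s(1)\big(1-\eta^4_s(1)\big)$, a function of the \emph{coupled} chain; the gap you cite is for the single-species open ASEP, not for $(\zeta_t)$. This is easily repaired by bounding $\V\big(J^{(3),L}_t\big)$ and $\V\big(J^{(4),L}_t\big)$ separately, since each drift $\int_0^t\eta^i_s(1)\,\diff s$ depends on only one species. Second, and more seriously, \cite{GE:ExactSpectralGap} is a Bethe-ansatz computation rather than a rigorous theorem, so it does not establish the $N$-uniform exponential decay you need; a gap of order only $N^{-1}$ (which is all one can extract from the paper's own mixing bounds without circularity) would give $\V=O(N^2)$ and the Chebyshev step fails. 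Neither your argument nor the paper's terse invocation of the ergodic theorem closes this step rigorously.
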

\begin{proof} Let $(\zeta^{2\rightarrow 1}_t)_{t \geq 0}$ and $(\zeta^{2\rightarrow 0}_t)_{t \geq 0}$ denote the processes which we obtain from $(\zeta_t)_{t \geq 0}$ by projecting all second class particles to first class particles and empty sites, respectively. Noting that $\zeta^{2\rightarrow 1}_t=\eta_t^3$ and $\zeta^{2\rightarrow 0}_t=\eta_t^4$ for all $t \geq 0$, we see that $(\zeta^{2\rightarrow 1}_t)_{t \geq 0}$ and $(\zeta^{2\rightarrow 0}_t)_{t \geq 0}$ are stationary simple exclusion processes with  parameters $(p, \alpha,\beta, \gamma, \delta)$ and $(p, \alpha,\beta^{\prime}, \gamma, \delta)$, respectively.  Observe that $(J^{(1)}_t+J^{(2)}_t)_{t \geq 0}$ is given by the current of particles in $(\zeta^{2\rightarrow 1}_t)_{t \geq 0}$ and $(J^{(0)}_t+J^{(2)}_t)_{t \geq 0}$ is given by the current of empty sites in $(\zeta^{2\rightarrow 0}_t)_{t \geq 0}$. Since $(\zeta^{2\rightarrow 1}_t)_{t \geq 0}$ and $(\zeta^{2\rightarrow 0}_t)_{t \geq 0}$ are stationary, we get by Lemma  \ref{lem:current} that there exists some $N_0=N_0(p,b,b^{\prime})$ such that for all $N\geq N_0$ and $t>0$
\begin{equation*}
t^{-1}\E[J^{(1)}_t+J^{(2)}_t] + t^{-1}\E[J^{(0)}_t+J^{(2)}_t] < \frac{1}{2}(2p-1)\left( \frac{b}{(1+b)^2}- \frac{b^{\prime}}{(1+b^{\prime})^2}\right) <0 \, . 
\end{equation*}
Then using the ergodic theorem for the current in $(\zeta^{2\rightarrow 1}_t)_{t \geq 0}$ and $(\zeta^{2\rightarrow 0}_t)_{t \geq 0}$, we get
\begin{equation*}
\lim_{N \rightarrow \infty}\mathbf{P}\left( J^{(0)}_{cN}  + J^{(1)}_{cN} +2 J^{(2)}_{cN}< - 4N \right) = 1
\end{equation*} for some constant $c>0$ which does not depend on $N$. Since by construction 
\begin{equation*}
 J^{(0)}_{t}  + J^{(1)}_{t} + J^{(2)}_{t} = 0 \ \text{ for all } t\geq 0 \, ,
\end{equation*} and $(J_t^{(2)})_{t \geq 0}$ is decreasing in $t$, we conclude. 
\end{proof}

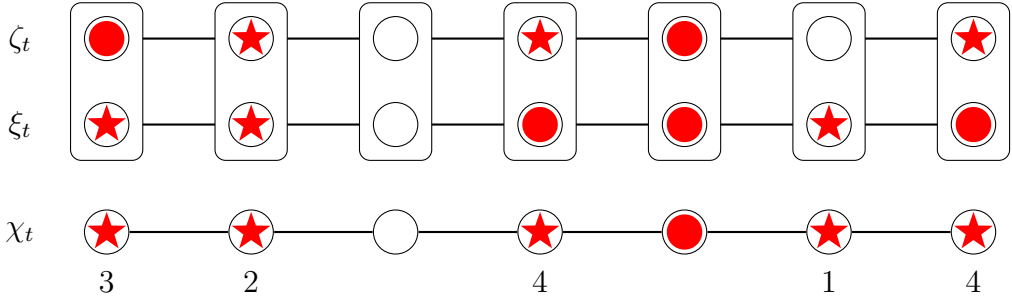
\begin{figure}
\centering
\begin{tikzpicture}[scale=0.95]

\def\x{2};
\def\y{1.2};
\def\z{-1.5};

 	\node[shape=circle,scale=1.5,draw] (A1) at (0,0){} ; 
 	\node[shape=circle,scale=1.5,draw] (A2) at (\x,0){} ; 
 	\node[shape=circle,scale=1.5,draw] (A3) at (2*\x,0){} ; 
 	\node[shape=circle,scale=1.5,draw] (A4) at (3*\x,0){} ; 
 	\node[shape=circle,scale=1.5,draw] (A5) at (4*\x,0){} ; 
 	\node[shape=circle,scale=1.5,draw] (A6) at (5*\x,0){} ; 
 	\node[shape=circle,scale=1.5,draw] (A7) at (6*\x,0){} ; 

 	\node[shape=circle,scale=1.5,draw] (B1) at (0,\y){} ; 
 	\node[shape=circle,scale=1.5,draw] (B2) at (\x,\y){} ; 
 	\node[shape=circle,scale=1.5,draw] (B3) at (2*\x,\y){} ; 
 	\node[shape=circle,scale=1.5,draw] (B4) at (3*\x,\y){} ; 
 	\node[shape=circle,scale=1.5,draw] (B5) at (4*\x,\y){} ; 
 	\node[shape=circle,scale=1.5,draw] (B6) at (5*\x,\y){} ; 
 	\node[shape=circle,scale=1.5,draw] (B7) at (6*\x,\y){} ; 

	\draw[rounded corners] (-0.5+0*\x,-0.5) rectangle (0.5+0*\x,\y+0.5);
	\draw[rounded corners] (-0.5+1*\x,-0.5) rectangle (0.5+1*\x,\y+0.5);
	\draw[rounded corners] (-0.5+2*\x,-0.5) rectangle (0.5+2*\x,\y+0.5);
	\draw[rounded corners] (-0.5+3*\x,-0.5) rectangle (0.5+3*\x,\y+0.5);
	\draw[rounded corners] (-0.5+4*\x,-0.5) rectangle (0.5+4*\x,\y+0.5);
	\draw[rounded corners] (-0.5+5*\x,-0.5) rectangle (0.5+5*\x,\y+0.5);
	\draw[rounded corners] (-0.5+6*\x,-0.5) rectangle (0.5+6*\x,\y+0.5); 	
 	
	\node[shape=circle,scale=1.2,fill=red] (D1) at (B1) {};
	\node[shape=star,star points=5,star point ratio=2.5,fill=red,scale=0.55] (D2) at (B2) {};
	\node[shape=star,star points=5,star point ratio=2.5,fill=red,scale=0.55] (D4) at (B4) {};
	\node[shape=circle,scale=1.2,fill=red] (D5) at (B5) {};
	\node[shape=star,star points=5,star point ratio=2.5,fill=red,scale=0.55] (D7) at (B7) {};
	
	\node[shape=star,star points=5,star point ratio=2.5,fill=red,scale=0.55] (E1) at (A1) {};
	\node[shape=star,star points=5,star point ratio=2.5,fill=red,scale=0.55] (E2) at (A2) {};	
	\node[shape=circle,scale=1.2,fill=red] (E4) at (A4) {};	
	\node[shape=circle,scale=1.2,fill=red] (E5) at (A5) {};		
	\node[shape=star,star points=5,star point ratio=2.5,fill=red,scale=0.55] (E6) at (A6) {};	
	\node[shape=circle,scale=1.2,fill=red] (E7) at (A7) {};		

 	\node[shape=circle,scale=1.5,draw] (C1) at (0,\z){} ; 
 	\node[shape=circle,scale=1.5,draw] (C2) at (\x,\z){} ; 
 	\node[shape=circle,scale=1.5,draw] (C3) at (2*\x,\z){} ; 
 	\node[shape=circle,scale=1.5,draw] (C4) at (3*\x,\z){} ; 
 	\node[shape=circle,scale=1.5,draw] (C5) at (4*\x,\z){} ; 
 	\node[shape=circle,scale=1.5,draw] (C6) at (5*\x,\z){} ; 
 	\node[shape=circle,scale=1.5,draw] (C7) at (6*\x,\z){} ; 
 	
 	\draw[thick] (C1) -- (C2);
 	\draw[thick] (C2) -- (C3);
 	\draw[thick] (C3) -- (C4);
 	\draw[thick] (C4) -- (C5);
 	\draw[thick] (C5) -- (C6);
 	\draw[thick] (C6) -- (C7);

 	\node (F1) at (0,\z-0.7) {$3$};
 	\node (F2) at (\x,\z-0.7) {$2$};
 	\node (F4) at (3*\x,\z-0.7) {$4$};
 	\node (F6) at (5*\x,\z-0.7) {$1$};
 	\node (F7) at (6*\x,\z-0.7) {$4$};
	\node[shape=circle,scale=1.2,fill=red] (F5) at (C5) {};
	
	\node[shape=star,star points=5,star point ratio=2.5,fill=red,scale=0.55] (G1) at (0,\z) {};	
	\node[shape=star,star points=5,star point ratio=2.5,fill=red,scale=0.55] (G2) at (\x,\z) {};	
	\node[shape=star,star points=5,star point ratio=2.5,fill=red,scale=0.55] (G4) at (3*\x,\z) {};	
	\node[shape=star,star points=5,star point ratio=2.5,fill=red,scale=0.55] (G6) at (5*\x,\z) {};	
	\node[shape=star,star points=5,star point ratio=2.5,fill=red,scale=0.55] (G7) at (6*\x,\z) {};		
	
 	\node (H1) at (-1.2,0) {$\xi_t$};
 	\node (H2) at (-1.2,\y) {$\zeta_t$};	 
	\node (H3) at (-1.2,\z) {$\chi_t$};

	\draw[thick] (0.5+0*\x,0) -- (-0.5+1*\x,0);
	\draw[thick] (0.5+0*\x,\y) -- (-0.5+1*\x,\y);
	\draw[thick] (0.5+1*\x,0) -- (-0.5+2*\x,0);
	\draw[thick] (0.5+1*\x,\y) -- (-0.5+2*\x,\y);
	\draw[thick] (0.5+2*\x,0) -- (-0.5+3*\x,0);
	\draw[thick] (0.5+2*\x,\y) -- (-0.5+3*\x,\y);
	\draw[thick] (0.5+3*\x,0) -- (-0.5+4*\x,0);
	\draw[thick] (0.5+3*\x,\y) -- (-0.5+4*\x,\y);
	\draw[thick] (0.5+4*\x,0) -- (-0.5+5*\x,0);
	\draw[thick] (0.5+4*\x,\y) -- (-0.5+5*\x,\y);
	\draw[thick] (0.5+5*\x,0) -- (-0.5+6*\x,0);
	\draw[thick] (0.5+5*\x,\y) -- (-0.5+6*\x,\y);
	
	\end{tikzpicture}	
\caption{\label{fig:CouplingDisagreementProcesses}Coupling $(\chi_t)_{t\geq 0}$ between the processes $(\zeta_t)_{t\geq 0}$ and $(\xi_t)_{t\geq 0}$ for $N=7$.}
 \end{figure}

%

\subsection{Comparison via a multi-species exclusion process}

Next, we relate the current of second class particles in $(\zeta_t)_{t \geq 0}$ to the motion of the second class particles in $(\xi_t)_{t \geq 0}$. 
The following lemma shows that when at least $4N$ second class particles have exited at the left-hand side boundary in $(\zeta_t)_{t \geq 0}$, all second class particles must have left in $(\xi_t)_{t \geq 0}$, with probability tending to $1$.

\begin{lemma}\label{lem:SecondClassExits} For all $N$ large enough and $T=T(N) \leq N^2$, we have 
\begin{equation}
\mathbf{P} \left( \xi_{T}(x) \neq 2 \text{ for all } x\in [N] \  \Big|  \ - J^{(2)}_{T(N)} > 4N \right) \geq 1-\frac{1}{N},
\end{equation}
where $(J^{(2)}_{t})_{t \geq 0}$ is defined with respect to $(\zeta_t)_{t \geq 0}$.
\end{lemma}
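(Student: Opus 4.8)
The plan is to run the two disagreement processes $(\xi_t)_{t\ge0}$ and $(\zeta_t)_{t\ge0}$ inside one grand coupling and show that the second-class particles of $(\xi_t)$ are swept out of the segment through the left boundary together with those of $(\zeta_t)$. Since $(\eta^1_t),\dots,(\eta^4_t)$ are all realised in the canonical coupling $\mathbf P$, and Lemma~\ref{lem:MonotoneCouplingComponentwise} together with the choices of the initial laws give $\eta^1_t\succeq_\c\eta^3_t\succeq_\c\eta^2_t$, $\eta^1_t\succeq_\c\eta^4_t$ and $\eta^3_t\succeq_\c\eta^4_t$ for all $t$, the pair $(\xi_t(x),\zeta_t(x))$ takes only the six values drawn in Figure~\ref{fig:CouplingDisagreementProcesses}, and the process $(\chi_t)_{t\ge0}$ recording these ``colours'' is a multi-species exclusion process whose colours carry only a partial order (the novelty advertised in the introduction). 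First I would set this up carefully and list the bulk and boundary transition rules of the colours; I would also split the second-class particles of $\xi$ into the family of $\{\eta^1>\eta^3\}$-discrepancies (colour $(\xi,\zeta)=(2,0)$) and the family of $\{\eta^3>\eta^2\}$-discrepancies (colours $(2,1)$ and $(2,2)$). Because $\eta^1,\eta^3$ share their parameters, and so do $\eta^3,\eta^2$, each of these two families is non-increasing in time, and the number of second-class particles of $\zeta$ exiting at site~$1$ is precisely what the hypothesis controls.

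The core of the argument is at the left boundary, where a single $\alpha$- or $\gamma$-clock acts on all four coordinates: whenever such a clock removes a second-class particle of $\zeta$ from site~$1$ it simultaneously removes any $\{\eta^3>\eta^2\}$-discrepancy then sitting at site~$1$, and the only configuration preventing these two objects from being co-located at site~$1$ is the colour $(1,2)$, which is exactly a $\{\eta^2>\eta^4\}$-discrepancy. Hence, writing $B_T$ for the number of these exceptional clock rings up to time~$T$, one obtains the deterministic bound that the number of $\{\eta^3>\eta^2\}$-discrepancies left at time~$T$ is at most $N-\bigl(-J^{(2)}_T\bigr)+B_T$. Separately, the $\{\eta^1>\eta^3\}$-discrepancies are second-class particles sitting in the high-density background of $\eta^3$, so they have strictly negative speed, and a biased-random-walk comparison in the spirit of Lemma~\ref{lem:HittingBlockingMeasure} shows that they are all absorbed at the left boundary well before time~$T$ with probability at least $1-\tfrac1{2N}$. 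Combining the two facts, $\xi_T\in\{0,1\}^N$ holds on the event $\{-J^{(2)}_T>4N\}\cap\{B_T\le 3N\}\cap\{\text{early absorption}\}$.

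It therefore remains to estimate $B_T$, and this is the main obstacle. One has to show $\mathbf P(B_T>3N\mid -J^{(2)}_T>4N)\le \tfrac1{2N}$: the colour-$(1,2)$ objects are created only at the right boundary (by the extra rate $\beta'-\beta$ clock acting on a common particle) or when an $\eta^2$-particle overtakes a second-class particle of $\zeta$ in the bulk, and what is needed is that only a small (sub-$\tfrac34$) fraction of the second-class particles of $\zeta$ leaving at site~$1$ are of this exceptional colour. This should follow by tracking the colour-$(1,2)$ objects within $(\chi_t)$ — comparing their motion with biased random walks, bounding how many of them are produced near the left boundary within time $T\le N^2$, and a union bound over the at most $O(N^2)$ relevant clock rings, which is where the restriction $T\le N^2$ enters. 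Since $\{-J^{(2)}_T>4N\}$ has probability bounded away from $0$ for the relevant $T$ by Lemma~\ref{lem:SecondClassCurrent}, the unconditional estimate transfers to the conditional one up to adjusting constants; the factor~$4$ (rather than $1$) in the hypothesis is exactly the slack that absorbs $B_T\le 3N$, and a final union bound over the three exceptional events completes the proof.
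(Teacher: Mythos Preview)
Your setup of the multi-species process $(\chi_t)=(\zeta_t,\xi_t)$ is correct and coincides with the paper's, but the accounting strategy built on top of it has a fatal flaw. Your deterministic bound reads
\[
\#\{\eta^3>\eta^2\}\text{-discrepancies at time }T \ \le\ N - (-J^{(2)}_T) + B_T,
\]
and you then aim for $B_T\le 3N$. However, $-J^{(2)}_T$ counts exactly the exits at site~$1$ of colours with $\zeta=2$, namely your colours $(2,2)$ and $(1,2)$ (the paper's types~$2$ and~$4$). Since the total number of colours with $\xi=2$ (types $1,2,3$) is non-increasing and equals $N$ at time~$0$, the number of type-$2$ exits at site~$1$ is at most~$N$; hence on $\{-J^{(2)}_T>4N\}$ one always has $B_T=(-J^{(2)}_T)-(\text{type-}2\text{ exits})>3N$. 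Your target $B_T\le 3N$ is therefore \emph{never} satisfied, and the claim ``only a small fraction of the $\zeta$-second-class particles leaving at site~$1$ are colour $(1,2)$'' is in fact the opposite of what happens. The separate treatment of the $\{\eta^1>\eta^3\}$-discrepancies via a ``biased random walk in the spirit of Lemma~\ref{lem:HittingBlockingMeasure}'' is also unsubstantiated: that lemma concerns blocking measures, not second-class particles in a positive-density background.

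What is missing is the paper's key projection step. Instead of counting boundary events, the paper deletes from $\chi_t$ all sites that are genuine first-class particles or empty in both coordinates, maps types $1,2,3$ to empty sites and types $4,5$ to particles, and extends to a configuration $\chi^\star_t$ on~$\Z$. Up to a censoring scheme that is measurable with respect to $(\zeta_t)$ alone, $(\chi^\star_t)$ has the law of an ASEP on $A_0$ started from the ground state~$\vartheta_0$; the blocking-measure estimate of Lemma~\ref{lem:HittingBlockingMeasure} then gives $|L(\chi^\star_T)-R(\chi^\star_T)|\le c\log N+N$ with probability $\ge 1-N^{-1}$ for $T\le N^2$ (this is Lemma~\ref{lem:RightmostLeftmostStar}). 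This says that any surviving type $1,2,3$ has at most $c\log N+N$ particles of type $4,5$ to its left, counting those already absorbed at site~$1$. Since $-J^{(2)}_T>4N$ forces at least $2N$ type-$4$ exits at site~$1$, this is a contradiction, and all types $1,2,3$---i.e.\ all second-class particles of $(\xi_t)$---must have left.
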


\begin{proof}[Proof of Theorem \ref{thm:asymmetricTwoSides}] The upper bound in Theorem \ref{thm:asymmetricTwoSides} follows from Lemma \ref{lem:SecondClassCurrent} 
and Lemma \ref{lem:SecondClassExits} together with Corollary \ref{cor:SecondClassParticles}. 
\end{proof}

In order to show Lemma \ref{lem:SecondClassExits}, we require a bit of setup. Define the process $(\chi_t)_{t\geq 0}=(\zeta_t,\xi_t)_{t \geq 0}$ and note that under the canonical coupling, $(\chi_t)_{t \geq 0}$ is a Markov process with state space $S^N$ where $S:=\{0,1,2\}^2$. In the following, we will use an alternative interpretation of the process $(\chi_t)_{t \geq 0}$ on the state space $\{0,1,2\}^N$. By construction, every site in $(\chi_t)_{t \geq 0}$ which is not occupied by two first class particles or by two empty sites, must be of the form $(0,2),(2,2),(1,2)$ or $(2,1)$ (for example, note that the configuration $(2,0)$ is not attained since whenever a second class particle is created in $(\zeta_t)_{t \geq 0}$ at the boundary, there has to be a first class particle in $(\xi_t)_{t \geq 0}$). We refer to these configurations as second class particles of types $1$ to $4$, respectively, see Figures \hyperref[fig:CouplingDisagreementProcesses]{6} and \hyperref[fig:SecondClassHierachy]{7}. \\

By definition, $\chi_0$ contains only second class particles of types $1,2$ and $3$, while all second class particles which enter at site $N$ must have type $4$. Among each other, the second class particles of types $i$ and $j$ respect the canonical coupling, i.e.\ a particle of type $j$ has a higher priority than a particle of type $i$ if $i<j$, see Remark \ref{rem:MultiSpecies} (for example a second class particle of type $1$ associated to $(0,2)$ has in both components a lower priority than a second class particle of type $4$ which is associated to $(2,1)$). However, there is one exception: When two second class particles of types $3$ and $4$ are updated, they create the configurations $(2,2)$ and $(1,1)$. In this update mechanism, we call $(1,1)$ a second class particle of type $5$, see Figure \hyperref[fig:SecondClassHierachy]{7}. To the other configuration values $(1,1)$ and $(0,0)$ in $(\chi_t)_{t \geq 0}$, we refer as first class particles and empty sites, respectively. Note that when ignoring the labels of the second class particles, the process $(\chi_t)_{t \geq 0}$ has the same transition rates as $(\zeta_t)_{t \geq 0}$. In particular, entering and exiting of first class particles and empty sites in $(\chi_t)_{t \geq 0}$ is not affected by the types of the second class particles. 
\begin{figure} \label{fig:SecondClassHierachy}
\centering
\begin{tikzpicture}[scale=0.8]

\draw[rounded corners] (0, 0) rectangle (1, 2);

\node (X) at (0.5,-0.5) {type $1$};

\draw[rounded corners] (4, 0) rectangle (5, 2);

\node (X) at (4.5,-0.5) {type $2$};

\draw[rounded corners] (8, -1.8) rectangle (9, 0.2);

\node (X) at (10,-1.3) {type $3$};

\draw[rounded corners] (8, 1.8) rectangle (9, 3.8);

\node (X) at (10,3.3) {type $4$};

\draw[rounded corners] (12, 0) rectangle (13, 2);

\node (X) at (12.5,-0.5) {type $5$};

\draw[thick,arrow] (4, 1) -- (1, 1);
\draw[thick,arrow] (8, -0.8) -> (5, 0.5);
\draw[thick,arrow] (8, 2.8) -> (5, 1.5);
\draw[thick,arrow] (12, 1.5) -> (9, 2.8);
\draw[thick,arrow] (12, 0.5) -> (9,-0.8);

\draw[thick,dashed] (8.5,1) -- (8.5,1.8);
\draw[thick,dashed] (8.5,1) -- (8.5,0.2);
\draw[thick, dashed, arrow] (8.5,1) -- (5, 1);
\draw[thick, dashed, arrow] (8.5,1) -- (12, 1);

\node[shape=circle,scale=1.5,draw] (E) at (0.5,0.5){} ; 
\node[shape=star,star points=5,star point ratio=2.5,fill=red,scale=0.55] (Y1) at (0.5,0.5) {};

\node[shape=circle,scale=1.5,draw] (E) at (0.5,1.5){} ; 

\node[shape=circle,scale=1.5,draw] (E) at (4.5,1.5){} ; 
\node[shape=star,star points=5,star point ratio=2.5,fill=red,scale=0.55] (Y1) at (4.5,1.5) {};

\node[shape=circle,scale=1.5,draw] (E) at (4.5,0.5){} ; 
\node[shape=star,star points=5,star point ratio=2.5,fill=red,scale=0.55] (Y1) at (4.5,0.5) {};

\node[shape=circle,scale=1.5,draw] (E) at (8.5,-1.3){} ; 
\node[shape=star,star points=5,star point ratio=2.5,fill=red,scale=0.55] (Y1) at (8.5,-1.3) {};

\node[shape=circle,scale=1.5,draw] (E) at (8.5,-0.3){} ; 
\node[shape=circle,scale=1.2,fill=red] (Y1) at (8.5,-0.3) {};

\node[shape=circle,scale=1.5,draw] (E) at (8.5,2.3){} ; 
\node[shape=circle,scale=1.2,fill=red] (Y1) at (8.5,2.3) {};

\node[shape=circle,scale=1.5,draw] (E) at (8.5,3.3){} ; 
\node[shape=star,star points=5,star point ratio=2.5,fill=red,scale=0.55] (Y1) at (8.5,3.3) {};

\node[shape=circle,scale=1.5,draw] (E) at (12.5,1.5){} ; 
\node[shape=circle,scale=1.2,fill=red] (Y1) at (12.5,1.5) {};

\node[shape=circle,scale=1.5,draw] (E) at (12.5,0.5){} ; 
\node[shape=circle,scale=1.2,fill=red] (Y1) at (12.5,0.5) {};
	\end{tikzpicture}	
\caption{Visualization of the different types of second class particles. The tip of an arrow between two types indicates which type has the lower priority. The dashed arrows signalize that updating an edge with two second class particles of types $3$ and $4$ creates two second class particles of types $2$ and $5$, respectively.
}
 \end{figure}
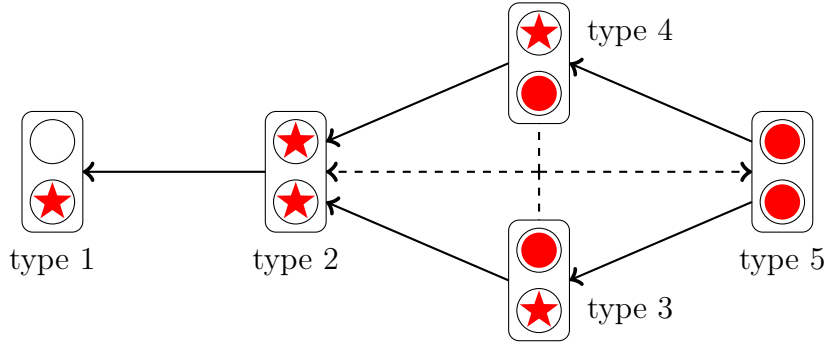
We will now investigate the behavior of the different types of second class particles in $(\chi_t)_{t \geq 0}$ among each other using an auxiliary process $(\chi^{\star}_t)_{t \geq 0}$, which will have a similar construction as $(\xi_t^{\ast})_{t\geq0}$ in Section \ref{sec:ShockwaveLemma}. Intuitively, for each $t\geq0$, we obtain $\chi^{\star}_t$ by deleting all sites which are either empty or occupied by a first class particle in $\chi_t$ (here certain edges are merged), and replacing all second class particles of types $1,2,3$ with empty sites, as well as all second class particles of types $4$ or $5$ with first class particles. We then extend $\chi_t$ to a configuration on $\{0,1\}^{\Z}$ by adding particles on the right-hand side, and empty sites (as well as a finite number of particles) on the left-hand side of the segment.  We will see from the formal construction below that $\chi^{\star}_0=\vartheta_0$ (recall \eqref{groundstateconfig}), and that $(\chi^{\star}_t)_{t \geq 0}$ has the law of a simple exclusion process on $\Z$ with censoring, in which the rightmost empty site $R(\chi^{\star}_t)$ is replaced by a first class particle whenever the corresponding second class particle in $(\chi_t)_{t \geq 0}$ exits at site $N$ at time $t$. An edge $e$ is censored for $\chi^{\star}_t$ at time $t$ if and only if it was merged in $\chi_t$ in the deletion step, or if one of its endpoints is occupied by a particle which is not present in $\chi_t$, and thus was only added in the construction when extending the configuration to $\Z$. Note that this censoring scheme does not depend on the different types of the second class particles in $(\chi_t)_{t \geq 0}$. \\

We now give a formal construction of $(\chi^{\star}_t)_{t \geq 0}$. Consider the following procedure which assigns some $\chi^{\star}=\chi^{\star}(v) \in \{0,1\}^\Z$ to every $\chi \in \{ 0,1,2 \}^N$ and every $v=\{ 0,1\}^k$ for $k\in \N \cup \{ 0\}$.
\begin{description}[labelsep=1em]
\item[Step 1] Delete all vertices in $\chi$ which are empty or contain a first class particle.  \vspace{-0.5em} 
\item[Step 2] Concatenate the vector $v$ at the left-hand side of the diminished segment. \vspace{-0.5em}
\item[Step 3] Turn all second class particles to empty sites if they are of type $1,2$ or $3$ and \\ \phantom{abc} turn them into first class particles if they are of type $4$ or $5$.  \vspace{-0.5em}
\item[Step 4] Extend to a configuration $\chi^{\star} \in \{0,1\}^\Z$ by adding empty sites at the left-\\ \phantom{abc} hand side and first class particles at the right-hand side of the segment.
\end{description}
An illustration is given in Figure \hyperref[fig:Projection]{8}. Note that $\chi^{\star}$ in this procedure is only defined up to translations on $\Z$. 
We use this additional degree of freedom when we define the process $(\chi^{\star}_t)_{t \geq 0}$ from $(\chi_t)_{t \geq 0}$. For all $t\geq 0$, let $v=v(t)$ denote the vector of all second class particles which have left the segment at the left-hand side boundary by time $t$. More precisely, we place a $1$ at position $i$ in $v$ if the $i^{\text{th}}$ second class particle exiting is of type $4$ or $5$, and we put a $0$, otherwise. For all $t\geq 0$, we obtain $\chi^{\star}_t$ up to translations by applying the above procedure for $\chi_t$ and $v(t)$. In order to determine the specific translation of $\chi^{\star}_t$ in $(\chi^{\star}_t)_{t \geq 0}$, we proceed as follows. We choose $\chi^{\star}_0 \in A_0$ where $A_0$ is defined in \eqref{blockingsets}. In particular, note that $\chi^{\star}_0=\vartheta_0$ holds. For $t>0$, suppose that $\chi^{\star}_t \in A_n$ holds for some $n\in\Z$. If at time $t$ a second class particle of type $1,2$ or $3$ exits at the right-hand side boundary in $\chi_t$, we choose the updated configuration such that $\chi^{\star}_{t+} \in A_{n-1}$ holds. In all other cases, we choose $\chi^{\star}_{t+} \in A_{n}$. 
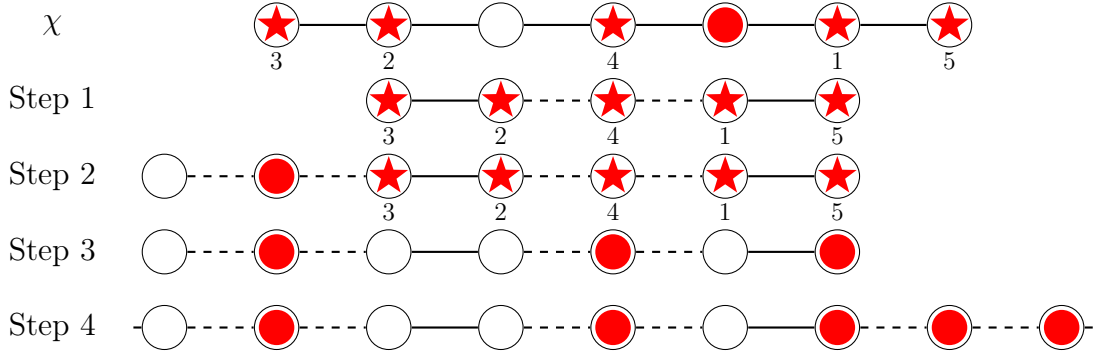
\begin{figure} \label{fig:Projection}
\centering
\begin{tikzpicture}[scale=0.59]

\def\x{2.5};
\def\y{-1.7};
\def\z{0.8};
\def\a{0.8};

 	\node[shape=circle,scale=1.5,draw] (C1) at (0,0*\y){} ; 
 	\node[shape=circle,scale=1.5,draw] (C2) at (\x,0*\y){} ; 
 	\node[shape=circle,scale=1.5,draw] (C3) at (2*\x,0*\y){} ; 
 	\node[shape=circle,scale=1.5,draw] (C4) at (3*\x,0*\y){} ; 
 	\node[shape=circle,scale=1.5,draw] (C5) at (4*\x,0*\y){} ; 
 	\node[shape=circle,scale=1.5,draw] (C6) at (5*\x,0*\y){} ; 
 	\node[shape=circle,scale=1.5,draw] (C7) at (6*\x,-0*\y){} ; 
 	
 	\draw[thick] (C1) -- (C2);
 	\draw[thick] (C2) -- (C3);
 	\draw[thick] (C3) -- (C4);
 	\draw[thick] (C4) -- (C5);
 	\draw[thick] (C5) -- (C6);
 	\draw[thick] (C6) -- (C7);

 	\node[scale=\z] (F1) at (0,0*\y-\a) {$3$};
 	\node[scale=\z] (F2) at (\x,0*\y-\a) {$2$};
 	\node[scale=\z] (F4) at (3*\x,0*\y-\a) {$4$};
 	\node[scale=\z] (F6) at (5*\x,0*\y-\a) {$1$};
 	\node[scale=\z] (F7) at (6*\x,0*\y-\a) {$5$};
	\node[shape=circle,scale=1.2,fill=red] (F5) at (C5) {};
	
	\node[shape=star,star points=5,star point ratio=2.5,fill=red,scale=0.55] (G1) at (0,0*\y) {};	
	\node[shape=star,star points=5,star point ratio=2.5,fill=red,scale=0.55] (G2) at (\x,0*\y) {};	
	\node[shape=star,star points=5,star point ratio=2.5,fill=red,scale=0.55] (G4) at (3*\x,0*\y8) {};	
	\node[shape=star,star points=5,star point ratio=2.5,fill=red,scale=0.55] (G6) at (5*\x,0*\y) {};	
	\node[shape=star,star points=5,star point ratio=2.5,fill=red,scale=0.55] (G7) at (6*\x,0*\y) {};		
 
	\node (H3) at (-5,0*\y) {$\chi$};

 	\node[shape=circle,scale=1.5,draw] (C2) at (\x,1*\y){} ; 
 	\node[shape=circle,scale=1.5,draw] (C3) at (2*\x,1*\y){} ; 
 	\node[shape=circle,scale=1.5,draw] (C4) at (3*\x,1*\y){} ; 
 	\node[shape=circle,scale=1.5,draw] (C5) at (4*\x,1*\y){} ; 
 	\node[shape=circle,scale=1.5,draw] (C6) at (5*\x,1*\y){} ;

 	\draw[thick] (C2) -- (C3);
 	\draw[thick, dashed] (C3) -- (C4);
 	\draw[thick, dashed] (C4) -- (C5);
 	\draw[thick] (C5) -- (C6);

 	\node[scale=\z] (F1) at (\x,1*\y-\a) {$3$};
 	\node[scale=\z] (F2) at (2*\x,1*\y-\a) {$2$};
 	\node[scale=\z] (F4) at (3*\x,1*\y-\a) {$4$};
 	\node[scale=\z] (F6) at (4*\x,1*\y-\a) {$1$};
 	\node[scale=\z] (F7) at (5*\x,1*\y-\a) {$5$};

	\node[shape=star,star points=5,star point ratio=2.5,fill=red,scale=0.55] (G1) at (\x,1*\y) {};	
	\node[shape=star,star points=5,star point ratio=2.5,fill=red,scale=0.55] (G2) at (2*\x,1*\y) {};	
	\node[shape=star,star points=5,star point ratio=2.5,fill=red,scale=0.55] (G4) at (3*\x,1*\y) {};	
	\node[shape=star,star points=5,star point ratio=2.5,fill=red,scale=0.55] (G6) at (4*\x,1*\y) {};	
	\node[shape=star,star points=5,star point ratio=2.5,fill=red,scale=0.55] (G7) at (5*\x,1*\y) {};

	\node (H3) at (-5,1*\y) {Step 1};

 	\node[shape=circle,scale=1.5,draw] (C0) at (-\x,2*\y){} ; 
 	\node[shape=circle,scale=1.5,draw] (C1) at (0,2*\y){} ; 
 	\node[shape=circle,scale=1.5,draw] (C2) at (\x,2*\y){} ; 
 	\node[shape=circle,scale=1.5,draw] (C3) at (2*\x,2*\y){} ; 
 	\node[shape=circle,scale=1.5,draw] (C4) at (3*\x,2*\y){} ; 
 	\node[shape=circle,scale=1.5,draw] (C5) at (4*\x,2*\y){} ; 
 	\node[shape=circle,scale=1.5,draw] (C6) at (5*\x,2*\y){} ; 
 
 	\draw[thick, dashed] (C0) -- (C1);	 
 	\draw[thick, dashed] (C1) -- (C2);		
 	\draw[thick] (C2) -- (C3);
 	\draw[thick, dashed] (C3) -- (C4);
 	\draw[thick, dashed] (C4) -- (C5);
 	\draw[thick] (C5) -- (C6);

 	\node[scale=\z] (F1) at (\x,2*\y-\a) {$3$};
 	\node[scale=\z] (F2) at (2*\x,2*\y-\a) {$2$};
 	\node[scale=\z] (F4) at (3*\x,2*\y-\a) {$4$};
 	\node[scale=\z] (F6) at (4*\x,2*\y-\a) {$1$};
 	\node[scale=\z] (F7) at (5*\x,2*\y-\a) {$5$};

	\node[shape=star,star points=5,star point ratio=2.5,fill=red,scale=0.55] (G1) at (\x,2*\y) {};	
	\node[shape=star,star points=5,star point ratio=2.5,fill=red,scale=0.55] (G2) at (2*\x,2*\y) {};	
	\node[shape=star,star points=5,star point ratio=2.5,fill=red,scale=0.55] (G4) at (3*\x,2*\y) {};	
	\node[shape=star,star points=5,star point ratio=2.5,fill=red,scale=0.55] (G6) at (4*\x,2*\y) {};	
	\node[shape=star,star points=5,star point ratio=2.5,fill=red,scale=0.55] (G7) at (5*\x,2*\y) {};		

	\node[shape=circle,scale=1.2,fill=red] (G0) at (0,2*\y) {};

	\node (H3) at (-5,2*\y) {Step 2};

 	\node[shape=circle,scale=1.5,draw] (C0) at (-\x,3*\y){} ; 
 	\node[shape=circle,scale=1.5,draw] (C1) at (0,3*\y){} ; 
 	\node[shape=circle,scale=1.5,draw] (C2) at (\x,3*\y){} ; 
 	\node[shape=circle,scale=1.5,draw] (C3) at (2*\x,3*\y){} ; 
 	\node[shape=circle,scale=1.5,draw] (C4) at (3*\x,3*\y){} ; 
 	\node[shape=circle,scale=1.5,draw] (C5) at (4*\x,3*\y){} ; 
 	\node[shape=circle,scale=1.5,draw] (C6) at (5*\x,3*\y){} ;

 	\draw[thick, dashed] (C0) -- (C1); 	
 	\draw[thick, dashed] (C1) -- (C2); 	
 	\draw[thick] (C2) -- (C3);
 	\draw[thick, dashed] (C3) -- (C4);
 	\draw[thick, dashed] (C4) -- (C5);
 	\draw[thick] (C5) -- (C6);

	\node[shape=circle,scale=1.2,fill=red] (F3) at (3*\x,3*\y) {};
	\node[shape=circle,scale=1.2,fill=red] (F5) at (5*\x,3*\y) {}; 
 
	\node[shape=circle,scale=1.2,fill=red] (G0) at (0,3*\y) {};

	\node (H3) at (-5,3*\y) {Step 3};

 	\node[shape=circle,scale=1.5,draw] (C0) at (-\x,4*\y){} ; 
 	\node[shape=circle,scale=1.5,draw] (C1) at (0,4*\y){} ; 
 	\node[shape=circle,scale=1.5,draw] (C2) at (\x,4*\y){} ; 
 	\node[shape=circle,scale=1.5,draw] (C3) at (2*\x,4*\y){} ; 
 	\node[shape=circle,scale=1.5,draw] (C4) at (3*\x,4*\y){} ; 
 	\node[shape=circle,scale=1.5,draw] (C5) at (4*\x,4*\y){} ; 
 	\node[shape=circle,scale=1.5,draw] (C6) at (5*\x,4*\y){} ; 
 	\node[shape=circle,scale=1.5,draw] (C7) at (6*\x,4*\y){} ;  
 	\node[shape=circle,scale=1.5,draw] (C8) at (7*\x,4*\y){} ;  
 	
  	\draw[thick, dashed] (C0) -- (-0.8-\x,4*\y) ; 	
 	\draw[thick, dashed] (C0) -- (C1) ; 	
 	\draw[thick, dashed] (C1) -- (C2); 	
 	\draw[thick] (C2) -- (C3);
 	\draw[thick, dashed] (C3) -- (C4);
 	\draw[thick, dashed] (C4) -- (C5);
 	\draw[thick] (C5) -- (C6);
 	\draw[thick, dashed] (C6) -- (C7);
 	\draw[thick, dashed] (C7) -- (C8);
 	\draw[thick, dashed] (C8) -- (7*\x+0.8,4*\y);

	\node[shape=circle,scale=1.2,fill=red] (F3) at (3*\x,4*\y) {};
	\node[shape=circle,scale=1.2,fill=red] (F5) at (5*\x,4*\y) {}; 
	\node[shape=circle,scale=1.2,fill=red] (F6) at (6*\x,4*\y) {};  
	\node[shape=circle,scale=1.2,fill=red] (F7) at (7*\x,4*\y) {};  
 
	\node[shape=circle,scale=1.2,fill=red] (G0) at (0,4*\y) {};

	\node (H3) at (-5,4*\y) {Step 4};	 
	
	\end{tikzpicture}
\caption{Construction of $\chi^{\star}$ from $\chi$ for $v=(0,1)$. Censored edges are drawn dashed.}
 \end{figure} 
 The next lemma states that the position of the leftmost particle $(L(\chi^{\star}_t))_{t \geq 0}$ is close to the position of the rightmost empty site $(R(\chi^{\star}_t))_{t \geq 0}$. 
 
 \begin{lemma}\label{lem:RightmostLeftmostStar} There exists a constant $c>0$ such that 
 \begin{equation}\label{eq:RightmostLeftmostEstimate}
 \mathbf{P} \left( \abs{R(\chi^{\star}_T)- L(\chi^{\star}_T)} > c \log N + N \right) \leq \frac{1}{N}
\end{equation} holds for all $N$ sufficiently large and $T\leq N^2$.
 \end{lemma}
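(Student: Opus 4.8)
The plan is to use that, by construction, $(\chi^{\star}_t)_{t\geq 0}$ is — apart from its right-boundary rule — a censored simple exclusion process on $\Z$ started from the ground state $\vartheta_0\in A_0$, and to control the spread $R(\chi^{\star}_t)-L(\chi^{\star}_t)$, which is a translation-invariant quantity and hence well defined even though the base level of $\chi^{\star}_t$ drifts. First observe that the lower bound is free: if $L(\chi^{\star}_t)>R(\chi^{\star}_t)+2$, then the sites strictly between $R(\chi^{\star}_t)$ and $L(\chi^{\star}_t)$ would have to be simultaneously occupied (they lie to the right of the rightmost empty site) and empty (to the left of the leftmost particle), which is impossible; so $R(\chi^{\star}_t)-L(\chi^{\star}_t)\geq -2$ always, and only the upper bound is at stake.

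For the upper bound I would split the active region $[L(\chi^{\star}_t),R(\chi^{\star}_t)]$ into the \emph{segment part} of $\chi^{\star}_t$ — the at most $N$ sites produced in Step~1 from the second class particles currently present in $\chi_t$ — and a leftward overhang into the prepended vector $v(t)$ of already-exited second class particles. Since the right-hand extension added in Step~4 consists only of particles, the rightmost empty site lies no further right than the right end of the segment part, so $R(\chi^{\star}_t)$ minus the left end of the segment part is at most $N$. It therefore suffices to bound the overhang — the distance by which $L(\chi^{\star}_t)$ sits to the left of the segment part's left end — by $c\log N$ with probability at least $1-\frac{1}{N}$, uniformly over $t\in[0,T]$ with $T\leq N^2$; summing the two contributions then gives $\abs{R(\chi^{\star}_T)-L(\chi^{\star}_T)}\leq c\log N+N$.

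To control the overhang I would compare, via the canonical coupling, the leftmost-particle trajectory of $(\chi^{\star}_t)_{t\geq 0}$ with that of the uncensored simple exclusion process on $\Z$ started from the blocking measure $\nu_{(0)}$: by Remark~\ref{rem:CensoringBlocking} the censored dynamics from $\vartheta_0$ is dominated with respect to $\succeq_\h$ by the uncensored dynamics from $\vartheta_0$, and since $\vartheta_0$ is the minimal element of $A_0$ and the canonical coupling is $\succeq_\h$-monotone (Remark~\ref{rem:PartialOrderonZ}), the latter is dominated by the uncensored process from $\nu_{(0)}$; the right-boundary rule of $(\chi^{\star}_t)_{t\geq 0}$ only ever converts the rightmost empty site into a particle, so it can only push $L(\chi^{\star}_t)$ towards the origin. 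I would then apply Lemma~\ref{lem:HittingBlockingMeasure} with $x:=c\log N$ for $c=c(p)>0$ chosen so large that $\left(\frac{p}{1-p}\right)^{c\log N}\geq N^{3}$, and with $\varepsilon:=\frac{1}{2N}$: the interval $\big[0,\varepsilon C x^{-1}(p/(1-p))^{x}\big]$ then contains $[0,N^2]$ for all large $N$, so with probability at least $1-\frac{1}{N}$ the leftmost particle of the reference process stays at least $-x$ throughout $[0,N^2]$. Transporting this through the coupling bounds the overhang by $c\log N$, which together with the deterministic bound $N$ on the segment part yields the claim.

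I expect the main obstacle to be the comparison in the last paragraph: the extra right-boundary rule (turning an empty site into a particle) breaks the height-function order $\succeq_\h$, so the domination by the blocking-measure process cannot simply be read off from $\succeq_\h$-monotonicity and must be established by a hands-on coupling that tracks the positions of the leftmost particle and the rightmost empty site separately, checking that both the censoring and the boundary injections act favourably on these two statistics rather than on the full configuration.
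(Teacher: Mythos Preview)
Your plan is in the right spirit --- compare to censored ASEP on $\Z$ started from a ground state and invoke Lemma~\ref{lem:HittingBlockingMeasure} --- and your treatment of $R(\chi^{\star}_t)$ is fine. The gap is in the bound on $L(\chi^{\star}_t)$.

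Your sentence ``the right-boundary rule \dots\ only ever converts the rightmost empty site into a particle, so it can only push $L(\chi^{\star}_t)$ towards the origin'' is not correct. The instantaneous effect of filling $R$ is indeed harmless for $L$, but adding a particle makes the configuration larger in $\succeq_\c$, and under the canonical coupling $\succeq_\c$ is preserved; hence the process with injections dominates the injection-free process $\eta^0$ componentwise, which forces $L(\chi^{\star}_t)\leq L(\eta^0_t)$, the \emph{wrong} inequality for your purposes. Concretely, the injected particle can block the leftmost particle from moving right, so over time $L(\chi^{\star}_t)$ may sit strictly to the left of $L(\eta^0_t)$. Thus the comparison to $\nu_{(0)}$ on $A_0$ does not give $L(\chi^{\star}_t)\geq -c\log N$, and your overhang bound does not follow.

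The paper's fix is to run \emph{two} reference processes with the same censoring scheme: $\eta^0$ on $A_0$ from $\vartheta_0$ to control $R$, and $\eta^{-N}$ on $A_{-N}$ from $\vartheta_{-N}$ to control $L$. The point of the shift by $-N$ is that the right-boundary rule fires at most $N$ times (only second class particles of types $1,2,3$ trigger it, and there are at most $N$ of them), so $\chi^{\star}_t$ always lives in some $A_n$ with $n\geq -N$. Conditioning on the at most $N$ replacement times and inducting over them yields $L(\chi^{\star}_t)\geq L(\eta^{-N}_t)$ almost surely; one then applies the censoring inequality and Lemma~\ref{lem:HittingBlockingMeasure} to each of $\eta^0$ and $\eta^{-N}$ separately, obtaining $R(\chi^{\star}_T)\leq R(\eta^0_T)\leq c\log N$ and $L(\chi^{\star}_T)\geq L(\eta^{-N}_T)\geq -N-c\log N$ with the required probability. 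This is the missing idea: your ``hands-on coupling'' needs to pre-load the $N$ future injections into the comparison process, which is exactly what passing from $A_0$ to $A_{-N}$ does.
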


 \begin{proof} Let $(\eta^{0}_t)_{t \geq 0}$ and $(\eta^{-N}_t)_{t \geq 0}$ be two simple exclusion processes on $A_{0}$ and $A_{-N}$ with initial states $\vartheta_{0}$ and $\vartheta_{-N}$, respectively. We let $(\eta^{0}_t)_{t \geq 0}$ and $(\eta^{-N}_t)_{t \geq 0}$ be canonically coupled to $(\chi^{\star}_t)_{t \geq 0}$, and apply the same censoring scheme. Since $(\eta^{0}_t)_{t \geq 0}$ and $(\chi^{\star}_t)_{t \geq 0}$ differ only by the fact that in $(\chi^{\star}_t)_{t \geq 0}$ occasionally the right-most empty site is replaced by a particle, we see that $R(\chi^{\star}_t) \leq R(\eta^{0}_t)$ holds almost surely for all $t\geq0$. Further, we claim that $L(\chi^{\star}_t) \geq L(\eta^{-N}_t)$ holds almost surely for all $t\geq0$.
 This can be seen by conditioning on the at most $N$ times at which the rightmost empty site in $(\chi^{\star}_t)_{t \geq 0}$ gets replaced, and then using an induction argument.  Since the above way of prohibiting updates in $(\chi^{\star}_t)_{t \geq 0}$ is indeed a censoring scheme in the sense of Section \ref{sec:Censoring}, we use the censoring inequality from Remark \ref{rem:CensoringBlocking} to see that the laws of $(\eta^{0}_t)_{t \geq 0}$ and $(\eta^{-N}_t)_{t \geq 0}$ are stochastically dominated by the blocking measures on $A_{0}$ and $A_{-N}$, respectively, with respect to the partial order $\succeq_{\h}$. The statement in \eqref{eq:RightmostLeftmostEstimate} now follows from Lemma \ref{lem:HittingBlockingMeasure}. \end{proof}
 \begin{proof}[Proof of Lemma \ref{lem:SecondClassExits}] Note that when the current of second class particles is at most $-4N$ at time $T$, we know that at least $4N$ second class particles are absorbed at the left-hand side boundary in $(\chi_t)_{t \geq 0}$ at time $T$. Note that in this case, at least $2N$ of them must be of type $4$ since all second class particles created at site $N$ are of type $4$, and there are at most $N$ second class particles of types $1,2,3$ initially in the segment. By Lemma \ref{lem:RightmostLeftmostStar}, we see that with probability at least $1-N^{-1}$, each second class particle of type $1$, $2$ or $3$ in $\chi_T$ has at most $c\log N+N$ second class particles of type $4$ or $5$ to its left (counting also particles which have exited at site $1$). Hence for all $N$ large enough, all second class particles in $(\chi_t)_{t \geq 0}$ of type $1$, $2$ or $3$, and thus also all second class particles in $(\xi_t)_{t \geq 0}$, have left the segment by time $T$ with probability at least $1-N^{-1}$.  \end{proof}
 
 \begin{remark}\label{rem:IntuitionMax} For the simple exclusion process in the maximal current phase, we conjecture that a similar analysis of the disagreement process started from $\mathbf{1}$ and $\mathbf{0}$ yields the order of the $\varepsilon$-mixing time. We believe that the typical time for all second class particles to leave the segment is of order $N^{3/2}$, using a comparison to the typical fluctuations of a second class particle on $\Z$ in a Bernoulli-$\frac{1}{2}$-product measure  \cite{BS:OrderCurrent}. Further, note that the exponent $\frac{3}{2}$ is the KPZ relaxation scale which has been proved by Baik and Liu for periodic models as well as by Corwin and Dimitrov for the ASEP on $\Z$, see \cite{BL:TASEPring,CD:ASEPline}, and more broadly is present in all KPZ class models. Moreover, Corwin and Shen, as well as Parekh showed that under a weakly asymmetry scaling, the height function (suitably normalized) of the simple exclusion process with open boundaries in the triple point converges to a solution of the KPZ equation, see \cite{CS:OpenASEPWeakly,P:KPZlimit}. This supports   Conjecture \ref{conj:MaxCurrent} for the maximal current phase of a mixing time of order $N^{3/2}$, and no cutoff.
 \end{remark}

\section{Mixing times for the triple point}\label{sec:MaxCurrent} 

In this section, we prove Theorem \ref{thm:MaxCurrentTriplePoint} for the simple exclusion process $(\eta_t)_{t \geq 0}$ with open boundaries and parameters $(p,\alpha,\beta,\gamma,\delta)$ in the triple point.  We use a symmetrization argument, similar to the one presented in \cite{F:EVBoundsSEP} for the case of the totally asymmetric simple exclusion process on the circle. The main technique used is a Nash inequality as introduced in \cite{DS:GeometricBounds}. We compare the total-variation distance between the law of $(\eta_t)_{t \geq 0}$ and its stationary distribution $\mu$ to the spectral gap of a process $(\zeta_t)_{t \geq 0}$, i.e.\ the absolute value of the largest non-zero eigenvalue of the generator for $(\zeta_t)_{t \geq 0}$.  
We start by defining the \textbf{adjoint} $\mathcal{L}^{\star}$ of the generator $\mathcal{L}$ of the simple exclusion process $(\eta_t)_{t \geq 0}$ with open boundaries. This is the linear operator which satisfies
\begin{equation*}
\sum_{\eta \in \Omega_{N}} f(\eta) (\mathcal{L}g)(\eta) \mu(\eta) =  \sum_{\eta \in \Omega_{N}}  ( \mathcal{L}^{\star} f )(\eta) g(\eta) \mu(\eta)
\end{equation*} for all functions $f,g \colon \Omega_N \rightarrow \mathbb{R}$. In particular, note that for reversible processes, we have that $\mathcal{L}=\mathcal{L}^{\star}$ holds, see \eqref{def:Reversibility}. By Lemma \ref{lem:stationaryDistribution}, we have that the stationary distribution $\mu$ of $(\eta_t)_{t \geq 0}$ is the uniform measure on $\Omega_N$. Hence, observe that 
the simple exclusion process with open boundaries and parameters $(1-p,\gamma,\delta,\alpha,\beta)$ has generator $\mathcal{L}^{\ast}$.  We now consider the additive symmetrization of the simple exclusion process $(\eta_t)_{t \geq 0}$ with open boundaries with generator $\mathcal{L}$ and the simple exclusion process generated by its adjoint $\mathcal{L}^{\ast}$. More precisely, we let $(\zeta_t)_{t \geq 0}$ be the Feller process on $\Omega_N$ generated by $\frac{1}{2}(\mathcal{L}^{\star}+\mathcal{L})$. Observe that $(\zeta_t)_{t \geq 0}$ is reversible with respect to $\mu$.  Moreover, $(\zeta_t)_{t \geq 0}$ has the law of a simple exclusion process with open boundaries for parameters $(p^{\prime},\alpha^{\prime},\beta^{\prime},\gamma^{\prime},\delta^{\prime})$ given by
\begin{equation*}
p^{\prime} = \frac{1}{2} , \quad \alpha^{\prime}=\gamma^{\prime}=\frac{\alpha+\gamma}{2} \quad \text{ and } \quad  \beta^{\prime} = \delta^{\prime} = \frac{\beta+\delta}{2} \ .
\end{equation*}
The next lemma relates the total-variation distance of $(\eta_t)_{t \geq 0}$ to the spectral gap of $(\zeta_t)_{t \geq 0}$. It is an immediate consequence of Theorem 2.14 in \cite{F:EVBoundsSEP}.
\begin{lemma}\label{lem:DiaconisSeries} Let $\lambda$ denote the spectral gap of $(\zeta_t)_{t \geq 0}$. We have that
\begin{equation}\label{eq:DiaconisEstimate}
\TV{\P_{\xi}(\eta_t \in \cdot)- \mu} \leq 2^{N/2+1}\exp(-\lambda t )
\end{equation} holds for all initial states $\xi \in \Omega_N$ and $t \geq 0$.
\end{lemma}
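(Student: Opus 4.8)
The plan is to recognise the statement as the standard singular-value (``symmetrization'') bound for non-reversible Markov chains, derived from an $L^2(\mu)$-contraction for the adjoint semigroup. All inner products and norms below are taken in $L^2(\mu)$, and we use that $\mu$ is the uniform measure on $\Omega_N$, which holds in the triple point by Lemma \ref{lem:stationaryDistribution}. Write $P_t := e^{t\mathcal{L}}$ for the semigroup of $(\eta_t)_{t \geq 0}$; its $L^2(\mu)$-adjoint is $P_t^{\star} = e^{t\mathcal{L}^{\star}}$, the semigroup of the time-reversed process. The relevant observation is that, for any fixed $\xi \in \Omega_N$, the density of $\P_{\xi}(\eta_t \in \cdot)$ with respect to $\mu$ is the function $P_t^{\star} g_{\xi}$, where $g_{\xi}(\eta) := \mathds{1}_{\{\eta=\xi\}}/\mu(\xi)$; indeed, if a probability measure has $\mu$-density $g$, then its image under $P_t$ has $\mu$-density $P_t^{\star}g$.

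First I would reduce to an $L^2$ estimate. Using \eqref{def:TVDistance} and the Cauchy--Schwarz inequality with the probability measure $\mu$,
$$\TV{\P_{\xi}(\eta_t \in \cdot) - \mu} = \frac{1}{2}\sum_{\eta \in \Omega_N}\mu(\eta)\,\abs{(P_t^{\star}g_{\xi})(\eta)-1} \leq \frac{1}{2}\,\lVert P_t^{\star}g_{\xi} - 1\rVert = \frac{1}{2}\,\lVert P_t^{\star}f\rVert,$$
where $f := g_{\xi} - 1$ satisfies $\E_{\mu}[f] = 0$ since $\E_\mu[g_\xi] = 1$. A direct computation, using $\mu(\xi) = 2^{-N}$, gives $\lVert f\rVert^2 = \mu(\xi)^{-1} - 1 = 2^N - 1$.

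Next I would establish the contraction $\lVert P_t^{\star}f\rVert^2 \leq e^{-2\lambda t}\lVert f\rVert^2$. Put $g_t := P_t^{\star}f$. Since $P_t\mathds{1} = \mathds{1}$, the zero mean is preserved: $\E_{\mu}[g_t] = \langle P_t^{\star}f, \mathds{1}\rangle = \langle f, P_t\mathds{1}\rangle = \langle f, \mathds{1}\rangle = 0$. Differentiating $t \mapsto \lVert g_t\rVert^2$ and using $\langle \mathcal{L}^{\star}h, h\rangle = \langle h, \mathcal{L}h\rangle$,
$$\frac{\diff}{\diff t}\lVert g_t\rVert^2 = 2\langle \mathcal{L}^{\star}g_t, g_t\rangle = \langle (\mathcal{L}+\mathcal{L}^{\star})g_t, g_t\rangle \leq -2\lambda\lVert g_t\rVert^2,$$
where the inequality is the spectral-gap bound for the reversible process $(\zeta_t)_{t \geq 0}$: its generator is $\frac{1}{2}(\mathcal{L}+\mathcal{L}^{\star})$, which is self-adjoint and negative semidefinite with gap $\lambda$, so $\langle \frac{1}{2}(\mathcal{L}+\mathcal{L}^{\star})h,h\rangle \leq -\lambda\lVert h\rVert^2$ for all $\mu$-mean-zero $h$, and $g_t$ is mean-zero. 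Grönwall's inequality gives $\lVert g_t\rVert^2 \leq e^{-2\lambda t}(2^N-1)$, and substituting into the earlier display yields
$$\TV{\P_{\xi}(\eta_t \in \cdot) - \mu} \leq \frac{1}{2}e^{-\lambda t}\sqrt{2^N - 1} \leq 2^{N/2+1}e^{-\lambda t},$$
which is the claim.

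There is no genuine obstacle here; this is exactly the content of Theorem 2.14 in \cite{F:EVBoundsSEP}, and the only points requiring attention are (i) verifying that $g_t = P_t^{\star}f$ remains of $\mu$-mean zero, so that the spectral-gap inequality for the \emph{reversible} chain $(\zeta_t)_{t \geq 0}$ may be applied, and (ii) identifying the Dirichlet form of $(\zeta_t)_{t \geq 0}$ with the symmetric part of that of $(\eta_t)_{t \geq 0}$, which is immediate from the definition of $(\zeta_t)_{t \geq 0}$ as the process generated by $\frac{1}{2}(\mathcal{L}+\mathcal{L}^{\star})$, once one checks (as in the paragraph preceding the lemma) that $\mathcal{L}^{\star}$ is the generator of the exclusion process with open boundaries and parameters $(1-p,\gamma,\delta,\alpha,\beta)$. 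Incidentally this argument produces the sharper constant $2^{N/2-1}$; only the weaker constant in the statement is needed in the sequel. Alternatively, one may simply invoke Theorem 2.14 of \cite{F:EVBoundsSEP} directly, together with the identification of $\mathcal{L}^{\star}$ above.
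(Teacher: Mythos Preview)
Your proof is correct and is precisely the standard symmetrization argument underlying Theorem 2.14 of \cite{F:EVBoundsSEP}, which the paper simply cites without further detail. You have spelled out what that citation contains (indeed with the sharper constant $2^{N/2-1}$), so your approach coincides with the paper's.
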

\begin{proof}[Proof of Theorem \ref{thm:MaxCurrentTriplePoint}]
By Remark \ref{rem:relaxSymmetric}, we see that $\lambda^{-1} \leq CN^2$ holds for some constant $C=C(\alpha,\beta,\gamma,\delta)$, and we conclude by applying Lemma \ref{lem:DiaconisSeries}.
\end{proof}

\bibliographystyle{plain}

\begin{footnotesize}
\bibliography{Openboundary}
\end{footnotesize}

\textbf{Acknowledgment} We thank Noam Berger, Ivan Corwin, David Criens, Hubert Lacoin, Allan Sly, Herbert Spohn, and Lauren Williams for helpful discussions and comments. We are indebted to the anonymous referee for a careful reading and pointing out several inaccuracies.
This project was started at Cambridge University and carried out during visits of the authors at Princeton University and Technical University of Munich. We thank all three institutions for their hospitality. The third author acknowledges the TopMath program and the Studienstiftung des deutschen Volkes for financial support.

\appendix

\section{Appendix}

\subsection{Proof of Lemma \ref{lem:HittingBlockingMeasure}}

To show Lemma \ref{lem:HittingBlockingMeasure}, we bound the hitting time $\tau_{0}$ of the ground state $\vartheta_{0}$ in $A_0$.

\begin{lemma}\label{lem:GroundStateHitting} For $x\geq 0$, let $\theta_{x} \in A_0$ with $\theta_{x}(y) := \mathds{1}_{\{-x \leq y < 0\}} + \mathds{1}_{\{y > x\}}$ for all $y \in \Z$ be the initial state for the simple exclusion process $(\eta^{\Z}_t)_{t\geq 0}$ on $A_0$. Then there exists some $c>0$ such that for all $x\geq 0$, we have that 
$\E_{\theta_{x}}[\tau_{0}] \leq c x$ 
holds.
\end{lemma}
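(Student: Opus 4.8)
The plan is to show that starting from $\theta_x$ — which consists of a block of $x$ particles sitting at sites $\{-x,\dots,-1\}$ and the full half-line $\{x+1,x+2,\dots\}$, with a gap of empty sites on $\{0,1,\dots,x\}$ — the process reaches $\vartheta_0$ in expected time $O(x)$. The key structural fact is that $\theta_x$ is, in the partial order $\succeq_{\h}$ of \eqref{def:partialorderBlocking}, the maximal configuration in $A_0$ among those whose "discrepancy" from $\vartheta_0$ is concentrated in the window $[-x,x]$; reaching $\vartheta_0$ amounts to transporting the misplaced block of $x$ particles rightward across the gap of length $x$. Since $p>\tfrac12$, this transport happens at ballistic speed: each of the $x$ misplaced particles has a rightward drift $2p-1>0$, the rightmost empty site sits initially at position $x$, and the process is monotone, so intuitively after a linear time all the misplaced mass has crossed over and we are at the ground state.

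First I would set up a coupling / monotonicity reduction: by Remark \ref{rem:PartialOrderonZ}, the canonical coupling is monotone with respect to $\succeq_{\h}$ on $A_0$, and $\vartheta_0$ is the unique minimal element, so $\tau_0$ is the time at which the process started from $\theta_x$ coincides with the one started from $\vartheta_0$; equivalently $\tau_0$ is the first time the disagreement process (a collection of at most $x$ second class particles, all initially in $[-x,x]$) is empty. Then I would track the rightmost disagreement, i.e. define $M_t$ to be the position of the rightmost site on which $\eta^{\Z}_t$ differs from $\vartheta_0$ (equivalently the rightmost second class particle in the disagreement process), with $M_0 \le x$. Two observations drive the argument: (i) $M_t$ is non-increasing except it can only move because a second class particle jumps, and a second class particle in the ASEP with $p>\tfrac12$ has positive drift $2p-1$ to the right, with jump rates bounded below independently of the surrounding configuration (this is the standard second-class-particle-as-biased-walk comparison, as used e.g. in \cite{BBHM:MixingBias} and invoked in Lemma \ref{lem:HittingBlockingMeasure}); (ii) once the rightmost second class particle crosses into the region $\{y>x\}$ it gets "absorbed" in the sense that it merges into the full half-line and disappears from the disagreement process — more precisely each disagreement is a second class particle that must travel a distance at most $2x$ to the right before it exits the relevant window, and the number of such particles is at most $x$.

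Concretely I would bound $\E_{\theta_x}[\tau_0]$ by the total time needed for $x$ second class particles, each performing (a process stochastically dominating) a rate-$(2p-1)$-biased continuous-time walk started within $[-x,x]$, to each travel distance at most $2x$ to the right; a second-class particle with positive drift $2p-1$ covers distance $2x$ in expected time $O(x)$, and since the particles keep their relative order (they interchange among themselves but the rightmost is always the first to exit), the bulk quantity "sum over disagreements of distance still to travel" is a supermartingale-plus-linear-drift whose expected absorption time is $O(x)$. The cleanest route is probably: let $A_t$ = number of second class particles still present; show $A_t$ is dominated by a pure death chain with per-particle death rate bounded below by a constant $c'=c'(p)>0$ once that particle becomes the rightmost one and has crossed position $x$, and show by a Gambler's-ruin / hitting-time estimate (cf. the birth-and-death computations in Lemma \ref{lem:GamblersRuin}) that the additional time for the current rightmost particle to reach the absorbing region from anywhere in $[-x,x]$ has expectation $O(x)$; summing over the $\le x$ particles and using the strong Markov property gives $\E_{\theta_x}[\tau_0] \le cx$.

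The main obstacle is making rigorous the claim that each disagreement behaves like a biased walk \emph{uniformly} in the rest of the configuration — the jump rates of a second class particle depend on whether neighbouring sites are occupied, and other second class particles can block it. The standard resolution (which I would follow) is to note that for the rightmost second class particle this is not an issue: to its right lies only the configuration of the $\vartheta_0$-copy, which on $\{y>x\}$ is eventually all particles and before that is being filled in from the right, so the rightmost disagreement always faces a neighbour that lets it either jump right with rate $\ge$ const or get absorbed; a short induction peeling off the rightmost disagreement one at a time, together with monotonicity to ensure no disagreement ever leaves the window $[-x,\infty)$, handles the blocking issue. Once Lemma \ref{lem:GroundStateHitting} is established, Lemma \ref{lem:HittingBlockingMeasure} follows by writing the blocking measure $\nu_{(0)}$ as a mixture (it is supported on configurations that, after finitely many steps with uniformly positive probability, lie below some $\theta_x$ with $x$ having exponential tails), applying the $O(x)$ bound, and using Markov's inequality together with the exponential decay of $\nu_{(0)}(R(\eta)\ge x)$ and $\nu_{(0)}(-L(\eta)\ge x)$ coming from \eqref{eq:BernoulliProductc}.
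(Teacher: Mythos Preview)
Your proposal has two substantial gaps.

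First, you identify $\tau_0$ (the hitting time of the single configuration $\vartheta_0$) with the coupling time of the processes started from $\theta_x$ and from $\vartheta_0$ under the canonical coupling. These are not the same: the process started from $\vartheta_0$ does not stay there (the particle at site $1$ jumps to site $0$ at rate $1-p$), so the two copies may merge at a configuration other than $\vartheta_0$. Moreover, $\theta_x$ and $\vartheta_0$ are not comparable for $\succeq_{\c}$, so the disagreement process carries $2x$ discrepancies of two opposite types (on $\{-x,\dots,-1\}$ and on $\{1,\dots,x\}$) that annihilate pairwise; nothing is ``absorbed'' upon entering $\{y>x\}$. In particular, the picture that the rightmost discrepancy faces an environment in which it drifts right at rate at least $2p-1$ is incorrect: next to the full half-line a type-$(1,0)$ discrepancy moves right only at rate $1-p$.

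Second, your peeling scheme does not give the stated order. You argue that the rightmost remaining discrepancy needs expected time $O(x)$ to disappear, and then ``summing over the $\le x$ particles via the strong Markov property'' gives $cx$; but $x$ rounds of $O(x)$ each is $O(x^2)$. The supermartingale sentence hints at the right repair (many discrepancies move simultaneously, so one should track a global height-area quantity rather than peel one at a time), but you have not identified the supermartingale or established its drift, and the rate at which the area decreases depends on the number of particle--hole interfaces in the current configuration, which is not a priori of order $x$.

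The paper proceeds differently. It invokes Theorem~1.9 of \cite{BBHM:MixingBias} as a black box to get $\P_{\theta_x}(\tau_0\le c_1 x)\ge c_2>0$ directly, and then runs a restart: if $\vartheta_0$ has not been hit by time $c_1 x$, one waits an additional expected time $\tilde c$, \emph{uniform in $x$}, to re-enter the set $B^{\c}_x=\{\eta:\max(R(\eta),-L(\eta))\le x\}$, every element of which satisfies $\eta\preceq_{\h}\theta_x$. The uniform bound on the return time comes from sandwiching $R(\eta_t)$ and $L(\eta_t)$ by stationary blocking-measure processes on $A_x$ and $A_{-x-1}$ and applying Kac's formula. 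Solving the recursion $\E_{\theta_x}[\tau_0]\le c_1 x+(1-c_2)(\tilde c+\E_{\theta_x}[\tau_0])$ yields the linear bound.
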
 

\begin{proof} For all $x\geq 0$, we define $B_x$ to be the set of configurations
\begin{equation}\label{def:SetOfMaximalDistance}
B_x := \left\{ \eta \in A_{0}\colon \max(R(\eta),-L(\eta)) >  x  \right\} 
\end{equation} and denote for all $s \geq 0$ by $
\tau_{B^{\c}_x}^s := \inf \left\{ t \geq s \colon \eta_t \notin B_x \right\}$ the first time after time $s$ when we hit the set $B^{\c}_x$. We claim that there exists some $\tilde{c}>0$ such that for all $x,s \geq 0$
\begin{equation}\label{eq:TauBxCBound}
E_{\theta_{x}}[\tau_{B^{\c}_x}^s] -s \leq \tilde{c} \ .
\end{equation} To see this, let $(\eta^{x}_t)_{t \geq 0}$ and $(\eta^{-x-1}_t)_{t \geq 0}$ be two exclusion processes on $A_{x}$ and $A_{-x-1}$, started from the blocking measure, respectively. Using Remark \ref{rem:PartialOrderonZ}, we note that
\begin{equation}\label{eq:Domination2Blocking}
\mathbf{P}\left( R(\eta^{\Z}_t) \leq  R(\eta^{x}_t) \mbox{ and }  L(\eta^{\Z}_t) \geq L(\eta^{-x-1}_t) \text{ for all } t\geq 0 \right) = 1
\end{equation} holds with respect to the canonical coupling $\mathbf{P}$, see also Figure \hyperref[fig:BlockingDoubleDomination]{9}. Moreover, note that $(\eta^{x}_t,\eta^{-x-1}_t)_{t \geq 0}$ is a stationary and positive recurrent Feller process for which the state $(\vartheta_{x},\vartheta_{-x-1})$ has a strictly positive probability in equilibrium, and that $\tau_{B^{\c}_x}^s \leq T$ whenever $(\eta^{x}_t,\eta^{-x-1}_t)_{t \geq 0}$ is in the state $(\vartheta_{x},\vartheta_{-x-1})$ at time $T\geq s$. We conclude \eqref{eq:TauBxCBound} using  
Kac's lemma for the embedded discrete chain of $(\eta^{x}_t,\eta^{-x-1}_t)_{t \geq 0}$, see Theorem 21.12 in \cite{LPW:markov-mixing},  and a time-change. Next, by Theorem 1.9 in \cite{BBHM:MixingBias},
\begin{equation}\label{eq:FastHittingThetaY}
\P_{\theta_{x}}\left( \tau_{0} \leq  c_1 x \right) \geq c_2
\end{equation} holds for all $x \geq 0$ with constants $c_1,c_2>0$. We claim that together with \eqref{eq:TauBxCBound}, this yields
\begin{equation}\label{eq:RecursionHittingTimeTau0}
\E_{\theta_{x}}[\tau_{0}] \leq c_2 c_1 x + (1-c_2)\left( c_1 x + \tilde{c} + \E_{\theta_{x}}[\tau_{0}]  \right) \ .
\end{equation} To see this, note that with probability at least $c_2$, we hit $\vartheta_{0}$ by time $c_1x$. Suppose that $\vartheta_{0}$ was not hit by time $c_1x$, then we can wait until hitting $B^{\c}_x$ and use \eqref{eq:TauBxCBound}. Since $\eta \preceq_{\h} \theta_x$ holds for all $\eta \in B^{\c}_x$, the hitting time of $\vartheta_{0}$ starting from the configuration at time $\tau_{B^{\c}_x}^{c_1x}$ is stochastically dominated by the hitting time of $\vartheta_{0}$ when starting from $\theta_x$. Now take expectations to get \eqref{eq:RecursionHittingTimeTau0}. Since $\E_{\theta_{x}}[\tau_{0}] < \infty$, we conclude by solving \eqref{eq:RecursionHittingTimeTau0} for $\E_{\theta_{x}}[\tau_{0}] $.
\end{proof}

Next, we study the return time $\tau^{+}_{B_x}:= \inf \left\{ t \geq  \tau_{B^{\c}_x} \colon \eta_t \in B_x \right\}$ to the set ${B_x}$.
\begin{lemma}\label{lem:HittingTimeIntermediate} There exists some $C>0$ such that for all $x\geq 1$
\begin{equation}\label{eq:HittingTimeBound0}
E_{\nu_{(0)}}[\tau^{+}_{B_x}] \geq \nu_{(0)}(\vartheta_0) \E_{\vartheta_0} [\tau_{B_x}]\geq\frac{C}{x}\left(\frac{p}{1-p}\right)^{x} \ .
\end{equation} 
\end{lemma}  
\begin{proof} Observe that an exclusion process in ${B^{\c}_x}$ can change its state if and only if a clock on the sites $[-x,x]$ rings.  Hence, using Kac's lemma for the embedded discrete chain, we see that
\begin{equation}\label{eq:HittingTimeBound1}
\E_{\nu_{(0)}(\ .\  | {B_x})}[\tau^+_{B_x}] \geq \frac{1}{(2x+1)\nu_{(0)}({B_x})} \geq \frac{c_1}{x} \left( \frac{p}{1-p}\right)^x 
\end{equation} 
holds for all $x\geq0$ and some constant $c_1>0$. Since $\vartheta_0 \preceq_{\h} \eta$ for all $\eta \in B^{\c}_x$, we get
\begin{align}\label{eq:HittingTimeBound2}
\E_{\nu_{(0)}(\ .\ | {B_x})}[\tau^+_{B_x}] &=
 \sum_{\zeta \in B^{\c}_x} \left(  \E_{\nu_{(0)}(\ .\  | {B_x})}\left[\tau_\zeta \mid \eta_{\tau_{B^{\c}_x}} = \zeta  \right] + \E_{\zeta}[\tau_{B_x}] \right) \P_{\nu_{(0)}(\ .\  | {B_x})}( \eta_{\tau_{B^{\c}_x}} = \zeta )  \nonumber \\
 &\leq \E_{\nu_{(0)}(\ .\  | {B_x})}[\tau_0] + \E_{\vartheta_0}[\tau_{B_x}] \ .
\end{align} Recall that $\eta \preceq_{\h} \theta_x$ for all $\eta \in B^{\c}_x$. Note that there exists some $c_2>0$ such that 
\begin{equation}\label{eq:HittingTimeBound3}
\E_{\nu_{(0)}(\ .\  | {B_x})}[\tau_0] = \sum_{y \geq x} \sum_{\eta \in B_y\setminus B_{y+1}} \E_{\eta}[\tau_0]   \nu_{(0)}( \eta | {B_x})\leq \sum_{y \geq x} \E_{\theta_{y+1}}[\tau_0] \nu_{(0)}( B_y | {B_x}) \leq c_2 x 
\end{equation} holds for all $x\geq0$, using Lemma \ref{lem:GroundStateHitting} and the fact that $\nu_{(0)}( B_y  | {B_x}) \leq c_3 ((1-p)/p)^{y-x}$ for some $c_3>0$ in the last inequality. Combining \eqref{eq:HittingTimeBound2} and \eqref{eq:HittingTimeBound3}, we see that
\begin{equation*}
\E_{\vartheta_0}[\tau_{B_x}] \geq \E_{\nu_{(0)}(\ .\  | {B_x})}[\tau^+_{B_x}] -   \E_{\nu_{(0)}(\ .\  | {B_x})}[\tau_0]  \geq \E_{\nu_{(0)}(\ .\  | {B_x})}[\tau^+_{B_x}] - c_2 x \, .
\end{equation*}
Together with the lower bound on $\E_{\nu_{(0)}(\ .\  | {B_x})}[\tau^+_{B_x}]$ from \eqref{eq:HittingTimeBound1}, this yields \eqref{eq:HittingTimeBound0}.
\end{proof}
\begin{proof}[Proof of Lemma \ref{lem:HittingBlockingMeasure}]  We will prove Lemma \ref{lem:HittingBlockingMeasure} by contradiction. Take $C>0$ from Lemma \ref{lem:HittingTimeIntermediate} and assume that \eqref{eq:HittingBlocking} is not true. Then,
using the general fact that  for arbitrary events $A$ and $B$, the inequality $\P(A \cap B) \geq \P(A)- \P(B^{\c})$ holds, we have
\begin{equation*}
q:=\P_{\nu_{(0)}}\left( \eta_t \in B_x \text{ for some } t \in \left[ 0, \frac{\varepsilon C}{x} \left(\frac{p}{1-p}\right)^{x}\right] \text{ and } \eta_0 \in B^{\c}_x \right) > 2\varepsilon - \nu_{(0)}(B_x)\, .
\end{equation*} 
A similar argument as for \eqref{eq:RecursionHittingTimeTau0} yields
\begin{equation}\label{eq:Recursion2.0}
\E_{\nu_{(0)}}[\tau^+_{B_x}]  \leq  q \frac{\varepsilon C}{x} \left(\frac{p}{1-p}\right)^{x}  + (1-q) \left(\frac{\varepsilon C}{x} \left(\frac{p}{1-p}\right)^{x} + \E_{\nu_{(0)}}[\tau^+_{B_x}] \right)  \ .
\end{equation}
Solving \eqref{eq:Recursion2.0} for $\E_{\nu_{(0)}}[\tau^+_{B_x}]$, and using the definition of $\nu_{(0)}$ for $q$, we see that for all $x$ large enough $\E_{\nu_{(0)}}[\tau^+_{B_x}] < \varepsilon C{x}^{-1} (p/(1-p))^{x}$ holds. This contradicts Lemma \ref{lem:HittingTimeIntermediate}.
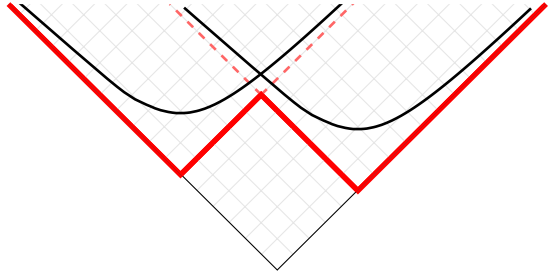
\begin{figure}
\centering
\begin{tikzpicture}[scale=1,rotate=45]
\draw[line width=2pt,red] (0,5) to (0,1.8) to (1.5,1.8) to (1.5,0) to (5,0);
\clip (0,0) to (5,0) to (0,5) to (0,0);
\draw [color=gray!20]  [step=3mm] (0,0) grid (5,5);
\draw[thick] (0,0) to (0,5);
\draw[thick] (0,0) to (5,0);
\draw[dashed,line width=1pt,color=red!60] (1.5,1.8) to (1.5,3.5);
\draw[dashed,line width=1pt,color=red!60] (1.5,1.8) to (3.5,1.8);
\draw[line width=2pt,red] (0,5) to (0,1.8) to (1.5,1.8) to (1.5,0) to (5,0);
\draw[domain=0.1:3.35,smooth,variable=\x, line width=1pt] plot ({\x},{1/(3*\x)+1.8});
\draw[domain=1.6:4.82,smooth,variable=\x, line width=1pt] plot ({\x},{1/(3*(\x-1.5)});
\end{tikzpicture}
\caption{\label{fig:BlockingDoubleDomination}The initial state $\theta_x$ of $(\eta^{\Z}_t)_{t \geq 0}$ is shown in red. The position of the leftmost particle in $(\eta^{\Z}_t)_{t \geq 0}$ is stochastically dominated by the position of the leftmost particle in $(\eta^{-x-1}_t)_{t \geq 0}$.  
A similar statement holds for the rightmost empty site in $(\eta^{\Z}_t)_{t \geq 0}$.}
\end{figure}
\end{proof}

\subsection{Proof of the generalized version of Wilson's lemma}


\begin{proof}[Proof of Lemma \ref{lem:GeneralizedWilsonContinuousTime}]  

For fixed $X_0= \eta$, let $f(t):= \E[F(X_t)]= \E_\eta[F(X_t)]$ for all $t\geq 0$, and note that 
\begin{equation*}
	f^{\prime}(t) = \E\left[ (\mathcal{A}F)(X_t)\right] \in [-\lambda f(t) - c, -\lambda f(t)+c] \ \text{ for all  } t \geq 0
\end{equation*} by using the martingale property of $(M_t)_{t \geq 0}$ and \eqref{eq:Eigenfunction}. Applying Gronwall's lemma, we get
\begin{equation*}
f(t) \leq f(0)e^{-\lambda t} + \int_{0}^{t} c e^{-\lambda(t-s)}\diff s \leq f(0)e^{-\lambda t} + \frac{c}{\lambda} \ \text{ for all  } t \geq 0, 
\end{equation*}  see Lemma 2.7 in \cite{T:ODEs}. Similarly, apply Gronwall's lemma to $-f$ to conclude that
\begin{equation}\label{eq:FirstOrderEigenfunction}
\abs{f(t) - e^{-\lambda t}f(0)} \leq \frac{c}{\lambda} \ \text{ for all  } t \geq 0.
\end{equation} Next, we define $g(t) := \E[(F(X_t))^2]$. Observe that $(F(X_t))_{t \geq 0}$ is a semimartingale. Thus, we apply It\^o's formula to see that
\begin{align*}
F^2(X_t) - F^2(X_0) &= 2 \int_0^t F(X_s) \diff\big[ F(X_s)-\int_0^s(\mathcal{A}F)(X_r)\diff r \big] \\
&+ 2 \int_0^t F(X_s) \diff \big[ \int_0^s(\mathcal{A}F)(X_r)\diff r \big] + \frac{1}{2} \int_0^t 2\ \diff \langle M \rangle_s
\end{align*} holds, see Theorem 5.33 in \cite{L:Book3}. Taking expectations and changing the order of integration, a calculation yields that
\begin{equation*}
g(t)-g(0) = 2 \int_0^t \E\left[ F(X_s)(\mathcal{A}F)(X_s)\right] \diff s + \E\left[\langle M \rangle_t\right] \ \text{ for all  } t \geq 0.
\end{equation*} Now taking derivatives gives us that
\begin{equation*}
g^{\prime}(t) = 2 \E\left[ F(X_t)(\mathcal{A}F)(X_t)\right]  + \frac{\diff}{\diff t} \E[\langle M \rangle_t]  \ \text{ for all  } t \geq 0.
\end{equation*} Moreover, using \eqref{eq:Eigenfunction}, we obtain that
\begin{equation*}
2\E\left[ F(X_t)(\mathcal{A}F)(X_t)\right]  \leq -2\lambda g(t) + 2c || F ||_{\infty} 
\end{equation*} holds. 
Further, by applying Gronwall's lemma and using \eqref{eq:QVBound}, a calculation shows that
\begin{equation*}
g(t) \leq g(0)e^{-2\lambda t} + \frac{c || F ||_{\infty}}{\lambda} + \int_0^t \left(\frac{\diff}{\diff s} \E[\langle M \rangle_s]\right) e^{-2\lambda(t-s)} \diff s \leq  g(0)e^{-2\lambda t} + \frac{c || F ||_{\infty}+R}{\lambda}
\end{equation*} holds for all $t \geq 0$. Together with \eqref{eq:FirstOrderEigenfunction} and the fact that $g(0)=f(0)^2$, we deduce that
\begin{equation}\label{eq:VarianceBoundWilson}
\V(F(X_t))= \V_{\eta}(F(X_t)) = g(t)-f(t)^2 \leq \frac{3c || F ||_{\infty} +R}{\lambda}
\end{equation} holds for any initial state $\eta \in S$, and all $t \geq 0$. Recall the total-variation distance from \eqref{def:TVDistance} and let $d_\eta(t)$ denote the total-variation distance between the law of $X_t$ started from $\eta$ and its stationary distribution. Note that for all $t \geq 0$ and any initial state $\eta$, we have
\begin{equation*}
 \P\left( F(X_\infty) \geq \frac{1}{2} \E[F(X_t)]\right) \leq  \P\left( F(X_\infty)^2 \geq \frac{1}{4} \E[F(X_t)]^2\right) \leq 4 \frac{\E[F(X_\infty)^2]}{\E[F(X_t)]^2}
\end{equation*} and hence
\begin{align}\label{eq:LowerBoundWilsonDistance}
d_\eta(t) &\geq \P\left( F(X_t) \geq \frac{1}{2} \E[F(X_t)]\right) - \P\left( F(X_\infty) \geq \frac{1}{2} \E[F(X_t)]\right)\cr
&\geq  1 - 4 \frac{\V(F(X_t))}{\E[F(X_t)]^2} - 4 \frac{\V(F(X_\infty)) + \E [F(X_\infty)]^2}
{\E[F(X_t)]^2}
\end{align} where we used Chebyshev's inequality for the second inequality.
Here, $X_{\infty}$ is a random variable whose law is the stationary distribution of $(X_t)_{t \geq 0}$.  The goal is to show that for $t$ equal to the right-hand side of \eqref{eq:LowerBoundWilson}, 
the right-hand side of \eqref{eq:LowerBoundWilsonDistance} is $\geq 1- \varepsilon$, which implies \eqref{eq:LowerBoundWilson}.
Let $\eta$ be such that $\abs{F(\eta)}=|| F ||_{\infty}$ holds. Then, to estimate the denominator of the last term in \eqref{eq:LowerBoundWilsonDistance}
for $t$ equal to the right-hand side of \eqref{eq:LowerBoundWilson}, note that by \eqref{eq:FirstOrderEigenfunction},
$$
\E[F(X_t)] \geq e^{-\lambda t}F(X_0) - \frac{c}{\lambda} = e^{-\lambda t}|| F ||_{\infty}- \frac{c}{\lambda} \geq \frac{1}{2}e^{-\lambda t}|| F ||_{\infty}\, ,
$$
where the last inequality is due to our choice of $t$. To estimate the nominator of the last term in \eqref{eq:LowerBoundWilsonDistance},
taking $t \rightarrow \infty$ in \eqref{eq:FirstOrderEigenfunction} and \eqref{eq:VarianceBoundWilson}, we see that 
$|\E[F(X_\infty)] |\leq c/\lambda$  and $\abs{\V[F(X_\infty)]} \leq (3c || F ||_{\infty}+R)\lambda^{-1}$. Now we see after a calculation that indeed the right-hand side of \eqref{eq:LowerBoundWilsonDistance} is $\geq 1- \varepsilon$.
\end{proof}

\end{document}